\documentclass[11pt]{amsart}
\usepackage{amsmath}
\usepackage{amssymb}
\usepackage{bm}
\usepackage{ esint }
\usepackage{graphicx}
\usepackage{verbatim}
\usepackage{enumitem}
\setlist[enumerate,1]{leftmargin=*}
\setlist[itemize,1]{leftmargin=*}
\usepackage{mathrsfs}
\usepackage[dvipsnames]{color}
\usepackage{todonotes}
\usepackage[pdftex,pdfstartview=FitH,pdfborderstyle={/S/B/W 1},%
colorlinks=true, linkcolor=blue, urlcolor=blue, citecolor=blue,%
pagebackref=true]{hyperref}
\usepackage[nobysame,alphabetic,initials,msc-links]{amsrefs}
\usepackage{comment}
\usepackage{mathtools}

\calclayout 

\newtheorem{Lemma}{Lemma}[section]
\newtheorem{Theorem}[Lemma]{Theorem}
\newtheorem{Proposition}[Lemma]{Proposition}
\newtheorem{Corollary}[Lemma]{Corollary}

\theoremstyle{definition}

\newtheorem{Definition}[Lemma]{Definition}

\newtheorem*{Standing}{Standing assumptions}

\numberwithin{equation}{section}

\def\C{\mathbb{C}}

\def\geq{\geqslant}
\def\leq{\leqslant}
\def\R{\mathbb{R}}

\def\Z{\mathbb{Z}}
\def\N{\mathbb{N}}

\def\R{\mathbb{R}}
\def\id{\mathrm{id}}

\def\Q{\mathbb{Q}}

\def\v{\mathbf{v}}

\def\0{\mathbf{0}}

\def\cF{\mathcal F}

\def\h{\mathfrak{h}}
\def\SO{\operatorname{SO}}
\def\GL{\operatorname{GL}}
\def\adj{\operatorname{adj}}

\def\epsilon{\varepsilon}

\def\ad{\operatorname{ad}}
\def\Ad{\operatorname{Ad}}

\def\diag{\operatorname{diag}}

\def\ker{\operatorname{ker}}
\def\SL{\operatorname{SL}}
\def\supp{\operatorname{supp}}

\def\sl{\mathfrak{sl}}
\def\PSL{\operatorname{PSL}}

\newcommand{\bea}{\begin{eqnarray}}
  \newcommand{\eea}{\end{eqnarray}}
  \newcommand{\beab}{\begin{eqnarray*}}
  \newcommand{\eeab}{\end{eqnarray*}}

  \newcommand{\be}{\begin{equation}}
  \newcommand{\ee}{\end{equation}}

\newcommand{\abs}[1]{\left| #1 \right|}

\newcommand{\norm}[1]{\abs{\abs{#1}}}

\begin{document}

\title[Directional measure rigidity]{Directional measure rigidity for time-changed first row unipotent actions}

\author[Davide Ravotti]{Davide Ravotti}
\address{D.R.: Laboratoire Paul Painlev\'{e}, Universit\'{e} de Lille, B\^{a}timent M3 Cit\'{e} Scientifique, 59655 Villeneuve-d'Ascq, France}
\email{davide.ravotti@gmail.com}

\author[Daren Wei]{Daren Wei}
\thanks{D.W. acknowledges support by NUS startup grants A-0009806-00-00 and A-0009806-01-00}
\address{D.W.: Department of Mathematics, National University of Singapore, 119076, Singapore}
\email{darenwei@nus.edu.sg}

\setlength{\marginparwidth }{2 cm}

\begin{abstract}
We study flows on finite volume homogeneous manifolds arising as directional restrictions of time changes of higher rank abelian unipotent actions. We establish Ratner-type rigidity results in this setting. For first row actions on finite volume quotients of $\SL_n(\R)$, we show that natural quantitative assumptions on the cocycle force the flow to be algebraic and its ergodic invariant measure to be homogeneous. These results extend to higher dimensional subactions and to a general abstract setting. We also show that, on compact quotients of $\SL_3(\R)$, the smooth time change is smoothly conjugate to a linear algebraic action by a leaf preserving diffeomorphism if and only if the cocycle of the directional flow has bounded deviations along one direction. We also obtain precise asymptotics for compact root increments. The principal thresholds in our results are sharp.

\end{abstract}

\maketitle

\section{Introduction}
Ratner's measure classification and orbit closure theorems place unipotent dynamics among the most rigid systems in smooth ergodic theory. On a finite-volume homogeneous space, every ergodic probability measure invariant under a unipotent flow is homogeneous, and every orbit closure is homogeneous as well \cites{ratner1990measure,ratner1991measure,ratner1991topological}. 

Time changes provide a natural test of how much of this rigidity survives beyond the homogeneous category. For a flow, a time change reparametrizes each orbit by a positive function. For an $\R^d$-action, it is described by a cocycle whose fiber maps reparametrize the entire orbit. The orbit foliation is preserved, but the homogeneous parametrization is lost; invariant measures and lower dimensional restrictions may change substantially.

The orbit foliation alone cannot retain much rigidity in higher dimension. Rudolph \cite{RudolphSmoothOrbitEquivalence79} proved that any two free ergodic probability preserving $\R^d$-actions, $d\geq 2$, are orbit equivalent by a measure preserving map whose restriction to almost every orbit is $C^{\infty}$. Consequently, a rigidity theorem for higher rank time changes cannot follow merely from the smooth geometry of their leaves. It must recover the homogeneous structure from additional geometric information carried by the action.

The model for such a result is Ratner's rigidity theorem for time changes of horocycle flows \cite{ratner1986rigidity}. Ratner imposed regularity in the compact rotation direction of $\SL_2(\R)$ and proved that an isomorphism between sufficiently regular time changes is algebraic up to a cohomological displacement along the horocycle orbits. Her theorem shows that, although time changes leave the homogeneous category, the transverse geometry of the ambient group can still force algebraic rigidity.

The cohomological and statistical theory of horocycle time changes was developed further by Flaminio and Forni \cite{FlaminioForni03InvariantDistributions}, who classified the invariant distributions and the obstructions to the cohomological equation, and by Forni and Ulcigrai \cite{forni2012time}, who established quantitative mixing for smooth time changes on compact quotients. These results exhibit time changed horocycle flows as genuinely nonhomogeneous parabolic systems which nevertheless retain strong rigidity and statistical properties.

Higher rank parabolic actions introduce a further obstruction. Wang \cite{Wang2015Cohomological} showed that common cohomological equations for commuting unipotent directions depend sharply on the surrounding geometry. For compatible cocycle data, smooth common solutions exist when the relevant directions embed in a subalgebra of type $\sl_2(\R)\oplus\R$, while counterexamples arise when this configuration is absent. In particular, her results show that smoothness alone does not yield a general cocycle rigidity statement in the $\SL_3(\R)$ geometry.

This phenomenon has a direct dynamical counterpart in the first author's work on parabolic perturbations \cite{RavottiParabolic19}. On compact $\SL_3(\R)$-quotients, he constructed nonhomogeneous perturbations in a commuting unipotent direction, proved mixing with respect to an equivalent invariant measure, and related smooth triviality of the perturbation to cocycle rigidity of the ambient action. Although these perturbations are not time changes, this work is one of the motivations for the present paper: it shows that a higher rank parabolic deformation may preserve strong dynamical properties while still carrying a genuine cohomological obstruction to algebraicity.

Ratner's time change rigidity theorem has also been extended beyond the classical horocycle setting. Tang \cites{tang2020new,Tang23} studied time changes of unipotent flows on compact Lorentz quotients and established rigidity results for their joinings. Artigiani, Flaminio and the first author \cite{afr22} subsequently extended Ratner's graph-joining method to general unipotent flows on finite volume semisimple quotients, under suitable regularity and graph-joining hypotheses.

A complementary perspective comes from Kakutani equivalence, cf.~\cites{kakutani1943131,katok1977monotone,ornstein1982equivalence}. Ratner proved that horocycle flows are loosely Kronecker \cite{ratner1978horocycle}, that their Cartesian squares are not \cite{ratner1979cartesian}, and introduced invariants distinguishing products with different multiplicities \cite{ratner1981some}. Kanigowski, Vinhage and the second author \cite{kanigowski2021kakutani} computed Ratner's invariant for general unipotent flows and characterized the loosely Kronecker class. More recently, Lindenstrauss and the second author \cite{LindenstraussWei2025} proved a dichotomy for one-parameter unipotent flows under time change: outside the exceptional loosely Kronecker class, measurable equivalence after time change forces an algebraic isomorphism, and the time change is cohomologous, after renormalization, to a constant one. 

\medskip

These developments point to the rigidity problem studied here. Rudolph's theorem shows that the smooth geometry of the orbit foliation is too flexible by itself, while Wang's work shows that smoothness of a higher rank parabolic cocycle alone need not force triviality. We ask whether compact directions in the ambient group can nevertheless recover the homogeneous dynamics of an ergodic directional restriction. We seek a classification of both the restriction and its invariant measure, together with sharpness examples for which smooth changes of coordinates within the unipotent leaves do not restore algebraicity.

For the first row action on every finite volume $\SL_3(\R)$-quotient, we prove that a strict mean square estimate in a single quadratic compact direction forces an ergodic directional restriction with nonzero drift to agree almost everywhere with its algebraic drift (the notion of \emph{drift} will be defined below). Its invariant probability measure is therefore homogeneous. The strict inequality is essential even when smooth leaf preserving conjugacies are allowed: on $\SL_3(\R)/\SL_3(\Z)$ we construct arbitrarily small smooth time changes satisfying the boundary estimate for which neither the selected restriction nor the full action is smoothly conjugate to an algebraic action by a leaf preserving diffeomorphism.

More generally, the method applies to abelian unipotent actions admitting a compact Ratner frame, as defined in Definition~\ref{def:CptRatnerFra}. Such a frame consists of compact directions whose adjoint polynomials along the drift have highest degree coefficients spanning the acting algebra. Pointwise growth bounds, or suitable correlation bounds, in these directions force algebraicity of the restriction and homogeneity of its invariant measure. The pointwise hypotheses are sharp: on a product of two compact horocycle quotients, we construct arbitrarily small smooth time changes satisfying the boundary estimates whose diagonal restriction is mixing, but for which neither the diagonal flow nor the full action admits a smooth leaf preserving conjugacy to an algebraic action.

For first row actions in $\SL_n(\R)$, the frame has one quadratic compact root direction and $n-2$ degree one directions. These different adjoint degrees determine the corresponding regularity thresholds, giving rigidity criteria on every finite volume quotient. The directional results also determine higher dimensional subactions. Under the stated differentiability and integrability assumptions, it is enough to verify the rigidity hypothesis along finitely many ergodic directions spanning the parameter subspace; the cocycle identity then recovers the entire subaction.

The pointwise thresholds for these first row actions are sharp for exact algebraicity on every such quotient. Smooth changes of coordinates along the unipotent leaves give arbitrarily small time changes satisfying the boundary estimates in all frame directions, while their selected restrictions are not algebraic in the original coordinates. Unlike the preceding nonconjugate examples, these actions are smoothly conjugate to algebraic ones. Thus failure of exact algebraicity at the boundary need not reflect an obstruction to smooth conjugacy. 

\medskip

On compact $\SL_3(\R)$ quotients, every smooth time change of the full first row action satisfies the quadratic compact root bound. This makes it necessary to distinguish what follows from the boundary estimate from what requires an additional cohomological condition. We prove that smooth leaf preserving conjugacy of the full action to a linear algebraic action is equivalent to bounded inverse cocycle deviations along one direction, with the $L^2$ bound uniform in time. 
On a prescribed compact quotient, time changes outside this conjugacy class exist exactly when there is a smooth closed pair whose components have zero Haar mean and which has no common smooth primitive. 

The compact root asymptotics explain how this conjugacy criterion differs from exact algebraicity. For a restriction ergodic with respect to the natural smooth invariant probability, after normalizing the drift, we identify the leading quadratic coefficient and show that it vanishes precisely when the full time change is linear. For compact root perturbations of size $\sigma/t^2$, with $\sigma$ fixed, the difference between the inverse cocycle values at the perturbed and the original base points is asymptotic in $L^2$ to the full inverse cocycle deviation evaluated at the endpoint of the directional orbit. This deviation need not depend linearly on $\sigma$. Smooth conjugacy requires this deviation to remain bounded, not to vanish, and therefore allows a nonzero quadratic coefficient. The same asymptotics determine the optimal compact root exponents for nonconstant smoothly conjugate time changes: for each prescribed H\"older exponent $0<\delta\leq 1$, the optimal time growth exponent is $2\delta$, and the bound is attained. Such examples exist on every compact quotient.

We now state the precise results.
\subsection*{Setting}
Let $n\geq 3$, set $G=\SL_n(\R)$, and let $\Gamma<G$ be a lattice. Define
    \[
    U_j=E_{1j}\quad (2\leq j\leq n),\qquad \h_n=\operatorname{span}\{U_2,\ldots,U_n\},
    \]
    \[
    H_n=\exp(\h_n),\qquad X=G/\Gamma.
    \]
We identify $\R^{n-1}$ with $\h_n$ by
\[
\iota(p_2,\ldots,p_n)=\sum_{j=2}^np_jU_j,
\]
and define $\phi_px=\exp(\iota(p))x$ for the algebraic action. A diffeomorphism is called leaf preserving if it preserves each individual orbit of the algebraic unipotent action under consideration.

\begin{Definition}[Continuous time change]\label{def:continuousTimeChange}
    A continuous time change of $\phi$ is a jointly continuous cocycle $\rho:X\times\R^{n-1}\to\R^{n-1}$ satisfying
    \begin{equation*}
        \rho(x,p+q)=\rho(x,p)+\rho(\phi_px,q),
    \end{equation*}
    such that every fiber map $\rho_x:p\mapsto\rho(x,p)$ is a homeomorphism of $\R^{n-1}$, and the inverse family $\tau(x,w)=\rho_x^{-1}(w)$ is jointly continuous. The induced action is 
    \begin{equation*}
        \alpha_wx=\phi_{\tau(x,w)}x.
    \end{equation*}
\end{Definition}

A time change is called smooth if its reparametrization cocycle is jointly smooth and every parameter map is a smooth diffeomorphism. For an algebraic abelian unipotent $\R^d$-action $\phi$, a linear algebraic reparametrization is the action $w\mapsto\phi_{A^{-1}w}$, where $A\in\GL_d(\R)$. A smooth leaf preserving conjugacy to this action is a smooth leaf preserving diffeomorphism $\Psi$ satisfying
\[
\Psi\alpha_w=\phi_{A^{-1}w}\Psi,\qquad w\in\R^d.
\]

\begin{Standing}
    Fix $v\in \R^{n-1}\setminus\{0\}$. We define the directional flow $(T_t)_{t\in\R}$ along $v$ and the associated cocycle $\beta=\beta(x,t)$ as
    \begin{equation}\label{eq:defin_of_T_and_beta}
    T_t=\alpha_{tv},\qquad \beta(x,t)=\iota\big(\tau(x,tv)\big)\in\h_n.
    \end{equation}
    Let $\nu$ be a $T_t$-invariant ergodic probability. We assume that there is $V=V_{\nu}\in\h_n\setminus\{0\}$ such that
    \begin{equation}\label{eq:betaConv}
        \lim_{t\to+\infty}\frac{\beta(x,t)}{t}=V\quad \text{for $\nu$-a.e. $x$.}
    \end{equation}
    We call $V$ as above, if it exists, the \emph{asymptotic drift} of $(T_t)_{t\in\R}$ with respect to $\nu$.
    
    If $Lx\subset X$ is a closed finite-volume homogeneous orbit, $m_{Lx}$ denotes its normalized Haar probability. Throughout the paper, all measures on $X$ are Borel measures. We identify a Borel probability measure with its completion when discussing almost-everywhere statements.
\end{Standing}

Lemma~\ref{lem:contSameOrbPar} proves that $(\alpha_w)$ is a continuous abelian action with the same orbit partition as $H_n$, and proves the cocycle identity for $\beta$. Lemma~\ref{lem:oneStepIntCri} gives a standard one-step integrability criterion for \eqref{eq:betaConv}.

\medskip

\subsection*{The case of \texorpdfstring{$\SL_3(\R)$}{SL3R}}
Lemma~\ref{lem:normalization} conjugates every nonzero $V\in\h_n$ to $E_{12}$. Whenever normalized coordinates are used, the action, lattice, and measure are the transported objects defined in \S\ref{sec:cocycleNormalization}; in particular, $\nu$ denotes the corresponding pushforward measure. We work in the resulting coordinates and put
\begin{equation}
    K_2=E_{21}-E_{12},\qquad K_j=E_{j2}-E_{2j}\quad \text{for }3\leq j\leq n.
\end{equation}
Each $\exp(\R K_j)$ is a compact root circle. For a continuous $F:X\to\h_n$, set
\[
[F]_{K,\theta,s_0}=\sup_{0<\abs{s}\leq s_0}\sup_{x\in X}\frac{\norm{F(\exp(sK)x)-F(x)}}{\abs{s}^{\theta}}.
\]

For $n=3$, we define
\[
\Delta_s^{(2)}(x,t)=\beta(\exp(sK_2)x,t)-\beta(x,t).
\]
\begin{Definition}[One-root mean square class]\label{def:oneRootMeanSquare}
    For one normalization supplied by Lemma~\ref{lem:normalization}, the restricted flow belongs to $\mathcal{M}_2(v,\nu)$ if there are $C,s_0>0$, $\delta>0$, and $\gamma\geq 0$ such that
    \begin{equation}
        \norm{\Delta_s^{(2)}(\cdot,t)}_{L^2(\nu)}\leq C\abs{s}^{\delta}(1+\abs{t})^{\gamma}, \qquad 2\delta>\gamma,
    \end{equation}
    for $\abs{s}\leq s_0$ and $t\geq 0$.
\end{Definition}

\begin{Theorem}[One-root mean square rigidity]\label{thm:oneRootSL3rigidity}
Let $n=3$ and $\Gamma<\SL_3(\R)$ a lattice. If the normalized restriction belongs to $\mathcal{M}_2(v,\nu)$, then there are a closed connected subgroup $L<\SL_3(\R)$ and a closed finite-volume orbit $Lx$ such that
\[
\exp(\R V_{\nu})\leq L,\qquad T_ty=\exp(tV_{\nu})y\quad \text{$\nu$-a.e. $y$ and $\forall t\in\R$},\qquad \nu=m_{Lx}.
\]
\end{Theorem}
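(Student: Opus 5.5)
The plan is a Ratner-type drift argument. We compare the orbit of $x$ with the orbit of a nearby point $x'=\exp(sK_2)x$. We then use the polynomial divergence along the compact root to show that the nonlinear part of $\beta$ must vanish. In normalized coordinates $V=E_{12}=U_2$. Since $\beta(x,t)\in\h_3$ is abelian, $T_tx=\exp(\beta(x,t))x$. For $x'=\exp(sK_2)x$ we get
\[
T_tx'=\exp(\beta(x',t))\exp(sK_2)\exp(-\beta(x,t))\,T_tx .
\]
We write $\beta(x',t)=\beta(x,t)+\Delta^{(2)}_s(x,t)$. The middle factor is then $\exp(\beta+\Delta)\exp(sK_2)\exp(-\beta)$. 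Expanding $\Ad(\exp\beta(x,t))K_2$ with $\beta\approx tV=tE_{12}$ gives a polynomial in $t$ of degree two. Its leading coefficient is $-t^2 s\,E_{12}$, coming from $\ad(E_{12})^2E_{21}=-2E_{12}$. This is exactly the statement that $K_2$ is a quadratic compact Ratner direction for the drift $V$. Set $s=\sigma/t^2$ with $\sigma$ fixed. The displacement between $T_tx'$ and $T_tx$ is then, up to $o(1)$, the element $\exp(\Delta^{(2)}_{\sigma/t^2}(x,t)-\sigma E_{12}+(\text{lower order, controlled by } \beta(x,t)-tV))$. The hypothesis $2\delta>\gamma$ gives $\|\Delta^{(2)}_{\sigma/t^2}(\cdot,t)\|_{L^2(\nu)}\le C\sigma^\delta t^{\gamma-2\delta}\to 0$. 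So, in measure, $T_tx'\approx \exp(-\sigma E_{12})T_tx$. The lower order terms are handled as follows. The linear-in-$t$ contributions carry a factor $s=\sigma/t^2$ multiplied by $\|\beta(x,t)\|=O(t)$, so they vanish. The quadratic contribution equals $s(\beta_{12})^2$-type terms, which converge to $\sigma$ times the correct constant by \eqref{eq:betaConv}.

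Next we convert this into invariance. By ergodicity and the a.e.\ convergence in \eqref{eq:betaConv}, we pass to a set of large $\nu$-measure on which $\beta(x,t)/t\to V$ uniformly, by Egorov. We would like the perturbed point $x'$ to be $\nu$-generic as well. This is delicate, because $\nu$ need not be $K_2$-quasi-invariant. So we argue instead with the cocycle identity. Using $T_{t+r}=T_rT_t$ together with the $L^2$ smallness, we aim to show that for $\nu$-a.e.\ $x$ the increment $\beta(T_tx,r)-\beta(x,r)$ and the shear $\exp(-\sigma E_{12})$ are compatible. The cleanest route is to mimic the Ratner $H$-property. Push the small $L^2$ discrepancy to a statement about the joining of $\nu$ with its image under $\exp(-\sigma E_{12})$, and conclude that $\nu$ is invariant under $\exp(\R E_{12})$ (taking $\sigma$ of both signs). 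Concretely, we test against a continuous $f$ and compare $\frac1T\int_0^T f(T_tx)\,dt$ along $x$ and along a generic $x'$ obtained by flowing. A Birkhoff/Egorov argument then gives $\int f\circ\exp(-\sigma E_{12})\,d\nu=\int f\,d\nu$.

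Once $\nu$ is $\exp(\R V)$-invariant, Ratner's measure classification yields $\nu$-a.e.\ ergodic components that are homogeneous. Next we identify $T_t$ with $\exp(tV)$. The function $\beta(x,t)-tV$ is a cocycle over $T_t$. Invariance of $\nu$ under both flows, combined with the fact that they share orbits, gives $\beta(x,t)=\lambda(x)\,tV$ for a $T$-invariant density ratio. Ergodicity makes $\lambda$ a.e.\ constant, and \eqref{eq:betaConv} forces $\lambda=1$. (Alternatively, the increments $\beta(x,t)-tV$ are $o(t)$ and stationary, so they are bounded, and unique ergodicity-type arguments kill them.) Then $T_t=\exp(tV)$ $\nu$-a.e., $\nu$ is ergodic for the unipotent flow $\exp(tV)$, and Ratner's theorem gives $\nu=m_{Lx}$ with $\exp(\R V)\le L$.

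The main obstacle is the middle step: turning the $L^2(\nu)$ estimate along a compact perturbation, which may leave the support of $\nu$, into genuine $\exp(\R E_{12})$-invariance of $\nu$. The first thing to try is to use that $K_2$-perturbations are smooth in $x$ while $\Delta$ is measured in $L^2(\nu)$ only at base points in $\operatorname{supp}\nu$. Averaging over $\sigma$ in a small interval and using Fubini should produce generic pairs.
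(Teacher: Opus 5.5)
Your first step is the right computation and agrees with the paper. By Lemma~\ref{lem:exactRelId}, taking $s=\sigma/t^2$, the conjugated compact root $\Ad_{\exp\beta(x,t)}(\sigma t^{-2}K_2)$ tends to $-\sigma V$, because $\tfrac12\ad_V^2K_2=-V$. The hypothesis then gives $\norm{\Delta^{(2)}_{\sigma/t^2}(\cdot,t)}_{L^2(\nu)}\ll t^{\gamma-2\delta}\to0$.

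\textbf{The middle step.} The gap is in what you try to extract from this computation. You aim for $\exp(\R V)$-invariance of $\nu$, which requires the perturbed points $\exp(\sigma t^{-2}K_2)x$ to be $\nu$-generic. As you note, $\nu$ may be singular and is not $K_2$-quasi-invariant. For the same reason, averaging over $\sigma$ and applying Fubini does not produce generic pairs. Comparing Birkhoff averages along $x$ and $x'$ can at best show that $\exp(-\sigma V)_*\nu$ is a limit of $T$-orbit averages, not that it equals $\nu$.

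The missing idea is that the second point never needs to be generic: the right target is the commutation relation $\exp(rV)T_tx=T_t\exp(rV)x$ for $\nu$-a.e.\ $x$ (Theorem~\ref{thm:oneRtGraPro}). In Proposition~\ref{prop:scaleGraCri} the paper argues as follows.
\begin{itemize}
\item It fixes a perturbation $g_\eta=\exp(-r\eta^2K_2)$ and a time window $I$ around $1/\eta$ whose length tends to infinity while its relative length tends to zero.
\item For each scale it picks a base point $x$ that is uniformly generic (by Egorov) and for which $\frac{1}{\abs{I}}\int_I\norm{\Delta_\eta(x,t)}^2dt$ is small (by Tonelli and Markov in $t$, not in $\sigma$).
\item The empirical joinings $\frac{1}{\abs{I}}\int_I\delta_{(T_tx,\,T_tg_\eta x)}dt$ then have first marginal tending to $\nu$ and concentrate near the graph of $\exp(rV)$. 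Lemma~\ref{lem:tightGraphLem} forces the limit to be $(\id,\exp(rV))_*\nu$.
\item Since $\abs{I}\to\infty$, this limit is $T_\sigma\times T_\sigma$-invariant, which is exactly the commutation.
\end{itemize}
Only the first marginal is ever identified, so your ``main obstacle'' never arises.

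\textbf{The final step.} This step fails as well. The flows $T_t$ and $\exp(tV)$ do not share orbits. Indeed, $\beta(x,t)=a(x,t)V+c(x,t)U_3$ lies in the two-dimensional algebra $\h_3$, and the drift only gives $c(x,t)=o(t)$. Your density-ratio argument therefore assumes what must be proved. Your parenthetical alternative also fails: a cocycle over a measure-preserving flow that is $o(t)$ need not be bounded.

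The paper's Theorem~\ref{thm:RigidGraGeneDrift} uses the pointwise commutation, not invariance of $\nu$.
\begin{itemize}
\item Discreteness of the stabilizers $\Lambda_x$ (Lemma~\ref{lem:algStab}) gives $\beta(\exp(sV)x,t)=\beta(x,t)$.
\item The homeomorphisms $F_r$ then carry the line $\beta(x,t)+\R V$ onto $\beta(x,t+r)+\R V$. Hence $\{t:c(x,t)=0\}$ is a closed subgroup of $\R$, and $c(x,\cdot)$ is either identically zero or strictly monotone.
\item Ergodicity, together with the vanishing of the $U_3$-component of $V$, excludes the monotone case.
\item Then $T_ux=\exp(a(x,u)V)x$. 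The $V$-invariance of $\beta$ makes $t\mapsto\beta(x,t)$ additive, so $\beta(x,t)=tV$.
\end{itemize}
Ratner's theorem is applied only after this, to the now ergodic algebraic flow.
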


\medskip

\begin{Theorem}[Sharpness of Theorem~\ref{thm:oneRootSL3rigidity}]\label{thm:sharpnessSL3rigidityOneRoot}
   On $X=\SL_3(\R)/\SL_3(\Z)$ there is a smooth family $(\alpha^{\epsilon})_{0<\epsilon<\epsilon_0}$ of time changes of the standard first row $H_3$-action, converging to the algebraic action smoothly on compact sets as $\epsilon\to0$, with a selected restriction $T_t^{\epsilon}$ and a full support ergodic invariant probability $\nu_{\epsilon}$ such that:
   \begin{enumerate}[label=(\roman*)]
       \item\label{item:sharpSL31} the asymptotic drift of $T_t^{\epsilon}$ is $U_2=E_{12}$;
       \item\label{item:sharpSL32} with $K_2=E_{21}-E_{12}$ and $T_t^{\epsilon}x=\exp(q_{\epsilon}(x,t)U_2)x$,
       \[
       \norm{q_{\epsilon}(\exp(sK_2)\cdot, t)-q_{\epsilon}(\cdot,t)}_{L^2(\nu_{\epsilon})}\leq C_{\epsilon}\abs{s}(1+\abs{t})^2
       \]
       for every $s,t\in\R$;
       \item\label{item:sharpSL33} neither $T_t^{\epsilon}$ nor the full time-changed $H_3$-action is smoothly conjugate to an algebraic action by a leaf preserving diffeomorphism.
   \end{enumerate}
   Consequently, the equality case $(\delta,\gamma)=(1,2)$ does not imply rigidity, even after conjugacy by a smooth leaf preserving diffeomorphism.
\end{Theorem}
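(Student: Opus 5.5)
We construct the time changes as leaf-wise perturbations driven by a cohomological obstruction. On $X=\SL_3(\R)/\SL_3(\Z)$ the abelian group $H_3=\exp(\R U_2\oplus\R U_3)$ acts by unipotent translations. We look for a smooth closed $1$-form along the $H_3$-orbits, i.e.\ a pair $(f_2,f_3)$ of smooth functions with $U_3f_2=U_2f_3$ and zero Haar mean, that has no common smooth primitive. Wang's counterexamples suggest this is possible in the $\SL_3$ geometry, since $U_2,U_3$ do not sit in an $\sl_2\oplus\R$. On the noncompact quotient we take $(f_2,f_3)$ compactly supported or cusp-adapted, built from automorphic data such as Eisenstein-type or cusp-form components. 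From this pair we define the cocycle
\[
\rho^\epsilon(x,p)=p+\epsilon\Big(\int_{\gamma_{x,p}} f_2\,dp_2+f_3\,dp_3\Big)e_1,
\]
obtained by integrating the closed form along the orbit path from $x$ to $\phi_px$. Closedness makes $\rho^\epsilon$ well defined and a cocycle. For $\epsilon$ small it is a fiberwise diffeomorphism, and it converges smoothly on compacts to the identity cocycle. We select $v=e_1$, so that $T^\epsilon_t$ moves along $U_2$ with $q_\epsilon(x,t)$ solving $q+\epsilon\int_0^q f_2(\exp(rU_2)x)\,dr=t$. The invariant measure $\nu_\epsilon$ is the Haar measure reweighted by the density $1+\epsilon f_2$, normalized. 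It is ergodic because it is a time change of the ergodic $U_2$-flow (Moore), and it has full support. Since $f_2$ has zero mean, the ergodic theorem gives drift $U_2$, which is item (i).

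For item (ii) we differentiate $q_\epsilon(\exp(sK_2)x,t)$ in $s$. The commutation relation $\Ad(\exp(-rU_2))K_2=K_2+r(\cdots)+r^2U_2$ shows that moving the base point along $K_2$ changes the point $\exp(rU_2)x$ by a displacement of size $|s|(1+r^2)$. Integrating $\nabla f_2$ against this displacement up to $r\le q\approx t$ gives a pointwise bound $|s|(1+|t|)^3\epsilon$. This is too big, so the key step is to use the $L^2(\nu_\epsilon)$ norm. The $r^2U_2$ component integrates $U_2 f_2$ exactly, and the exact-derivative term telescopes to $O(r^2)$. The remaining lower-order terms are ergodic integrals of smooth zero-mean functions along $U_2$. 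These are $O(t^{1/2+})$ in $L^2$ by the spectral gap and effective equidistribution of $U_2$ on $\SL_3(\Z)$ quotients. Together these give the exponent $2$ with $\delta=1$. The same structure also shows the statement is the boundary case of Theorem~\ref{thm:oneRootSL3rigidity}.

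For item (iii), suppose a smooth leaf preserving $\Psi$ conjugates the full action to $\phi_{A^{-1}w}$. Write $\Psi x=\phi_{g(x)}x$. The conjugacy equation forces $\rho^\epsilon(x,p)-Ap=$ a coboundary $h(\phi_px)-h(x)$, so $(f_2,f_3)$ minus constants would have the common smooth primitive $h/\epsilon$. The means must vanish, so $A=\id$, and this contradicts the choice of the pair. For the restriction alone, a conjugacy of $T^\epsilon_t$ to an algebraic flow forces $f_2$ to be a smooth $U_2$-coboundary. We therefore choose the pair so that $f_2$ is not a $U_2$-coboundary, by pairing it nontrivially with a $U_2$-invariant distribution. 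The main obstacle is producing on $\SL_3(\Z)$ an explicit closed pair with no common primitive whose first component is not a coboundary and whose growth in the cusp is controlled enough for the $L^2$ estimate. We would attempt this via Wang's obstruction, realizing $(f_2,f_3)$ as $(U_2u,U_3u)$ for a non-smooth (distributional) $u$ lying in a suitable irreducible component, and then truncating.
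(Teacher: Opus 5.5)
Your skeleton is the paper's: a cocycle $p+\epsilon c(x,p)e_1$ built from a leafwise closed pair, the direction $e_1$, the measure $(1+\epsilon f_2)\,dm$, and integration by parts of the $r^2U_2f_2$ term. But there is a genuine gap exactly where you locate the main obstacle: you never produce the closed pair, and the route you suggest does not produce one.
\begin{itemize}
\item Cutting off $(U_2u,U_3u)$ by a function $\chi$ destroys closedness, since $U_3(\chi U_2u)-U_2(\chi U_3u)=(U_3\chi)\,U_2u-(U_2\chi)\,U_3u$.
\item Cutting off $u$ instead introduces the nonsmooth terms $(U_j\chi)\,u$.
\item Wang's obstructions are statements about unitary representations. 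They do not give you a smooth function on the noncompact quotient $\SL_3(\R)/\SL_3(\Z)$ with what the construction actually needs.
\end{itemize}
What is needed is the following. First, a one-sided bound such as $f_2\geq -C$. Without it, $1+\epsilon f_2(\phi_px)$, the $p_1$-derivative of the first coordinate of $\rho^{\epsilon}$, changes sign for every $\epsilon\neq0$, and the fiber maps are not diffeomorphisms. Second, cusp control such as $f_2,K_2f_2,[U_2,K_2]f_2\in L^2(m)$ for item (ii). Third, a computable obstruction for item (iii). Compact support would give the first two for free, but you give no way to produce a compactly supported closed pair that is not exact.

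The paper's key idea is the primitive Siegel transform. Since $D\Theta^{\operatorname{prim}}_F=\Theta^{\operatorname{prim}}_{D^{\mathrm{lin}}F}$, a closed pair for the \emph{linear} vector fields on $\R^3$ lifts to a closed pair on $X$. Working with $E_{21},E_{23}$ and transporting to the first row by a Weyl element, these fields are $\partial_{x_1}$ and $x_2\partial_{x_1}$ in the chart $x_1=y_2/y_1$, $x_2=y_3/y_1$. So $F_1=x_2F_0$ closes the pair, $F_0\geq0$ gives $f\geq-\mu_0$, and Rogers' second moment formula gives the $L^2$ bounds.

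Item (iii) also needs a different obstruction on a noncompact quotient. Differentiating a smooth leaf preserving conjugacy $\Psi(x)=\phi_{g(x)}x$ gives $U_2g_1=w_1\epsilon f_2+(w_1-1)$. Here $w_1$ is an unknown constant, and $g_1$ is smooth but need not be integrable or lie in any $L^2$-Sobolev space. So pairing with a Sobolev-continuous $U_2$-invariant distribution is not legitimate, and the constant still has to be controlled; your ``the means must vanish'' needs an argument, e.g.\ recurrence. The invariant functionals that do apply to an arbitrary smooth $g_1$ are closed-orbit measures. The paper integrates the linearized conjugacy identity over the closed orbits of the drift flow through $a_s\Gamma$, $a_s=\diag(e^s,e^{-s},1)$. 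This forces $w_1$ to be the reciprocal of the normalized period. An explicit lattice point count (only primitive vectors $(1,n,0)^{\mathrm{T}}$ contribute) then shows that the orbits at $s=0$ and $s=s_*$ have different normalized periods.

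In (ii), no spectral gap is needed. Your $O(t^{1/2+})$ heuristic is also wrong for the $r$-weighted terms $\int_0^q r\,(Hf_2)(\exp(rU_2)x)\,dr$ and $\int_0^q r\,f_2(\exp(rU_2)x)\,dr$, with $H=[U_2,K_2]$: square-root cancellation would give at best $t^{3/2}$. After integration by parts every term is $O(t^2)$ by Minkowski's inequality. This works only after changing variables to the time-changed flow, which preserves $\nu_\epsilon$; the density $1+\epsilon f$ is unbounded above, so $L^2(\nu_\epsilon)$ is not comparable with $L^2(m)$. Passing from the $K_2$-derivative to finite increments in $L^2(\nu_\epsilon)$ then needs a separate argument. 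The paper supplies it via half-density estimates for $\sqrt{a_\epsilon}$ and $\sqrt{a_\epsilon}\,q_\epsilon(\cdot,t)$.
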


The pointwise form of Theorem~\ref{thm:oneRootSL3rigidity} has the same degree-two scale.
\begin{Corollary}[Root-H\"{o}lder rigidity in finite-volume $\SL_3$-quotients]\label{cor:RootRigiditySL3}
Let $n=3$ and $\Gamma<\SL_3(\R)$ a lattice. Assume that, in normalized coordinates, there are $C,s_0>0$, $0<\delta\leq 1$, and $\gamma\geq 0$ such that
\begin{equation}
    [\beta(\cdot,t)]_{K_2,\delta,s_0}\leq C(1+\abs{t})^{\gamma},\qquad 2\delta>\gamma,\quad t\geq 0.
\end{equation}
Then the conclusion of Theorem~\ref{thm:oneRootSL3rigidity} holds.
\end{Corollary}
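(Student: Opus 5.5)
The plan is to reduce the corollary to Theorem~\ref{thm:oneRootSL3rigidity} by showing that the pointwise root-H\"older hypothesis implies membership in $\mathcal{M}_2(v,\nu)$ with the same exponents $(\delta,\gamma)$. No new dynamics is needed; the whole argument is passing from a sup-norm bound to an $L^2(\nu)$ bound.

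First we fix the normalization supplied by Lemma~\ref{lem:normalization}. In these coordinates the hypothesis reads
\[
\norm{\beta(\exp(sK_2)x,t)-\beta(x,t)}\leq [\beta(\cdot,t)]_{K_2,\delta,s_0}\,\abs{s}^{\delta}\leq C\abs{s}^{\delta}(1+\abs{t})^{\gamma}
\]
for every $x\in X$, every $0<\abs{s}\leq s_0$, and every $t\geq 0$. The case $s=0$ is trivial, since then $\Delta^{(2)}_0\equiv 0$. So we have $\abs{\Delta_s^{(2)}(x,t)}\leq C\abs{s}^{\delta}(1+\abs{t})^{\gamma}$ uniformly in $x$.

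Next, since $\nu$ is a probability measure, we integrate over $X$. Here $\Delta_s^{(2)}(\cdot,t)$ is continuous, because $\beta$ is jointly continuous by Definition~\ref{def:continuousTimeChange} and the standing assumptions; in particular it is Borel measurable. This gives
\[
\norm{\Delta_s^{(2)}(\cdot,t)}_{L^2(\nu)}\leq \sup_{x}\abs{\Delta_s^{(2)}(x,t)}\leq C\abs{s}^{\delta}(1+\abs{t})^{\gamma}
\]
for $\abs{s}\leq s_0$ and $t\geq0$. Together with $2\delta>\gamma$ and $\delta>0$, this is exactly Definition~\ref{def:oneRootMeanSquare}. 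The restriction $\delta\leq 1$ plays no role here; it only makes the H\"older seminorm meaningful.

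Finally, we invoke Theorem~\ref{thm:oneRootSL3rigidity}, which gives the subgroup $L$, the closed finite-volume orbit $Lx$, the algebraicity $T_ty=\exp(tV_\nu)y$, and $\nu=m_{Lx}$.

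The only potential obstacle is bookkeeping. The seminorm must be taken in the same normalized coordinates in which $\mathcal{M}_2$ is defined, including the transported measure and the transported cocycle. Since both the hypothesis and the definition of $\mathcal{M}_2$ refer to one normalization from Lemma~\ref{lem:normalization}, we simply use the same one. No step is genuinely hard.
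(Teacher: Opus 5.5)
Your proposal is correct and matches the paper's proof: the uniform pointwise bound in normalized coordinates gives the $L^2(\nu)$ bound with the same $(\delta,\gamma)$ because $\nu$ is a probability measure. Theorem~\ref{thm:oneRootSL3rigidity} then applies directly.
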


\subsection*{Compact Ratner frames and pointwise optimality}
The preceding argument has an abstract form. Let $G_0$ be a connected semisimple real linear group, $\Gamma_0<G_0$ a lattice and  $X_0=G_0/\Gamma_0$. Let $\mathfrak{u}<\mathfrak{g}_0$ be an abelian Lie algebra of nilpotent elements and set $N=\exp(\mathfrak{u})$. Consider a directional restriction of a continuous time change of the algebraic $N$-action
\[
T_tx=\exp(\beta(x,t))x,\qquad \beta(x,t)\in\mathfrak{u},
\]
preserving an ergodic probability measure $\nu$. Assume that $\beta$ is continuous and that, for some $V\in\mathfrak{u}\setminus\{0\}$,
\[
\lim_{t\to+\infty}\frac{\beta(x,t)}{t}=V\qquad \text{for $\nu$-a.e. }x.
\]
\begin{Definition}[Compact Ratner frame]\label{def:CptRatnerFra}
    A compact Ratner frame for $(\mathfrak{u},V)$ is a family
    \[
    (K_i,d_i,A_i)_{i=1}^m,\qquad K_i\in\mathfrak{g}_0,\qquad d_i\geq1,\quad d_i\in\N
    \]
    such that:
    \begin{enumerate}[label=(\roman*)]
        \item $\exp(\R K_i)$ is relatively compact;
        \item $\ad_{Z_1}\ldots\ad_{Z_{d_i+1}}K_i=0$ for all $Z_1,\ldots,Z_{d_i+1}\in\mathfrak{u}$;
        \item $A_i=\frac{1}{d_i!}\ad_V^{d_i}K_i$ and $(A_1,\ldots,A_m)$ is a basis of $\mathfrak{u}$.
    \end{enumerate}
\end{Definition}

The frame is pointwise regular for $(T,\nu)$ if, for each $i$, there are constants $C_i,s_i>0$, exponents $\delta_i>0$ and $\gamma_i\geq 0$, such that
\[
\sup_x\norm{\beta(\exp(sK_i)x,t)-\beta(x,t)}\leq C_i\abs{s}^{\delta_i}(1+\abs{t})^{\gamma_i},\qquad d_i\delta_i>\gamma_i,
\]
for $\abs{s}\leq s_i$ and $t\geq 0$. The correlation regular version is given in Definition~\ref{def:RatnerCorReg}.

\begin{Theorem}[Abstract compact-frame rigidity]\label{thm:absCptRigid}
    Suppose that $(\mathfrak{u},V)$ admits a compact Ratner frame which is either pointwise regular or correlation regular for $(T,\nu)$. Then 
    \[
    \beta(x,t)=tV\qquad \text{for $\nu$-a.e. $x$ and $\forall t\in\R$,}
    \]
    and $\nu$ is a Ratner homogeneous probability measure for the algebraic flow $\exp(tV)$.
\end{Theorem}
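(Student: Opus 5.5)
The plan is to run Ratner's polynomial-divergence argument in the compact directions $K_i$. The regularity hypotheses make the divergence controlled by the leading coefficients $A_i$, and the frame condition turns this into invariance of $\nu$ under all of $N$ together with exact linearity of $\beta$.

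\emph{Step 1: the basic identity.} Fix $i$, write $K=K_i$, $d=d_i$, and let $y=\exp(sK)x$. Since $\mathfrak{u}$ is abelian,
\[
T_ty=\exp\bigl(\beta(y,t)\bigr)\exp(sK)x=\exp\bigl(\beta(y,t)-\beta(x,t)\bigr)\exp\bigl(s\Ad_{\exp\beta(x,t)}K\bigr)\,T_tx .
\]
By property (ii) of Definition~\ref{def:CptRatnerFra},
\[
\Ad_{\exp\beta}K=\sum_{k=0}^{d}\frac{1}{k!}\ad_\beta^kK,
\]
which is a polynomial of degree at most $d$ in $\beta$.

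\emph{Step 2: the scaling limit.} For $\nu$-a.e. $x$ we have $\beta(x,t)=tV+o(t)$ by the drift hypothesis. Choose $s=\sigma t^{-d}$ with $\sigma$ fixed. Then
\[
s\Ad_{\exp\beta(x,t)}K=\sigma A_i+o(1),
\]
because all lower-order terms carry a factor $t^{k-d}\to0$ and the error in $\beta$ enters only through $o(t)\cdot t^{d-1}/t^{d}$. The pointwise regularity gives
\[
\norm{\beta(y,t)-\beta(x,t)}\le C\sigma^{\delta}t^{\gamma-d\delta}\to0,
\]
and this is exactly where $d_i\delta_i>\gamma_i$ is used. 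In the correlation-regular case I would instead get this smallness in $L^2(\nu)$, pass to a subsequence, and convert to a.e. statements. Altogether,
\[
T_t\bigl(\exp(\sigma t^{-d}K)x\bigr)=\exp\bigl(\sigma A_i+o(1)\bigr)T_tx\qquad\text{as } t\to\infty .
\]

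\emph{Step 3: invariance of $\nu$ under $\exp(\R A_i)$ (main obstacle).} I would fix a compact set $\Omega$ of large $\nu$-measure on which Birkhoff averages of a countable dense family of $f\in C_c(X_0)$, and the convergence of $\beta/t$, are uniform. The difficulty is that the perturbed point $\exp(\sigma t^{-d}K)x$ need not lie in $\Omega$, since $\nu$ may be singular transversally to the $T$-orbits. I would try to exploit that the identity of Step~2 holds uniformly over $x\in\Omega$. Taking $t$ with $T_tx\in\Omega$ (by Poincar\'e recurrence), I would compare $\int f(\exp(\sigma A_i)z)\,d\nu(z)$ with the pushforward of $\nu$ under the maps $x\mapsto T_t\exp(\sigma t^{-d}K)x$. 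The aim is to show these pushforwards converge weak-$*$ to $\nu$, using the compactness of $\exp(\R K)$ together with $T_t$-invariance; if that fails, I would fall back on a Ratner $H$-property style argument. Since $(A_1,\dots,A_m)$ is a basis of $\mathfrak{u}$, this yields that $\nu$ is $N$-invariant.

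\emph{Step 4: conclusion.} Once $\nu$ is $N$-invariant, $T_t$ is a measure-preserving reparametrization within $N$-orbits. The fiber maps $\rho_x$ must therefore preserve the leafwise Haar measure almost surely. Combined with ergodicity and the drift $V$, this forces $\beta(x,t)-tV$ to be a bounded measurable cocycle along the orbit. Feeding this back into Step~1 with $s=\sigma t^{-d}$ and comparing two different $\sigma$ should force the cocycle to vanish, giving $\beta(x,t)=tV$ $\nu$-a.e.; the identity for all $t\in\R$ then follows from the cocycle relation. Finally, $\nu$ is $\exp(tV)$-invariant and ergodic, so Ratner's measure classification shows it is homogeneous.
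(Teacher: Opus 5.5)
There is a genuine gap, and it lies in the \emph{target} of Step~3, not only in its proof: under the hypotheses of the theorem, $\nu$ need not be invariant under $\exp(\R A_i)$, and hence need not be $N$-invariant. Take the trivial time change of the first row action on $\SL_3(\R)/\SL_3(\Z)$, so that $T_t=\exp(tU_2)$ and $\beta(x,t)=tU_2$. Let $\nu$ be the normalized length measure on the periodic orbit $\exp(\R U_2)\Gamma$. This $\nu$ is $T$-ergodic with drift $U_2$, and every $\Delta_{K_i,s}$ vanishes identically, so the frame of Lemma~\ref{lem:firstRowCptRatner} is pointwise regular. However, $A_3=-U_3$, and for $s\notin\Z$ the measure $\exp(sU_3)_*\nu$ is carried by a disjoint orbit. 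Your own Step~2 identity explains why the proposed comparison cannot work: the pushforwards of $\nu$ under $x\mapsto T_t\exp(\sigma t^{-d_i}K_i)x$ are asymptotic to $\exp(\sigma A_i)_*(T_t)_*\nu=\exp(\sigma A_i)_*\nu$. They approach $\nu$ only if the invariance is already known, and no fallback argument can establish a false statement. Step~4 is built on $N$-invariance, so it has no foundation. As written it is also only heuristic, since it never says how the strict inequality $d_i\delta_i>\gamma_i$ kills the bounded cocycle.

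The missing idea is to read the divergence identity as a relation between \emph{two trajectories} and to exploit it through a graph joining. This also removes the obstacle you correctly identified, namely that the perturbed point need not be generic.
\begin{itemize}
\item Fix $B=\sum_ib_iA_i$ and generic points $x_j$. Perturb by a packet combining all frame directions, for instance $y_j=\exp(b_m\eta_j^{d_m}K_m)\cdots\exp(b_1\eta_j^{d_1}K_1)x_j$.
\item Average $\delta_{(T_tx_j,T_ty_j)}$ over $t\in[(1-\theta_j)/\eta_j,(1+\theta_j)/\eta_j]$, with $\theta_j\to0$ and $\theta_j/\eta_j\to\infty$.
\item The first marginal tends to $\nu$ by genericity of $x_j$ alone.
\item The regularity hypothesis controls the increments on this window. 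In the correlation case, first integrate the correlation bound to get $\norm{\Delta}_{L^2(\nu)}\ll\eta^{\kappa}M_\chi(1/\eta)\to0$.
\item Combined with $\Ad_{\exp\beta(x_j,u/\eta_j)}Z_{\eta_j}(B)\to P_B(u)$ and $P_B(1)=B$, this forces concentration near the graph of $S=\exp(B)$. By Lemma~\ref{lem:tightGraphLem} the limit is $(\id,S)_*\nu$.
\item As a limit of long $T\times T$ orbit averages, this joining is $T_\sigma\times T_\sigma$-invariant, which says exactly that $\exp(B)T_\sigma x=T_\sigma\exp(B)x$ for $\nu$-a.e.\ $x$.
\end{itemize}
This is a commutation relation, not an invariance of $\nu$, and it holds in the example above. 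The packet is needed because a.e.\ relations for the separate groups $\exp(\R A_i)$ cannot be composed when $\nu$ is not $N$-quasi-invariant.

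The conclusion then needs no Ratner-type argument. The map $B\mapsto\beta(\exp(B)x,t)-\beta(x,t)$ is continuous, takes values in the discrete group $\{C\in\mathfrak{u}:\exp(C)x=x\}$, and vanishes at $0$, so it vanishes identically. Hence $\beta(T_sx,t)=\beta(x,t)$, so $t\mapsto\beta(x,t)$ is additive and equals $tV$ by the drift. Only at this point is Ratner's measure classification applied, to $\nu$ as an $\exp(tV)$-ergodic measure.
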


The pointwise inequalities in Theorem~\ref{thm:absCptRigid} are also strictly optimal.
\begin{Theorem}[Sharpness of Theorem~\ref{thm:absCptRigid}]\label{thm:sharpnessAbsCptRigid}
    For $i=1,2$, let $G_i=\PSL_2(\R)$, $\Gamma_i<G_i$ a torsion-free cocompact lattice and $X_i=G_i/\Gamma_i$. In $\sl_2(\R)$ set
    \[
    U=E_{12},\qquad K=E_{21}-E_{12},
    \]
    and let $U^{(i)}$ and $K^{(i)}$ denote their copies in the $i$-th factor of $\sl_2(\R)\oplus\sl_2(\R)$.
    
    On $X=X_1\times X_2$, set 
    \[
    \mathfrak{u}=\R U^{(1)}\oplus\R U^{(2)},\qquad V=U^{(1)}+U^{(2)}.
    \]
    There exist $\epsilon_0>0$ and smooth time changes $(\alpha^{\epsilon})_{0<\abs{\epsilon}<\epsilon_0}$ of the product $\R^2$-action, converging to the algebraic action in $C^{\infty}$ as $\epsilon\to0$, such that:
    \begin{enumerate}[label=(\roman*)]
        \item\label{item:sharpnessAbstract1}  the diagonal restriction $T_t^{\epsilon}=\alpha_{(t,t)}^{\epsilon}$ is mixing for a full-support invariant probability and has drift $V$;
        \item\label{item:sharpnessAbstract2}  $(K^{(1)},2,-U^{(1)})$ and $(K^{(2)},2,-U^{(2)})$ form a compact Ratner frame for $(\mathfrak{u},V)$, and, for some $s_0>0$, 
        \[
        [\beta_{\epsilon}(\cdot,t)]_{K^{(i)},1,s_0}\leq C_{i,\epsilon}(1+\abs{t})^2,\qquad i=1,2,\quad t\in\R;
        \]
        thus $d_i\delta_i=\gamma_i=2$ for both frame directions;
        \item\label{item:sharpnessAbstract3}  neither the full action nor the diagonal restriction is smoothly conjugate to an algebraic action by a leaf preserving diffeomorphism.
    \end{enumerate}
\end{Theorem}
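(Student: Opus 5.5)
We construct a product time change. On $X_2$ choose a smooth function $f_\epsilon=1+\epsilon f$ with $f\in C^\infty(X_2)$ of zero Haar mean that is \emph{not} a measurable coboundary for the horocycle flow $\exp(tU)$. For example, take a nonzero component of $f$ lying in an irreducible unitary representation of $\PSL_2(\R)$ on which one of the Flaminio--Forni invariant distributions does not vanish. Let $h^\epsilon_t$ be the time change of the horocycle flow on $X_2$ with generator $U^{(2)}/f_\epsilon$. Define the $\R^2$-action
\[
\alpha^\epsilon_{(w_1,w_2)}(x_1,x_2)=\big(\exp(w_1U^{(1)})x_1,\;h^\epsilon_{w_2}x_2\big).
\]
This is a time change of the product action, with cocycle $\tau(x,w)=(w_1,\zeta_\epsilon(x_2,w_2))$, where $\int_0^{\zeta_\epsilon}f_\epsilon(\exp(rU)x_2)\,dr=w_2$. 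It converges to the algebraic action in $C^\infty$ as $\epsilon\to0$.

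The diagonal flow is $T^\epsilon_t=u_t\times h^\epsilon_t$. It preserves $m_{X_1}\times f_\epsilon\, m_{X_2}$, which has full support. By Forni--Ulcigrai, $h^\epsilon_t$ is mixing. Hence the product of two mixing flows is mixing, which gives \ref{item:sharpnessAbstract1}. The drift is computed from the ergodic theorem: $\zeta_\epsilon(x_2,t)/t\to 1/\int f_\epsilon=1$. Hence $V=U^{(1)}+U^{(2)}$.

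For \ref{item:sharpnessAbstract2}, the frame property is direct from $\ad_{U}^2K=-2U$ in $\sl_2$, so $A_i=\tfrac12\ad_V^2K^{(i)}=-U^{(i)}$. Also $\ad^3$ vanishes, and $K$ generates $\SO(2)$. In the first coordinate $\beta$ equals $tU^{(1)}$, so rotating by $K^{(1)}$ changes nothing.

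The real estimate concerns $K^{(2)}$ acting on $\zeta_\epsilon$. We differentiate the defining identity in $s$ along $\exp(sK)x_2$. This uses the commutation relation $\Ad(\exp(-rU))K=K-rH'-r^2U$, up to sign conventions with $H'$ diagonal. It gives the bound
\[
\abs{\partial_s\zeta_\epsilon}\le C\int_0^{\zeta}(1+r)^2\norm{df}\,dr/\inf f_\epsilon+C(1+\zeta)^2.
\]
This is $O((1+t)^3)$ naively, but a sharper $O((1+t)^2)$ is needed. For this we argue more carefully: write $\exp(sK)\exp(rU)=\exp(rU)\exp(s\Ad(\exp(-rU))K)$. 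The $r^2U$ component only shifts time along the orbit, and it contributes $s\,\zeta^2$ to first order, which is $O(s t^2)$. The $H'$ and $K$ components at distance $r$ are handled by integrating $f$ along nearby horocycle arcs. The derivative of $\int_0^\zeta f$ in a transverse direction $Y$ is $\int_0^\zeta Yf(\exp(rU)x)\,dr$ weighted by polynomial factors. We bound $\int_0^\zeta (Yf)\circ u_r\,dr$ and $\int_0^\zeta r(\cdot)$ using Flaminio--Forni ergodic integral bounds $O(\zeta^{1/2+\eta})$ or $O(\log)$ on compact quotients. Integrating by parts, the $r$- and $r^2$-weighted terms then become $O(t^{3/2+\eta})$, hence $O(t^2)$. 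This gives a Lipschitz bound in $s$ uniform in $x$ and $\abs{s}\le s_0$, so $[\beta_\epsilon(\cdot,t)]_{K^{(2)},1,s_0}\le C(1+\abs t)^2$. This quantitative step is the main technical obstacle. If the spectral-gap bounds are too weak for small Casimir eigenvalues, I would instead choose $f$ supported spectrally in a single principal series representation with large Casimir parameter, where the deviation exponents are explicit.

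For \ref{item:sharpnessAbstract3}, suppose first that $\Psi$ is a smooth leaf preserving conjugacy of the full action to $\phi_{A^{-1}w}$. Restricting to the second generator, $h^\epsilon$ would be smoothly conjugate, along orbits, to a linear flow $\exp(t(aU^{(1)}+bU^{(2)}))$. Comparing generators, $U^{(2)}/f_\epsilon$ composed with the leafwise displacement gives $a=0$. It then forces $f_\epsilon-c=U^{(2)}g$ for a smooth $g$ obtained from the displacement function of $\Psi$. Integrating against Haar measure gives $c=1$, so $\epsilon f$ would be a smooth coboundary, which contradicts the choice of $f$.

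For the diagonal flow alone, a leaf preserving conjugacy to a linear flow $\exp(t(aU^{(1)}+bU^{(2)}))$ sends the generator $U^{(1)}+U^{(2)}/f_\epsilon$ to a constant vector field in the orbit tangent directions. Projecting to the $U^{(2)}$ component yields $b(1+U^{(2)}g_2)-1/f_\epsilon\cdot(\ldots)=0$ plus a $U^{(1)}$-derivative term. Averaging this identity over the compact $X_1$ factor, which is invariant under the $U^{(1)}$-flow, removes the $U^{(1)}$ derivative. It leaves $1/f_\epsilon-b=U^{(2)}\bar g$ on $X_2$, equivalently $f_\epsilon$ cohomologous to a constant, which again contradicts the choice of $f$ via the Flaminio--Forni obstructions.
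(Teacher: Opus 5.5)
Your proof is correct, but your construction differs from the paper's. The paper time-changes \emph{both} horocycle factors (clocks $1+\epsilon f_1$ and $1+\epsilon f_2$). You leave $X_1$ algebraic and time-change only $X_2$.

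This buys real simplifications. The $K^{(1)}$-increment of $\beta_\epsilon$ vanishes identically. Mixing needs Forni--Ulcigrai on one factor only. In the diagonal nonconjugacy you can average the $U^{(2)}$-component of the linearized conjugacy equation over $(X_1,m_1)$ without weights, because the clock depends only on $x_2$; the paper must instead multiply by $a_{1,\epsilon}a_{2,\epsilon}$ and integrate over $X_2$.

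Your example also shows a slightly stronger sharpness. Rigidity fails even when one frame direction satisfies the strict inequality (here with $\gamma_1=0$) and only the other sits at $d_2\delta_2=\gamma_2$. What the paper's symmetric example buys is that ``$d_i\delta_i=\gamma_i=2$ for both frame directions'' holds non-vacuously. In yours, the $(1+\abs{t})^2$ bound for $K^{(1)}$ holds only trivially.

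For the $K^{(2)}$-estimate you misplace the difficulty. The naive bound is cubic only because of the $-r^2\,\mathcal{U}_2f$ term in $\Ad_{\exp(rU)}K=K+rH-r^2U$. Integrating that term by parts gives $-\zeta^2 f(u_\zeta x)+2\int_0^\zeta r f(u_r x)\,dr$. After that, the sup norms of $\mathcal{K}_2f$, $\mathcal{H}_2f$ and $f$ give $O(\zeta^2)=O((1+\abs{t})^2)$ directly, exactly as in Lemma~\ref{lem:quadraticCptRtDis}. No Flaminio--Forni deviation bounds or spectral restrictions on $f$ are needed. Also, the exponent $1/2+\eta$ you quote is not uniform when complementary series occur.

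Three small repairs are needed:
\begin{itemize}
\item Flaminio--Forni invariant distributions give a \emph{smooth} (finite Sobolev order) obstruction, not a measurable one. That smooth obstruction is all you actually use.
\item In the diagonal argument, the averaged identity is $\mathcal{U}_2\bar F_2=bf_\epsilon-1$ with $\bar F_2=\int_{X_1}F_2\,dm_1$, not $1/f_\epsilon-b=U^{(2)}\bar g$. Integrating over $X_2$ then gives $b=1$ and $\epsilon f=\mathcal{U}_2\bar F_2$.
\item ``Comparing generators'' tacitly uses two facts that should be cited: freeness of the $N$-action on the compact product, and smoothness of the leafwise displacement (Lemma~\ref{lem:smmothDisplacement}).
\end{itemize}
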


Theorems~\ref{thm:oneRootSL3rigidity} and~\ref{thm:sharpnessSL3rigidityOneRoot} give the exact mean square threshold in $\SL_3$. Theorem~\ref{thm:absCptRigid} gives abstract compact frame rigidity, while Theorem~\ref{thm:sharpnessAbsCptRigid} proves the sharpness of its pointwise hypothesis.

\subsection*{Further forms of the rigidity theorem}

For the general first row action, we use one packet containing the degree two direction $K_2$ and the degree-one directions $K_3,\ldots,K_n$. If $B=\sum_{j=2}^nb_jU_j$ and $0<\epsilon<1$, define
\begin{equation}
    Z_{\epsilon}(B)=-b_2\epsilon^2K_2-\epsilon\sum_{j=3}^nb_jK_j,\qquad g_{\epsilon,B}=\exp Z_{\epsilon}(B),
\end{equation}
and define
\[
\Delta_{\epsilon,B}(x,t)=\beta(g_{\epsilon,B}x,t)-\beta(x,t).
\]

Whenever an $L^2$-valued derivative occurs below, we use the same symbol for a fixed jointly measurable representative.
\begin{Definition}[First-row Ratner correlation class]\label{def:FirstRowRatnerCor}
    For one normalization supplied by Lemma~\ref{lem:normalization}, the restricted flow belongs to $\mathcal{K}_{\operatorname{fr}}(v,\nu)$ if there are $\chi>0$, $\delta_2,\ldots,\delta_n>0$, and $\epsilon_0>0$ such that 
    \begin{equation}\label{eq:kappaFr}
       \kappa_{\operatorname{fr}}\coloneq\min\{2\delta_2,\delta_3,\ldots,\delta_n\}>1-\frac{1}{2}\min\{\chi,1\}, 
    \end{equation}
    and, for every $R>0$, there is $C_R$ with the following property. For $0<\epsilon\leq\epsilon_0$ and $\norm{B}\leq R$, the map $t\mapsto \Delta_{\epsilon,B}(\cdot,t)$ is locally absolutely continuous as an $L^2(\nu,\h_n)$-valued map. If $\mathcal{D}_{\epsilon,B}$ denotes its derivative, then
    \begin{equation}
        \abs{\int_X\langle\mathcal{D}_{\epsilon,B}(x,u),\mathcal{D}_{\epsilon,B}(x,w)\rangle d\nu(x)}\leq C_R\left(\epsilon^{2\delta_2}+\sum_{j=3}^n\epsilon^{\delta_j}\right)^2(1+\abs{u-w})^{-\chi}
    \end{equation}
    for almost every $u,w\geq 0$.
\end{Definition}

\begin{Theorem}[First-row rigidity on finite-volume quotients]\label{thm:firstRowRigidity}
    Let $n\geq 3$ and $\Gamma<\SL_n(\R)$ a lattice. If the normalized restriction belongs to $\mathcal{K}_{\operatorname{fr}}(v,\nu)$, then the conclusion of Theorem~\ref{thm:oneRootSL3rigidity} holds with $\SL_n(\R)$ in place of $\SL_3(\R)$.
\end{Theorem}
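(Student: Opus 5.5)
We run Ratner's shearing argument with the combined compact perturbation $g_{\epsilon,B}$, tuned so that the leading adjoint term along the drift equals $B$. Work in normalized coordinates, so $V=U_2=E_{12}$. Then
\[
\Ad(\exp(tU_2))Z_\epsilon(B)=Z_\epsilon(B)+t[U_2,Z_\epsilon(B)]+\tfrac{t^2}{2}[U_2,[U_2,Z_\epsilon(B)]].
\]
Since $\ad_{U_2}^2K_2=-2U_2$ and $\ad_{U_2}K_j=-U_j$ for $j\geq3$, the top-degree part is $b_2(\epsilon t)^2U_2$ from $K_2$ and $(\epsilon t)b_jU_j$ from $K_j$. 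At the time scale $t\sim1/\epsilon$, the algebraic orbits of $x$ and $g_{\epsilon,B}x$ therefore separate by an element of $\h_n$ close to $B$. The remaining components lie in $\mathfrak{sl}_n$ and are $O(\epsilon)$ or $O(\epsilon t\cdot\epsilon)$; they are lower order and must be shown to contribute only a vanishing error. The goal is to prove that $\nu$ is invariant under $\exp(B)$ for every $B\in\h_n$ with $B$ in the appropriate range. This is the abstract rigidity of Theorem~\ref{thm:absCptRigid} in its correlation-regular form, applied to the frame $(K_2,2,-U_2),(K_j,1,-U_j)$. The main work is checking that Definition~\ref{def:FirstRowRatnerCor} provides exactly the correlation regularity needed there.

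\textbf{Step 1 (variance bound).} Write $\Delta_{\epsilon,B}(x,t)=\int_0^t\mathcal{D}_{\epsilon,B}(x,u)\,du$, which uses the local absolute continuity and $\Delta(\cdot,0)=0$. Expand $\|\Delta_{\epsilon,B}(\cdot,t)\|_{L^2(\nu)}^2$ as a double integral of correlations. The decay $(1+|u-w|)^{-\chi}$ gives
\[
\|\Delta_{\epsilon,B}(\cdot,t)\|_{L^2}^2\lesssim C_R\,\eta_\epsilon^2\, t^{2-\min\{\chi,1\}}\log(2+t),
\]
where $\eta_\epsilon=\epsilon^{2\delta_2}+\sum_{j\geq3}\epsilon^{\delta_j}\lesssim\epsilon^{\kappa_{\mathrm{fr}}}$; the logarithm is needed only when $\chi=1$. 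At $t=c/\epsilon$ this becomes
\[
\epsilon^{2\kappa_{\mathrm{fr}}-2+\min\{\chi,1\}}\cdot\log,
\]
which tends to $0$ precisely by \eqref{eq:kappaFr}. So, at the Ratner time scale, the time-changed shift of $g_{\epsilon,B}x$ and that of $x$ differ by $o(1)$ in $L^2(\nu)$. Chebyshev then makes the difference small on sets of measure close to $1$.

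\textbf{Step 2 (transport).} Combine Step 1 with the drift \eqref{eq:betaConv} and Egorov. For $x$ in a large set and $t=c/\epsilon$, we get $\beta(x,t)=tU_2+o(t)$. One needs $o(t)$ to be small compared with $1$ after conjugation, which is where the drift must be used along with the cocycle identity. The plan is to compare $T_t(g_{\epsilon,B}x)$ with $T_t x$ through the identity
\[
\exp(\beta(gx,t))g=\bigl[\exp(\beta(gx,t))\,g\,\exp(-\beta(x,t))\bigr]\exp(\beta(x,t)).
\]
Step 1 shows that the bracketed element is $\exp(\beta(x,t))\,g\,\exp(-\beta(x,t))$ up to $o(1)$. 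Writing $\beta(x,t)=\beta_2U_2+\sum\beta_jU_j$ and using that $\h_n$ is abelian, the conjugate of $g$ equals $\exp(\Ad(\exp\beta)Z_\epsilon)$. Its top part is quadratic in $\epsilon\beta_2$ and linear in $\epsilon\beta_j$. I expect this step, controlling the non-drift components $\beta_j$ and the error $o(t)$ in $\beta_2$, to be the main obstacle. Take $c$ so that $\epsilon\beta_2(x,t)$ hits a prescribed value: choose the time $t_x$ with $\beta_2(x,t_x)=1/\epsilon$, which is a measurable stopping-time choice legitimate because $T$ preserves $\nu$. At this time the transverse components $\epsilon\beta_j=o(1)$ by the drift, so the displacement is $\exp(B+o(1))$.

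\textbf{Step 3 (invariance and conclusion).} Take generic points of $\nu$ for the ergodic theorem of $T$, together with $g_{\epsilon,B}x\to x$ and the Step 2 displacement. A standard Ratner/Margulis polishing argument then shows that $\nu$ is $\exp(B)$-invariant: test against $f\in C_c(X)$, use uniform continuity, and let $\epsilon\to0$. So $\nu$ is $H_n$-invariant. By Ratner's measure classification $\nu$ is homogeneous, $\nu=m_{Lx}$, with $L\supseteq H_n\ni\exp(\R V)$. Finally, $H_n$-invariance forces the cocycle to be algebraic. The measure $\nu$ is invariant under both $T_t$ and $\exp(tV)$, and $T_ty=\exp(\beta(y,t))y$. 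An $H_n$-invariant ergodic measure whose orbitwise reparametrization has asymptotic drift $V$ has $\beta(y,t)-tV$ as an $\h_n$-valued coboundary with zero mean displacement along an ergodic homogeneous flow, and such a coboundary must vanish. Concretely, I would apply the shearing of Step 2 once more with $B$ replaced by the deviation $\beta(y,t)-tV$ to obtain $\beta(y,t)=tV$ $\nu$-almost everywhere, as in Theorem~\ref{thm:absCptRigid}.
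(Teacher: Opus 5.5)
\textbf{There is a genuine gap in Step 3.} You claim that the shearing makes $\nu$ invariant under $\exp(B)$ for all $B\in\h_n$, and hence $H_n\le L$. This is false under the hypotheses of the theorem.

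Take the trivial time change on $\SL_3(\R)/\SL_3(\Z)$ with $v=e_1$, which is already normalized. Then $T_t=\exp(tU_2)$ and $\beta(x,t)=tU_2$. Let $\nu$ be the normalized length measure on the closed $U_2$-orbit of the identity coset, which has period $1$. Then $\Delta_{\epsilon,B}\equiv0$, so $\mathcal{K}_{\operatorname{fr}}(v,\nu)$ holds trivially. Yet $\nu$ is not $\exp(sU_3)$-invariant for $0<\abs{s}<1$, and in the conclusion necessarily $L=\exp(\R U_2)$.

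The error is treating $g_{\epsilon,B}x$ as if it were generic. Only $x$ is chosen generic, and $g_{\epsilon,B}x$ need not even lie in $\supp\nu$. What the shearing actually controls is the pair empirical measure $\frac{1}{\abs{I}}\int_I\delta_{(T_tx,\,T_tg_{\epsilon,B}x)}\,dt$ over a window $I=[(1-\theta)/\epsilon,(1+\theta)/\epsilon]$, with $\theta\to0$ and $\theta/\epsilon\to\infty$. By your Steps 1--2 this measure concentrates near the graph of $S=\exp(B)$, and its first marginal tends to $\nu$. It therefore converges to $(\id,S)_*\nu$, which is $T\times T$-invariant because the windows are long. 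The output is $\exp(B)T_\sigma x=T_\sigma\exp(B)x$ for $\nu$-a.e.\ $x$ (Proposition~\ref{prop:scaleGraCri}), not $S_*\nu=\nu$.

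Your final paragraph therefore needs a different mechanism. The assertion that a zero-mean coboundary must vanish is not an argument. Neither is shearing ``with $B$ replaced by'' the point-dependent vector $\beta(y,t)-tV$, since the graph argument needs a fixed group element. The correct route (Theorem~\ref{thm:fullGraRigid}) is:
\begin{enumerate}
\item The commutation gives $\beta(\exp(B)x,t)-\beta(x,t)\in\Lambda_x$, a discrete group by Lemma~\ref{lem:algStab}.
\item Continuity in $B$ forces $\beta(\exp(B)x,t)=\beta(x,t)$.
\item Taking $B=\beta(x,s)$ gives $\beta(T_sx,t)=\beta(x,t)$, so $t\mapsto\beta(x,t)$ is additive and equals $tV$ by the drift.
\end{enumerate}
Only then is Ratner's theorem applied, to the algebraic flow $\exp(tV)$, giving $\exp(\R V)\le L$.

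Two smaller points. First, the obstacle you anticipate in Step 2 is not one: drift and Egorov give $\epsilon\beta(x,u/\epsilon)\to uV$ uniformly for $u\in[1/2,3/2]$, hence $\exp(\beta)g_{\epsilon,B}\exp(-\beta)\to\exp(P_B(u))$ (Lemma~\ref{lem:polyPacLim}). Second, an $x$-dependent stopping time does not combine with a fixed-$t$ $L^2$ bound; that bound should instead be averaged over the window via Tonelli.

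Finally, the paper's proof of this theorem is exactly your opening reduction. Lemma~\ref{lem:firstRowCptRatner} gives the frame. The coefficients of $B$ relative to $A_j=-U_j$ are $-b_j$, so the two definitions of $g_{\epsilon,B}$ and of $\kappa$ agree. Then Theorem~\ref{thm:absCptRigid} applies. Stopping there would have given a complete proof.
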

\medskip

\begin{Definition}[Pointwise compact root class]\label{def:PointComRoot}
    For one normalization supplied by Lemma~\ref{lem:normalization}, the restricted flow belongs to $\mathcal{H}_{\operatorname{fr}}(v)$ if there are $C_j>0$, $s_0>0$, exponents $0<\delta_j\leq1$, and $\gamma_j\geq 0$ such that 
    \begin{equation}
        \begin{aligned}
            &[\beta(\cdot,t)]_{K_2,\delta_2,s_0}\leq C_2(1+\abs{t})^{\gamma_2},\qquad 2\delta_2>\gamma_2,\\
            &[\beta(\cdot,t)]_{K_j,\delta_j,s_0}\leq C_j(1+\abs{t})^{\gamma_j},\qquad \delta_j>\gamma_j\quad \text{for }3\leq j\leq n
        \end{aligned}
    \end{equation}
    for every $t\geq 0$.
\end{Definition}

\begin{Corollary}[Pointwise compact root criterion]\label{cor:pointCptCri}
    The conclusion of Theorem~\ref{thm:firstRowRigidity} holds if the normalized restriction belongs to $\mathcal{H}_{\operatorname{fr}}(v)$.
\end{Corollary}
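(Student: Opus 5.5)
The quickest route is to reduce to Theorem~\ref{thm:absCptRigid} in its pointwise regular form. Fix the normalization of Lemma~\ref{lem:normalization}, so that $V_\nu=E_{12}=U_2$. Take $\mathfrak u=\h_n$ and $G_0=\SL_n(\R)$, and consider the family
\[
(K_2,2,A_2),\qquad (K_j,1,A_j)\quad (3\leq j\leq n).
\]
We must check that this is a compact Ratner frame for $(\h_n,U_2)$ in the sense of Definition~\ref{def:CptRatnerFra}.

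Condition (i) holds because each $K_j$ is antisymmetric, so $\exp(\R K_j)$ is a circle in $\SO(n)$. For (ii) and (iii) we compute brackets with the elementary matrices $E_{1k}$.
\begin{itemize}
\item For $K_2=E_{21}-E_{12}$: $[E_{1k},E_{21}]=-E_{2k}$ for $k\geq 2$ (with $E_{22}-E_{11}$ appearing when $k=2$), and $[E_{1k},E_{12}]=0$. A second bracket with $E_{1l}$ lands in $\h_n$, and a third vanishes because $\h_n$ is abelian. Hence $d_2=2$ works. Explicitly,
\[
\ad_{U_2}K_2=E_{11}-E_{22},\qquad \ad_{U_2}^2K_2=-2E_{12},
\]
so $A_2=-U_2$.
\item For $K_j=E_{j2}-E_{2j}$ with $j\geq 3$: $[E_{1k},E_{j2}]=\delta_{kj}E_{12}\in\h_n$ and $[E_{1k},E_{2j}]=0$. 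So a single bracket already lands in $\h_n$, a second kills it, and $d_j=1$. Moreover $\ad_{U_2}K_j=[E_{12},E_{j2}]-[E_{12},E_{2j}]=-E_{1j}$, so $A_j=-U_j$.
\end{itemize}
Thus $(A_2,\dots,A_n)=(-U_2,\dots,-U_n)$ is a basis of $\h_n$, and the frame conditions hold. I would double check these sign conventions against the definition of $K_j$ used in the paper, but only nonvanishing and linear independence matter.

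Next we verify pointwise regularity. By definition of $[\cdot]_{K,\theta,s_0}$, the hypothesis $\mathcal H_{\operatorname{fr}}(v)$ gives, for $\abs{s}\leq s_0$ and $t\geq0$,
\[
\sup_x\norm{\beta(\exp(sK_j)x,t)-\beta(x,t)}\leq C_j\abs{s}^{\delta_j}(1+\abs t)^{\gamma_j}.
\]
The required strict inequality $d_j\delta_j>\gamma_j$ is $2\delta_2>\gamma_2$ for $j=2$ and $\delta_j>\gamma_j$ for $j\geq3$; these are exactly the inequalities in Definition~\ref{def:PointComRoot}.

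The remaining standing hypotheses of Theorem~\ref{thm:absCptRigid} are also satisfied. Continuity of $\beta$ is Lemma~\ref{lem:contSameOrbPar}, the drift $V=U_2$ exists by \eqref{eq:betaConv}, and $\nu$ is ergodic. Theorem~\ref{thm:absCptRigid} therefore gives $\beta(x,t)=tU_2$ for $\nu$-a.e.\ $x$, that is, $T_ty=\exp(tV_\nu)y$. It also says $\nu$ is ergodic and invariant for the unipotent flow $\exp(tV_\nu)$. Ratner's measure classification then produces a closed connected $L\geq\exp(\R V_\nu)$ and a closed finite-volume orbit $Lx$ with $\nu=m_{Lx}$.

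Finally, we transport back through the conjugation of Lemma~\ref{lem:normalization}. This conjugation maps lattices to lattices and homogeneous measures to homogeneous measures, and the conclusion is stated in normalized coordinates anyway.

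The only point needing care is the frame verification; the rest is direct invocation. An alternative route is to show that $\mathcal H_{\operatorname{fr}}(v)\subset\mathcal K_{\operatorname{fr}}(v,\nu)$ and invoke Theorem~\ref{thm:firstRowRigidity}, but the pointwise bounds give no decay of correlations in $\abs{u-w}$. The abstract pointwise route avoids that issue.
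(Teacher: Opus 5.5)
Your proposal is correct and follows the paper's proof: verify that $(K_2,2,-U_2)$, $(K_j,1,-U_j)$ is a compact Ratner frame (the paper's Lemma~\ref{lem:firstRowCptRatner}), note that $\mathcal{H}_{\operatorname{fr}}(v)$ is exactly pointwise regularity for this frame, apply Theorem~\ref{thm:absCptRigid}, and transport back. There is one small slip in your frame check: $[E_{1k},E_{2j}]=\delta_{k2}E_{1j}$, not $0$, and this $k=2$ term is precisely what gives $\ad_{U_2}K_j=-U_j$; since $[E_{1k},K_j]=\delta_{kj}E_{12}-\delta_{k2}E_{1j}\in\h_n$ still holds, the argument is unaffected.
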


\medskip

When $n=3$, the conclusion already follows from Corollary~\ref{cor:RootRigiditySL3} under the $K_2$-estimate alone; the $K_3$-estimate is needed only for the uniform compact-frame argument.

For $n>3$, the degree-one directions in Definition~\ref{def:PointComRoot} are necessary for this compact-frame construction: the calculation following Lemma~\ref{lem:firstRowCptRatner} shows that every degree-two top coefficient lies in the drift line, whereas $K_3,\ldots,K_n$ generate the remaining first row directions.

\medskip

The argument also treats higher-dimensional restrictions using finitely many ergodic directions. Let 
\[
\{0\}\neq P<\R^{n-1}
\]
be a linear subspace and $\nu$ a $P$-invariant probability measure, and, for $p\in P$, write
\[
T_t^p\coloneq \alpha_{tp}.
\]
For every $x\in X$, assume that the map
\[
P\ni p\longmapsto \tau(x,p)
\]
is Fr\'{e}chet differentiable at $0$. Denote its derivative by 
\[
D_P\tau(x,0)\in\operatorname{Hom}(P,\R^{n-1}).
\]
Assume that $x\mapsto D_P\tau(x,0)$ is continuous and that
\[
\iota\circ D_P\tau(\cdot,0)\in L^1(\nu,\operatorname{Hom}(P,\h_n)).
\]
Define the linear drift map
\begin{equation}
    \mathcal{V}_{\nu}(p)\coloneq\int_X\iota \big(D_P\tau(x,0)(p)\big)d\nu(x),\qquad p\in P.
\end{equation}

\begin{Theorem}[Finite-basis subaction rigidity]\label{thm:finiteSubactionRigidity}
Suppose that $p_1,\ldots,p_k$ form a basis of $P$. Assume that, for each $i$, the flow $T^{p_i}$ is $\nu$-ergodic, 
\[
\mathcal{V}_{\nu}(p_i)\neq0,
\]
and the corresponding normalized line flow satisfies the hypotheses of one of Theorem~\ref{thm:oneRootSL3rigidity}, Corollary~\ref{cor:RootRigiditySL3} when $n=3$, Theorem~\ref{thm:firstRowRigidity}, or Corollary~\ref{cor:pointCptCri}. Then
\[
\iota\big(\tau(x,p)\big)=\mathcal{V}_{\nu}(p)\quad \text{for $\nu$-a.e. $x$ and every $p\in P$.}
\]
Consequently, the $P$-subaction is algebraic modulo $\nu$, and $\nu$ is homogeneous.
\end{Theorem}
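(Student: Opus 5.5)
Apply the one-dimensional rigidity theorems to each basis direction. Then assemble the resulting linear identities into the $P$-subaction using the cocycle identity and the derivative at $0$.

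\emph{Step 1: identify the drift of each line flow.} Fix $i$. The line flow $T^{p_i}_t=\alpha_{tp_i}$ has cocycle $\beta_i(x,t)=\iota(\tau(x,tp_i))$, and $\beta_i(x,t+s)=\beta_i(x,t)+\beta_i(T^{p_i}_tx,s)$ by Lemma~\ref{lem:contSameOrbPar}. Differentiability of $\tau(x,\cdot)$ at $0$ and the cocycle identity give
\[
\partial_t\beta_i(x,t)=\iota\bigl(D_P\tau(T^{p_i}_tx,0)(p_i)\bigr),
\]
and this derivative is continuous in $t$. Hence
\[
\beta_i(x,t)=\int_0^t\iota\bigl(D_P\tau(T^{p_i}_sx,0)(p_i)\bigr)\,ds .
\]
The integrand lies in $L^1(\nu)$ and $T^{p_i}$ is $\nu$-ergodic, so Birkhoff's theorem gives $\beta_i(x,t)/t\to\mathcal{V}_\nu(p_i)$ for $\nu$-a.e.\ $x$. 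This is \eqref{eq:betaConv} with $V=\mathcal{V}_\nu(p_i)\neq0$, so the standing assumptions hold. Lemma~\ref{lem:oneStepIntCri} is an alternative route if the criterion is phrased that way.

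\emph{Step 2: apply the one-dimensional theorems.} Normalize via Lemma~\ref{lem:normalization}. Whichever of Theorem~\ref{thm:oneRootSL3rigidity}, Corollary~\ref{cor:RootRigiditySL3}, Theorem~\ref{thm:firstRowRigidity} or Corollary~\ref{cor:pointCptCri} is assumed yields $T^{p_i}_ty=\exp(t\mathcal{V}_\nu(p_i))y$ for $\nu$-a.e.\ $y$ and all $t$. The action of $H_n$ is not free everywhere, so we must pass from equality of points to equality of cocycle values. I would use that $\beta_i(y,\cdot)$ is continuous with $\beta_i(y,0)=0$, and that the stabilizer of $y$ in $H_n$ is a discrete subgroup. (Stabilizers in a unipotent group of a point in $G/\Gamma$ are discrete because $\Gamma$ is discrete.) Continuity then forces $\beta_i(y,t)=t\mathcal{V}_\nu(p_i)$. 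Transporting back through the normalization conjugacy gives, for each $i$ and $\nu$-a.e.\ $x$,
\[
\iota(\tau(x,tp_i))=t\,\mathcal{V}_\nu(p_i)\quad\text{for all }t.
\]

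\emph{Step 3: extend to all of $P$.} Let $E$ be the set of $x$ at which every identity from Step 2 holds. Since $\nu$ is $P$-invariant, it is also invariant under each $T^{p_i}$. The intersection $E'=\bigcap_{q\in P\cap \Q\text{-span}}\alpha_q^{-1}E$ is a countable intersection, so it has full measure. Write $p=\sum t_ip_i$ and apply the cocycle identity iteratively:
\[
\tau(x,p)=\tau(x,t_1p_1)+\tau\bigl(\alpha_{t_1p_1}x,\,t_2p_2+\cdots\bigr).
\]
Each intermediate point $\alpha_{t_1p_1}x$ must lie in $E$. For rational $t_i$ this holds on $E'$, giving $\iota\tau(x,p)=\sum t_i\mathcal{V}_\nu(p_i)=\mathcal{V}_\nu(p)$ by linearity of $\mathcal{V}_\nu$. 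Continuity of $\tau(x,\cdot)$ then extends the identity to all real coefficients.

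\emph{Step 4: conclusions.} The $P$-subaction is now $x\mapsto\exp(\mathcal{V}_\nu(p))x$ $\nu$-a.e., which is algebraic. The measure $\nu$ is homogeneous because Step 2 already identifies it as some $m_{Lx}$. The main obstacle is Step 3: the countable-intersection bookkeeping must ensure the orbit points stay in the good set, which uses $P$-invariance of $\nu$. The justification of Birkhoff convergence in Step 1 from mere Fréchet differentiability at $0$ also needs care.
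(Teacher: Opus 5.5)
Your proposal is correct and follows the paper's proof essentially step for step. The drift identification is Lemma~\ref{lem:dirDrift}; the passage from equality of points to $\iota\tau(x,tp_i)=t\mathcal{V}_{\nu}(p_i)$ uses discreteness of $\Lambda_x$ (Lemma~\ref{lem:algStab}) and continuity in $t$; the extension to $P$ uses the same countable intersection $\bigcap_i\bigcap_{q\in P_{\Q}}\alpha_q^{-1}E_i$, the cocycle identity along the partial sums, and continuity in $p$, with homogeneity coming from Ratner's theorem on a single basis line.
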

\medskip

The following proposition constructs time changes whose directional restrictions satisfy the boundary estimate but are not algebraic in the original coordinates. Nevertheless, the full actions are smoothly conjugate to algebraic actions by leaf preserving diffeomorphisms.
\begin{Proposition}[Sharpness for exact algebraicity]\label{prop:SharpProp}
    Let $\Gamma<\SL_n(\R)$ be any lattice and $v\neq0$. For every choice of exponents $\theta_2,\ldots,\theta_n\in(0,1]$, there exist a $C^{\infty}$ time change $\alpha^F$ and a full-support $T_t^F$-ergodic probability measure $\nu_F$ such that the asymptotic drift is $\iota(v)$ and $T_t^F$ is not algebraic modulo $\nu_F$. After normalizing the drift to $E_{12}$, there are $C,s_0>0$ such that, for every $t\in\R$,
    \[
    [\beta_F(\cdot,t)]_{K_2,\theta_2,s_0}\leq C(1+\abs{t})^{2\theta_2},
    \]
    \[
    [\beta_F(\cdot,t)]_{K_j,\theta_j,s_0}\leq C(1+\abs{t})^{\theta_j}\qquad \text{for }3\leq j\leq n.
    \]
    In dimension three the corresponding one-root $L^2$ estimate also holds with $2\delta=\gamma$. Thus the strict inequalities in  Definition~\ref{def:oneRootMeanSquare} and Definition~\ref{def:PointComRoot} cannot in general be replaced by non-strict inequalities when exact algebraicity in the original coordinates is required.
\end{Proposition}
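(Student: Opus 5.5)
Since the proposition is stated with full actions smoothly conjugate to algebraic ones, I would build $\alpha^F$ by conjugating the linear algebraic action through a leaf preserving diffeomorphism. Take a smooth function $F:X\to\h_n$, compactly supported, small in $C^1$ and nonconstant, and put $\Psi_F(x)=\exp(F(x))x$. For $F$ small in $C^1$ this is a leaf preserving diffeomorphism of $X$. Define
\[
\alpha^F_w=\Psi_F^{-1}\,\phi_w\,\Psi_F .
\]
This is a smooth time change. Indeed, writing $\alpha^F_wx=\exp(\iota\tau(x,w))x$, commutativity of $H_n$ gives
\[
\iota\tau(x,w)=\iota(w)+F(x)-F(\alpha^F_wx),
\]
so $\tau(x,\cdot)$ is a smooth diffeomorphism of $\R^{n-1}$ when $\|F\|_{C^1}$ is small.

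Next I would fix $v$ and choose the measure. By Lemma~\ref{lem:normalization}, normalize so that $\iota(v)=E_{12}$. Pick a closed finite-volume orbit $Lx_0$ on which $\exp(tE_{12})$ is ergodic with full support. The natural choice is $L=G$ with Haar measure $m_X$, where ergodicity follows from Moore's theorem. Set
\[
\nu_F=(\Psi_F^{-1})_*m_X .
\]
This measure is $T^F_t$-invariant, ergodic and of full support, since $\Psi_F$ conjugates $T^F_t$ to $\exp(tE_{12})$. The cocycle is
\[
\beta_F(x,t)=tE_{12}+F(x)-F(T^F_tx).
\]
Since $F$ is bounded, the drift is $E_{12}=\iota(v)$.

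To see that $T^F$ is not algebraic modulo $\nu_F$, suppose $\beta_F(x,t)=tE_{12}$ $\nu_F$-a.e. Then $F=F\circ T^F_t$, so $F\circ\Psi_F^{-1}$ is $\exp(tE_{12})$-invariant and hence constant $m_X$-a.e. by ergodicity. This contradicts the choice of $F$ as nonconstant.

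The Hölder estimates require quantifying the difference $F(T^F_t\exp(sK)x)-F(T^F_tx)$. The term $F(\exp(sK)x)-F(x)$ is $O(|s|)$, which is at most $O(|s|^\theta)$ for small $s$. For the orbit term I would use
\[
\Psi_F(T^F_t y)=\exp(tE_{12})\Psi_F(y).
\]
With $y'=\exp(sK)y$, one has
\[
\Psi_F(y')=\exp(sK+O(s))\Psi_F(y)
\]
up to conjugation, with the perturbation moving within $H_n$. Pushing by $\exp(tE_{12})$ gives the displacement
\[
\Ad_{\exp(tE_{12})}(sK_j)=sK_j+ts[E_{12},K_j]+\tfrac{t^2s}{2}\ad_{E_{12}}^2K_j .
\]
This is of size $|s|(1+|t|)^{d_j}$, with $d_2=2$ and $d_j=1$ for $j\geq3$. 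Applying $\Psi_F^{-1}$, which is Lipschitz, gives
\[
|F(T^F_ty')-F(T^F_ty)|\leq \min\{2\|F\|_\infty,\,C|s|(1+|t|)^{d_j}\}.
\]
Using $\min\{a,b\}\leq a^{1-\theta}b^{\theta}$, this is at most $C|s|^{\theta}(1+|t|)^{d_j\theta}$. That yields $2\theta_2$ for $K_2$ and $\theta_j$ for $K_j$ with $j\geq3$. The $L^2(\nu_F)$ one-root estimate in dimension three follows from the sup bound with $\theta_2=1$, giving $\delta=1$ and $\gamma=2$.

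The main obstacle is the middle step: making rigorous that the nonhomogeneous map $\Psi_F$ intertwines the $K_j$-perturbation with the adjoint expansion up to uniformly bounded distortion. Because $H_n$ is abelian, one can write
\[
\Psi_F(\exp(sK)y)=\exp(sK)\exp(F(\exp(sK)y)')\Psi_F(y)
\]
after suitable conjugation, and the error terms lie in $H_n$ with size $O(s)$, so they commute with $\exp(tE_{12})$. The $\min$ trick also absorbs the regime $|s|(1+|t|)^{d_j}\gtrsim1$. For noncompact $X$, compact support of $F$ keeps all constants uniform.
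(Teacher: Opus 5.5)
Your proposal is correct and follows the paper's proof essentially step by step. The shared steps are: conjugation by $\Psi_F(x)=\exp(F(x))x$, the measure $\nu_F=(\Psi_F^{-1})_*m_X$, the orbit-shift identity $\beta_F(x,t)=tE_{12}+F(x)-F(T_t^Fx)$, the intertwining argument, and the final bound $\min(1,a)\leq a^{\theta}$. Written precisely, your factorization is
\[
\Psi_F(\exp(sK_j)x)=\exp\bigl(F(\exp(sK_j)x)-F(x)\bigr)\exp\bigl(s\Ad_{\exp(F(x))}K_j\bigr)\Psi_F(x).
\]
The $\h_n$-factor commutes with $\exp(tE_{12})$, and $\Ad_{\exp(F(x))}$ commutes with $\ad_{E_{12}}$, so the adjoint degrees $2$ and $1$ survive. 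This is slightly cleaner than the paper's logarithm, which produces an extra $s^2(1+\abs{t})^2$ term for $j\geq 3$.

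The only step to add is in the non-algebraicity argument. You refute $\beta_F(x,t)=tE_{12}$, but the claim to refute is $T_t^Fx=\exp(tW)x$ a.e.\ for an arbitrary $W\in\h_n$. The paper bridges this: $\beta_F(x,t)-tW$ is continuous in $t$ with values in the discrete group $\Lambda_x$ of Lemma~\ref{lem:algStab}, so it vanishes, and the drift then forces $W=E_{12}$. This matters on noncompact quotients, where $H_n$-stabilizers can be nontrivial.
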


\subsection*{Smooth time changes on compact quotients}
The directional rigidity theorems above concern continuous time changes and ergodic restriction measures that need not be equivalent to Haar measure. Their compact root hypotheses do not require joint smoothness of $\rho$. We now assume that the time change is smooth and the quotient is compact. In this setting the quadratic bound is automatic. We characterize smooth algebraic conjugacy and determine the leading compact root terms, including the term that survives at the critical scale.

In this subsection, let $X=\SL_3(\R)/\Gamma$, where $\Gamma$ is cocompact, and let $m$ be Haar probability. Write parameter vectors as columns $p=(p_1,p_2)^{\mathrm{T}}$, with $\iota(p)=p_1U_2+p_2U_3$ and $e_1=(1,0)^{\mathrm{T}}$. Let $\rho$ be a smooth time change, $\tau$ its parameter inverse, and $\alpha$ the induced action. Set
\[
M(x)=D_{p}\rho(x,0),\qquad A=\int_XMdm.
\]
By Proposition~\ref{prop:compactMeasure}, the matrix $A$ is invertible, and
\[
d\mu=\frac{\det M}{\det A}dm
\]
is a smooth full support probability invariant under the full action.

\begin{Theorem}[Smooth conjugacy criterion]\label{thm:smoothConjugacyCri}
The following are equivalent:
\begin{enumerate}[label=(\roman*)]
    \item\label{item:thm:smoothConjugacyCri1} $\sup_{t\in\R}\norm{\rho(\cdot,te_1)-tAe_1}_{L^2(m)}<\infty$;
    \item\label{item:thm:smoothConjugacyCri2}  $\sup_{t\in\R}\norm{\tau(\cdot,tAe_1)-te_1}_{L^2(\mu)}<\infty$;
    \item\label{item:thm:smoothConjugacyCri3}  there exists $b\in C^{\infty}(X,\R^2)$ such that
    \[
    \rho(x,p)=Ap+b(\phi_px)-b(x),\qquad x\in X,\quad p\in\R^2;
    \]
    \item\label{item:thm:smoothConjugacyCri4}  the full action is smoothly conjugate to a linear algebraic action by a diffeomorphism preserving each individual $H_3$-orbit;
    \item\label{item:thm:smoothConjugacyCri5}  for some $v\neq0$, the restriction $t\mapsto\alpha_{tv}$ is smoothly conjugate to an algebraic $H_3$-flow by a global diffeomorphism preserving each individual $H_3$-orbit.
\end{enumerate}
    When these conditions hold, the transfer $b$ in \ref{item:thm:smoothConjugacyCri3} is unique up to a constant vector. Given such a transfer, a full conjugacy and its inverse are
    \begin{equation}\label{eq:compactConjugacyMaps}
        \Psi(x)=\phi_{A^{-1}b(x)}x,\qquad \Psi^{-1}(x)=\alpha_{-b(x)}x.
    \end{equation}
    Each conjugacy $\Psi$ in \ref{item:thm:smoothConjugacyCri4} or \ref{item:thm:smoothConjugacyCri5} satisfies
    \[
    \Psi\alpha_{\eta}=\phi_{A^{-1}\eta}\Psi\quad \text{for every }\eta\in\R^2,\qquad \Psi_*\mu=m.
    \]
\end{Theorem}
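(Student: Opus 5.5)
The plan is to prove the cycle \ref{item:thm:smoothConjugacyCri1}$\Leftrightarrow$\ref{item:thm:smoothConjugacyCri2}, then \ref{item:thm:smoothConjugacyCri1}$\Rightarrow$\ref{item:thm:smoothConjugacyCri3}$\Rightarrow$\ref{item:thm:smoothConjugacyCri4}$\Rightarrow$\ref{item:thm:smoothConjugacyCri5}$\Rightarrow$\ref{item:thm:smoothConjugacyCri2}, and then the uniqueness and pushforward statements.

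For \ref{item:thm:smoothConjugacyCri1}$\Leftrightarrow$\ref{item:thm:smoothConjugacyCri2}, I would use the identity $\tau(x,\rho(x,p))=p$. Differentiating in $p$ and using that $M$ and $M^{-1}$ are bounded on the compact quotient gives a bi-Lipschitz bound on fibers, uniform in $x$. Hence $\norm{\tau(y,tAe_1)-te_1}$ is comparable to $\norm{\rho(y,te_1)-tAe_1}$ after a bounded reparametrization along the orbit. Since $d\mu/dm=\det M/\det A$ is bounded above and below, the $L^2(m)$ and $L^2(\mu)$ norms are comparable. The change of base point is controlled by the cocycle identity and the invariance of $\mu$ under $\alpha$ (Proposition~\ref{prop:compactMeasure}).

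The implication \ref{item:thm:smoothConjugacyCri3}$\Rightarrow$\ref{item:thm:smoothConjugacyCri4} is a direct check. Set $\Psi(x)=\phi_{A^{-1}b(x)}x$. Then $\Psi$ is smooth and leaf preserving, and the coboundary identity gives $\Psi\alpha_\eta=\phi_{A^{-1}\eta}\Psi$. The map $x\mapsto\alpha_{-b(x)}x$ inverts it by the same identity. The step \ref{item:thm:smoothConjugacyCri4}$\Rightarrow$\ref{item:thm:smoothConjugacyCri5} is immediate.

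For \ref{item:thm:smoothConjugacyCri5}$\Rightarrow$\ref{item:thm:smoothConjugacyCri2}, I would write the conjugacy as $\Psi(x)=\phi_{c(x)}x$. Here $c$ is continuous, and it can be taken smooth by compactness and the local product structure along the $H_3$-leaves. The target flow is $\exp(tW)$. The identity $\Psi\alpha_{tv}=\exp(tW)\Psi$ gives $\iota\tau(x,tv)=tW+\iota c(x)-\iota c(\alpha_{tv}x)$, so $\tau(\cdot,tv)-t\,\iota^{-1}W$ is bounded in sup norm. Birkhoff averaging against $\mu$ forces $\iota^{-1}W=A^{-1}v$, since the drift is $\int_X D_p\tau\,d\mu$. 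This yields one bounded direction. To get the $e_1$ direction itself, I would pass through the transfer argument below, which produces all of \ref{item:thm:smoothConjugacyCri3} from a bounded deviation in any single direction. Running that argument in direction $A^{-1}v$ gives \ref{item:thm:smoothConjugacyCri3}, hence \ref{item:thm:smoothConjugacyCri1} and \ref{item:thm:smoothConjugacyCri2}.

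The main step is \ref{item:thm:smoothConjugacyCri1}$\Rightarrow$\ref{item:thm:smoothConjugacyCri3}.
\begin{enumerate}
\item \emph{An $L^2$ primitive along $U_2$.} Write $f_i=Me_i-Ae_i$; these are smooth with zero Haar mean. The cocycle identity gives $\rho(x,te_1)-tAe_1=\int_0^t f_1(\exp(sU_2)x)\,ds$. A bounded $L^2$ norm of these ergodic integrals implies, by the standard weak-compactness (Browder/Gottschalk--Hedlund in $L^2$) argument, that there is $b\in L^2(m,\R^2)$ with $U_2b=f_1$ in the distributional sense.
\item \emph{The $U_3$ equation.} Closedness of the cocycle gives $U_2f_2=U_3f_1$. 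Hence $h=f_2-U_3b$ satisfies $U_2h=U_3f_1-U_3U_2b=0$, using that $U_2$ and $U_3$ commute. By Moore ergodicity of $\exp(\R U_2)$ on $(X,m)$, $h$ is constant. Since $f_2$ and $U_3b$ both have zero mean, $h=0$.
\item \emph{Regularity of $b$.} We know $U_2b=f_1$ and $U_3b=f_2$ with smooth right-hand sides. Since $U_2,U_3$ do not generate $\sl_3$, hypoellipticity does not apply directly. I would decompose $L^2(X)$ into irreducible unitary $\SL_3(\R)$-representations, which have a spectral gap since $\Gamma$ is cocompact. Restricting to the $\SL_2$ generated by $U_2$ and $K_2$, I would use the Flaminio--Forni tame estimates for the $U_2$ cohomological equation: the $U_2$-invariant distributions vanish on $f_1$ because an $L^2$ solution exists. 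This gives Sobolev bounds for $b$ in the $\sl_2$ directions, uniform over components. The analogous estimate for $U_3$, via the $\SL_2$ through $U_3$ and $E_{31}$, together with the commutation relations, should yield $b\in W^s$ for all $s$, hence $b\in C^\infty$. I expect this to be the main obstacle; the fallback is Wang's tame estimates for first-row cocycles \cite{Wang2015Cohomological}.
\item \emph{Integration.} Integrating $Mp=Ap+\langle p,\nabla_{\h}\rangle b$ along orbits gives $\rho(x,p)=Ap+b(\phi_px)-b(x)$.
\end{enumerate}

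For uniqueness of the transfer, the difference of two transfers is a smooth $H_3$-invariant function, hence constant by ergodicity of $H_3$ on $(X,m)$. Finally, for any conjugacy $\Psi$, the measure $\Psi_*\mu$ is invariant under $\phi_{A^{-1}\eta}$ for all $\eta$, hence under $H_3$. Because $\Psi$ is a diffeomorphism and $\mu$ is smooth, $\Psi_*\mu\ll m$, and ergodicity of $m$ under $H_3$ gives $\Psi_*\mu=m$. The relation $\Psi\alpha_\eta=\phi_{A^{-1}\eta}\Psi$ for every conjugacy follows since each conjugacy differs from the explicit one in \eqref{eq:compactConjugacyMaps} by an $H_3$-commuting leaf preserving map, which is identified with a constant translation by the same ergodicity argument.
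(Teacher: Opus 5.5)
Your architecture largely matches the paper's: a weak limit of averages of $C_t=\rho(\cdot,te_1)-tAe_1$ gives an $L^2$ transfer along $U_2$, then regularity via Wang, Moore for the second equation, radial integration, and the explicit maps \eqref{eq:compactConjugacyMaps}. However, your steps 2 and 3 are in an order that does not work.

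In step 2 you only know $b\in L^2$, so $U_3b$, and hence $h=f_2-U_3b$, are merely distributions of order one. Moore's theorem concerns invariant functions, not distributions. A distribution with $U_2h=0$ is a $U_2$-invariant distribution, and nonconstant ones are exactly the obstructions in the Flaminio--Forni/Wang theory. Step 3 feeds $U_3b=f_2$ into the regularity argument, whose primary $\SL_2$ route is, as you say, only heuristic. So the chain is unsupported.

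The paper avoids this by applying Wang's regularity theorem \cite{Wang2015Cohomological}*{Theorem~2.1~(1)} to the single equation $U_2b=f_1$, with $b$ in the generator domain as the weak-limit argument provides. Then $b$ is smooth, $U_3b-f_2$ is a smooth $U_2$-invariant function, and Moore applies. You could also obtain $U_3b=f_2$ directly in $L^2$. Closedness gives $U_3C_t=f_2\circ\phi_{te_1}-f_2$, so applying $U_3$ to $-\frac1R\int_0^RC_t\,dt$ gives $f_2-\frac1R\int_0^Rf_2\circ\phi_{te_1}\,dt$, which tends to $f_2$ by the mean ergodic theorem.

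Second, in (v) your claim that $c$ in $\Psi(x)=\phi_{c(x)}x$ is smooth ``by compactness and the local product structure'' is not justified. The $H_3$-leaves are not closed, so continuity of $\Psi$ alone does not stop $c$ from jumping between distinct plaques of the same leaf. The paper proves this separately in Lemma~\ref{lem:smmothDisplacement}, via a Baire category argument on the countable closed cover $\{E_\gamma\}$ and the map induced by $D\Psi$ on $\mathfrak{g}_0/\mathfrak{n}$.

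Granting that lemma, your route from (v) through a bounded deviation in direction $A^{-1}v$ is workable. It does require a second use of the regularity theorem, now for the flow of $\iota(A^{-1}v)$ after normalizing by Lemma~\ref{lem:normalization}. The paper is more economical. The generators of $\Psi\alpha\Psi^{-1}$ have $\phi_{tq}$-invariant, hence constant, coefficients, so the line conjugacy is already a full one. Then (iv)$\Rightarrow$(iii) follows from discreteness of stabilizers plus Haar integration, which also forces $B=A$ for every conjugacy.

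In your final paragraph, note that $\Psi\Psi_0^{-1}$ a priori only intertwines $\phi_{tA^{-1}v}$ with $\phi_{tq}$; it is not yet known to commute with $H_3$. You must first show $q=A^{-1}v$ (differentiate and integrate against $m$). Only then does Moore make the displacement constant.
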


The existence of time changes outside this conjugacy class is determined by the smooth cocycle equation on the given quotient.

\begin{Theorem}[Non conjugate time changes on a prescribed quotient]\label{thm:NonConjTCGivenQuo}
   For the fixed compact quotient $X$, the following are equivalent:
   \begin{enumerate}[label=(\roman*)]
       \item\label{item:thm:NonCOnjTCGivenQuo1} there are real $f,g\in C^{\infty}(X)$ with
       \[
       \int_Xfdm=\int_Xgdm=0,\qquad U_2g=U_3f,
       \]
       but no $b\in C^{\infty}(X)$ satisfying $(U_2b,U_3b)=(f,g)$;
       \item\label{item:thm:NonCOnjTCGivenQuo2} there is a smooth first row time change not smoothly conjugate to a linear algebraic action by a leaf preserving diffeomorphism;
       \item\label{item:thm:NonCOnjTCGivenQuo3} there are arbitrarily small such time changes whose first restrictions are positive scalar time changes of the $U_2$ flow, ergodic for smooth full support probabilities, and are not smoothly conjugate to algebraic $H_3$ flows by leaf preserving diffeomorphisms.
   \end{enumerate}
   Here smallness means convergence to the algebraic action in $C^{\infty}$ on compact parameter sets.
\end{Theorem}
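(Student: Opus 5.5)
The plan is to prove the cycle (i)$\Rightarrow$(ii)$\Rightarrow$(iii)$\Rightarrow$(i), using Theorem~\ref{thm:smoothConjugacyCri} to translate conjugacy into cohomology of the Lie derivative $U_2,U_3$ data of the cocycle.

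For (ii)$\Rightarrow$(i), let $\rho$ be a smooth time change that is not conjugate. Set $M=D_p\rho(\cdot,0)$, so that the columns $f_k=Me_k-Ae_k$ ($k=1,2$) are smooth $\R^2$-valued functions of zero Haar mean. Differentiating the cocycle identity $\rho(x,p+q)=\rho(x,p)+\rho(\phi_px,q)$ at $p=q=0$ along the two commuting directions gives the closedness relation $U_2(Me_2)=U_3(Me_1)$, and the same relation then holds for $f_1,f_2$. Suppose each scalar component pair $(f_1^{(i)},f_2^{(i)})$ had a smooth common primitive $b^{(i)}$. Then $\rho(x,p)$ and $Ap+b(\phi_px)-b(x)$ have the same derivative in $p$ along every orbit and agree at $p=0$, so they coincide. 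Hence \ref{item:thm:smoothConjugacyCri3} of Theorem~\ref{thm:smoothConjugacyCri} holds, and the action is conjugate, a contradiction. So some component pair witnesses (i). I will need to check that the $p$-derivative of $\rho$ at a general $p$ equals $M(\phi_px)$ by the cocycle identity; this is routine.

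(iii)$\Rightarrow$(ii) is trivial. The substance is (i)$\Rightarrow$(iii). Given $(f,g)$ as in (i), set
\[
M_\epsilon=\begin{pmatrix}1+\epsilon f&\epsilon g\\0&1\end{pmatrix},
\]
so that the first row of $\rho$ is perturbed by the closed pair and $\rho_\epsilon(x,p)=\bigl(p_1+\epsilon\int_{\text{path}}(f\,dp_1+g\,dp_2),\,p_2\bigr)$. The line integral along the orbit is path independent because $U_2g=U_3f$ and the orbits of $H_3$ are simply connected copies of $\R^2$; freeness of the action on the compact quotient should follow from cocompactness and the absence of unipotents in a torsion-free $\Gamma$ after passing to a finite cover, or directly because stabilizers in $H_3$ of points of $G/\Gamma$ are discrete. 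For $\epsilon\|f\|_\infty<1$ the fiber map is a diffeomorphism: its Jacobian is triangular with diagonal entries $1+\epsilon f>0$ and $1$, it is proper, and it preserves the $p_2$ coordinate while strictly increasing in $p_1$. So $\rho_\epsilon$ is a smooth time change converging to the identity cocycle in $C^\infty$ on compact parameter sets. Here $A=I$, and the obstruction to (iii) of Theorem~\ref{thm:smoothConjugacyCri} is exactly a smooth $b$ with $(U_2b,U_3b)=(\epsilon f,\epsilon g)$ in the first component. By hypothesis no such $b$ exists, so Theorem~\ref{thm:smoothConjugacyCri} rules out the full conjugacy.

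For the first restriction $\alpha_{te_1}$, the second coordinate is preserved, so $\alpha_{te_1}$ moves along $U_2$ with speed $1/(1+\epsilon f)$; it is thus a positive scalar time change of the $U_2$-flow. Its invariant measure $(1+\epsilon f)\,dm$ is smooth, has full support, and is ergodic because the $U_2$-flow is Haar ergodic by Moore's theorem on compact $\SL_3$-quotients and time changes preserve ergodicity of the equivalent measure. Non-conjugacy of this restriction comes from (v)$\Leftrightarrow$(iii) of Theorem~\ref{thm:smoothConjugacyCri} with $v=e_1$. The main obstacle I expect is the path-independence and global inversion of $\rho_\epsilon$. If path independence via simple connectivity is delicate, I would instead define $\rho_\epsilon$ first along $U_2$ and then along $U_3$, and verify the cocycle identity using $U_2g=U_3f$ through a Stokes argument on rectangles in the orbit.
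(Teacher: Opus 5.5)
Your proposal is correct and follows the paper's proof essentially step for step. The paper proves (i)$\Rightarrow$(iii) with the same first-row perturbation $\rho_\epsilon(x,(r,s))=(r+\epsilon\mathcal{C}(x,(r,s)),s)$, for which $A_\epsilon=\id$, and Theorem~\ref{thm:smoothConjugacyCri}, including item (v) for the restriction. It takes (iii)$\Rightarrow$(ii) as trivial. It proves (ii)$\Rightarrow$(i) exactly as you do, by applying the negation of (i) to the two rows of $M-A$ and integrating along straight parameter segments.

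Two small things to tidy. Path independence should be argued in the simply connected parameter plane, as in Lemma~\ref{lem:intScaCocycle}; no freeness of the $H_3$-action is needed there, so your aside about torsion-freeness and discrete stabilizers is unnecessary. Also, the cycle you announce at the start is actually run as (i)$\Rightarrow$(iii)$\Rightarrow$(ii)$\Rightarrow$(i).
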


For a fixed physical direction $v\neq0$, put
\[
T_t=\alpha_{tv},\qquad  D_s(x,t)=\tau(\exp(sK_2)x,tv)-\tau(x,tv).
\]
Thus $\beta(x,t)=\iota(\tau(x,tv))$ and $\Delta_s^{(2)}(x,t)=\iota(D_s(x,t))$. The operator $K_2$ differentiates the base point of $\tau$, and all parameter vector norms are Euclidean.

\begin{Theorem}[Compact root asymptotics]\label{thm:CptRtAsy}
\begin{enumerate}[label=(\roman*)]
    \item\label{item:CptRtAsyitem1} For every $v\neq 0$, there is $C>0$ such that
    \[
    \norm{K_2\tau(\cdot,tv)}_{\infty}\leq C(\abs{t}+t^2),\qquad \norm{D_s(\cdot,t)}_{\infty}\leq C\abs{s}(\abs{t}+t^2),\qquad \text{for all }s,t\in\R.
    \]
    \item\label{item:CptRtAsyitem2} Suppose that $T_t$ is $\mu$-ergodic, and use the normalized coordinates of Lemma~\ref{lem:normalization}, so that $A^{-1}v=e_1$. The leading compact root derivative is
    \begin{equation}\label{eq:CptRtAsyitem2eq1}
        K_2\tau(x,tv)=t^2(e_1-M^{-1}(T_tx)v)+o_{L^2(\mu)}(t^2),\qquad \text{$t\to+\infty$.}
    \end{equation}
    In particular,
    \begin{equation}\label{eq:CptRtAsyitem2eq2}
         \lim_{t\to+\infty}\frac{\norm{K_2\tau(\cdot,tv)}_{L^2(\mu)}}{t^2}=\norm{e_1-M^{-1}v}_{L^2(\mu)}.
    \end{equation}
    For every real function $s(t)\neq0$ with $\abs{s(t)}t^2\to0$, the increments satisfy the relative expansion
    \begin{equation}\label{eq:CptRtAsyitem2eq3}
        D_{s(t)}(x,t)=s(t)t^2(e_1-M^{-1}(T_tx)v)+o_{L^2(\mu)}(\abs{s(t)}t^2).
    \end{equation}
   
    \item\label{item:CptRtAsyitem3} Under the hypothesis of \ref{item:CptRtAsyitem2}, the term at the critical scale $s=\sigma/t^2$ is the full inverse cocycle deviation $\sigma e_1+\tau(T_tx,-\sigma v)$. More precisely, for every $S>0$
    \[
    \norm{\sup_{\abs{\sigma}\leq S}\abs{D_{\sigma/t^2}(\cdot,t)-\sigma e_1-\tau(T_t\cdot,-\sigma v)}}_{L^2(\mu)}\longrightarrow0,\qquad \text{for }t\to+\infty.
    \]
\end{enumerate}
Here $o_{L^2(\mu)}(a_t)$ denotes a vector valued remainder whose $L^2(\mu)$ norm is $o(a_t)$.
\end{Theorem}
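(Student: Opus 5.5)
The plan is to reduce everything to one exact commutation identity between the compact root circle and the first row orbit, and then read off the leading terms.

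\emph{The commutation identity.} For $p\in\R^2$, write $Y=\iota(p)$ and $K=K_2$. Since $\ad_Y^3K=0$ for $Y\in\h_3$,
\[
\Ad(\exp Y)K=K+[Y,K]+\tfrac12[Y,[Y,K]],
\]
and the last term lies in $\h_3$. A direct computation with $K_2=E_{21}-E_{12}$ gives
\[
\tfrac12[Y,[Y,K_2]]=-\iota(p_1^2,\,p_1p_2)=-p_1\,\iota(p).
\]
The middle term $[Y,K_2]$ is a linear combination of $E_{11}-E_{22}$ and $E_{23}$, so it is transverse to $\h_3$ and of size $O(\abs{p})$. Hence
\[
\phi_p\exp(sK_2)x=\exp\big(s(K_2+[Y,K_2])-s\,p_1\iota(p)\big)\phi_px .
\]

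\emph{Differentiating the cocycle (part (i)).} Differentiate $\rho(y,\tau(y,w))=w$ at $y=x$ in the direction $K_2$. The cocycle identity gives $D_p\rho(x,p)=M(\phi_px)$, so
\[
K_2\tau(x,w)=-M^{-1}(\alpha_wx)\,(K_2\rho)(x,\tau(x,w)).
\]
To compute $(K_2\rho)(x,p)$, write $\rho(x,p)=\int_0^1M(\phi_{rp}x)p\,dr$ and push the $K_2$-derivative through $\phi_{rp}$ with the identity above at parameter $rp$. This produces two kinds of terms:
\begin{itemize}
\item transverse derivatives of $M$, of size $O(\abs{p})$ times bounded quantities;
\item an $\h_3$-derivative $-r^2p_1\,\iota(p)\cdot\nabla_{\h}M$.
\end{itemize}
The second term integrates exactly in $r$. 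Using $\frac{d}{dr}[M(\phi_{rp}x)p]$ and an integration by parts, it becomes
\[
-p_1\,\rho(x,p)+p_1M(\phi_px)p+O(\abs{p}).
\]
Because $\Gamma$ is cocompact, $M$, $M^{-1}$ and their derivatives are bounded. Also $\abs{\tau(x,tv)}\leq C\abs{t}$. Substituting $p=\tau(x,tv)$ and $\rho(x,\tau(x,tv))=tv$ should give
\[
K_2\tau(x,tv)=\tau_1\big(\tau-tM^{-1}(T_tx)v\big)+O(\abs{t}),\qquad \tau=\tau(x,tv).
\]
This yields the sup bound $C(\abs{t}+t^2)$. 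The bound on $D_s$ follows from the mean value theorem along $\exp(sK_2)$.

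\emph{Leading term (part (ii)).} In normalized coordinates $A^{-1}v=e_1$. Ergodicity of $T_t$ for $\mu$ and the mean cocycle computation of Proposition~\ref{prop:compactMeasure} give $\tau(x,tv)/t\to e_1$ in $L^2(\mu)$, with uniform boundedness. Inserting this into the formula gives
\[
K_2\tau(x,tv)=t^2\big(e_1-M^{-1}(T_tx)v\big)+o_{L^2(\mu)}(t^2).
\]
Since $\mu$ is $T_t$-invariant, $\norm{M^{-1}(T_t\cdot)v}_{L^2(\mu)}$ is independent of $t$, and \eqref{eq:CptRtAsyitem2eq2} follows. For \eqref{eq:CptRtAsyitem2eq3}, write
\[
D_s(x,t)=\int_0^sK_2\tau(\exp(uK_2)x,tv)\,du .
\]
The remainder must be controlled uniformly over $\abs{u}\leq\abs{s(t)}$. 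The second derivative $K_2^2\tau(\cdot,tv)$ is $O(t^4)$ in sup norm by the same calculation, so the error is $O(s^2t^4)=o(\abs{s}t^2)$ since $\abs{s}t^2\to0$. The remaining issue is $o_{L^2(\mu)}$ along the shifted base point. This is handled because $\exp(uK_2)$ with $u\to0$ distorts $\mu$ only by a smooth density close to $1$.

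\emph{Critical scale (part (iii)).} Here the linearization fails, and I would use the commutation identity globally instead. Take $s=\sigma/t^2$ and $p=\tau(x,tv)\approx te_1$. Then
\[
\phi_p\exp(sK_2)x=\exp(\xi)\,\phi_{-\sigma e_1+o(1)}\,T_tx,
\]
where the transverse error $\xi$ has size $O(\sigma/t)$, which tends to $0$. Smoothness and uniform continuity on compact $X$ then show that the orbit of $\exp(sK_2)x$ reaches parameter $tv$ at an algebraic parameter differing from $\tau(x,tv)$ by $\sigma e_1+\tau(T_tx,-\sigma v)+o(1)$; this uses the cocycle identity for $\tau$ at the endpoint. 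This is the main obstacle. The $o(1)$ must be made uniform in $\abs{\sigma}\leq S$ and controlled in $L^2(\mu)$, while the approximation $\tau/t\to e_1$ holds only in measure, with no rate. I would first pass to an exceptional set of small $\mu$-measure on which $\abs{\tau(x,tv)/t-e_1}$ is small, use the sup bound from (i) off it, and use equicontinuity of $\sigma\mapsto\tau(y,-\sigma v)$ on compacts to take the supremum over $\sigma$ inside.
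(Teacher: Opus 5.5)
Your argument for (i) is sound, and it takes a slightly different route from the paper. Along the straight segment, $\frac{d}{dr}M(\phi_{rp}x)=(\iota(p)M)(\phi_{rp}x)$ holds exactly, so the cubic $\h$-term integrates by parts without using closedness of the rows of $M$. Every term of $(K_2\rho)(x,p)$ is then $O(\abs{p}+\abs{p}^2)$, and $\norm{\tau(x,tv)}\leq L\abs{t}$ gives the sup bound.

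\textbf{The gap in (ii).} Your bookkeeping discards two terms that are of order $t^2$, not $O(\abs{t})$.
\begin{enumerate}
\item The transverse term $\int_0^1 r\,([\iota(p),K_2]M)(\phi_{rp}x)\,p\,dr$ has a coefficient of size $O(\abs{p})$, which is then multiplied by $p$. So it is $O(\abs{p}^2)$. For $p=\tau(x,tv)\approx te_1$ it is essentially $t^2\int_0^1 r\,(HM)(\phi_{rp}x)e_1\,dr$, with $H=E_{11}-E_{22}$.
\item The integration by parts actually gives $-p_1M(\phi_px)p+2p_1\int_0^1 rM(\phi_{rp}x)p\,dr$; your signs are reversed. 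Replacing $2\int_0^1 rM(\phi_{rp}x)p\,dr$ by $\rho(x,p)$ costs $p_1\int_0^1(2r-1)M(\phi_{rp}x)p\,dr$, which is again $O(t^2)$.
\end{enumerate}
Your leading coefficient $e_1-M^{-1}(T_tx)v$ is the correct one. But it is correct only because these two terms are $o(t^2)$ in $L^2(\mu)$, which is exactly what must be proved, and your route gives no way to prove it. You would need weighted averages of $HM$ and $M$ along the straight segment from $x$ to $T_tx$ to equidistribute for Haar measure. That segment has the $x$-dependent direction $\tau(x,tv)/\norm{\tau(x,tv)}$, which tends to $e_1$ without rate. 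Its endpoint is displaced from $\exp(tU_2)x$ by $\tau(x,tv)-te_1$, which is only $o(t)$ and in general unbounded. So the segment is neither a $T$-orbit nor an orbit of a fixed unipotent flow, and no ergodic theorem applies directly.

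The paper avoids this by differentiating along the inverse parameter path $u\mapsto\tau(x,uv)$, that is, along the $T$-orbit itself (Lemma~\ref{lem:derivativeAlongOrbit}).
\begin{itemize}
\item Since this path is not straight, converting the $\h$-component into $\frac{d}{du}M(T_ux)$ applied to $\tau(x,uv)$ needs the closedness identity $(\iota(a)M)c=(\iota(c)M)a$.
\item After integrating by parts, the endpoint term remains. Replacing $\tau(x,uv)$ by $ue_1$ costs $o(t^2)$ in $L^2(\mu)$. Every remaining order-$t^2$ term is then a weighted ergodic integral $t^{-2}\int_0^tu\,(\cdots)(T_ux)\,du$ for $(T,\mu)$.
\item The limit $v$ comes from the averaging identity $\int_X\big((HM)\xi+\xi_1Me_1+v\big)\,d\mu=2v$ of Lemma~\ref{lem:identityDerivativeAlongOrbit}. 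It is proved via the $H$-rescaled cocycle $\rho(\exp(rH)x,\diag(e^{2r},e^r)p)$ and Lemma~\ref{lem:averagingClosedRows}.
\end{itemize}
This averaging identity is the missing idea. Once \eqref{eq:CptRtAsyitem2eq1} is available, your derivation of \eqref{eq:CptRtAsyitem2eq3} through a bound $K_2^2\tau=O(t^4)$ can replace the paper's estimate \eqref{eq:disQuaControl}, though it needs its own computation.

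\textbf{The gap in (iii).} Part (iii) also lacks a mechanism. Your commutation computation reproduces the endpoint identity \eqref{eq:endPtIdKs}. That identity only expresses $T_tk_sx$ in terms of the unknown increment $D_s(x,t)$. The cocycle identity for $\tau$ cannot determine $D_s$, because $k_sx$ and $x$ lie on different leaves. ``Smoothness and uniform continuity'' do not supply the missing relation.

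The paper closes the loop with an ODE in $\sigma$. Put $d_t(\sigma,x)=D_{\sigma/t^2}(x,t)$, $w=e_1-M^{-1}v$ and $E_t=t^{-2}K_2\tau(\cdot,tv)-w\circ T_t$.
\begin{itemize}
\item By (ii), $\partial_\sigma d_t=w(T_tk_{\sigma/t^2}x)+E_t(k_{\sigma/t^2}x)$.
\item The endpoint identity gives $T_tk_{\sigma/t^2}x\approx\phi_{d_t(\sigma,x)-\sigma e_1}T_tx$.
\item Hence $d_t$ approximately solves $z'=w(\phi_{z-\sigma e_1}T_tx)$ with $z(0)=0$. The exact solution is $z(\sigma)=\sigma e_1+\tau(T_tx,-\sigma v)$, since $\phi_{z(\sigma)-\sigma e_1}T_tx=T_{t-\sigma}x$.
\item Gronwall, the Lipschitz bound for $w$, $\norm{E_t}_{L^2(\mu)}\to0$ and \eqref{eq:rotationNorm} then give the $L^2$ statement uniformly in $\abs{\sigma}\leq S$.
\end{itemize}
So (iii) rests on (ii) at all the base points $k_{u/t^2}x$. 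The exceptional-set issue you single out is secondary.
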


In these normalized coordinates, the coefficient $e_1-M^{-1}v$ vanishes identically exactly when $\rho(x,p)=Ap$. Smooth conjugacy allows a nonzero coefficient. By Theorem~\ref{thm:smoothConjugacyCri}, it is instead characterized by
\[
\sup_{\sigma\in\R}\norm{\sigma e_1+\tau(\cdot,-\sigma v)}_{L^2(\mu)}<\infty.
\]
The critical term can therefore be nonzero and still remain bounded. For small $\sigma$ it begins with $\sigma(e_1-M^{-1}v)$; for fixed $\sigma$, the entire inverse cocycle occurs in \ref{item:CptRtAsyitem3}.

For the same ergodic normalized restriction, fix $s_0>0$ and define, for $\delta>0$,
\[
\begin{aligned}
  \mathcal{N}_{\delta}(t)=&\sup_{0<\abs{s}\leq s_0}\frac{\norm{D_s(\cdot,t)}_{L^2(\mu)}}{\abs{s}^{\delta}},\\
  \gamma_*(\delta)=&\inf\{\gamma\geq 0:\exists C<\infty\text{ such that $\mathcal{N}_{\delta}(t)\leq C(1+t)^{\gamma}$ for every $t\geq 0$}\}.
\end{aligned}
\]
The infimum of the empty set is $+\infty$; a finite infimum need not be attained.

\begin{Theorem}[Optimal compact root exponents]\label{thm:OptCptRtExp}
    Suppose that $T$ is $\mu$-ergodic, $A^{-1}v=e_1$, and $M$ is nonconstant. Then
    \[
    2\delta\leq\gamma_*(\delta)\leq1+\delta \quad \text{ for $0<\delta\leq 1$,}\qquad \gamma_*(\delta)=+\infty \quad \text{for $\delta>1$}.
    \]
    In particular $\gamma_*(1)=2$, with an estimate at equality. If the action is smoothly leaf preservingly conjugate to a linear algebraic action, then
    \[
    \gamma_*(\delta)=2\delta\quad \text{for $0<\delta\leq1$},
    \]
    and the bound at equality is attained for every $0<\delta\leq 1$. For these conjugate actions, the same equality and attained estimates hold with the uniform norm on $X$ in place of $L^2(\mu)$.
\end{Theorem}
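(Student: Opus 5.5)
The proof splits into three parts: the lower bound $2\delta\leq\gamma_*(\delta)$, the upper bound $\gamma_*(\delta)\leq 1+\delta$ together with $\gamma_*(\delta)=+\infty$ for $\delta>1$, and the conjugate case. The main inputs are Theorem~\ref{thm:CptRtAsy} and Theorem~\ref{thm:smoothConjugacyCri}.

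\emph{Lower bound.} Since $M$ is nonconstant, the normalization $A^{-1}v=e_1$ and the remark after Theorem~\ref{thm:CptRtAsy} give $c_0:=\norm{e_1-M^{-1}v}_{L^2(\mu)}>0$. Fix $\delta\leq1$ and choose $s(t)=\eta/t^{2}$ with $\eta$ small; more robustly, take $s(t)=1/(t^2\log t)$, so that $|s(t)|t^2\to0$. Then \eqref{eq:CptRtAsyitem2eq3} gives $\norm{D_{s(t)}(\cdot,t)}_{L^2(\mu)}\sim c_0|s(t)|t^2$. Dividing by $|s(t)|^\delta$, we get $\mathcal N_\delta(t)\gtrsim c_0 t^{2}|s(t)|^{1-\delta}= c_0t^{2\delta}(\log t)^{\delta-1}$. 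For $\delta<1$ this is not $O(t^{\gamma})$ whenever $\gamma<2\delta$. For $\delta=1$ it is already $\gtrsim t^2$. Hence $\gamma_*(\delta)\geq2\delta$.

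\emph{Case $\delta>1$.} Fix $t$ large. The quotient $\norm{D_s}/|s|^\delta=(\norm{D_s}/|s|)|s|^{1-\delta}$ blows up as $s\to0$, because $\norm{D_s(\cdot,t)}_{L^2}/|s|\to\norm{K_2\tau(\cdot,tv)}_{L^2}$. This limit is nonzero for large $t$ by \eqref{eq:CptRtAsyitem2eq2}. So $\mathcal N_\delta(t)=\infty$ and $\gamma_*(\delta)=+\infty$.

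\emph{Upper bound.} Interpolate two estimates. Theorem~\ref{thm:CptRtAsy}\ref{item:CptRtAsyitem1} gives $\norm{D_s}\leq C|s|t^2$. A trivial bound follows from $\tau(x,tv)$ being comparable to $t$: $\norm{D_s}\leq\norm{\tau(g\cdot,tv)}+\norm{\tau(\cdot,tv)}\leq C(1+t)$, using compactness and the uniform bounds on $M^{-1}$. Then $\min(|s|t^2,t)\leq (|s|t^2)^{\delta}t^{1-\delta}=|s|^\delta t^{1+\delta}$, so $\gamma_*(\delta)\leq1+\delta$. At $\delta=1$ both bounds equal $2$, and the estimate holds with equality.

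\emph{Conjugate case.} By Theorem~\ref{thm:smoothConjugacyCri}, write $\tau(x,w)=A^{-1}w-A^{-1}b(\alpha_wx)+A^{-1}b(x)$ for a smooth $b$ (invert the formula in \ref{item:thm:smoothConjugacyCri3}). Then $D_s$ consists of two pieces. The first is $A^{-1}(b(gx)-b(x))$, which is $O(|s|^\delta)$ uniformly. The second is $A^{-1}(b(T_tgx)-b(T_tx))$, controlled by $\min(C,\,C\,d(T_tgx,T_tx))$.

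The key step, and the main obstacle, is the sup-norm estimate $d(T_t\exp(sK_2)x,T_tx)\leq C|s|(1+t)^2$. It should follow by writing $T_t$ through the conjugacy, $T_t=\Psi^{-1}\exp(tU_2)\Psi$. The algebraic flow distorts the $K_2$ displacement by $\Ad(\exp(tU_2))K_2=O(t^2)$, and $\Psi^{\pm1}$ are Lipschitz. Interpolating, $\min(1,|s|t^2)\leq |s|^\delta t^{2\delta}$ gives the bound $C|s|^\delta(1+t)^{2\delta}$ uniformly on $X$, hence also in $L^2(\mu)$. Combined with the lower bound, this yields $\gamma_*(\delta)=2\delta$, attained.

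For the sup-norm version of the lower bound, use $\norm{\cdot}_\infty\geq\norm{\cdot}_{L^2(\mu)}$.
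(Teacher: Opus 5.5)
Your proposal is correct. For $\delta>1$ and for the general bound $\gamma_*(\delta)\le 1+\delta$ it is the paper's argument: the upper bound interpolates part \ref{item:CptRtAsyitem1} of Theorem~\ref{thm:CptRtAsy} with the clock bound $\norm{D_s(\cdot,t)}_\infty\le 2L\abs{t}$.

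It differs from the paper in two places.

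\emph{Lower bound.} You use the relative expansion \eqref{eq:CptRtAsyitem2eq3} at the subcritical scale $s(t)=1/(t^2\log t)$, which gives $\mathcal N_\delta(t)\gtrsim t^{2\delta}(\log t)^{\delta-1}$. The paper instead works at the critical scale $s=\sigma_0/t^2$ via part \ref{item:CptRtAsyitem3}. It chooses $\sigma_0$ with $\norm{\sigma_0e_1+\tau(\cdot,-\sigma_0v)}_{L^2(\mu)}>0$, which is possible since $(\sigma e_1+\tau(\cdot,-\sigma v))/\sigma\to w$ uniformly, and obtains the cleaner bound $\mathcal N_\delta(t)\geq c_\delta t^{2\delta}$. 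Your route needs only part \ref{item:CptRtAsyitem2} and avoids the Gronwall comparison behind part \ref{item:CptRtAsyitem3}. The price is a logarithmic loss, which does not affect $\gamma_*$. Your first suggestion $s(t)=\eta/t^2$ is not covered by \eqref{eq:CptRtAsyitem2eq3} as stated, so the logarithmic choice is the right one.

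\emph{Conjugate case.} The step you single out as the main obstacle can be bypassed. Equation \eqref{eq:smoothConjugacyCri12.4-2} gives $\norm{D_s(\cdot,t)}_\infty\le 4\norm{F}_\infty$ directly. Combining this with part \ref{item:CptRtAsyitem1} yields $\norm{D_s(\cdot,t)}_\infty\le C\min(1,\abs{s}(1+t)^2)\le C\abs{s}^{\delta}(1+t)^{2\delta}$, which is exactly how the paper concludes. Your detour through $T_t=\Psi^{-1}\exp(tU)\Psi$ also works, but the justification needs adjusting. The displacement from $\Psi(x)$ to $\Psi(k_sx)$ is not along $K_2$, so what you actually need is that $\exp(tU)$ is $O((1+t)^2)$-Lipschitz on $X$, which holds because $\ad_U^3=0$ on all of $\sl_3(\R)$. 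In effect this reproves, in the conjugate case, the distance estimate \eqref{eq:disQuaControl} that the paper already has for general smooth time changes.
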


Section~\ref{sec:cptExamplesOptCoe} constructs nonconstant smoothly conjugate examples on every compact quotient and verifies the attained optimal estimates directly. Each fixed example realizes every pair $(\delta,2\delta)$ with $0<\delta\leq 1$, including $(3/4,3/2)$ and $(1/\sqrt{2},\sqrt{2})$. For the natural measure and an ergodic normalized restriction, the strict inequality $2\delta>\gamma$ forces the full time change to be linear. The earlier rigidity theorems determine the restriction modulo its given ergodic measure, which may be singular. Smallness of a smooth perturbation can reduce the leading coefficient without improving its time growth exponent.

\subsection*{Overview of the argument}

We first normalize the asymptotic drift by an algebraic change of coordinates. In the first row action, conjugation of suitable compact root directions by the normalized unipotent flow produces polynomial displacements. One direction has degree two, while the remaining directions have degree one. The abstract notion of a compact Ratner frame isolates exactly the algebraic property needed from this collection of compact directions.

A perturbation of size $\varepsilon^d$ in a compact direction of degree $d$, followed for a time of order $\varepsilon^{-1}$, produces a nontrivial limiting displacement in the acting unipotent group. The regularity hypotheses ensure that the corresponding change in the physical cocycle vanishes at this scale. Passing to limits of paired orbit averages produces invariant measures supported on algebraic graphs, and hence commutation relations with the restricted flow.

The highest degree coefficients of a compact Ratner frame span the acting unipotent algebra. The resulting graph relations give commutation with the full algebraic unipotent action. Discreteness of stabilizers makes the physical cocycle invariant along the unipotent leaves. The cocycle identity and the drift then give $\beta(x,t)=tV$. Ratner's measure classification identifies the invariant probability measure.

The sharpness constructions reflect the two regularity settings of the paper. On the modular $\SL_3(\R)$-quotient, a primitive Siegel transform cocycle gives an exact mean square boundary example, and periodic unipotent orbits obstruct smooth algebraic conjugacy. In the pointwise compact frame setting, two independent nontrivial horocycle time changes give a boundary example on a product of compact quotients. 

\subsection*{Organization of the paper} 
Section~\ref{sec:cocycleNormalization} records the cocycle identities and the normalization of the drift. Sections~\ref{sec:cptRatnerFra} and \ref{sec:RigidityFullGRaFam} contain the two main steps of the proof of the rigidity result in the abstract setting of compact frames. Section~\ref{sec:firstRowCptFrame} constructs the first row frame, and sections~\ref{sec:sharperThreeDim} and \ref{sec:HigheerDimSubactions} prove the three-dimensional and higher-dimensional positive results. The following three sections then pass from exact algebraicity in section~\ref{sec:sharpnessScaleCond}, to mean square sharpness in section~\ref{sec:sl3Sharpness}, and finally to pointwise compact-frame sharpness in section~\ref{sec:PtSharpCptRatnerFra}. Section~\ref{sec:smoothTCcptSL3} establishes the compact conjugacy criterion, and Section~\ref{sec:cptRtAsyOpExp} proves the compact root asymptotics and optimal exponent results. Appendix~\ref{sec:smoothDisplacement} proves the smooth displacement lemma used in the nonconjugacy arguments.

\subsection*{Acknowledgement}
This paper originated in a question posed by Giovanni Forni to the first named author many years ago. We thank him for his warm support, helpful discussion and insights. We thank Zhenqi Wang for helpful discussions concerning her results on cohomological equations.

\section{Cocycle identities and normalization}\label{sec:cocycleNormalization}
In this section, we collect some basic facts about time changes of abelian actions and the flows obtained as restrictions to a single direction. In particular, for any directional flow as above, we define its asymptotic drift with respect to an ergodic measure. Finally, by an algebraic change of coordinates, we normalize the drift to be a fixed direction.

We retain the notation and standing assumptions of the introduction. The next lemma records the inverse cocycle identity used throughout the paper. 
\begin{Lemma}\label{lem:contSameOrbPar}
For a continuous time change,
\[
\tau(x,w+z)=\tau(x,w)+\tau(\alpha_wx,z)
\]
for all $x\in X$ and $w,z\in\R^{n-1}$. Consequently, $(\alpha_w)_{w\in\R^{n-1}}$ is a continuous abelian action, and its orbits coincide with the $H_n$-orbits. Finally, the 
function $\beta$ as defined in \eqref{eq:defin_of_T_and_beta} is a cocycle for the flow $(T_t)_{t\in\R}$; namely, it satisfies

\[
\beta(x,t+s)=\beta(x,t)+\beta(T_tx,s)
\]
for all $x\in X$ and $s,t\in\R$.
\end{Lemma}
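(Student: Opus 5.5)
The plan is to derive everything from the forward cocycle identity for $\rho$ and the fact that each fiber map $\rho_x$ is a bijection of $\R^{n-1}$ with inverse $\tau(x,\cdot)$.

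\emph{Step 1: the inverse cocycle identity.} Fix $x,w,z$ and set $p=\tau(x,w)$, so that $\rho(x,p)=w$ and $\alpha_wx=\phi_px$. Put $q=\tau(\phi_px,z)$, so that $\rho(\phi_px,q)=z$. The cocycle identity for $\rho$ gives
\[
\rho(x,p+q)=\rho(x,p)+\rho(\phi_px,q)=w+z .
\]
Applying $\rho_x^{-1}$ yields $\tau(x,w+z)=p+q=\tau(x,w)+\tau(\alpha_wx,z)$, which is the claimed identity.

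\emph{Step 2: the action property and continuity.} Since $\rho(x,0)=0$ by the cocycle identity with $p=q=0$, we get $\tau(x,0)=0$ and hence $\alpha_0=\id$. Using Step 1 and the fact that $\phi$ is an action,
\[
\alpha_{w+z}x=\phi_{\tau(\alpha_wx,z)}\phi_{\tau(x,w)}x=\phi_{\tau(\alpha_wx,z)}\alpha_wx=\alpha_z\alpha_wx .
\]
By symmetry of $w+z$ this also equals $\alpha_w\alpha_zx$, so the action is abelian. Joint continuity of $(w,x)\mapsto\alpha_wx$ follows from the joint continuity of $\tau$ and of $(p,x)\mapsto\phi_px$.

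\emph{Step 3: orbits and the cocycle for $\beta$.} By definition $\alpha_wx\in H_nx$, so each $\alpha$-orbit lies in an $H_n$-orbit. Conversely, any point $\phi_px$ equals $\alpha_{\rho(x,p)}x$, because $\tau(x,\rho(x,p))=p$; hence $H_nx$ lies in the $\alpha$-orbit of $x$. For $\beta$, apply Step 1 with $w=tv$ and $z=sv$, and then apply $\iota$. Since $\alpha_{tv}x=T_tx$, this gives $\beta(x,t+s)=\beta(x,t)+\beta(T_tx,s)$.

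None of these steps is a real obstacle. The only point needing care is the bookkeeping in Step 1: one must pass from $\rho$ to $\tau$ using bijectivity of the fiber maps $\rho_x$, and note that $\tau$ is an inverse only fiberwise, not jointly in $(x,p)$.
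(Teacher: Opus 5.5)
Your proof is correct and follows the paper's argument essentially step for step. You derive the inverse identity from the $\rho$-cocycle identity via fiberwise bijectivity, obtain the action law by composing the $\phi$'s, get the orbit equality from $\phi_px=\alpha_{\rho(x,p)}x$, and prove the $\beta$ identity by restricting to $\R v$ and applying $\iota$.
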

\begin{proof}
    The cocycle identity at $p=q=0$ gives $\rho(x,0)=0$, hence $\tau(x,0)=0$. Define
    \[
    p=\tau(x,w),\qquad q=\tau(\alpha_wx,z).
    \]
    Then $\alpha_wx=\phi_px$, and Definition~\ref{def:continuousTimeChange} gives
    \[
    \rho(x,p+q)=\rho(x,p)+\rho(\phi_px,q)=w+z.
    \]
    Applying the inverse map $\tau(x,\cdot)$ proves the cocycle identity for $\tau$. Therefore
    \[
    \alpha_z(\alpha_wx)=\phi_{\tau(\alpha_wx,z)}\phi_{\tau(x,w)}x=\phi_{\tau(x,w+z)}x=\alpha_{w+z}x.
    \]
    Joint continuity follows from that of $\phi$ and $\tau$. Finally, $\alpha_wx=\phi_{\tau(x,w)}x$ and $\phi_px=\alpha_{\rho(x,p)}x$ show that the two actions have the same $H_n$-orbits.

    The cocycle identity for $\beta$ follows by restricting the cocycle identity for $\tau$ to the line $\R v$.
\end{proof}
\medskip

The following lemma shows the existence of the asymptotic drift with respect to an ergodic measure under an integrability assumption.

\begin{Lemma}\label{lem:oneStepIntCri}
    Let $\beta:X\times\mathbb{R}\longrightarrow\h_n$ be a continuous additive cocycle over the ergodic flow $T_t$. Assume 
    \[
    M(x)\coloneq\sup_{0\leq r\leq 1}\norm{\beta(x,r)}\in L^1(\nu).
    \]
    Then, for $\nu$-a.e. $x$,
    \[
    \lim_{t\to+\infty}\frac{\beta(x,t)}{t}=\int_X\beta(x,1) d\nu(x).
    \]
\end{Lemma}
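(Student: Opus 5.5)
The plan is to reduce to Birkhoff's ergodic theorem for the time-one map, and to control the fractional part of $t$ using the integrable maximal function $M$. Set $f(x)=\beta(x,1)$. Then $\norm{f}\le M\in L^1(\nu)$, so $f\in L^1(\nu,\h_n)$. Iterating the cocycle identity of Lemma~\ref{lem:contSameOrbPar} gives, for every integer $k\ge 1$,
\[
\beta(x,k)=\sum_{j=0}^{k-1}f(T_jx).
\]

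The first step is to obtain a limit along integer times. The time-one map $T_1$ preserves $\nu$, but it need not be ergodic, so Birkhoff's theorem for $T_1$ only gives
\[
\frac{\beta(x,k)}{k}\to \bar f(x),\qquad \bar f=\mathbb{E}_\nu[f\mid \mathcal{I}_1]\quad\text{$\nu$-a.e.},
\]
where $\mathcal{I}_1$ is the $T_1$-invariant $\sigma$-algebra. To identify $\bar f$ with the constant $\int f\,d\nu$, I will show that $\bar f$ is invariant under the whole flow. For $0\le r\le 1$ write
\[
\beta(T_rx,k)=\beta(x,k)+\beta(T_kx,r)-\beta(x,r).
\]
This follows from the cocycle identity applied to $\beta(x,r+k)$ in two ways. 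Dividing by $k$, the term $\beta(x,r)/k$ tends to $0$. The term $\beta(T_kx,r)/k$ is bounded by $M(T_kx)/k$. Since $M\in L^1(\nu)$ and $T_1$ preserves $\nu$, the standard Borel--Cantelli argument gives $M(T_kx)/k\to0$ a.e.: indeed $\sum_k\nu(M>\epsilon k)\le \epsilon^{-1}\int M\,d\nu<\infty$.

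Hence, for a.e. $x$ and every fixed $r$, $\bar f(T_rx)=\bar f(x)$ wherever both limits exist. Fubini on $X\times[0,1]$ handles the "a.e. $x$, a.e. $r$" quantifiers. It follows that $\bar f$ agrees a.e. with a $T_t$-invariant function, since the set of $r$ with $\bar f\circ T_r=\bar f$ a.e. is a group of full measure, hence all of $\R$. Ergodicity of the flow then forces $\bar f$ to be constant, equal to $\int \bar f\,d\nu=\int f\,d\nu$.

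The second step interpolates to real $t$. For $t\ge1$ write $t=k+r$ with $k=\lfloor t\rfloor$ and $r\in[0,1)$. Then
\[
\beta(x,t)=\beta(x,k)+\beta(T_kx,r),\qquad \norm{\beta(T_kx,r)}\le M(T_kx).
\]
By the Borel--Cantelli bound above, $M(T_kx)/k\to0$ a.e., and $k/t\to1$. Therefore $\beta(x,t)/t\to\int_X\beta(x,1)\,d\nu$ for $\nu$-a.e. $x$.

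The main technical point is the ergodicity issue in the first step: ergodicity of the flow does not imply ergodicity of $T_1$. The flow-invariance argument for $\bar f$ resolves this, and it uses precisely the maximal integrability of $M$ over one unit of time. Measurability of $M$ follows from continuity of $\beta$, since the supremum may be taken over rational $r$.
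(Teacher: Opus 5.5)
Your proof is correct, but it takes a different route from the paper. The paper never passes to the time-one map. It sets $H(x)=\int_0^1\beta(x,r)\,dr$ and uses the cocycle identity to obtain $\int_0^t\beta(T_sx,1)\,ds=\beta(x,t)+H(T_tx)-H(x)$, so that $\beta(x,t)$ is a continuous-time Birkhoff integral of $\beta(\cdot,1)$ up to a coboundary. The pointwise ergodic theorem for the ergodic flow then gives the constant limit $\int_X\beta(\cdot,1)\,d\nu$ at once. The only remaining work is $\norm{H(T_tx)}/t\to0$, which follows from $\norm{H(T_tx)}\le 2M_2(T_Nx)$, where $t=N+r$ and $M_2=\sup_{0\le r\le 2}\norm{\beta(\cdot,r)}\le M+M\circ T_1$, together with the same Borel--Cantelli argument you use. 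Averaging over a unit time window thus avoids the issue you correctly flag, that $T_1$ need not be ergodic.

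Your argument uses only the discrete Birkhoff theorem. It pays for this with the extra step showing that $\mathbb{E}_\nu[f\mid\mathcal{I}_1]$ is flow-invariant, plus the standard fact that a function which is a.e. invariant under each $T_t$ is a.e. constant for an ergodic flow. Both proofs share the Borel--Cantelli control of the fractional-time error.

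Your first step can be simplified. Define $\bar f$ as the pointwise limit of $\beta(\cdot,k)/k$ where it exists. Then your identity shows the following for every $x$ in the full-measure set where $\beta(x,k)/k$ converges and $M(T_kx)/k\to0$: the limit at $T_rx$ exists and equals $\bar f(x)$ for all $r\in[0,1]$ simultaneously. So the Fubini step is not needed.
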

\begin{proof}
Define $H(x)\coloneq\int_0^1\beta(x,r)dr$. The cocycle identity gives
\[
\int_0^t\beta(T_sx,1)ds=\beta(x,t)+H(T_tx)-H(x).
\]
It follows from the pointwise ergodic theorem for the flow $T_t$ that 
\[
\frac{1}{t}\int_0^t\beta(T_sx,1)ds\longrightarrow\int_X \beta(x,1)d\nu.
\]
If $M_2(x)\coloneq\sup_{0\leq r\leq 2}\norm{\beta(x,r)}$, then 
\[
M_2(x)\leq M(x)+(M\circ T_1)(x)\in L^1(\nu).
\]
Writing $t=N+r$ and $0\leq r<1$, we have
\[
\norm{H(T_tx)}\leq 2M_2(T_Nx).
\]
Since $T_N$ preserves $\nu$ and $M_2\in L^1(\nu)$, the Borel-Cantelli lemma gives $M_2(T_Nx)/N\to 0$ for $\nu$-a.e. $x\in X$ and therefore
\[
\norm{H(T_tx)}/t\to 0,
\]
which completes the proof of Lemma~\ref{lem:oneStepIntCri}.
\end{proof}
\medskip

In particular, if $\beta(x,t)=\int_0^t\xi(T_sx)ds$ with $\xi\in L^1(\nu,\h_n)$, the ergodic theorem gives $\frac{\beta(x,t)}{t}\to\int_X\xi d\nu$ for $\nu$-a.e. $x\in X$.

\begin{Lemma}\label{lem:normalization}
    Given
    \[
    0\neq V=\sum_{j=2}^nv_jU_j,\qquad \v=(v_2,\ldots,v_n),\qquad \varrho=\norm{\v},
    \]
    choose $O\in SO(n-1)$ with $\v O^{-1}=\varrho(1,0,\ldots,0)$, let $\lambda_V=\varrho^{1/n}$ and define
    \[
    g_V=\begin{pmatrix}
        \lambda_V^{-(n-1)} & 0 \\
        0 & \lambda_V O
    \end{pmatrix}\in\SL_n(\R).
    \]
    Then $g_V\in N_G(H_n)$ and $\Ad_{g_V}V=U_2$.
\end{Lemma}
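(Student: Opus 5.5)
The plan is a direct block-matrix computation. We identify $\h_n$ with row vectors in $\R^{n-1}$ and compute how conjugation by $g_V$ acts on them.

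First, $g_V\in\SL_n(\R)$. It is block diagonal, so $\det g_V=\lambda_V^{-(n-1)}\cdot\lambda_V^{n-1}\det O=1$, using $\det O=1$. The rotation $O$ exists because $n-1\geq 2$: then $\SO(n-1)$ acts transitively on the sphere of radius $\varrho$ in $\R^{n-1}$ (acting on row vectors on the right by $O^{-1}=O^{\mathrm T}$). Hence some $O$ sends $\v$ to $\varrho(1,0,\ldots,0)$.

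Next, write a general element of $\h_n$ as
\[
E_u=\begin{pmatrix}0&u\\0&0\end{pmatrix},\qquad u\in\R^{n-1}\ \text{a row vector},
\]
so that $E_u=\sum_j u_jU_{j+1}$. For any block diagonal $g=\diag(a,B)$ with $a\in\R^\times$ and $B\in\GL_{n-1}(\R)$, multiplying blocks gives
\[
gE_ug^{-1}=\begin{pmatrix}0&a\,uB^{-1}\\0&0\end{pmatrix}=E_{a uB^{-1}}.
\]
Take $a=\lambda_V^{-(n-1)}$ and $B=\lambda_VO$. Then $a\,uB^{-1}=\lambda_V^{-n}\,uO^{-1}=\varrho^{-1}uO^{-1}$. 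Thus $\Ad_{g_V}$ maps $\h_n$ linearly onto itself, by the invertible map $u\mapsto\varrho^{-1}uO^{-1}$.

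Since $\Ad_{g_V}$ preserves $\h_n$, we get $g_VH_ng_V^{-1}=g_V\exp(\h_n)g_V^{-1}=\exp(\Ad_{g_V}\h_n)=H_n$. Therefore $g_V\in N_G(H_n)$.

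Finally, apply the formula to $u=\v$. By the choice of $O$, we get $\varrho^{-1}\v O^{-1}=(1,0,\ldots,0)$, and $E_{(1,0,\ldots,0)}=U_2$. Hence $\Ad_{g_V}V=U_2$.

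There is no real obstacle here. The only points needing care are that the scalar factors combine to exactly $\lambda_V^{-n}=\varrho^{-1}$, which is why $\lambda_V=\varrho^{1/n}$ is chosen, and that the condition $n\geq3$ guarantees the rotation $O\in\SO(n-1)$ exists.
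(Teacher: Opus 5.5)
Your proposal is correct and follows essentially the same route as the paper. Like the paper, you compute that conjugation by $\diag(c,A)$ sends the upper-right row $u$ to $c\,uA^{-1}$, check that the scalars combine to $\lambda_V^{-n}=\varrho^{-1}$, and note that block diagonal matrices normalize $H_n$. You add two small justifications the paper takes for granted: the existence of $O$ via transitivity of $\SO(n-1)$ on spheres, and the identity $g_VH_ng_V^{-1}=\exp(\Ad_{g_V}\h_n)$.
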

\begin{proof}
    Relative to $\R e_1\oplus\operatorname{span}\{e_2,\ldots,e_n\}$, the element $V$ has upper-right row $\v$. Conjugation by $\diag(c,A)$ sends that row to~$c\v A^{-1}$. If $c=\lambda_V^{-(n-1)}$ and $A=\lambda_VO$, then we have
    \[
    c\v A^{-1}=\lambda_V^{-n}\v O^{-1}=\lambda_V^{-n}\varrho(1,0,\ldots,0)=(1,0,\ldots,0).
    \]
    The determinant of $g_V$ is one, and block diagonal matrices of this form normalize $H_n$. This proves the lemma.
\end{proof}

Define
\[
\Gamma_V\coloneq g_V\Gamma g_V^{-1},\qquad X_V\coloneq G/\Gamma_V,
\]
and let
\[
\mathcal{C}_V:X\longrightarrow X_V,\qquad \mathcal{C}_V(h\Gamma)\coloneq g_Vhg_V^{-1}\Gamma_V.
\]
Define the conjugated action by
\[
\alpha_w^V\coloneq\mathcal{C}_V\alpha_w\mathcal{C}_V^{-1},\qquad w\in\R^{n-1}.
\]
Since $g_V\in N_G(H_n)$, the conjugated full action $\alpha_w^V$ is again a continuous time change of the algebraic $H_n$-action on $X_V$.

Set
\[
\nu_V\coloneq(\mathcal{C}_V)_*\nu,\qquad T_t^V\coloneq\mathcal{C}_VT_t\mathcal{C}_V^{-1},
\]
and define
\[
\beta_V(\mathcal{C}_Vx,t)\coloneq\Ad_{g_V}\beta(x,t).
\]
Then, $\nu_V$ is $T_t^V$-invariant and ergodic, and its asymptotic drift is $U_2$. We henceforth work in this normalized system and suppress the subscript~$V$.

\section{Compact Ratner Frames}\label{sec:cptRatnerFra}
In this section and \S\ref{sec:RigidityFullGRaFam} we use independent notation. Let $G_0$ be a connected semisimple real linear group, $\Gamma_0<G_0$ a lattice and $\mathfrak{u}<\mathfrak{g}_0=\operatorname{Lie}(G_0)$ an abelian Lie algebra 
of dimension $m$ consisting of nilpotent elements.
Let $N=\exp(\mathfrak{u})$ and $X_0=G_0/\Gamma_0$. Equip $G_0$ with a complete right-invariant Riemannian metric and denote its distance by $d_{G_0}$. Let $d_0$ be the induced quotient distance on $X_0$, and set
\[
\bar{d}_0=\min\{1,d_0\}.
\]
Let
\[
T_tx=\exp(\beta(x,t))x,\qquad \beta(x,t)\in\mathfrak{u},
\]
be a directional restriction of a continuous time change of the $N$-action. Let $\nu$ be a $T_t$-ergodic probability measure and assume that the asymptotic drift exists and equals $V\in\mathfrak{u}\setminus\{0\}$.

For $K\in\mathfrak{g}_0$, we let
\[
\Delta_{K,s}(x,t)=\beta(\exp(sK)x,t)-\beta(x,t).
\]

For a compact Ratner frame as in Definition \ref{def:CptRatnerFra}, we introduce two families of perturbations and their cocycle increments.
\begin{Definition}\label{def:twoPackets}
    Let $\cF=\{(K_i,d_i,A_i)\}_{i=1}^m$ be a compact Ratner frame, and let 
    $B=\sum_ib_iA_i\in\mathfrak{u}$. For any $u\in \R$ and $\varepsilon\in (0,1)$, we define 
    \[
    P_B(u)\coloneq\sum_{i=1}^mb_iu^{d_i}A_i, \qquad \text{and} \qquad Z_{\epsilon}(B)=\sum_{i=1}^mb_i\epsilon^{d_i}K_i.
    \]
    Furthermore, we define 
    \[
    g_{\epsilon,B}^{\mathrm{exp}}\coloneq\exp Z_{\epsilon}(B), \qquad \text{and} \qquad g_{\epsilon,B}^{\mathrm{ord}}\coloneq\exp\bigl(b_m\epsilon^{d_m}K_m\bigr)\cdots\exp\bigl(b_1\epsilon^{d_1}K_1\bigr),
    \]
    and
    \[
    \Delta_{\epsilon,B}^{\diamond}(x,t)\coloneq\beta(g_{\epsilon,B}^{\diamond}x,t)-\beta(x,t),
    \]
    where $\diamond\in\{\mathrm{exp},\mathrm{ord}\}$. 
\end{Definition}
We now state our two main regularity assumptions.

\begin{Definition}[Pointwise regularity]\label{def:PtReg}
    A compact Ratner frame is pointwise regular for $(T,\nu)$ if there are $C_i>0$, $s_i>0$, $\delta_i>0$, and $\gamma_i\geq0$ such that
    \[
    \sup_{x\in X_0}\norm{\Delta_{K_i,s}(x,t)}\leq C_i\abs{s}^{\delta_i}(1+\abs{t})^{\gamma_i},\qquad d_i\delta_i>\gamma_i,
    \]
    for $\abs{s}\leq s_i$ and $t\geq 0$.
\end{Definition}

\begin{Definition}[Correlation regularity]\label{def:RatnerCorReg}
    A compact Ratner frame is correlation regular for $(T,\nu)$ if there are $\chi>0$ and $\delta_i>0$ such that
    \begin{equation}\label{eq:kappaDef}
        \kappa\coloneq\min_{1\leq i\leq m}d_i\delta_i>1-\frac{1}{2}\min\{\chi,1\},
    \end{equation}
    and, for every $R>0$, there are $C_R>0$ and $\epsilon_R>0$ such that, for $0<\epsilon<\epsilon_R$ and $\norm{B}\leq R$, the map $t\mapsto\Delta_{\epsilon,B}^{\mathrm{exp}}(\cdot,t)$ is locally absolutely continuous as an $L^2(\nu,\mathfrak{u})$-valued map. Denoting its derivative by $\mathcal{D}_{\epsilon,B}$, one has
    \begin{equation}\label{eq:derCorReg}
        \abs{\int_{X_0}\langle\mathcal{D}_{\epsilon,B}(x,u),\mathcal{D}_{\epsilon,B}(x,w)\rangle d\nu(x)}\leq C_R\left(\sum_{i=1}^m\epsilon^{d_i\delta_i}\right)^2(1+\abs{u-w})^{-\chi}
    \end{equation}
    for $0<\epsilon<\epsilon_R$, almost every $u,w\geq 0$ and $\norm{B}\leq R$.
\end{Definition}

The goal of this section is to prove the following results.
\begin{Theorem}[Large centraliser from correlation regularity]\label{thm:graphProRootCor}
    Assume that a compact Ratner frame is correlation regular for $(T,\nu)$. Then there is a conull set $E\subset X_0$ such that for every $x\in E$, every $B\in\mathfrak{u}$ and $t\in\R$
    \[
    \exp(B)T_tx=T_t\exp(B)x.
    \]
\end{Theorem}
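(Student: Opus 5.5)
Following the overview in the introduction, the plan has four steps. First, we turn the perturbation $g^{\mathrm{exp}}_{\epsilon,B}$ into a polynomial displacement along $\mathfrak{u}$. Second, we show that correlation regularity kills the cocycle increment at the relevant time scale. Third, we pass to a limiting joining supported on an algebraic graph. Fourth, we extract commutation on a conull set.

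\emph{Step 1 (algebraic displacement).} Fix $B=\sum_i b_iA_i$ and $u>0$, and put $t=u/\epsilon$. By property (ii) of Definition~\ref{def:CptRatnerFra}, $\Ad_{\exp(-tV)}Z_\epsilon(B)$ is a polynomial in $t$:
\[
\Ad_{\exp(-tV)}Z_\epsilon(B)=\sum_i b_i\epsilon^{d_i}\sum_{k\le d_i}\frac{(-t)^k}{k!}\ad_V^kK_i .
\]
At $t=u/\epsilon$ the top-degree terms equal $\sum_i b_i(\pm u)^{d_i}A_i$, which is $P_B(u)$ up to the sign convention; I would fix signs so that the result is $P_B(u)$. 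Every lower-order term carries a positive power of $\epsilon$. Hence
\[
\exp(-tV)\,g^{\mathrm{exp}}_{\epsilon,B}\exp(tV)\to \exp(P_B(u))
\]
uniformly for $u$ in compacts. The map $B\mapsto P_B(u)$ is a polynomial bijection onto $\mathfrak{u}$ for $u>0$, since the $A_i$ form a basis.

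\emph{Step 2 (increment is negligible).} For the physical orbits we only know $\beta(x,t)=tV+o(t)$ $\nu$-a.e. We therefore compare $T_t g x$ and $T_tx$ through
\[
T_t(gx)=\exp(\beta(gx,t))\,g\,x=\exp\bigl(\beta(x,t)+\Delta_{\epsilon,B}(x,t)\bigr)\,g\,x .
\]
Writing $\beta(x,t)=tV+r(x,t)$, we get
\[
\exp(\beta(x,t))\,g\,\exp(-\beta(x,t))=\exp(r)\,\exp(tV)\,g\,\exp(-tV)\,\exp(-r).
\]
To control this we need $\|\Delta_{\epsilon,B}(\cdot,u/\epsilon)\|_{L^2(\nu)}\to0$. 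I also need $r(x,t)$ to be harmless. Since $\mathfrak{u}$ is abelian, $\exp(r)$ commutes with $\exp(P_B(u))$. However, conjugating the lower-order error terms by $\exp(r)$ with $r=o(t)$ is delicate; I would handle it via Egorov's theorem and a uniform rate on a large-measure set.

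For the $L^2$ bound, write
\[
\Delta(\cdot,t)=\int_0^t\mathcal{D}(\cdot,w)\,dw
\]
(note $\Delta(\cdot,0)=0$). Then
\[
\|\Delta(\cdot,t)\|^2=\iint\langle\mathcal{D}(\cdot,u'),\mathcal{D}(\cdot,w)\rangle\le C_R\Bigl(\sum_i\epsilon^{d_i\delta_i}\Bigr)^2\iint_{[0,t]^2}(1+|u'-w|)^{-\chi},
\]
which is $O(\epsilon^{2\kappa}t^{2-\min\{\chi,1\}}\log t)$. At $t=u/\epsilon$ this gives
\[
\|\Delta\|_{L^2}\lesssim \epsilon^{\kappa-1+\min\{\chi,1\}/2}\,\sqrt{\log(1/\epsilon)}\to0,
\]
exactly by \eqref{eq:kappaDef}.

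\emph{Step 3 (graph joining).} Consider the measures $\lambda_\epsilon$ given by averaging over $s\in[0,S]$ the pushforwards of $\nu$ under
\[
x\mapsto \bigl(T_{s}x,\ T_{s}g_{\epsilon,B}x\bigr)
\]
or, more precisely, take paired orbit averages over a window of length $\asymp\epsilon^{-1}$. By Steps 1--2, the second coordinate is close to $\exp(P_B(u))$ applied to the first, for $u$ of order one. Take weak-$*$ limits as $\epsilon\to0$; tightness follows from the finite-volume hypothesis and the fact that the displacement is small. The limit is supported on the graph of $x\mapsto\exp(P_B(u))x$ over $\nu$. Invariance of the paired averages under $T\times T$ then gives that $\exp(P_B(u))$ maps $\nu$ to itself and intertwines $T_t$ $\nu$-a.e. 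A cleaner alternative avoids joinings: directly show
\[
\bar d_0\bigl(T_t\exp(P)x,\ \exp(P)T_tx\bigr)=0
\]
in $\nu$-measure, using the cocycle identity at times $t'$ and $t'+t$ with $t'=u/\epsilon$ and Step 2 at both times. Since $\exp(P)$ commutes with $\exp(\mathfrak u)$ and $T$ moves along $\mathfrak u$ leaves, this is the same as checking
\[
\beta(\exp(P)x,t)=\beta(x,t)\quad \nu\text{-a.e.}
\]
That in turn reduces to showing $\beta(g_{\epsilon,B}T_{t'}x,t)-\beta(T_{t'}x,t)\to0$ in measure, which follows from Step 2 via the cocycle identity together with $T$-invariance of $\nu$. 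I expect this passage, replacing $g_{\epsilon,B}$ at base point $T_{t'}x$ by $\exp(P_B(u))$ inside $\beta(\cdot,t)$ using only continuity of $\beta$, to be the main obstacle. It needs uniform continuity on compacts plus the Egorov control of $r$ from Step 2.

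\emph{Step 4 (conull set).} We obtain, for each $B$ in a countable dense set and each rational $t$, a conull set on which $\exp(B)T_tx=T_t\exp(B)x$. We then intersect these countably many sets to get $E$. Joint continuity of $T$ and of $\exp$ extends the identity to all $B\in\mathfrak{u}$ and $t\in\R$. Finally, $t<0$ follows by applying $T_{-t}$ on both sides.
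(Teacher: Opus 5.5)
Steps 1, 2 and 4 are sound and use the same ingredients as the paper (Lemma~\ref{lem:polyPacLim}, Lemmas~\ref{lem:covInt} and~\ref{lem:meanSquContr}, and the countable intersection of Proposition~\ref{prop:regPcGraph}). The Egorov control of $r$ you worry about in Step 2 is unnecessary. The relevant conjugation is by $\exp(+\beta(x,u/\epsilon))$, and you can expand it directly: $\epsilon^{d_i}\Ad_{\exp\beta(x,u/\epsilon)}K_i=\sum_{\ell\le d_i}u^{\ell}\epsilon^{d_i-\ell}\ad^{\ell}_{V_\epsilon}K_i/\ell!$ with $V_\epsilon=\frac{\epsilon}{u}\beta(x,u/\epsilon)\to V$. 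This uses condition (ii) of Definition~\ref{def:CptRatnerFra} for arbitrary elements of $\mathfrak{u}$, and gives the limit $\exp(P_B(u))$, with the correct signs, at every $x$ where the drift exists.

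The gap is in Step 3, where the theorem is actually proved; neither of your two versions works as written.
\begin{itemize}
\item In the joining version, a window of length $\asymp\epsilon^{-1}$ lets $u=\epsilon s$ sweep a nondegenerate interval. Any limit is then a mixture of the graphs of $\exp(P_B(u))$ over that interval, not a single graph. The paper averages along one generic orbit over windows $I_j$ of relative width $\theta_j\to0$ but absolute length $\to\infty$. Alternatively, you could use the law of $(T_{1/\epsilon}x,T_{1/\epsilon}g^{\mathrm{exp}}_{\epsilon,B}x)$ under $\nu$ at the single time $1/\epsilon$.
\item Your claim that $\exp(P_B(u))$ preserves $\nu$ is false in general and is not needed; take the algebraic flow and a periodic-orbit measure on $\SL_3(\R)/\SL_3(\Z)$.
\item In the direct version, the reduction you state is vacuous. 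Since $g^{\mathrm{exp}}_{\epsilon,B}\to e$, the convergence $\beta(g^{\mathrm{exp}}_{\epsilon,B}T_{t'}x,t)-\beta(T_{t'}x,t)\to0$ is trivial by continuity and carries no information about $\exp(P)$. That is why the passage to $\exp(P_B(u))$ remains an unresolved ``main obstacle''.
\end{itemize}

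The fix is to put $g$ on the other side of $T_{t'}$. Take $u=1$, $t'=1/\epsilon$, $g=g^{\mathrm{exp}}_{\epsilon,B}$, and fix $\sigma\in\R$.
\begin{enumerate}
\item The cocycle identity gives $\beta(T_{t'}gx,\sigma)-\beta(T_{t'}x,\sigma)=\Delta(x,t'+\sigma)-\Delta(x,t')$, which tends to $0$ in $L^2(\nu)$ by Lemma~\ref{lem:meanSquContr}.
\item Lemma~\ref{lem:exactRelId} gives $T_{t'}gx=k_\epsilon(x)\exp(B)T_{t'}x$ with $k_\epsilon(x)=\exp(\Delta(x,t'))\exp(\beta(x,t'))g\exp(-\beta(x,t'))\exp(-B)$. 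By the expansion above, $P_B(1)=B$, and $\Delta(\cdot,t')\to0$ in $L^2(\nu)$, we get $k_\epsilon\to e$ in $\nu$-measure.
\item Since $\exp(B)T_{t'}x$ has law $\exp(B)_*\nu$ for every $\epsilon$, tightness and uniform continuity of $(k,z)\mapsto\beta(kz,\sigma)$ on compact sets give $\beta(T_{t'}gx,\sigma)-\beta(\exp(B)T_{t'}x,\sigma)\to0$ in $\nu$-measure. Combined with step 1, $\beta(\exp(B)T_{t'}x,\sigma)-\beta(T_{t'}x,\sigma)\to0$ in $\nu$-measure.
\item By $T_{t'}$-invariance of $\nu$, this quantity has the same distribution as $y\mapsto\beta(\exp(B)y,\sigma)-\beta(y,\sigma)$, which does not depend on $\epsilon$. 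Hence it vanishes $\nu$-a.e., abelianity gives $T_\sigma\exp(B)y=\exp(B)T_\sigma y$, and your Step 4 finishes.
\end{enumerate}

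Thus corrected, your route is genuinely different from, and shorter than, the paper's Proposition~\ref{prop:scaleGraCri}. The paper picks uniformly generic base points by Egorov and Markov, forms paired orbit averages, applies Lemma~\ref{lem:tightGraphLem}, and extracts commutation from diagonal invariance of the graph joining. Using invariance of $\nu$ to average over base points replaces this single-orbit machinery. It also yields $\beta(\exp(B)y,\sigma)=\beta(y,\sigma)$ directly, which the paper obtains only afterwards from discreteness of $\Lambda_x$ in Theorem~\ref{thm:fullGraRigid}.
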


\begin{Theorem}[Large centralizer from pointwise regularity]\label{thm:graphProPtRoot}
    Assume that a compact Ratner frame is pointwise regular for $(T,\nu)$. Then there is a conull set $E\subset X_0$ such that for every $x\in E$, every $B\in\mathfrak{u}$ and $t\in\R$
    \[
    \exp(B)T_tx=T_t\exp(B)x.
    \]
\end{Theorem}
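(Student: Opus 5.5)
Write $g_\epsilon=g^{\mathrm{ord}}_{\epsilon,B}$ and $y=g_\epsilon x$. The plan is a Ratner-type polynomial divergence argument at the critical scale $t\approx\epsilon^{-1}$.

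\emph{Step 1: polynomial divergence.} We compare $T_{t}y$ with $T_tx$. Since $\beta(x,t)/t\to V$, we have $T_tx=\exp(\beta(x,t))x$ with $\beta(x,t)=tV+o(t)$. Write $\exp(\beta(y,t))g_\epsilon\exp(-\beta(x,t))$ as
\[
\exp(\Delta(x,t))\cdot\exp(\beta(x,t))g_\epsilon\exp(-\beta(x,t)),
\]
where $\Delta=\Delta^{\mathrm{ord}}_{\epsilon,B}$; this factorization uses that $\mathfrak u$ is abelian. By condition (ii) of Definition~\ref{def:CptRatnerFra}, the adjoint expansion gives
\[
\Ad_{\exp(\beta)}(b_i\epsilon^{d_i}K_i)=b_i\epsilon^{d_i}\sum_{k\le d_i}\ad_\beta^kK_i/k!.
\]
At $t=u/\epsilon$ with $\beta=tV+o(t)$, the top term tends to $b_iu^{d_i}A_i$ and all lower terms are $O(\epsilon)$. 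Multiplying the factors in order (the limits lie in the abelian $\mathfrak u$), the conjugated element converges to $\exp(P_B(u))$. This requires a uniform-in-$x$ version of the drift convergence, which I would obtain on a large-measure set via Egorov.

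\emph{Step 2: the increment vanishes.} Pointwise regularity, applied one factor at a time and telescoped through the cocycle identity along the ordered product, gives
\[
\|\Delta(x,u/\epsilon)\|\le\sum_i C_i|b_i|^{\delta_i}\epsilon^{d_i\delta_i}(1+u/\epsilon)^{\gamma_i}.
\]
Each term tends to $0$ because $d_i\delta_i>\gamma_i$. Each intermediate point is $\exp(\cdot)x$ for an arbitrary $x$, so the supremum over $X_0$ makes the telescoping legitimate. Hence, for $x$ in an Egorov set $E_\eta$,
\[
T_{u/\epsilon}(g_\epsilon x)=\exp\bigl(P_B(u)+o(1)\bigr)\,T_{u/\epsilon}x,
\]
uniformly for $u$ in compacts.

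\emph{Step 3: joining limit and the graph relation.} Form empirical measures of the pairs $(T_sx,T_s g_\epsilon x)$ averaged over $s\in[0,L/\epsilon]$, or equivalently push $\nu|_{E_\eta}$ forward under these pairs at times $u/\epsilon$ and average in $u$. By Step 2 and Lusin continuity, any weak limit is supported on the graphs $\{(z,\exp(P_B(u))z)\}$. Ergodic decomposition and the invariance of the limit under $T\times T$ (since $g_\epsilon\to\id$ and $\nu$ is $T$-invariant) yield, for a.e.\ $z$, the relation $T_t\exp(P_B(u))z=\exp(P_B(u))T_tz$. The constant $B$ is arbitrary, and the set $\{P_B(u)\}$ covers a neighbourhood of each $u^{d}$-scaled basis direction; using $u=1$ with $B$ ranging over a countable dense set, the vectors $P_B(1)=B$ give every $B$ in a dense set. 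Continuity of $T$ then extends the relation to all $B$ on a conull set, and the group property passes it to all $t$ via $T_{-t}$.

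The main obstacle is Step 3. The joining limit must be taken in a way that identifies $\exp(B)$ exactly rather than along a subsequence. Non-compactness of $X_0$ (escape of mass) also has to be controlled, and I would handle it using that the pairs lie over $\nu$ in the first coordinate, so tightness holds there and transfers to the second coordinate through the bounded displacement.
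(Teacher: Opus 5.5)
Your Steps 1 and 2 are correct and match Lemmas~\ref{lem:polyPacLim} and~\ref{lem:pointPacContr} of the paper.

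The gap is in Step 3. If you average over $s\in[0,L/\epsilon]$, equivalently over $u\in[0,L]$, the displacement $\exp(P_B(u))$ varies macroscopically across the window. The limit is then not a graph joining but a superposition, heuristically $\frac1L\int_0^L(\mathrm{id},\exp(P_B(u)))_*\nu\,du$, supported on a union of graphs. Its $T\times T$-invariance only says that $T_\sigma$ carries the fibre measure $\frac1L\int_0^L\delta_{\exp(P_B(u))z}\,du$ to the fibre measure over $T_\sigma z$. It does not give $T_\sigma\exp(P_B(u))z=\exp(P_B(u))T_\sigma z$ for a fixed $u$, in particular for $u=1$. Ergodic decomposition does not separate the graphs either. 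To separate them you would need the label $u$ of the graph through $(z,y)$ to be $T\times T$-invariant, and that is precisely the commutation relation you are trying to prove. If instead you do not average in $u$, you get a single graph, but then nothing in your argument (``$g_\epsilon\to\mathrm{id}$ and $\nu$ is $T$-invariant'') gives $T\times T$-invariance. Moreover, with $\nu|_{E_\eta}$ the first marginals $(T_{u/\epsilon})_*\nu|_{E_\eta}$ move with $\epsilon$ and do not pin down the limit.

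The missing idea is to decouple two scales. The averaging window must be long in absolute time, to produce invariance, but short relative to $1/\epsilon$, so that the displacement converges to the single element $\exp(P_B(1))=\exp(B)$. The paper does this in Proposition~\ref{prop:scaleGraCri}. It uses windows $[(1-\theta_j)/\eta_j,(1+\theta_j)/\eta_j]$ with $\theta_j\to0$ and $\theta_j/\eta_j\to\infty$, along a single uniformly generic orbit chosen via Egorov and Markov. Lemma~\ref{lem:tightGraphLem} then identifies the limit as $(\mathrm{id},\exp(B))_*\nu$.

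Your pushforward variant can also be repaired without any averaging:
\begin{enumerate}
\item Push all of $\nu$ (not $\nu|_{E_\eta}$) forward by $(T_{1/\epsilon}\times T_{1/\epsilon})\circ(\mathrm{id},g_\epsilon)$, so the first marginal is exactly $\nu$.
\item The a.e.\ drift together with dominated convergence gives concentration near the graph of $\exp(B)$, hence convergence to $(\mathrm{id},\exp(B))_*\nu$ by Lemma~\ref{lem:tightGraphLem}.
\item The same holds at time $1/\epsilon+\sigma=(1+\sigma\epsilon)/\epsilon$.
\item Weak continuity of $(T_\sigma\times T_\sigma)_*$ then gives the required invariance.
\end{enumerate}
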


We will prove Theorems \ref{thm:graphProRootCor} and \ref{thm:graphProPtRoot} at the end of the section. 

\subsection{Preliminary lemmas}
We prove a few results that will be useful below.

\begin{Lemma}\label{lem:exactRelId}
    Let $g\in G_0$, $x\in X_0$ and $y=gx$. Then, for every $t\in\R$, the following identity holds in~$X_0$
    \[
    T_ty=\exp\bigl(\beta(gx,t)-\beta(x,t)\bigr)\Bigl(\exp\bigl(\beta(x,t)\bigr)g\exp\bigl(-\beta(x,t)\bigr)\Bigr)T_tx.
    \]
\end{Lemma}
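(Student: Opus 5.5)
This identity is purely algebraic: we simply expand both sides using the definition of $T_t$ as $T_tz=\exp(\beta(z,t))z$. No dynamics enter.

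First apply the definition at the point $y=gx$, which gives $T_ty=\exp(\beta(gx,t))gx$. Then apply it at $x$: since $T_tx=\exp(\beta(x,t))x$, we have $x=\exp(-\beta(x,t))T_tx$. Substituting this expression for $x$ yields
\[
T_ty=\exp\bigl(\beta(gx,t)\bigr)\,g\,\exp\bigl(-\beta(x,t)\bigr)T_tx.
\]

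Next, insert $\exp(-\beta(x,t))\exp(\beta(x,t))$ immediately to the left of $g$, and regroup the terms as
\[
\Bigl[\exp\bigl(\beta(gx,t)\bigr)\exp\bigl(-\beta(x,t)\bigr)\Bigr]\Bigl(\exp\bigl(\beta(x,t)\bigr)g\exp\bigl(-\beta(x,t)\bigr)\Bigr)T_tx.
\]

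Finally, note that $\beta(gx,t)$ and $\beta(x,t)$ both lie in the abelian Lie algebra $\mathfrak{u}$. The exponential map is therefore a homomorphism on $\mathfrak{u}$, so the first bracket equals $\exp\bigl(\beta(gx,t)-\beta(x,t)\bigr)$. This is exactly the claimed identity.

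The only point that needs checking is this last step. We use that $\mathfrak{u}$ is abelian, so the Baker--Campbell--Hausdorff correction vanishes. Apart from that, the argument is associativity of the $G_0$-action on $X_0$. There is no real obstacle, and the identity holds for every $t\in\R$ and every $x$, with no measure-theoretic qualification.
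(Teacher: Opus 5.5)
Your proposal is correct and is essentially the paper's proof. The paper also writes $T_ty=\exp(\beta(gx,t))gx$, substitutes $x=\exp(-\beta(x,t))T_tx$, inserts the conjugation, and uses that $\mathfrak{u}$ is abelian to combine $\exp(\beta(gx,t))\exp(-\beta(x,t))$ into $\exp\bigl(\beta(gx,t)-\beta(x,t)\bigr)$.
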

\begin{proof}
Define
\[
A=\beta(gx,t),\qquad C=\beta(x,t).
\]
Since
\[
T_ty=\exp(A)gx\quad\text{and}\quad x=\exp(-C)T_tx,
\]
we have
\[
T_ty=\exp(A)g\exp(-C)T_tx=\exp(A-C)\Bigl(\exp(C)g\exp(-C)\Bigr)T_tx,
\]
where the last equality uses $A,C\in\mathfrak{u}$ and the abelian property of $\mathfrak{u}$.
\end{proof}

For $\chi>0$ and $t\geq 0$, define
\begin{equation}\label{eq:Mchi}
    M_{\chi}(t)\coloneq \begin{cases}
        (1+t)^{1-\chi/2},&0<\chi<1,\\
        (1+t)^{1/2}\log^{1/2}(2+t),&\chi=1,\\
        (1+t)^{1/2},&\chi>1.
    \end{cases}    
\end{equation}

\begin{Lemma}\label{lem:covInt}
    Let $(Y,\nu)$ be a standard probability space, $\mathcal{H}$ a finite-dimensional real inner product space, and let
    \[
    F:[0,\infty)\longrightarrow L^2(\nu,\mathcal{H})
    \]
    be locally absolutely continuous with $F(0)=0$. Let $D(\cdot,t)$ denote its derivative as an $L^2(\nu,\mathcal{H})$-valued map, defined for almost every $t\geq0$, and let $(y,t)\mapsto D(y,t)$ be the chosen jointly measurable representative.
    Suppose that, for some $A\geq 0$ and $\chi>0$,
    \[
    \abs{\int_Y\langle D(y,u),D(y,w)\rangle_{\mathcal{H}} d\nu(y)}\leq A^2(1+\abs{u-w})^{-\chi}
    \]
    for almost every $u,w\geq 0$. Then there is a constant $C_{\chi}>0$ such that
    \[
    \norm{F(t)}_{L^2(\nu,\mathcal{H})}\leq C_{\chi}A M_{\chi}(t)\qquad t\geq 0.
    \]
\end{Lemma}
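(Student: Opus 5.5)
Since $F$ is locally absolutely continuous with $F(0)=0$, I would write $F(t)=\int_0^t D(\cdot,u)\,du$ as a Bochner integral in $L^2(\nu,\mathcal{H})$. Expanding the square norm and exchanging integrals by Fubini (justified by joint measurability and local integrability of $u\mapsto\norm{D(\cdot,u)}_{L^2}$) gives
\[
\norm{F(t)}_{L^2(\nu,\mathcal{H})}^2=\int_0^t\!\!\int_0^t\int_Y\langle D(y,u),D(y,w)\rangle_{\mathcal{H}}\,d\nu(y)\,du\,dw .
\]
The exchange is what makes the pointwise-a.e. correlation hypothesis usable. It should be checked on a Hilbert-space level: $\langle F(t),F(t)\rangle=\int\!\int\langle D(u),D(w)\rangle\,du\,dw$ holds for Bochner integrals, and the inner integrand coincides a.e. with the $\nu$-integral of the chosen representative. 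Taking diagonal pairs in the hypothesis also shows $\norm{D(\cdot,u)}_{L^2}\le A$ for a.e. $u$, so all integrals are finite.

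Next I would insert the bound and reduce to a one-dimensional integral:
\[
\norm{F(t)}^2\le A^2\int_0^t\!\!\int_0^t(1+\abs{u-w})^{-\chi}\,du\,dw\le 2A^2\,t\int_0^t(1+r)^{-\chi}\,dr .
\]
The last step substitutes $r=u-w$; the measure of $\{(u,w)\in[0,t]^2:\ u-w\in dr\}$ is $(t-\abs r)\,dr\le t\,dr$. The remaining integral is computed case by case:
\begin{itemize}
\item for $0<\chi<1$ it is at most $(1+t)^{1-\chi}/(1-\chi)$, giving $\norm{F(t)}^2\lesssim(1+t)^{2-\chi}$;
\item for $\chi=1$ it is at most $\log(1+t)\le\log(2+t)$, giving $\norm{F(t)}^2\lesssim(1+t)\log(2+t)$;
\item for $\chi>1$ it is at most $1/(\chi-1)$, giving $\norm{F(t)}^2\lesssim 1+t$.
\end{itemize}
Taking square roots yields $\norm{F(t)}\le C_\chi A\,M_\chi(t)$ with $C_\chi$ depending only on $\chi$, for example $C_\chi^2=2\max\{1/(1-\chi),1,1/(\chi-1)\}$ in the respective cases.

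The only delicate point is the first step: identifying the square norm with the double integral of the chosen representative, given that the correlation bound holds only for almost every $(u,w)$. Everything after that is an elementary computation.
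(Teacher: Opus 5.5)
Your proposal is correct and follows the paper's proof. Both write $F(t)=\int_0^tD(\cdot,u)\,du$ as a Bochner integral, expand $\norm{F(t)}^2$ as the double integral of the correlations, insert the a.e.\ bound, and evaluate $\int_0^t\int_0^t(1+\abs{u-w})^{-\chi}\,du\,dw$; your cruder estimate $t-\abs{r}\le t$ still gives $M_\chi(t)^2$ up to constants. One small correction: the hypothesis holds only for a.e.\ $(u,w)$ and the diagonal is a null set, so ``taking diagonal pairs'' does not justify $\norm{D(\cdot,u)}_{L^2}\le A$. You do not need that bound, because finiteness already follows, as in the paper, from $\int_0^t\int_0^t\abs{\langle D(\cdot,u),D(\cdot,w)\rangle}\,du\,dw\le\bigl(\int_0^t\norm{D(\cdot,u)}_{L^2}\,du\bigr)^2<\infty$.
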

\begin{proof}
    By the fundamental theorem of calculus for Banach-valued absolutely continuous maps,
    \[
    F(t)=\int_0^tD(\cdot,u)du
    \]
    in $L^2(\nu,\mathcal{H})$, where the integral is understood in the Bochner sense. Since
    \[
    \begin{aligned}
        \int_0^t\int_0^t\abs{\langle D(\cdot,u),D(\cdot,w)\rangle_{L^2(\nu,\mathcal{H})}}dudw\leq\left(\int_0^t\norm{D(\cdot,u)}_{L^2(\nu,\mathcal{H})}du\right)^2<\infty,
    \end{aligned}
    \]
    continuity and bilinearity of the Hilbert space inner product give 
    \[
    \begin{aligned}
        \norm{F(t)}_{L^2(\nu)}^2=&\int_0^t\int_0^t\langle D(\cdot,u),D(\cdot,w)\rangle_{L^2(\nu,\mathcal{H})}dudw\\
        =&\int_0^t\int_0^t\int_Y\langle D(y,u),D(y,w)\rangle_{\mathcal{H}} d\nu(y)dudw\\
        \leq& A^2\int_0^t\int_0^t(1+\abs{u-w})^{-\chi}dudw.
    \end{aligned}
    \]
    Finally,
    \[
    \begin{aligned}
    \int_0^t\int_0^t(1+\abs{u-w})^{-\chi}dudw=2\int_0^t(t-r)(1+r)^{-\chi}dr&\ll_\chi M_{\chi}(t)^2.
    \end{aligned}
    \]
    Taking square roots proves the lemma.
\end{proof}

\begin{Lemma}\label{lem:tightGraphLem}
    Let $S:X_0\to X_0$ be continuous and $\{\eta_j\}_{j\geq 1}$ a sequence of probability measures on $X_0\times X_0$. Suppose that their first marginals converge weakly to a probability measure $\nu$ and
    \[
    \int_{X_0\times X_0}\bar{d}_0(y,Sx)d\eta_j(x,y)\longrightarrow0.
    \]
    Then
    \[
    \eta_j\longrightarrow(\id,S)_*\nu
    \]
    weakly. In particular, $\{\eta_j\}_{j\geq 1}$ is tight.
\end{Lemma}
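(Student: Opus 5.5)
The idea is to compare the measures $\eta_j$ with the graph measure $(\id,S)_*\nu$ against Lipschitz test functions, and to obtain tightness from the fact that weak convergence of probability measures to a probability measure already gives tightness. Here $X_0$ is a separable metric space with the bounded metric $\bar d_0$, and I equip $X_0\times X_0$ with the sum metric $\bar d_0(x,x')+\bar d_0(y,y')$. By the portmanteau theorem it suffices to show
\[
\int f\,d\eta_j\longrightarrow\int f(x,Sx)\,d\nu(x)
\]
for every bounded Lipschitz $f$ on $X_0\times X_0$.

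Fix such an $f$ with $\norm{f}_\infty\leq 1$ and Lipschitz constant $L$ with respect to the sum metric. Write
\[
\int f\,d\eta_j=\int f(x,Sx)\,d\eta_j(x,y)+\int\bigl(f(x,y)-f(x,Sx)\bigr)\,d\eta_j(x,y).
\]
The second term is at most $L\int\bar d_0(y,Sx)\,d\eta_j$ in absolute value, since $\bar d_0\leq 1$ and $\abs{f}\leq 1$ make the truncation harmless; by hypothesis it tends to $0$. The first term equals $\int f(x,Sx)\,d\pi^1_*\eta_j(x)$, where $\pi^1_*\eta_j$ is the first marginal. The function $x\mapsto f(x,Sx)$ is bounded and continuous because $S$ is continuous. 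Therefore weak convergence of the first marginals to $\nu$ gives convergence to $\int f(x,Sx)\,d\nu$. This proves $\eta_j\to(\id,S)_*\nu$ weakly.

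For tightness, the limit $(\id,S)_*\nu$ is a Borel probability on the Polish space $X_0\times X_0$ (complete right-invariant metric on $G_0$, discrete $\Gamma_0$). A weakly convergent sequence of probability measures on a Polish space is tight, by the converse half of Prokhorov's theorem (Le Cam's theorem). Alternatively, tightness can be shown directly. Take compact sets $K_1$ carrying most of the mass of the convergent marginals, which is possible because they form a tight family. Then choose a compact neighbourhood $K_2$ of $S(K_1)$, using local compactness of $X_0$; its $r$-neighbourhood is relatively compact for small $r$. Markov's inequality applied to $\bar d_0(y,Sx)$ controls the mass of $\eta_j$ outside $K_1\times K_2$.

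The only point needing care, and it is mild, is that $f(x,Sx)$ is continuous but not Lipschitz. This is harmless, because the portmanteau step only needs bounded Lipschitz test functions on the product, while the marginal step only needs bounded continuous functions on $X_0$.
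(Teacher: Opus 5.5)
Your proof is correct, but it is organized differently from the paper's.

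The paper proves tightness first. It uses local compactness to find an open $U$ with compact closure carrying most of the mass of the marginals, via the Portmanteau $\liminf$ for open sets. It then takes a relatively compact open $W\supset S(\bar U)$ whose complement stays at positive $\bar d_0$-distance $a$ from $S(\bar U)$, and applies Markov's inequality to bound $\eta_j(U\times(X_0\setminus W))$. Prokhorov's theorem then gives relative compactness. Every subsequential limit is identified as $(\id,S)_*\nu$, because it has first marginal $\nu$ and integrates the bounded continuous function $\bar d_0(y,Sx)$ to zero. The subsequence principle finishes the argument.

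You instead test directly against bounded Lipschitz functions for the sum metric. You split off $f(x,Sx)$, which is handled by convergence of the marginals, and control the remainder by $L\int\bar d_0(y,Sx)\,d\eta_j$. This gives the weak convergence with no compactness argument at all; that part works on any metric space. Tightness then follows via the converse half of Prokhorov's theorem on the Polish space $X_0\times X_0$.

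The trade-off is that your route outsources tightness to Le Cam's theorem. The paper's route is self-contained there, using only local compactness and Markov's inequality, and its identification of limits through concentration on the graph is the viewpoint reused in the next proposition.

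Your sketched direct alternative for tightness is essentially the paper's argument, with two caveats.
\begin{itemize}
\item It presupposes tightness of the marginals. That itself needs either Le Cam's theorem or the paper's $\liminf$ trick with $U$.
\item What you need of $K_2$ is that it contain an $r$-neighbourhood of the compact set $S(K_1)$ for some $r>0$, so that Markov's inequality applies with threshold $r$. Requiring that $K_2$'s own $r$-neighbourhood be relatively compact is not what is needed.
\end{itemize}
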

\begin{proof}
    Let $\mu_j=(\pi_1)_*\eta_j$ be the first marginal. Fix $\epsilon\in(0,1)$. Since $X_0$ is a locally compact second countable metric space, the probability measure $\nu$ is tight. Choose a compact set $C\subset X_0$ such that $\nu(C)>1-\frac{\epsilon}{4}$, and then choose an open neighborhood $U\supset C$ with compact closure. By the Portmanteau theorem,
    \[
    \liminf_{j\to\infty}\mu_j(U)\geq\nu(U)\geq\nu(C).
    \]
    Hence, for all sufficiently large $j$
    \[
    \mu_j(U)>1-\frac{\epsilon}{2}.
    \]
    
    Choose an open set $W\subset X_0$ with compact closure such that 
    \[
    S(\bar{U})\subset W.
    \]
    Since $S(\bar{U})$ is compact and $X_0\setminus W$ is closed and disjoint from it, there exists $a>0$ such that
    \[
    \bar{d}_0(y,Sx)\geq a
    \]
    whenever $x\in\bar{U}$ and $y\in X_0\setminus W$. Therefore
    \[
    \eta_j\bigl(U\times(X_0\setminus W)\bigr)\leq a^{-1}\int_{X_0\times X_0}\bar{d}_0(y,Sx)d\eta_j(x,y).
    \]
    For all sufficiently large $j$, the RHS is smaller than $\epsilon/2$. Consequently,
    \[
    \eta_j(\bar{U}\times\bar{W})\geq\eta_j(U\times W)=\mu_j(U)-\eta_j\bigl(U\times(X_0\setminus W)\bigr)>1-\epsilon.
    \]
    Thus the tail of the sequence is tight. Since each of the finitely many remaining probability measures is individually tight, adjoining finitely many compact sets controlling those measures proves tightness of the whole sequence.

    Since $X_0\times X_0$ is Polish, Prokhorov's theorem implies that $\{\eta_j\}_{j\geq1}$ is relatively compact in the weak topology. Let $\eta_{j_k}\to\eta$ weakly. Continuity of the first projection gives $(\pi_1)_*\eta=\nu$. 
    Since $\bar{d}_0=\min(1,d_0)$, the function $(x,y)\mapsto \bar{d}_0(y,Sx)$ is bounded and continuous. Hence
    \[
    \int_{X_0\times X_0}\bar{d}_0(y,Sx)d\eta(x,y)=0.
    \]
    Therefore $\eta$ is concentrated on the graph of $S$. Since its first marginal is $\nu$,
    \[
    \eta=(\id,S)_*\nu.
    \]

    Thus every weakly convergent subsequence has the same limit. By relative compactness, every subsequence admits a weakly convergent subsubsequence, and therefore 
    \[
    \eta_j\longrightarrow(\id,S)_*\nu
    \]
    weakly.
\end{proof}

\subsection{Large centralizers}

Using Lemma \ref{lem:tightGraphLem}, we can formulate conditions that allow us to find elements of $G_0$ that commute with the flow $(T_t)_{t\in\R}$ almost everywhere. 

\begin{Proposition}\label{prop:scaleGraCri}
    Under the standing assumptions of this section, let $h\in G_0$, let 
    \[
    Q:[1/2,3/2]\to G_0
    \]
    be continuous with $Q(1)=h$, and, for $0<\eta<1$, let $g_{\eta}\in G_0$. Define
    \[
    \Delta_{\eta}(x,t)\coloneq\beta(g_{\eta}x,t)-\beta(x,t).
    \]
    Assume that 
    \begin{equation}\label{eq:scaleAssump1}
        \lim_{\eta\to0^+}\sup_{\frac{1}{2\eta}\leq t\leq \frac{3}{2\eta}}\norm{\Delta_{\eta}(\cdot,t)}_{L^2(\nu)}=0,
    \end{equation}
    and that, whenever $\eta_j\to0$ and $x_j\in X_0$ satisfy
    \begin{equation}\label{eq:scaleAssump2}
        \sup_{u\in[1/2,3/2]}\norm{\frac{\eta_j}{u}\beta\left(x_j,\frac{u}{\eta_j}\right)-V}\longrightarrow0,
    \end{equation}
    we have
    \begin{equation}\label{eq:scaleAssump3}
        \sup_{u\in[1/2,3/2]}d_{G_0}\left(\exp\left(\beta\left(x_j,\frac{u}{\eta_j}\right)\right)g_{\eta_j}\exp\left(-\beta\left(x_j,\frac{u}{\eta_j}\right)\right),Q(u)\right)\longrightarrow0.
    \end{equation}
    Then there is a conull set $E_h\subset X_0$ such that
    \[
    hT_{\sigma}x=T_{\sigma}hx,\qquad x\in E_h,\quad \sigma\in\R.
    \]
\end{Proposition}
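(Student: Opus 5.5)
Fix $\sigma\in\R$. The plan is to build joinings from paired orbit averages, identify their limits by Lemma~\ref{lem:tightGraphLem}, and read off commutation from the limiting graph. For $\eta\in(0,1)$ and $u\in[1/2,3/2]$, Lemma~\ref{lem:exactRelId} applied to $g=g_\eta$ and $t=u/\eta$ gives
\[
T_{u/\eta}(g_\eta x)=\exp\bigl(\Delta_\eta(x,u/\eta)\bigr)\,c_\eta(x,u)\,T_{u/\eta}x,\qquad c_\eta(x,u)=\exp\bigl(\beta(x,\tfrac u\eta)\bigr)g_\eta\exp\bigl(-\beta(x,\tfrac u\eta)\bigr).
\]
Consider the measures
\[
\lambda_\eta=\int_{1/2}^{3/2}\int_{X_0}\delta_{(T_{u/\eta}x,\;T_{u/\eta}g_\eta x)}\,d\nu(x)\,du
\]
on $X_0\times X_0$ (normalized Lebesgue in $u$). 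The first marginal of $\lambda_\eta$ is exactly $\nu$, by $T$-invariance. I will show that $\int\bar d_0(y,hx)\,d\lambda_{\eta_j}\to0$ along a sequence $\eta_j\to0$. Lemma~\ref{lem:tightGraphLem}, with $S=h$, then gives $\lambda_{\eta_j}\to(\id,h)_*\nu$.

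To prove the distance bound, I use right-invariance of the metric: $\bar d_0(ay,y)\le \min\{1,d_{G_0}(a,e)\}$. The integrand at $(x,u)$ is then at most
\[
\min\bigl\{1,\ \norm{\Delta_\eta(x,u/\eta)}\bigr\}+\min\bigl\{1,\ d_{G_0}(c_\eta(x,u),Q(u))\bigr\}+\bar d_0\bigl(Q(u)z,hz\bigr),
\]
with $z=T_{u/\eta}x$, up to a uniform constant from the exponential map near $e$. The first term integrates to $o(1)$ by \eqref{eq:scaleAssump1} and Cauchy--Schwarz. For the second term, ergodic drift gives, for $\nu$-a.e. $x$, $\beta(x,t)/t\to V$. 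Hence \eqref{eq:scaleAssump2} holds for every fixed such $x$ along any $\eta_j\to0$: the supremum over $u$ follows since $u/\eta\to\infty$ uniformly. So \eqref{eq:scaleAssump3} gives pointwise convergence to $0$ of the sup in $u$, and dominated convergence (the integrand is bounded by $1$) makes the integral $o(1)$.

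The third term is the subtle one, because $Q(u)\ne h$. To handle it, I will not use $h$ directly in the graph. Instead I work with the $u$-dependent graph, i.e.\ with the measures $\tilde\lambda_\eta$ on $[1/2,3/2]\times X_0\times X_0$ recording $u$ as well. I then apply the same argument with $S(u,z)=(u,Q(u)z)$; Lemma~\ref{lem:tightGraphLem} carries over verbatim on $([1/2,3/2]\times X_0)\times X_0$. This gives the limit $du\otimes\nu$ pushed to the graph $(u,z,Q(u)z)$.

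Next comes the time shift. The same reasoning holds with $T_{u/\eta}$ replaced by $T_{u/\eta+\sigma}$. The key identity is $T_\sigma(T_{u/\eta}g_\eta x)=T_{u/\eta+\sigma}g_\eta x$, together with the fact that rescaling $u\mapsto u+\eta\sigma$ changes $Q(u)$ by $o(1)$ by uniform continuity of $Q$. The pair $(T_\sigma z, T_\sigma y)$ sampled from $\tilde\lambda_\eta$ therefore has the same limit as the pair sampled at times $u/\eta+\sigma$, up to the small shift in $u$. The first coordinate map is continuous, since $T_\sigma$ is continuous. So pushing the limit forward by $(u,z,y)\mapsto(u,T_\sigma z,T_\sigma y)$ yields the graph of $Q(u)$ again. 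That is,
\[
T_\sigma Q(u)z=Q(u)T_\sigma z\quad\text{for } du\otimes\nu\text{-a.e. }(u,z).
\]
Making this shift-invariance rigorous (joint continuity of $(x,t)\mapsto T_tx$, plus weak-convergence bookkeeping) is the step I expect to be the main obstacle.

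Finally I let $u\to1$. The identity holds for a.e.\ $u$ near $1$. Choose $u_k\to1$ and a conull set $E$ of $z$ valid for all $k$ and all rational $\sigma$. Continuity of $Q$ and of $T_\sigma$ then give $T_\sigma hz=hT_\sigma z$ for $z\in E$ and rational $\sigma$. Joint continuity of $T$ extends this to all $\sigma\in\R$, giving the conull set $E_h$.
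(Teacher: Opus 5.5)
Your proof is correct, but it is organized differently from the paper's.

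\emph{The paper's route.} It works with a single orbit per scale. Egorov and Markov are used to select a base point $x_j$ that is uniformly generic for finitely many test functions, has drift close to $V$, and has small time-averaged $\norm{\Delta_{\eta_j}(x_j,t)}^2$. It then averages $\delta_{(T_tx_j,T_tg_{\eta_j}x_j)}$ only over a window $I_j$ with $\eta_jt\in[1-\theta_j,1+\theta_j]$. Shrinking the relative width forces $Q(\eta_jt)\to h$ uniformly, so the limit is directly the graph of $h$ on $X_0\times X_0$ and Lemma~\ref{lem:tightGraphLem} applies as stated. Diagonal invariance is immediate from $\abs{I_j}=2\theta_j/\eta_j\to\infty$.

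\emph{Your route.} You integrate the base point against $\nu$ and keep $u$ as a coordinate. This buys several things:
\begin{itemize}
\item the first marginal is exactly $du\otimes\nu$ for every $\eta$, so no genericity or Egorov argument is needed;
\item you only need \eqref{eq:scaleAssump3} for constant sequences $x_j=x$, which the a.e.\ drift supplies;
\item Cauchy--Schwarz and dominated convergence replace the Markov selection of $x_j$;
\item you get the stronger conclusion that $T_\sigma$ commutes a.e.\ with $Q(u)$ for a.e.\ $u$, and hence, by your density argument, for every $u\in[1/2,3/2]$.
\end{itemize}

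\emph{What it costs.} First, you need Lemma~\ref{lem:tightGraphLem} on $([1/2,3/2]\times X_0)\times X_0$. Here the graph map should be $S(u,z)=Q(u)z\in X_0$, not $(u,Q(u)z)$; the extension is harmless since $[1/2,3/2]\times X_0$ is locally compact Polish and $S$ is continuous. Second, you need the shift bookkeeping that you flag as the main obstacle. It does go through: apply Lemma~\ref{lem:tightGraphLem} directly to $(\id\times T_\sigma\times T_\sigma)_*\tilde\lambda_{\eta_j}$. Its first marginal is again $du\otimes\nu$, and its distance integral to the graph of $S$ tends to zero, by writing $u/\eta+\sigma=u'/\eta$ and using uniform continuity of $Q$. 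You should note that for $u$ within $\eta\abs{\sigma}$ of the endpoints, $u'$ leaves $[1/2,3/2]$, where neither \eqref{eq:scaleAssump1} nor \eqref{eq:scaleAssump3} applies. That set has measure $O(\eta\abs{\sigma})$ and the integrand is bounded by $1$, so it contributes $o(1)$. Comparing with the continuous pushforward of the first limit shows that $(\id\times T_\sigma\times T_\sigma)_*(\id,S)_*(du\otimes\nu)$ is concentrated on the closed graph of $S$. This gives $T_\sigma Q(u)z=Q(u)T_\sigma z$ for $du\otimes\nu$-a.e.\ $(u,z)$. Your Fubini and $u_k\to1$ step then finishes the proof.
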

\begin{proof}
    Define $Sx=hx$. Let $\{f_k\}_{k\geq1}\subset C_b(X_0)$ be a countable convergence determining family, which exists since $X_0$ is Polish. For $R>0$, define 
    \[
    \mathcal{A}_Rf(x)\coloneq\frac{1}{R}\int_0^Rf(T_tx)dt.
    \]

    \medskip
    \noindent
    \emph{Uniform genericity and the choice of base points.} Choose a sequence 
    \[
    0<\theta_j<\frac{1}{2},\qquad \theta_j\downarrow0.
    \]
    By the pointwise ergodic theorem, the assumed existence of the drift, and Egorov's theorem, there are measurable sets $\Omega_j\subset X_0$ with $\nu(\Omega_j)>3/4$, and numbers $R_j>0$ such that for $x\in \Omega_j$ and $R\geq R_j$,
    \begin{equation}\label{eq:UniGene}
        \begin{aligned}
            \max_{1\leq k\leq j}\abs{\mathcal{A}_Rf_k(x)-\int_{X_0}f_kd\nu}\leq \theta_j^2,\qquad \norm{\frac{\beta(x,R)}{R}-V}\leq\theta_j^2.
        \end{aligned}
    \end{equation}

    By the assumed mean square decay (cf.~\eqref{eq:scaleAssump1}), choose $\eta_j\in(0,1)$ so small that 
    \[
    \eta_j<\min\left(\frac{1}{2R_j},\frac{\theta_j}{j}\right),\qquad \sup_{\frac{1}{2\eta_j}\leq t\leq\frac{3}{2\eta_j}}\norm{\Delta_{\eta_j}(\cdot,t)}_{L^2(\nu)}\leq\frac{1}{j}.
    \]
    Define
    \[
    a_j\coloneq\frac{1-\theta_j}{\eta_j},\qquad b_j\coloneq\frac{1+\theta_j}{\eta_j},\qquad I_j=[a_j,b_j].
    \]
    Since $\theta_j\in(0,1/2)$, the intervals $I_j$ satisfy 
    \[
    I_j\subset\left[\frac{1}{2\eta_j},\frac{3}{2\eta_j}\right],\qquad a_j,b_j> R_j,\qquad \abs{I_j}=\frac{2\theta_j}{\eta_j}>2j.
    \]
    Moreover, the choice of $\eta_j$ gives
    \begin{equation}\label{eq:L2Win}
        \sup_{t\in I_j}\norm{\Delta_{\eta_j}(\cdot,t)}_{L^2(\nu)}\leq\frac{1}{j}.
    \end{equation}

    Define
    \[
    \mathcal{R}_j(x)\coloneq\frac{1}{\abs{I_j}}\int_{I_j}\norm{\Delta_{\eta_j}(x,t)}^2dt.
    \]
    By Tonelli's theorem and \eqref{eq:L2Win},
    \[
    \begin{aligned}
        m_j\coloneq&\int_{X_0}\mathcal{R}_jd\nu=\frac{1}{\abs{I_j}}\int_{I_j}\norm{\Delta_{\eta_j}(\cdot,t)}_{L^2(\nu)}^2dt
        \leq\sup_{t\in I_j}\norm{\Delta_{\eta_j}(\cdot,t)}_{L^2(\nu)}^2\leq\frac{1}{j^2}.
    \end{aligned}
    \]
    Markov's inequality, with the assertion trivial when $m_j=0$, gives 
    \[
    \nu\left(\{x:\mathcal{R}_j(x)>4m_j\}\right)\leq1/4.
    \]
    Hence we may choose
    \[
    x_j\in \Omega_j\cap\{x:\mathcal{R}_j(x)\leq 4m_j\}.
    \]
    Define
    \[
    q_j\coloneq \mathcal{R}_j(x_j),\qquad  y_j\coloneq g_{\eta_j}x_j.
    \]
    Then 
    \[
    q_j\leq 4m_j\longrightarrow0.
    \]

    Since $x_j\in \Omega_j$, equation~\eqref{eq:UniGene} gives \eqref{eq:scaleAssump2}. Hence \eqref{eq:scaleAssump3}, the continuity of $Q$ and $\eta_jt\in[1-\theta_j,1+\theta_j]$ for $t\in I_j$ imply
    \begin{equation}\label{eq:uniPolyGra}
        r_j\coloneq\sup_{t\in I_j}d_{G_0}\left(\exp\bigl(\beta(x_j,t)\bigr)g_{\eta_j}\exp\bigl(-\beta(x_j,t)\bigr),h\right)\longrightarrow0.
    \end{equation}

    \medskip
    \noindent
    \emph{Concentration near the algebraic graph.}
    Define
    \[
    \zeta_j\coloneq\frac{1}{\abs{I_j}}\int_{I_j}\delta_{(T_tx_j,T_ty_j)}dt.
    \]
    
    By the definition of the quotient metric,
    \begin{equation}\label{eq:quoMetIneq}
        d_0(az,bz)\leq d_{G_0}(a,b),\qquad a,b\in G_0,\quad z\in X_0.
    \end{equation}
    This estimate, together with the path $s\mapsto\exp(sZ)$, gives a constant $C_0>0$ such that
    \begin{equation}\label{eq:partExpEst}
        \bar{d}_0\bigl(\exp(Z)z,z\bigr)\leq C_0\min(1,\norm{Z}),\qquad  Z\in\mathfrak{u},\ z\in X_0.
    \end{equation}
    Lemma~\ref{lem:exactRelId}, the triangle inequality, \eqref{eq:quoMetIneq}, \eqref{eq:uniPolyGra}, and \eqref{eq:partExpEst} give, for~$t\in~I_j$,
    \[
    \bar{d}_0\bigl(T_ty_j,ST_tx_j\bigr)\leq r_j+C_0\min\bigl(1,\norm{\Delta_{\eta_j}(x_j,t)}\bigr).
    \]
    Consequently, by Cauchy-Schwarz,
    \[
    \begin{aligned}
        \int_{X_0\times X_0}\bar{d}_0(y,Sx)d\zeta_j(x,y)
        \leq&r_j+C_0\left(\frac{1}{\abs{I_j}}\int_{I_j}\norm{\Delta_{\eta_j}(x_j,t)}^2dt\right)^{1/2}\\
        =&r_j+C_0q_j^{1/2}\longrightarrow0.
    \end{aligned}
    \]

    \medskip
    \noindent
    \emph{The first marginal.}
    Let
    \[
    \lambda_j\coloneq (\pi_1)_*\zeta_j.
    \]
    For $1\leq k\leq j$,
    \[
    \begin{aligned}
        \int_{X_0}f_kd\lambda_j=\frac{1}{b_j-a_j}\int_{a_j}^{b_j}f_k(T_tx_j)dt=\frac{b_j\mathcal{A}_{b_j}f_k(x_j)-a_j\mathcal{A}_{a_j}f_k(x_j)}{b_j-a_j}.
    \end{aligned}
    \]
    Since $a_j,b_j\geq R_j$, equation~\eqref{eq:UniGene} gives 
    \[
    \begin{aligned}
        \abs{\int_{X_0}f_kd\lambda_j-\int_{X_0} f_kd\nu}\leq\frac{a_j+b_j}{b_j-a_j}\theta_j^2=\theta_j.
    \end{aligned}
    \]
    Thus, for every fixed $k$, the integrals converge once $j\geq k$. Since $\{f_k\}_{k\geq 1}$ is convergence determining, $\lambda_j\to\nu$ weakly. Lemma~\ref{lem:tightGraphLem} now gives
    \[
    \zeta_j\longrightarrow(\id,S)_*\nu.
    \]

    \medskip
    \noindent
    \emph{Diagonal invariance.}
    Fix $\sigma\in\R$. For $F\in C_b(X_0\times X_0)$, 
    \[
    \begin{aligned}
        \abs{\int_{X_0\times X_0}Fd(T_{\sigma}\times T_{\sigma})_*\zeta_j-\int_{X_0\times X_0}Fd\zeta_j}
        \leq&\frac{\abs{(I_j+\sigma)\triangle I_j}}{\abs{I_j}}\norm{F}_{\infty}\\
        \leq&\frac{2\abs{\sigma}}{\abs{I_j}}\norm{F}_{\infty}\longrightarrow0.
    \end{aligned}
    \]
    Hence $(T_{\sigma}\times T_{\sigma})_*(\id,S)_*\nu=(\id,S)_*\nu$. Because $T_{\sigma}$ preserves $\nu$, 
    \[
    (\id,T_{\sigma}ST_{-\sigma})_*\nu= (T_{\sigma}\times T_{\sigma})_*(\id,S)_*\nu=(\id,S)_*\nu.
    \]
    Testing this equality against the bounded continuous function
    \[
    \Phi(x,y)\coloneq\bar{d}_0(y,Sx)
    \]
    gives
    \[
    \begin{aligned}
         &\int_{X_0}\bar{d}_0\bigl(T_{\sigma}ST_{-\sigma}x,Sx\bigr)d\nu(x)=\int_{X_0\times X_0}\Phi d(\id,T_{\sigma}ST_{-\sigma})_*\nu\\
         =&\int_{X_0\times X_0}\Phi d(\id,S)_*\nu=\int_{X_0}\bar{d}_0(Sx,Sx)d\nu(x)=0.
    \end{aligned}
    \]
    Therefore $T_{\sigma}ST_{-\sigma}x=Sx$ for $\nu$-a.e. $x$. By $T_{\sigma}$-invariance of $\nu$, this implies $T_{\sigma}Sx=ST_{\sigma}x$ almost everywhere. Intersecting the corresponding conull sets over $\sigma\in\Q$ and using the continuity in $\sigma$ proves the assertion.
    \end{proof}

Under suitable assumptions, the previous proposition can be applied to all elements $h\in N$.

\begin{Lemma}\label{lem:polyPacLim}
    Fix a compact Ratner frame
    \[
    \mathcal{F}=\{(K_i,d_i,A_i)\}_{i=1}^m
    \]
    for $(\mathfrak{u},V)$,
    and let $B=\sum_ib_iA_i\in\mathfrak{u}$. 
    Let $\{\epsilon_j\}_{j\geq1}$ be a sequence of positive numbers with $\epsilon_j\to0$, and $J\subset(0,\infty)$ a compact set. Assume that $\{x_j\}_{j\geq1}\subset X_0$ satisfy
    \[
    \lim_{j\to\infty}\sup_{u\in J}\norm{\frac{\epsilon_j}{u}\beta\left(x_j,\frac{u}{\epsilon_j}\right)-V}=0.
    \]
    Then
    \[
    \Ad_{\exp\left(\beta(x_j,u/\epsilon_j)\right)}Z_{\epsilon_j}(B)\longrightarrow P_B(u)
    \]
    uniformly for $u\in J$. Moreover, for $\diamond\in\{\mathrm{exp},\mathrm{ord}\}$, as $j\to\infty$,
    \[
    \exp \left(\beta(x_j,u/\epsilon_j)\right)g_{\epsilon_j,B}^{\diamond}\exp\left(-\beta(x_j,u/\epsilon_j)\right)\longrightarrow \exp(P_B(u)) 
    \]
    uniformly for $u\in J$.
\end{Lemma}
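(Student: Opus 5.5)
Write $W_j(u)=\beta(x_j,u/\epsilon_j)\in\mathfrak{u}$. The hypothesis says $W_j(u)=\frac{u}{\epsilon_j}(V+r_j(u))$ with $\sup_{u\in J}\norm{r_j(u)}\to0$. The plan is to expand the adjoint action as a finite polynomial in $\ad_{W_j(u)}$, using condition (ii) of Definition~\ref{def:CptRatnerFra}, and then read off the top-degree term.

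\textbf{The adjoint limit.} By linearity it suffices to treat a single term $b_i\epsilon_j^{d_i}K_i$. Condition (ii) gives $\ad_W^{d_i+1}K_i=0$ for every $W\in\mathfrak{u}$, so the series for $\Ad_{\exp W}=e^{\ad_W}$ terminates:
\[
\Ad_{\exp W_j(u)}\bigl(\epsilon_j^{d_i}K_i\bigr)=\sum_{k=0}^{d_i}\frac{\epsilon_j^{d_i}}{k!}\ad_{W_j(u)}^kK_i=\sum_{k=0}^{d_i}\frac{\epsilon_j^{d_i-k}u^k}{k!}\ad_{V+r_j(u)}^kK_i .
\]
Since $J$ is compact in $(0,\infty)$, $u$ is bounded, and $V+r_j(u)$ stays in a bounded set and converges uniformly to $V$. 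The map $W\mapsto\ad_W^kK_i$ is a polynomial, hence uniformly continuous on bounded sets. Every term with $k<d_i$ carries a factor $\epsilon_j^{d_i-k}\to0$. The top term $k=d_i$ converges uniformly to $\frac{u^{d_i}}{d_i!}\ad_V^{d_i}K_i=u^{d_i}A_i$. Summing over $i$ with coefficients $b_i$ gives
\[
\Ad_{\exp W_j(u)}Z_{\epsilon_j}(B)\longrightarrow P_B(u)
\]
uniformly on $J$.

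\textbf{The exponential version.} For $\diamond=\mathrm{exp}$, conjugation commutes with the exponential map, so
\[
\exp(W)\exp(Z)\exp(-W)=\exp(\Ad_{\exp W}Z).
\]
Uniform convergence in $G_0$ then follows from continuity of $\exp$ on the bounded set of values involved. For $\diamond=\mathrm{ord}$, conjugation is a group automorphism, so
\[
\exp(W)\,g^{\mathrm{ord}}_{\epsilon_j,B}\exp(-W)=\prod_{i=m}^{1}\exp\bigl(\Ad_{\exp W}(b_i\epsilon_j^{d_i}K_i)\bigr).
\]
By the first step, each factor converges uniformly to $\exp(b_iu^{d_i}A_i)$. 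Since $\mathfrak{u}$ is abelian, these limits commute, and their product is $\exp(\sum_ib_iu^{d_i}A_i)=\exp(P_B(u))$. Group multiplication is continuous, hence uniformly continuous on compact sets, and all factors stay in a fixed compact set. Therefore the product converges uniformly in the metric $d_{G_0}$.

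\textbf{Main obstacle.} The only delicate point is uniformity in $u$ when the adjoint expression has to be controlled at the unbounded scale $W_j(u)\sim u/\epsilon_j$. The nilpotency condition (ii) handles this: it reduces everything to finitely many terms in which the powers of $\epsilon_j$ exactly cancel the growth of $W_j(u)$. Once that cancellation is in place, the uniform convergence follows from elementary polynomial continuity estimates.
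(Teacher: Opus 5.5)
Your proposal is correct and follows essentially the same route as the paper. You use condition (ii) to truncate $e^{\ad_{W_j(u)}}$ to a finite sum in $\ad_{V_j(u)}$, where $V_j(u)=\frac{\epsilon_j}{u}\beta(x_j,u/\epsilon_j)$; the lower-order terms then vanish through the factor $\epsilon_j^{d_i-k}$, and for $\mathrm{ord}$ you conjugate factor by factor and use commutativity of $\mathfrak{u}$, exactly as the paper does.
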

\begin{proof}
    Define 
    \[
    V_j(u)\coloneq\frac{\epsilon_j}{u}\beta\left(x_j,\frac{u}{\epsilon_j}\right).
    \]
    By assumption, $V_j\to V$ uniformly on $J$. By definition of compact Ratner frame, we can write the finite expansion
    \begin{equation}\label{eq:polyDeco}
        \begin{aligned}
            \epsilon_j^{d_i}\Ad_{\exp\beta(x_j,u/\epsilon_j)}K_i
            =\sum_{\ell=0}^{d_i}\frac{u^{\ell}\epsilon_j^{d_i-\ell}}{\ell!}\ad_{V_j(u)}^{\ell}K_i.
        \end{aligned}
    \end{equation}
    The vectors $V_j(u)$ remain in a fixed bounded set, uniformly on $J$. The terms with $\ell<d_i$ therefore tend uniformly to zero, while the last term converges uniformly to $u^{d_i}\ad_V^{d_i}K_i/d_i!=u^{d_i}A_i$. Summing with coefficients $b_i$ proves the first assertion and, by continuity of the exponential, the assertion for $g_{\epsilon_j,B}^{\mathrm{exp}}$.

    For $g_{\epsilon_j,B}^{\mathrm{ord}}$, conjugate each factor separately. The limiting product is
    \[
    \begin{aligned}
    \exp\bigl(b_mu^{d_m}A_m\bigr)\cdots\exp\bigl(b_1u^{d_1}A_1\bigr)=\exp(P_B(u)),
    \end{aligned}
    \]
    since the $A_i$ belong to the abelian algebra $\mathfrak{u}$. All convergences are uniform on $J$.
\end{proof}

\begin{Proposition}\label{prop:regPcGraph}
Under the standing assumptions of this section, fix a compact Ratner frame
\[
\mathcal{F}=\{(K_i,d_i,A_i)\}_{i=1}^m
\]
for $(\mathfrak{u},V)$ and fix $\diamond\in\{\mathrm{exp},\mathrm{ord}\}$. 
    Assume that, for every $B\in\mathfrak{u}$,
    \[
    \lim_{\epsilon\to0^+}\sup_{\frac{1}{2\epsilon}\leq t\leq \frac{3}{2\epsilon}}\norm{\Delta_{\epsilon,B}^{\diamond}(\cdot,t)}_{L^2(\nu)}=0.
    \]
    Then there is a conull set $E\subset X_0$ such that
    \[
    T_{\sigma}\exp(B)x=\exp(B)T_{\sigma}x,\qquad \forall x\in E,\ \forall B\in\mathfrak{u},\ \forall\sigma\in\R.
    \]
\end{Proposition}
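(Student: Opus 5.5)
The plan is to apply Proposition~\ref{prop:scaleGraCri} once for each $h=\exp(B)$, $B\in\mathfrak{u}$, and then assemble the resulting conull sets. The key observation is that the frame coefficients give a way to reach $\exp(B)$ at the time scale $u=1$: since $P_B(1)=\sum_ib_iA_i=B$, the limiting curve from Lemma~\ref{lem:polyPacLim} passes through $\exp(B)$ at $u=1$.

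Fix $B=\sum_ib_iA_i$; this expansion is possible because $(A_i)$ is a basis of $\mathfrak{u}$. Apply Proposition~\ref{prop:scaleGraCri} with the following data:
\[
h=\exp(B),\qquad Q(u)=\exp(P_B(u)),\qquad g_\eta=g^{\diamond}_{\eta,B}.
\]
Then $Q$ is continuous and $Q(1)=\exp(B)=h$. Hypothesis \eqref{eq:scaleAssump1} is exactly the assumption of the present proposition with $\epsilon=\eta$. For \eqref{eq:scaleAssump2}$\Rightarrow$\eqref{eq:scaleAssump3}, take $\epsilon_j=\eta_j$ and $J=[1/2,3/2]$ in Lemma~\ref{lem:polyPacLim}. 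This gives
\[
\exp(\beta(x_j,u/\eta_j))\,g^\diamond_{\eta_j,B}\,\exp(-\beta(x_j,u/\eta_j))\to \exp(P_B(u))
\]
uniformly on $J$, which is the required convergence in $d_{G_0}$ because uniform convergence in $G_0$ on a compact set of limits gives uniform convergence in the metric. Proposition~\ref{prop:scaleGraCri} therefore yields a conull set $E_B$ such that
\[
\exp(B)T_\sigma x=T_\sigma \exp(B)x\qquad\text{for all } x\in E_B,\ \sigma\in\R.
\]

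The main obstacle is passing from countably many $B$ to all $B\in\mathfrak{u}$ with a single conull set, since $E_B$ depends on $B$. I would set $E_0=\bigcap_{B\in\mathfrak{u}_\Q}E_B$, where $\mathfrak{u}_\Q$ is the set of rational combinations of the $A_i$; this is countable, so $E_0$ is conull. For $x\in E_0$ and general $B$, choose rational $B_k\to B$. The relation $\exp(B_k)T_\sigma x=T_\sigma\exp(B_k)x$ then holds, and the left side converges to $\exp(B)T_\sigma x$. For the right side I would use joint continuity of $T_\sigma$, which holds because $T$ is a restriction of a continuous time change, so $(x,\sigma)\mapsto T_\sigma x$ is continuous. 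Hence $T_\sigma\exp(B_k)x\to T_\sigma\exp(B)x$, and the identity holds for all $B$ and all $\sigma$ on $E=E_0$.

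The remaining subtlety is that continuity of $T_\sigma$ is needed at arbitrary points $\exp(B)x$, not only on a conull set; the continuity hypotheses in the standing assumptions (continuity of $\beta$) supply exactly this. With that in place the proof is a direct combination of Lemma~\ref{lem:polyPacLim} and Proposition~\ref{prop:scaleGraCri}.
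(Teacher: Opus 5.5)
Your proposal is correct and matches the paper's proof. The paper applies Proposition~\ref{prop:scaleGraCri} with $h=\exp(B)$, $Q(u)=\exp(P_B(u))$, $g_\eta=g^{\diamond}_{\eta,B}$, obtains the second hypothesis from Lemma~\ref{lem:polyPacLim}, intersects the conull sets over a countable dense set of $B$, and extends to all $B$ by continuity (continuity of $T_\sigma$ for each fixed $\sigma$ suffices here).
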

\begin{proof}
    Fix $B\in\mathfrak{u}$ and set
    \[
    h=\exp(B),\qquad Q_B(u)=\exp(P_B(u)),\qquad g_{\epsilon}=g_{\epsilon,B}^{\diamond}.
    \]
    The mean square hypothesis of  Proposition~\ref{prop:scaleGraCri} is the assumed decay, while its second hypothesis follows from Lemma~\ref{lem:polyPacLim}. Since $P_B(1)=B$, that proposition gives a conull set $E_B$ on which 
    \[
    T_{\sigma}\exp(B)x=\exp(B)T_{\sigma}x,\qquad \sigma\in\R.
    \]
    Apply this conclusion to a countable dense additive subgroup $\mathfrak{u}_{\Q}\subset\mathfrak{u}$ and intersect the corresponding conull sets. Continuity in $B$ extends the identity to every $B\in\mathfrak{u}$.
\end{proof}

\subsection{Proofs of  Theorems \ref{thm:graphProRootCor} and \ref{thm:graphProPtRoot}}

In order to prove Theorems \ref{thm:graphProRootCor} and \ref{thm:graphProPtRoot} we need to verify the assumptions of Proposition \ref{prop:regPcGraph}. This is the content of the next two lemmas.

\begin{Lemma}\label{lem:meanSquContr}
    Assume that the compact Ratner frame $\mathcal{F}=\{(K_i,d_i,A_i)\}_{i=1}^m$ is correlation regular. 
    For every $R>0$, there is $C'_R>0$ such that
    \[
    \norm{\Delta_{\epsilon,B}^{\mathrm{exp}}(\cdot,t)}_{L^2(\nu)}\leq C'_R\left(\sum_{i=1}^m\epsilon^{d_i\delta_i}\right)M_{\chi}(t)
    \]
    for every $B\in \mathfrak{u}$ with $\norm{B}\leq R$, $
    0<\epsilon<\epsilon_R$ and $t\geq 0$.
    In particular, 
    \[
    \lim_{\epsilon\to0^+}\sup_{\norm{B}\leq R}\sup_{\frac{1}{2\epsilon}\leq t\leq \frac{3}{2\epsilon}}\norm{\Delta_{\epsilon,B}^{\mathrm{exp}}(\cdot,t)}_{L^2(\nu)}=0.
    \]
\end{Lemma}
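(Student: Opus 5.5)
The plan is to apply Lemma~\ref{lem:covInt} directly to the map $F(t)=\Delta_{\epsilon,B}^{\mathrm{exp}}(\cdot,t)$. We work on the space $(Y,\nu)=(X_0,\nu)$ and take $\mathcal{H}=\mathfrak{u}$ with its fixed inner product.

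First we verify the hypotheses of that lemma.

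\emph{Absolute continuity.} By Definition~\ref{def:RatnerCorReg}, the map $t\mapsto\Delta_{\epsilon,B}^{\mathrm{exp}}(\cdot,t)$ is locally absolutely continuous as an $L^2(\nu,\mathfrak{u})$-valued map whenever $0<\epsilon<\epsilon_R$ and $\norm{B}\leq R$. Its derivative is $\mathcal{D}_{\epsilon,B}$, and we fix a jointly measurable representative of it.

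\emph{Initial value.} We have $F(0)=0$. Indeed, the cocycle identity at $t=s=0$ gives $\beta(x,0)=0$ for every $x$, exactly as in the proof of Lemma~\ref{lem:contSameOrbPar}. Hence $\Delta_{\epsilon,B}^{\mathrm{exp}}(x,0)=\beta(g_{\epsilon,B}^{\mathrm{exp}}x,0)-\beta(x,0)=0$.

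\emph{Correlation bound.} The estimate \eqref{eq:derCorReg} is precisely the correlation hypothesis of Lemma~\ref{lem:covInt}, with
\[
A=C_R^{1/2}\sum_{i=1}^m\epsilon^{d_i\delta_i}
\]
and the same exponent $\chi$.

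Lemma~\ref{lem:covInt} then yields
\[
\norm{\Delta_{\epsilon,B}^{\mathrm{exp}}(\cdot,t)}_{L^2(\nu)}\leq C_\chi C_R^{1/2}\Bigl(\sum_{i=1}^m\epsilon^{d_i\delta_i}\Bigr)M_\chi(t).
\]
This is the first claim with $C'_R=C_\chi C_R^{1/2}$, which does not depend on $\epsilon$, $B$ or $t$.

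For the limit statement, take $t\in[\tfrac{1}{2\epsilon},\tfrac{3}{2\epsilon}]$ and $\epsilon<1$. Then $1+t\leq 2\epsilon^{-1}$, and since $M_\chi$ is increasing we get $M_\chi(t)\ll_\chi M_\chi(2/\epsilon)$.

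\emph{Growth of $M_\chi$.} Set $\chi'=\min\{\chi,1\}$. From the three cases in \eqref{eq:Mchi}:
\begin{itemize}
\item if $\chi<1$, then $M_\chi(2/\epsilon)\ll\epsilon^{-(1-\chi/2)}=\epsilon^{-(1-\chi'/2)}$;
\item if $\chi=1$, then $M_\chi(2/\epsilon)\ll\epsilon^{-1/2}\log^{1/2}(2+2/\epsilon)$;
\item if $\chi>1$, then $M_\chi(2/\epsilon)\ll\epsilon^{-1/2}=\epsilon^{-(1-\chi'/2)}$.
\end{itemize}

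\emph{Smallness of the prefactor.} For $\epsilon<1$, each $\epsilon^{d_i\delta_i}\leq\epsilon^{\kappa}$, so the prefactor is at most $m\epsilon^\kappa$.

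Multiplying, the supremum over $\norm{B}\leq R$ and $t\in[\tfrac{1}{2\epsilon},\tfrac{3}{2\epsilon}]$ is bounded by
\[
C\,\epsilon^{\kappa-(1-\chi'/2)}\log^{1/2}(2+2/\epsilon).
\]
The logarithmic factor appears only when $\chi=1$. By \eqref{eq:kappaDef}, the exponent $\kappa-(1-\chi'/2)$ is strictly positive. A positive power of $\epsilon$ beats the logarithm, so the bound tends to $0$ as $\epsilon\to0^+$.

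The only step that needs any care is this last exponent comparison, in particular the borderline case $\chi=1$ with its logarithmic factor. It is exactly where the strict inequality in \eqref{eq:kappaDef} is used.
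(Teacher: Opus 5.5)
Your proposal is correct and follows the paper's proof essentially verbatim: you apply Lemma~\ref{lem:covInt} with $A=C_R^{1/2}\sum_i\epsilon^{d_i\delta_i}$, bound the prefactor by $m\epsilon^{\kappa}$, bound $M_\chi(t)$ on $[\frac{1}{2\epsilon},\frac{3}{2\epsilon}]$ by the corresponding power of $\epsilon^{-1}$ (with the logarithm when $\chi=1$), and conclude from the strict inequality \eqref{eq:kappaDef}. One small slip is harmless: the claim $1+t\leq 2\epsilon^{-1}$ actually needs $\epsilon\leq 1/2$ rather than $\epsilon<1$, but the bound you really use, $M_\chi(t)\leq M_\chi(2/\epsilon)$, follows from $t\leq 3/(2\epsilon)$ and monotonicity alone.
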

\begin{proof}
    Since $\Delta_{\epsilon,B}^{\mathrm{exp}}(\cdot,0)=0$ and $t\mapsto \Delta_{\epsilon,B}^{\mathrm{exp}}(\cdot,t)$ is locally absolutely continuous in $L^2(\nu,\mathfrak{u})$, Lemma~\ref{lem:covInt} applies with
    \[
    Y=X_0,\qquad \mathcal{H}=\mathfrak{u},\qquad F(t)=\Delta_{\epsilon,B}^{\mathrm{exp}}(\cdot,t),\qquad D(x,t)=\mathcal{D}_{\epsilon,B}(x,t),
    \]
    and
    \[
    A=C_R^{1/2}\sum_{i=1}^m\epsilon^{d_i\delta_i}.
    \]
    This proves the first assertion.

    \medskip
    
    Define $\kappa\coloneq\min_{1\leq i\leq m}d_i\delta_i$. For $0<\epsilon\leq 1$,
    \[
    \sum_{i=1}^m\epsilon^{d_i\delta_i}\leq m\epsilon^{\kappa}.
    \]
    Consequently, uniformly for 
    \[
    \norm{B}\leq R,\qquad \frac{1}{2\epsilon}\leq t\leq \frac{3}{2\epsilon},
    \]
    we have
    \[
     \norm{\Delta_{\epsilon,B}^{\mathrm{exp}}(\cdot,t)}_{L^2(\nu)}\ll_R\begin{cases}
        \epsilon^{\kappa-1+\chi/2},&0<\chi<1,\\
        \epsilon^{\kappa-1/2}\log^{1/2}(2+\epsilon^{-1}),&\chi=1,\\
        \epsilon^{\kappa-1/2},&\chi>1.
    \end{cases}
    \]
    The first and third expressions tend to zero because
    \[
    \kappa>1-\frac{1}{2}\min\{\chi,1\}.
    \]
    If $\chi=1$, then $\kappa-\frac{1}{2}>0$, and
    \[
    \epsilon^{\kappa-1/2}\log^{1/2}(2+\epsilon^{-1})\longrightarrow 0.
    \]
    This proves the second assertion.
\end{proof}

\begin{Lemma}\label{lem:pointPacContr}
Assume that the compact Ratner frame $\mathcal{F}=\{(K_i,d_i,A_i)\}_{i=1}^m$ is pointwise regular, and let $B=\sum_ib_iA_i \in \mathfrak{u}$.
    Let $C_i,s_i,\delta_i,\gamma_i$ be the constants in Definition~\ref{def:PtReg}. Then, for all sufficiently small $\epsilon>0$,
    \[
    \sup_{\substack{x\in X_0 \\1/(2\epsilon)\leq t\leq 3/(2\epsilon)}}\norm{\Delta_{\epsilon,B}^{\mathrm{ord}}(x,t)}\leq\sum_{i=1}^mC_i(5/2)^{\gamma_i}\abs{b_i}^{\delta_i}\epsilon^{d_i\delta_i-\gamma_i}.
    \]
    In particular,
    \[
    \sup_{\frac{1}{2\epsilon}\leq t\leq \frac{3}{2\epsilon}}\norm{\Delta_{\epsilon,B}^{\mathrm{ord}}(\cdot,t)}_{L^2(\nu)}\longrightarrow0.
    \]
\end{Lemma}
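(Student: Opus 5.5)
We telescope the ordered product $g_{\epsilon,B}^{\mathrm{ord}}=\exp(b_m\epsilon^{d_m}K_m)\cdots\exp(b_1\epsilon^{d_1}K_1)$ one factor at a time and apply the pointwise bound of Definition~\ref{def:PtReg} to each factor. Set $z_0=x$ and $z_i=\exp(b_i\epsilon^{d_i}K_i)z_{i-1}$ for $1\le i\le m$, so that $z_m=g_{\epsilon,B}^{\mathrm{ord}}x$. Then
\[
\Delta_{\epsilon,B}^{\mathrm{ord}}(x,t)=\sum_{i=1}^m\bigl(\beta(z_i,t)-\beta(z_{i-1},t)\bigr)=\sum_{i=1}^m\Delta_{K_i,b_i\epsilon^{d_i}}(z_{i-1},t).
\]
The pointwise hypothesis is a supremum over all base points of $X_0$. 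It therefore applies at the intermediate points $z_{i-1}$ with no measure-theoretic issue. This is exactly why the ordered product is used here rather than $\exp Z_\epsilon(B)$.

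Choose $\epsilon$ small enough that $\abs{b_i}\epsilon^{d_i}\le s_i$ for every $i$ with $b_i\neq0$; terms with $b_i=0$ vanish. Then
\[
\norm{\Delta_{K_i,b_i\epsilon^{d_i}}(z_{i-1},t)}\le C_i\abs{b_i}^{\delta_i}\epsilon^{d_i\delta_i}(1+t)^{\gamma_i}.
\]
For $t\le 3/(2\epsilon)$ and $\epsilon\le1$ we have $1+t\le 1/\epsilon+3/(2\epsilon)=\frac52\epsilon^{-1}$. Hence $(1+t)^{\gamma_i}\le(5/2)^{\gamma_i}\epsilon^{-\gamma_i}$, using $\gamma_i\ge0$. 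Summing over $i$ gives the stated uniform bound.

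For the second assertion, the $L^2(\nu)$ norm is bounded by the supremum over $x$, since $\nu$ is a probability measure. Each exponent $d_i\delta_i-\gamma_i$ is strictly positive by pointwise regularity, so every term tends to $0$ as $\epsilon\to0^+$.

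There is no serious obstacle. The only points needing care are two bookkeeping steps: the smallness condition on $\epsilon$ depends on $B$ through the $b_i$, and the increments must be taken along the ordered factors rather than along a single exponential.
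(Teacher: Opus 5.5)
Your proposal is correct and follows the paper's proof essentially step for step. It uses the same telescoping through the intermediate points of the ordered product, and applies the base-point-uniform pointwise bound at each intermediate point once $|b_i|\epsilon^{d_i}\le s_i$. It then uses the same window estimate $(1+t)^{\gamma_i}\le (5/2)^{\gamma_i}\epsilon^{-\gamma_i}$ and bounds the $L^2(\nu)$ norm by the supremum.
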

\begin{proof}
Define 
\[
h_{i,\epsilon}\coloneq \exp\bigl(b_i\epsilon^{d_i}K_i\bigr).
\]
For $x\in X_0$, define
\[
x_0=x,\qquad x_i=h_{i,\epsilon}x_{i-1}\quad (1\leq i\leq m).
\]
Since $g_{\epsilon,B}^{\mathrm{ord}}=\exp\bigl(b_m\epsilon^{d_m}K_m\bigr)\cdots\exp\bigl(b_1\epsilon^{d_1}K_1\bigr)$, we have
\[
x_m=g_{\epsilon,B}^{\mathrm{ord}}x.
\]
Therefore
\[
\begin{aligned}
    \Delta_{\epsilon,B}^{\mathrm{ord}}(x,t)=\beta(x_m,t)-\beta(x_0,t)=&\sum_{i=1}^m\bigl(\beta(x_i,t)-\beta(x_{i-1},t)\bigr)\\
    =&\sum_{i=1}^m\Delta_{K_i,b_i\epsilon^{d_i}}(x_{i-1},t).
\end{aligned}
\]

Choose $\epsilon\in(0,1)$ sufficiently small that
\[
\abs{b_i}\epsilon^{d_i}\leq s_i \qquad (1\leq i\leq m).
\]
Since the pointwise estimates are uniform in the base point,
\[
\begin{aligned}
    \norm{\Delta_{\epsilon,B}^{\mathrm{ord}}(x,t)}\leq\sum_{i=1}^m\norm{\Delta_{K_i,b_i\epsilon^{d_i}}(x_{i-1},t)}\leq\sum_{i=1}^mC_i\abs{b_i}^{\delta_i}\epsilon^{d_i\delta_i}(1+t)^{\gamma_i}.
\end{aligned}
\]
If $\frac{1}{2\epsilon}\leq t\leq\frac{3}{2\epsilon}$, then $(1+t)^{\gamma_i}\leq\left(5/2\right)^{\gamma_i}\epsilon^{-\gamma_i}$.
Therefore
\begin{equation}\label{eq:supXTlimit}
    \begin{aligned}
    \sup_{\substack{x\in X_0\\1/(2\epsilon)\leq t\leq 3/(2\epsilon)}}\norm{\Delta_{\epsilon,B}^{\mathrm{ord}}(x,t)}\leq\sum_{i=1}^{m}C_i\left(5/2\right)^{\gamma_i}\abs{b_i}^{\delta_i}\epsilon^{d_i\delta_i-\gamma_i}.
\end{aligned}
\end{equation}
Since $d_i\delta_i-\gamma_i>0$ for every $1\leq i\leq m$, the RHS of \eqref{eq:supXTlimit} tends to zero. Since $\nu$ is a probability measure, the same conclusion holds in~$L^2(\nu)$.
\end{proof}

We can now prove the two main results of this section.

\begin{proof}[Proof of Theorem \ref{thm:graphProRootCor}]
Take $\diamond=\mathrm{exp}$. Then, Lemma~\ref{lem:meanSquContr}, together with the strict exponent inequality in Definition~\ref{def:RatnerCorReg},  verifies the hypothesis of Proposition~\ref{prop:regPcGraph}, which proves the result.   
\end{proof}

\begin{proof}[Proof of Theorem \ref{thm:graphProPtRoot}]
    Take $\diamond=\mathrm{ord}$. Then, Lemma~\ref{lem:pointPacContr} verifies the hypothesis of Proposition~\ref{prop:regPcGraph}, which proves the result.
\end{proof}

\section{A large centralizer implies rigidity}\label{sec:RigidityFullGRaFam}

The goal of this section is to prove that if the directional flow commutes with the entire algebraic $N$-action, then it is itself algebraic. 

We continue with the notation of \S\ref{sec:cptRatnerFra}. For a closed finite-volume homogeneous orbit $Lx\subset X_0$, write $m_{Lx}$ for its normalized Haar probability. For $x\in X_0$, define
\[
\Lambda_x\coloneq\{B\in\mathfrak{u}:\exp(B)x=x\}.
\]
\begin{Lemma}\label{lem:algStab}
    For every $x\in X_0$, the subgroup $\Lambda_x<\mathfrak{u}$ is discrete. Moreover,
    \[
    \Lambda_{\exp(C)x}=\Lambda_x\qquad C\in\mathfrak{u}.
    \]
    If $\Gamma_0$ is cocompact, then $\Lambda_x=\{0\}$ for every $x\in X_0$.
\end{Lemma}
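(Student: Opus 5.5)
We prove the three assertions in turn, beginning with discreteness and then the invariance along $N$-orbits. Write $x=g\Gamma_0$ with $g\in G_0$. Then $\exp(B)x=x$ exactly when $g^{-1}\exp(B)g\in\Gamma_0$. The map $B\mapsto g^{-1}\exp(B)g$ is a continuous homomorphism from the additive group $\mathfrak{u}$ into $G_0$, because $\mathfrak{u}$ is abelian. It is also injective, since $\exp$ restricted to a space of nilpotent elements of a linear group is injective: the logarithm is a polynomial inverse on unipotent matrices. It is moreover a homeomorphism onto its image, because that image, $g^{-1}Ng$, is closed: $N$ is the image of the polynomial map $\exp|_{\mathfrak{u}}$, whose inverse $\log$ is again polynomial, so $N$ is a closed unipotent subgroup. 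Hence $\Lambda_x$ is the preimage of the discrete set $\Gamma_0\cap g^{-1}Ng$ under a homeomorphism onto a closed subgroup, and so it is a discrete subgroup of $\mathfrak{u}$.

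The invariance along $N$-orbits uses commutativity. If $C\in\mathfrak{u}$ and $\exp(B)x=x$, then
\[
\exp(B)\exp(C)x=\exp(C)\exp(B)x=\exp(C)x .
\]
This gives $\Lambda_x\subset\Lambda_{\exp(C)x}$. Applying the same argument with $-C$ to the point $\exp(C)x$ gives the reverse inclusion.

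For the cocompact case, suppose that $0\neq B\in\Lambda_x$. Then $\gamma=g^{-1}\exp(B)g\in\Gamma_0$ is a nontrivial unipotent element, since it is the exponential of the nonzero nilpotent element $\Ad_{g^{-1}}B$. By Jacobson--Morozov there is $a_s=\exp(sH)$ such that $a_s\gamma a_s^{-1}\to e$ as $s\to\infty$. Consequently the points $a_sg'\Gamma_0$, with $g'$ chosen so that $\gamma$ lies in the stabilizer, have injectivity radius tending to $0$. Concretely, the point $y=a_s\Gamma_0$ is moved by the element $a_s\gamma a_s^{-1}$, which is arbitrarily close to $e$ but not equal to it. This contradicts the uniform positive injectivity radius of the compact quotient $X_0=G_0/\Gamma_0$. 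Equivalently, one can quote the standard fact that cocompact lattices contain no nontrivial unipotent elements, which is a consequence of Mahler's criterion or the argument just given. Therefore $\Lambda_x=\{0\}$.

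The only mildly delicate step is the closedness and injectivity in the discreteness claim, and the nilpotency of $\mathfrak{u}$ in the linear group settles it via the polynomial logarithm. The cocompact statement is classical, and we will simply cite it with the sketch above.
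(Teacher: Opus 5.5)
Your proof is correct and follows the paper's route: discreteness from $\exp(\Lambda_x)=N\cap g\Gamma_0g^{-1}$, $N$-invariance from commutativity, and a Jacobson--Morozov contraction $a_s\gamma a_s^{-1}\to e$ for the cocompact case; the closedness of $g^{-1}Ng$ is not needed, since the preimage of a discrete set under a continuous injective map is already discrete. One slip to fix: $a_s\Gamma_0$ is \emph{fixed}, not moved, by $a_s\gamma a_s^{-1}$, and the ``uniform injectivity radius'' you invoke is exactly what the paper proves directly by writing $a_n=c_n\gamma_n$ with $c_n$ in a compact set, so that the nontrivial elements $\gamma_n\gamma\gamma_n^{-1}\to e$ contradict discreteness of $\Gamma_0$ (Mahler's criterion is specific to $\SL_n(\Z)$ and is not the right reference for a general cocompact $\Gamma_0$).
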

\begin{proof}
    Recall that $N=\exp(\mathfrak{u})$ and that $\exp:\mathfrak{u}\to N$ is a diffeomorphism. If $x=g\Gamma_0$, then
    \[
    \exp(\Lambda_x)=\{n\in N:nx=x\}=N\cap g\Gamma_0g^{-1}.
    \]
    The last group is discrete because $g\Gamma_0g^{-1}$ is discrete. Hence $\Lambda_x$ is discrete. The second assertion follows from the abelianity of $N$.
    
    \medskip

    Suppose now that $\Gamma_0$ is cocompact. Let $x=g\Gamma_0$ and suppose, toward a contradiction, that 
    \[
    0\neq B\in\Lambda_x.
    \]
    By the definition of $\Lambda_x$, $\exp(B)g\Gamma_0=g\Gamma_0$ and hence $g^{-1}\exp(B)g\in\Gamma_0$. Define 
    \[
    Y\coloneq\Ad_{g^{-1}}B,\qquad u_B\coloneq g^{-1}\exp(B)g=\exp(Y)\in\Gamma_0.
    \]
    Since $B$ is a nonzero nilpotent element, so is $Y$. Hence $u_B$ is a nontrivial unipotent element of $\Gamma_0$.

    By the real Jacobson-Morozov theorem \cite{Jacobson1951Sl2}, there is $H\in\mathfrak{g}_0$ such that
    \[
    [H,Y]=2Y.
    \]
    For $a_s=\exp(-sH)$, we have
    \[
    a_su_Ba_s^{-1}=\exp(e^{-2s}Y)\longrightarrow e\qquad \text{as }s\to+\infty.
    \]

    Since $G_0/\Gamma_0$ is compact, there is a compact set $\mathcal{C}\subset G_0$ such that $G_0=\mathcal{C}\Gamma_0$. For every $n\geq 1$, write
    \[
    a_n=c_n\gamma_n,\qquad c_n\in\mathcal{C},\qquad \gamma_n\in\Gamma_0.
    \]
    After passing to a subsequence, $c_n\to c\in \mathcal{C}$. Therefore
    \[
    \gamma_nu_B\gamma_n^{-1}=c_n^{-1}(a_nu_Ba_n^{-1})c_n\longrightarrow e.
    \]
    Every $\gamma_nu_B\gamma_n^{-1}$ is a nontrivial element of $\Gamma_0$. This contradicts the discreteness of $\Gamma_0$. Hence $\Lambda_x=\{0\}$.
\end{proof}

\begin{Theorem}[Large centralizer implies rigidity]\label{thm:fullGraRigid}
    Assume that there is a conull set $E\subset X_0$ such that for every $x\in E$, every $B\in\mathfrak{u}$ and every $t\in\R$,
    \[
    \exp(B)T_tx=T_t\exp(B)x.
    \]
    Then
    \[
    \beta(x,t)=tV,\quad \text{$\nu$-a.e. $x$, every $t\in\R$.}
    \]
    Consequently, $T_tx=\exp(tV)x$ modulo $\nu$, and $\nu$ is a homogeneous probability measure.
\end{Theorem}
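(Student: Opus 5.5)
Using the cocycle form $T_tx=\exp(\beta(x,t))x$, the commutation hypothesis becomes an identity in the unipotent orbit. For $x\in E$, $B\in\mathfrak{u}$ and $t\in\R$, abelianity of $N$ gives
\[
\exp(B)\exp(\beta(x,t))x=\exp\bigl(\beta(\exp(B)x,t)\bigr)\exp(B)x,
\]
and therefore
\[
\beta(\exp(B)x,t)-\beta(x,t)\in\Lambda_{\exp(B)x}=\Lambda_x .
\]
By Lemma~\ref{lem:algStab}, $\Lambda_x$ is discrete. The left side is continuous in $B$, because $\beta$ is continuous, and it vanishes at $B=0$. Since $\mathfrak{u}$ is connected, the difference is identically zero. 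Hence, for $x\in E$, the function $\beta(\cdot,t)$ is constant on the whole orbit $Nx$, for every $t$.

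The next step uses the cocycle identity. Because $T_sx\in Nx$, we get $\beta(T_sx,t)=\beta(x,t)$ for $x\in E$ and all $s,t$. Lemma~\ref{lem:contSameOrbPar} then gives
\[
\beta(x,t+s)=\beta(x,t)+\beta(T_tx,s)=\beta(x,t)+\beta(x,s).
\]
So $t\mapsto\beta(x,t)$ is a continuous additive map $\R\to\mathfrak{u}$, hence linear: $\beta(x,t)=tW(x)$ with $W(x)=\beta(x,1)$. The drift assumption forces $W(x)=\lim_{t\to\infty}\beta(x,t)/t=V$ for $\nu$-a.e. $x\in E$. This gives $\beta(x,t)=tV$ for all $t$, and therefore $T_tx=\exp(tV)x$ $\nu$-almost everywhere.

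For the final assertion, note that $\nu$ is then an ergodic invariant probability for the algebraic unipotent flow $\exp(tV)$. Ratner's measure classification \cite{ratner1991measure} gives a closed connected subgroup $L$ containing $\exp(\R V)$ and a closed finite-volume orbit $Lx$ with $\nu=m_{Lx}$.

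The main point requiring care is the discreteness/continuity argument in the first step. The commutation must hold for all $B$ at a fixed $x\in E$, and this is exactly what the hypothesis supplies. We also need $\exp(B)x$ to have the same stabilizer as $x$, which is Lemma~\ref{lem:algStab}. One more check is needed: the cocycle step requires $\beta(T_sx,\cdot)=\beta(x,\cdot)$, and this holds because constancy on $Nx$ was derived pointwise for $x\in E$, whether or not $T_sx\in E$.
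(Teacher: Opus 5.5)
Your proposal is correct and follows the paper's proof essentially step by step. As in the paper, discreteness of $\Lambda_x$ and connectedness of $\mathfrak{u}$ give $\beta(\exp(B)x,t)=\beta(x,t)$ for $x\in E$; applying this with $B=\beta(x,s)$ and the cocycle identity makes $t\mapsto\beta(x,t)$ additive and hence linear, the drift identifies the slope as $V$, and Ratner's classification identifies $\nu$.
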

\begin{proof}
Fix $x\in E$ and $t\in\R$. For every $B\in\mathfrak{u}$, the graph identity and the abelianity of $N$ give
\[
\begin{aligned}
    \exp\bigl(\beta(\exp(B)x,t)\bigr)\exp(B)x&=T_t\exp(B)x=\exp(B)T_tx
\\
&=\exp(\beta(x,t))\exp(B)x.
\end{aligned}
\]
Hence the continuous map
\[
\Psi_{x,t}:\mathfrak{u}\longrightarrow\mathfrak{u},\qquad \Psi_{x,t}(B)=\beta(\exp(B)x,t)-\beta(x,t),
\]
satisfies
\[
\Psi_{x,t}(B)\in\Lambda_{\exp(B)x}=\Lambda_x\qquad B\in\mathfrak{u}.
\]
By Lemma~\ref{lem:algStab}, $\Lambda_x$ is discrete. Since $\mathfrak{u}$ is connected, $\Psi_{x,t}$ is constant. As  $\Psi_{x,t}(0)=0$, it vanishes identically. Therefore
\begin{equation}\label{eq:betaNInv}
    \beta(\exp(B)x,t)=\beta(x,t)\qquad B\in\mathfrak{u},\ x\in E,\ t\in\R.
\end{equation}

Since
\[
T_sx=\exp(\beta(x,s))x,
\]
equation~\eqref{eq:betaNInv}, applied with $B=\beta(x,s)$, gives
\begin{equation}\label{eq:betaTInv}
    \beta(T_sx,t)=\beta(x,t)\qquad x\in E,\ s,t\in\R.
\end{equation}

\medskip

Let $E_0\subset E$ be the conull set on which the asymptotic drift is $V$. For $x\in E_0$, the cocycle identity and \eqref{eq:betaTInv} give
\[
\beta(x,s+t)=\beta(x,s)+\beta(T_sx,t)=\beta(x,s)+\beta(x,t).
\]
Thus $t\mapsto\beta(x,t)$ is continuous and additive, so $\beta(x,t)=tW(x)$ for some $W(x)\in\mathfrak{u}$. The drift assumption gives $W(x)=V$.

It follows that $T_t$ agrees modulo $\nu$ with the algebraic unipotent flow $\exp(tV)$. Hence $\nu$ is invariant and ergodic for the algebraic unipotent flow $x\mapsto\exp(tV)x$. Ratner's measure classification theorem \cites{ratner1990measure,ratner1991measure} gives a closed connected subgroup $L<G_0$ and a closed finite-volume orbit $Lz$ such that
\[
\exp(\R V)\leq L,\qquad \nu=m_{Lz}.
\]
\end{proof}

\begin{proof}[Proof of Theorem~\ref{thm:absCptRigid}]
    Apply Theorem~\ref{thm:graphProRootCor} or Theorem~\ref{thm:graphProPtRoot}, followed by Theorem~\ref{thm:fullGraRigid}.
\end{proof}

\section{The first row compact frame}\label{sec:firstRowCptFrame}
We now return to first row actions. The purpose of this section is purely algebraic: we realize the abstract frame by one quadratic compact root and $n-2$ degree one compact roots. The analytic hypotheses of the introduction then become exactly the pointwise and correlation regularity conditions of the abstract theorem.

We return to the notation of the introduction and work in normalized coordinates $V=U_2=E_{12}$. Recall that
\[
K_2=E_{21}-E_{12},\qquad K_j=E_{j2}-E_{2j}\quad \text{for }3\leq j\leq n.
\]
Define
\[
d_2=2,\qquad d_j=1\quad \text{for }3\leq j\leq n.
\]

\begin{Lemma}\label{lem:firstRowCptRatner}
    The family 
    \[
    (K_2,2,-U_2),\qquad (K_j,1,-U_j)\quad \text{for }3\leq j\leq n
    \]
    is a compact Ratner frame for $(\h_n,U_2)$.
\end{Lemma}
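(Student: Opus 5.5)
We verify the three conditions of Definition~\ref{def:CptRatnerFra} directly, by matrix-unit computations in $\sl_n(\R)$ with $V=U_2=E_{12}$ and $\mathfrak{u}=\h_n$.

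\emph{Condition (i).} Each $K_j$ is a real skew-symmetric matrix. Hence $\exp(\R K_j)\subset\SO(n)$, which is compact.

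\emph{Condition (ii).} We use $E_{ab}E_{cd}=\delta_{bc}E_{ad}$. For $Z=\sum_k z_kE_{1k}\in\h_n$, the operator $\ad_Z$ sends $E_{ab}$ to
\[
\delta_{ab'}\ldots,
\]
or more concretely, $[E_{1k},E_{ab}]=\delta_{ka}E_{1b}-\delta_{b1}E_{ak}$. It is convenient to grade $\sl_n$ by the eigenvalue of $\ad$ of $\diag(n-1,-1,\ldots,-1)/n$. Under this grading, $\h_n$ has degree $+1$, while $E_{j1}$ has degree $-1$ and the block $E_{ab}$ with $a,b\geq2$ has degree $0$. The grading satisfies $[\mathfrak{g}_p,\mathfrak{g}_q]\subset\mathfrak{g}_{p+q}$, and $\mathfrak{g}_p=0$ for $\abs{p}>1$. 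It follows that
\begin{itemize}
\item $K_2=E_{21}-E_{12}$ has components only in degrees $-1$ and $+1$, so any three brackets with elements of $\h_n$ land in degree $\geq 2$ and vanish ($d_2=2$);
\item $K_j$ for $j\geq3$ lies entirely in degree $0$, so two brackets vanish ($d_j=1$).
\end{itemize}

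\emph{Condition (iii).} We compute the top coefficients explicitly.

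For $j\geq3$,
\[
\ad_{U_2}K_j=[E_{12},E_{j2}]-[E_{12},E_{2j}]=0-E_{1j}=-U_j.
\]
For $K_2$, first
\[
\ad_{U_2}K_2=[E_{12},E_{21}]=E_{11}-E_{22},
\]
the term $[E_{12},E_{12}]$ being zero. Then
\[
\ad_{U_2}(E_{11}-E_{22})=-E_{12}-E_{12}=-2E_{12}.
\]
Hence
\[
\tfrac{1}{2!}\ad_{U_2}^2K_2=-U_2.
\]
The family $(-U_2,-U_3,\ldots,-U_n)$ is a basis of $\h_n$, which proves (iii).

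There is no real obstacle here. The only care needed is the sign bookkeeping in the brackets and the observation that $\ad_{U_2}K_2$ acquires no $E_{12}$-component from $[E_{12},-E_{12}]$.
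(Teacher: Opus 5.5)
Your proof is correct and follows the paper's route. It uses skew-symmetry for (i) and the same explicit brackets $\ad_{U_2}K_j=-U_j$ and $\tfrac12\ad_{U_2}^2K_2=-U_2$ for (iii); for (ii), the $\Z$-grading by eigenvalues of $\ad\bigl(\diag(n-1,-1,\ldots,-1)/n\bigr)$, concentrated in degrees $-1,0,1$, is a tidier packaging of the paper's tracking of the spans of iterated commutators. Delete the stray fragment ``$\delta_{ab'}\ldots$'' before your concrete bracket formula.
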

\begin{proof}
    Each $K_j$ is skew-symmetric. Hence
    \[
    \bigl(\exp(tK_j)\bigr)^{\mathsf T}\exp(tK_j)=\id,\qquad \det\exp(tK_j)=1,
    \]
    so $\exp(tK_j)\in \SO(n)$ for every $t\in\R$ and $\exp(\R K_j)$ is relatively compact. Simple computations give us
    \[
    \frac{1}{2}\ad_{U_2}^2K_2=-U_2,\qquad \ad_{U_2}K_j=-U_j\quad \text{for $3\leq j\leq n$.}
    \]
    For $j\geq 3$, we have $[\h_n,K_j]\subset\h_n$, and therefore all twofold $\h_n$-commutators vanish. For $K_2$, the first commutators belong to
    \[
    \operatorname{span}\{E_{11}-E_{22},E_{23},\ldots,E_{2n}\},
    \]
    the second commutators belong to $\h_n$, and the third commutators vanish. Finally, the vectors 
    \[
    \{-U_2,-U_3,\ldots,-U_n\}
    \]
    form a basis of $\h_n$, and hence the three conditions in Definition~\ref{def:CptRatnerFra} are satisfied.
    \end{proof}

    The quadratic direction alone cannot form a compact Ratner frame spanning $\h_n$. Indeed, since $V=E_{12}$, we have $V^2 = 0$ and, for any $Y=(Y_{i j})_{i,j}\in\sl_n(\R)$, we have $VYV=Y_{21}V$; therefore
    \[
    \ad_V^2Y=-2VYV\in\R V.
    \]
    Thus, every degree-two top coefficient lies in the drift line, whereas $K_3,\ldots,K_n$ generate the remaining first row directions at degree one.

    \begin{proof}[Proof of Theorem~\ref{thm:firstRowRigidity}]
        Work in normalized coordinates and use the frame in Lemma~\ref{lem:firstRowCptRatner}. Relative to $A_j=-U_j$, an element $B=\sum_{j=2}^nb_jU_j$ has coefficients $-b_j$. Hence
        \[
        g_{\epsilon,B}^{\mathrm{exp}}=\exp\left(-b_2\epsilon^2K_2-\epsilon\sum_{j=3}^nb_jK_j\right),\qquad 
        \min_{2\leq j\leq n}d_j\delta_j=\min\{2\delta_2,\delta_3,\ldots,\delta_n\}=\kappa_{\operatorname{fr}}.
        \]
        Definition~\ref{def:FirstRowRatnerCor} therefore gives the correlation regularity of this frame. Apply Theorem~\ref{thm:absCptRigid} and transport the conclusion back to the original coordinates.
    \end{proof}

    \begin{proof}[Proof of Corollary~\ref{cor:pointCptCri}]
        In normalized coordinates, the frame in Lemma~\ref{lem:firstRowCptRatner} has degrees $d_2=2$ and $d_j=1$ for $j\geq 3$. Definition~\ref{def:PointComRoot} is therefore exactly its pointwise regularity condition. Apply Theorem~\ref{thm:absCptRigid} and transport the conclusion back to the original coordinates.
    \end{proof}

\section{The sharper three-dimensional theorem}\label{sec:sharperThreeDim}

The full first row frame uses every acting direction. In dimension three, a single degree two compact root is sufficient: the acting algebra is two dimensional, and a separate rigidity argument shows that this one-parameter control is enough. This is the source of Theorem~\ref{thm:oneRootSL3rigidity}.

Assume $n=3$ and use normalized coordinates 
\[
V=U_2=E_{12}.
\]
Set
\[
K\coloneq E_{21}-E_{12}.
\]
Then
\begin{equation}\label{eq:KadRes}
    \frac{1}{2}\ad_V^2K=-V,\qquad \ad_V^3K=0.
\end{equation}
For $s,t\in\R$, write
\[
\Delta_s(x,t)\coloneq\Delta_s^{(2)}(x,t)=\beta(\exp(sK)x,t)-\beta(x,t).
\]

\begin{Proposition}\label{prop:oneRtCovCri}
Suppose that, for some $s_0,C>0$, every map 
\[
t\mapsto\Delta_s(\cdot,t),\qquad \abs{s}\leq s_0,
\]
is locally absolutely continuous as an $L^2(\nu,\h_3)$-valued map. Let $D_s(\cdot,t)$ denote its derivative, defined for almost every $t\geq 0$, and $D_s(x,t)$ the chosen jointly measurable representative. Assume that, for some $\chi>0$ and $\delta>0$,
\[
\abs{\int_X\left\langle D_s(x,u),D_s(x,w)\right\rangle d\nu(x)}\leq C\abs{s}^{2\delta}(1+\abs{u-w})^{-\chi}
\]  
for $\abs{s}\leq s_0$ and almost every $u,w\geq 0$. Then there is $C'>0$ such that
\[
\norm{\Delta_s(\cdot,t)}_{L^2(\nu)}\leq C'\abs{s}^{\delta} M_{\chi}(t)\qquad \abs{s}\leq s_0,\quad t\geq0,
\]
where $M_{\chi}$ is defined in \eqref{eq:Mchi}. 

Consequently, Definition~\ref{def:oneRootMeanSquare} holds whenever
\[
\delta>\frac{2-\chi}{4}\quad \text{for }0<\chi<1,\qquad \delta>\frac{1}{4}\quad \text{for }\chi\geq 1.
\]
\end{Proposition}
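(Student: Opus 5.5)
The plan is to apply Lemma~\ref{lem:covInt} directly, for each fixed $s$, and then read off the exponent condition of Definition~\ref{def:oneRootMeanSquare}.

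\emph{Step 1: the bound on $\Delta_s$.} Fix $s$ with $\abs{s}\leq s_0$ and take
\[
Y=X,\qquad \mathcal{H}=\h_3,\qquad F(t)=\Delta_s(\cdot,t).
\]
The hypotheses of Lemma~\ref{lem:covInt} hold:
\begin{itemize}
\item $F(0)=0$, because $\beta(\cdot,0)=0$ by the cocycle identity.
\item $F$ is locally absolutely continuous, with derivative $D_s$.
\item The correlation hypothesis is the one required there, with $A^2=C\abs{s}^{2\delta}$.
\end{itemize}
Lemma~\ref{lem:covInt} then gives
\[
\norm{\Delta_s(\cdot,t)}_{L^2(\nu)}\leq C_{\chi}C^{1/2}\abs{s}^{\delta}M_{\chi}(t).
\]
The constant $C_\chi$ depends only on $\chi$, so this holds uniformly in $s$, and we may set $C'=C_\chi C^{1/2}$.

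\emph{Step 2: converting $M_\chi$ into a power of $(1+t)$.} We need $\gamma\geq0$ with $M_\chi(t)\ll(1+t)^{\gamma}$ and $2\delta>\gamma$.
\begin{itemize}
\item For $0<\chi<1$, take $\gamma=1-\chi/2$. The condition $2\delta>\gamma$ is then exactly $\delta>(2-\chi)/4$.
\item For $\chi>1$, take $\gamma=1/2$. The condition becomes $\delta>1/4$.
\item For $\chi=1$, the logarithmic factor prevents $\gamma=1/2$. Instead, since $\delta>1/4$, pick $\gamma$ with $1/2<\gamma<2\delta$. Then $\log^{1/2}(2+t)\ll_\gamma(1+t)^{\gamma-1/2}$, so $M_1(t)\ll(1+t)^{\gamma}$.
\end{itemize}
In each case, combining with Step~1 gives
\[
\norm{\Delta_s(\cdot,t)}_{L^2(\nu)}\leq C''\abs{s}^{\delta}(1+t)^{\gamma},\qquad 2\delta>\gamma,
\]
for $\abs{s}\leq s_0$ and $t\geq0$. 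This is membership in $\mathcal{M}_2(v,\nu)$.

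The only delicate point is the borderline case $\chi=1$, which costs an arbitrarily small loss in the exponent. It is absorbed because the hypothesis $\delta>1/4$ is strict. Everything else is a routine application of Lemma~\ref{lem:covInt}.
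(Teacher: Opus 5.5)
Your proposal is correct and matches the paper's proof essentially step for step. Lemma~\ref{lem:covInt} with $A=C^{1/2}\abs{s}^{\delta}$ gives $C'=C_{\chi}C^{1/2}$, and the same three-case conversion of $M_{\chi}$, including the choice $1/2<\gamma<2\delta$ in the borderline case $\chi=1$, is how the paper verifies Definition~\ref{def:oneRootMeanSquare}.
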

\begin{proof}
Fix $\abs{s}\leq s_0$. Since $\Delta_s(\cdot,0)=0$ and the map $t\mapsto \Delta_s(\cdot,t)$ is locally absolutely continuous in $L^2(\nu,\h_3)$,  Lemma~\ref{lem:covInt} applies with
\[
\begin{aligned}
    Y=X,\qquad \mathcal{H}=\h_3,\qquad F(t)=\Delta_s(\cdot,t),\\
    D(x,t)=D_s(x,t),\qquad A=C^{1/2}\abs{s}^{\delta}.
\end{aligned}
\]
This gives the stated estimate with $C'=C_{\chi}C^{1/2}$.

\medskip

If $0<\chi<1$, take $\gamma=1-\chi/2$. Then $2\delta>\gamma$ is equivalent to $\delta>(2-\chi)/4$. If $\chi>1$, take $\gamma=1/2$, so $2\delta>\gamma$ is equivalent to $\delta>1/4$. 

Finally, suppose $\chi=1$ and $\delta>1/4$. Choose $1/2<\gamma<2\delta$. Since 
\[
\log^{1/2}(2+t)\ll_{\gamma}(1+t)^{\gamma-1/2},
\]
our stated estimate gives
\[
\norm{\Delta_s(\cdot,t)}_{L^2(\nu)}\ll_{\gamma}\abs{s}^{\delta}(1+t)^{\gamma}.
\]
Since $\Delta_s=\Delta_s^{(2)}$, this verifies Definition~\ref{def:oneRootMeanSquare} in every case.
\end{proof}

\begin{Theorem}[Centralizer for flows in $\mathcal{M}_2(v,\nu)$]\label{thm:oneRtGraPro}
    Assume that the flow $(T_t)_{t\in\R}$ is in $\mathcal{M}_2(v,\nu)$ according to Definition~\ref{def:oneRootMeanSquare}. Then there is a conull set $E\subset X$ such that for every $x\in E$ and every $r,t\in \R$
    \[
    \exp(rV)T_tx=T_t\exp(rV)x.
    \]
\end{Theorem}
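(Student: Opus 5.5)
The plan is to apply Proposition~\ref{prop:scaleGraCri} with $h=\exp(rV)$, using the single quadratic compact root $K=E_{21}-E_{12}$ in place of a full frame.

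\textbf{Choice of perturbation.} Fix $r\in\R$ and, for small $\eta>0$, set
\[
g_\eta=\exp(-r\eta^2K),\qquad Q(u)=\exp(ru^2V).
\]
Then $Q(1)=h$ and $Q$ is continuous.

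\textbf{Mean square hypothesis \eqref{eq:scaleAssump1}.} With $s=-r\eta^2$ we have $\Delta_\eta=\Delta_s$. Definition~\ref{def:oneRootMeanSquare} then gives, uniformly for $t\in[1/(2\eta),3/(2\eta)]$,
\[
\norm{\Delta_\eta(\cdot,t)}_{L^2(\nu)}\leq C\abs{r}^{\delta}\eta^{2\delta}(1+3/(2\eta))^{\gamma}\ll_r\eta^{2\delta-\gamma}\longrightarrow0,
\]
because $2\delta>\gamma$.

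\textbf{Conjugation hypothesis \eqref{eq:scaleAssump2}$\Rightarrow$\eqref{eq:scaleAssump3}.} Write $W_j(u)=\frac{\eta_j}{u}\beta(x_j,u/\eta_j)\in\h_3$, so that $W_j\to V$ uniformly on $[1/2,3/2]$. Then
\[
\Ad_{\exp\beta(x_j,u/\eta_j)}(-r\eta_j^2K)=-r\sum_{\ell=0}^{2}\frac{u^\ell\eta_j^{2-\ell}}{\ell!}\ad_{W_j(u)}^\ell K,
\]
where the series terminates because twofold brackets of $K$ with $\h_3$ lie in $\h_3$ and threefold ones vanish; this is the computation in Lemma~\ref{lem:firstRowCptRatner}. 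The terms with $\ell=0,1$ tend to $0$ uniformly. The $\ell=2$ term tends to $-\frac{r u^2}{2}\ad_V^2K=ru^2V$ by \eqref{eq:KadRes}, and continuity of $\exp$ gives \eqref{eq:scaleAssump3}. The one point needing care is uniformity in $u$, which holds because $W_j(u)$ stays bounded and the bracket maps are polynomial. Essentially this is the case $m=1$ of Lemma~\ref{lem:polyPacLim}, except that $A=-V$ spans only the drift line and not all of $\h_3$.

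\textbf{Conclusion.} Proposition~\ref{prop:scaleGraCri} now yields a conull set $E_r$ on which $\exp(rV)T_\sigma x=T_\sigma\exp(rV)x$ for all $\sigma$. Intersect the $E_r$ over $r\in\Q$. For fixed $x$ and $t$, both sides of the identity are continuous in $r$, since $T_t$ is continuous, being a restriction of a continuous action by Lemma~\ref{lem:contSameOrbPar}. This extends the identity to all $r\in\R$.

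There is no real obstacle for this centralizer statement: everything reduces to Proposition~\ref{prop:scaleGraCri}. The genuine difficulty is deferred to the next step of the paper, namely upgrading commutation with $\exp(\R V)$ alone to full rigidity without the $U_3$ direction.
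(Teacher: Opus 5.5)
Your proposal is correct and matches the paper's proof step for step: the same perturbation $g_\eta=\exp(-r\eta^2K)$ and limit $Q(u)=\exp(ru^2V)$, the same $\eta^{2\delta-\gamma}$ mean-square bound, an application of Proposition~\ref{prop:scaleGraCri}, and the intersection over rational $r$ extended by continuity. The only cosmetic difference is in the conjugation limit: the paper obtains it by applying Lemma~\ref{lem:polyPacLim} to the full frame of Lemma~\ref{lem:firstRowCptRatner} with $B=rV$ (zero coefficient on $-U_3$), whereas you carry out that adjoint expansion directly.
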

\begin{proof}
    Let $C,s_0,\delta,\gamma$ be the constants in Definition~\ref{def:oneRootMeanSquare}. Fix $r\in\R$. For $\eta\in(0,1)$, set
    \[
    g_{\eta,r}\coloneq\exp(-r\eta^2K),\qquad \Delta_{\eta,r}(x,t)\coloneq\beta(g_{\eta,r}x,t)-\beta(x,t),
    \]
    and define
    \[
    Q_r(u)\coloneq\exp(ru^2V),\qquad u\in[1/2,3/2].
    \]
    
    For $\eta>0$ sufficiently small that $\abs{r}\eta^2\leq s_0$, Definition~\ref{def:oneRootMeanSquare} gives
    \[
    \begin{aligned}
         \sup_{\frac{1}{2\eta}\leq t\leq \frac{3}{2\eta}}\norm{\Delta_{\eta,r}(\cdot,t)}_{L^2(\nu)}=& \sup_{\frac{1}{2\eta}\leq t\leq \frac{3}{2\eta}}\norm{\Delta_{-r\eta^2}(\cdot,t)}_{L^2(\nu)}\\
         \leq & C\abs{r}^{\delta}\eta^{2\delta}\left(\frac{5}{2\eta}\right)^{\gamma}\ll_r \eta^{2\delta-\gamma}\longrightarrow0.
    \end{aligned}
    \]
    The last step follows from $2\delta>\gamma$.
    
    Now let $\eta_j\to0^+$ and suppose that $x_j\in X$ satisfy
    \[
    \sup_{u\in[1/2,3/2]}\norm{\frac{\eta_j}{u}\beta\left(x_j,\frac{u}{\eta_j}\right)-V}\longrightarrow0.
    \]
    Apply Lemma~\ref{lem:polyPacLim} to the algebraic frame of Lemma~\ref{lem:firstRowCptRatner} with $B=rV$. Its coefficients relative to $(-V,-U_3)$ are $(-r,0)$, so
    \[
    Z_{\eta}(rV)=-r\eta^2K,\qquad P_{rV}(u)=ru^2V.
    \]
    The lemma therefore gives
    \[
    \exp\left(\beta\left(x_j,\frac{u}{\eta_j}\right)\right)g_{\eta_j,r}\exp\left(-\beta\left(x_j,\frac{u}{\eta_j}\right)\right)\longrightarrow \exp(ru^2V)=Q_r(u)
    \]
    uniformly for $u\in[1/2,3/2]$. Since $Q_r(1)=\exp(rV)$, Proposition~\ref{prop:scaleGraCri}, applied to the present normalized system, gives a conull set $E_r$ on which 
    \[
    \exp(rV)T_tx=T_t\exp(rV)x,\qquad t\in\R.
    \]
    Intersecting over $r\in\Q$ and using the continuity in $r$ gives one conull set on which the identity holds for all $r,t\in\R$.
\end{proof}

\medskip

Unlike Theorem~\ref{thm:fullGraRigid}, the next result assumes only that the directional flow commutes almost everywhere with the algebraic flow generated by the drift $V$.

\begin{Theorem}[Rigidity for $\mathfrak{h}_3$]\label{thm:RigidGraGeneDrift}
   Suppose there is a conull set $E\subset X$ such that for every $x\in E$ and every $s,t\in\R$
   \[
   \exp(sV)T_tx=T_t\exp(sV)x.
   \]
   Then for $\nu$-a.e. $x$ and every $t\in\R$
   \[
   \beta(x,t)=tV.
   \]
   Consequently, $T_tx=\exp(tV)x$ modulo $\nu$, and $\nu$ is a homogeneous probability measure.
\end{Theorem}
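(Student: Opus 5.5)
The plan is to follow the proof of Theorem~\ref{thm:fullGraRigid}, replacing invariance of $\beta$ along all of $\h_3$ with invariance along the drift line only. The missing transverse direction $U_3$ will then be handled by a separate monotonicity and recurrence argument.

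\emph{Step 1: $\beta$ is constant along the drift line.} Fix $x\in E$ and $t\in\R$. The commutation hypothesis, together with $T_ty=\exp(\beta(y,t))y$ and abelianity of $H_3$, gives
\[
\exp\bigl(\beta(\exp(sV)x,t)-\beta(x,t)\bigr)\exp(sV)x=\exp(sV)x .
\]
Hence the continuous map $s\mapsto\beta(\exp(sV)x,t)-\beta(x,t)$ takes values in $\Lambda_{\exp(sV)x}=\Lambda_x$. By Lemma~\ref{lem:algStab} this group is discrete, and the map vanishes at $s=0$, so it vanishes identically:
\[
\beta(\exp(sV)x,t)=\beta(x,t),\qquad x\in E,\ s,t\in\R .
\]

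\emph{Step 2: the $U_3$-component vanishes.} Write $\beta(x,t)=b_2(x,t)U_2+b_3(x,t)U_3$. Each $b_i$ is a continuous additive cocycle by Lemma~\ref{lem:contSameOrbPar}, and the drift assumption gives $b_3(x,t)/t\to0$ and $b_2(x,t)/t\to1$ $\nu$-a.e. The key structural point is as follows. Inside an $H_3$-orbit, parametrized locally by $(p_2,p_3)$, the flow $T_t$ commutes with translation in $p_2$, because the orbit points involved lie in $E$, which we may take $T$- and $\exp(\R V)$-invariant after shrinking to a conull set. By Step~1, $b_3$ depends only on the $V$-line. So $T_t$ descends to a continuous flow on the transverse coordinate $p_3\in\R$, given by $p_3\mapsto p_3+b_3(x,t)$. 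A continuous flow on $\R$ has monotone orbits, so for each $x$ either $b_3(x,\cdot)\equiv0$ or $t\mapsto b_3(x,t)$ is strictly monotone. The set $F=\{x:b_3(x,\cdot)\equiv0\}$ is $T$-invariant, so by ergodicity it is either null or conull.

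To exclude the case $\nu(F)=0$, I would apply Poincaré recurrence inside a small flow box around a $\nu$-typical point. Such a box is a product of a transversal to $H_3$-orbits and a small disk of $H_3$. I would argue that returns of $T_t x$ to the box along the same local plaque, with the $U_3$-displacement strictly monotone in $t$, are incompatible with sublinear growth of $b_3$ on the one hand and with recurrence to arbitrarily small neighbourhoods on the other. Alternatively, one can compare $x$ with $T_tx$ inside the same plaque, where Step~1 forces $b_3$ to take equal values.

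\emph{Step 3: conclusion.} Once $b_3\equiv0$ a.e., we have $T_sx=\exp(b_2(x,s)V)x$. Step~1 then gives $\beta(T_sx,t)=\beta(x,t)$, so $\beta(x,\cdot)$ is additive and continuous, hence equal to $tV$ by the drift assumption. Ratner's measure classification then identifies $\nu$, exactly as in the end of Theorem~\ref{thm:fullGraRigid}.

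The main obstacle is Step~2. Leaf coordinates are only defined modulo the discrete stabilizer $\Lambda_x$, which may be nontrivial for noncompact $\Gamma$. Moreover, recurrence in $X$ does not directly give recurrence of the transverse leaf coordinate. My first attempt would be to work in flow boxes and use the induced monotone transverse flow together with the sublinear growth of $b_3$. If that fails, I would run the argument of Proposition~\ref{prop:scaleGraCri} with the perturbation $\exp(\epsilon U_3)$ to extract additional commutation.
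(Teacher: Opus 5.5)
Your Steps 1 and 3, and the dichotomy ``$b_3(x,\cdot)\equiv0$ or strictly monotone'', follow the paper's proof. The paper makes the dichotomy rigorous in the lifted parameter space rather than in leaf coordinates on $X$. The map $F_r(z)=z+\beta(\exp(z)x,r)$ is a homeomorphism of $\h_3$ with inverse $F_{-r}$, and Step 1 applied at the points $T_tx$ gives $F_r(\beta(x,t)+\R V)=\beta(x,t+r)+\R V$. It follows that $\{t:b_3(x,t)=0\}$ is a closed subgroup of $\R$, and it cannot be $b\mathbb{Z}$, since that would give a continuous injection of a circle into $\R$. Working in $\h_3$ makes $\Lambda_x$ irrelevant. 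It also uses the commutation only along the $T$-orbit of $x$, so a $T$-invariant set suffices; the paper extends the relation to $\supp\nu$ by continuity. An $\exp(\R V)$-invariant conull set is not available at that stage: quasi-invariance of $\nu$ under $\exp(sV)$ only follows from the conclusion.

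The genuine gap is the exclusion of the strictly monotone case, which you leave as a plan. Recurrence cannot do this. A return of $T_tx$ close to $x$ in $X$ says nothing about the leaf coordinate, because the leaf may come back transversally, as for an irrational linear flow on a torus. Also, along a single orbit, strict monotonicity is compatible with sublinear growth (think of $\sqrt{t}$). Your fallback via Proposition~\ref{prop:scaleGraCri} with $g_\epsilon=\exp(\epsilon U_3)$ is vacuous. Conjugation by $\exp(\beta(x,t))$ fixes $\exp(\epsilon U_3)$, so the limiting element is the identity, and the theorem has no quantitative hypothesis to feed that proposition anyway.

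The missing idea is global and ergodic-theoretic. Split the complement of $F$ into $E_\pm=\{x:\pm b_3(x,t)>0\ \forall t>0\}$. The cocycle identity $b_3(T_ux,t)=b_3(x,u+t)-b_3(x,u)$ shows that $F$, $E_+$ and $E_-$ are $T$-invariant, so exactly one of them is conull. Suppose $E_+$ is conull. Then $f_+=\min(1,\max(b_3(\cdot,1),0))$ is bounded and positive a.e., and by the cocycle identity and the drift,
\[
0\le\frac{1}{n}\sum_{k=0}^{n-1}f_+(T_kx)\le\frac{1}{n}\sum_{k=0}^{n-1}b_3(T_kx,1)=\frac{b_3(x,n)}{n}\longrightarrow0
\]
for a.e.\ $x$. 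Integrating, using invariance of $\nu$ and dominated convergence, gives $\int_Xf_+\,d\nu=0$, which is a contradiction. The case $E_-$ is identical. This is where sublinear growth of $b_3$ is actually used: a time-one increment of constant sign forces linear growth of the cocycle.
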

\begin{proof}
    Set $S_sx=\exp(sV)x$. Since $E$ is conull,  $E\cap\supp\nu$ is dense in $\supp\nu$. For fixed $s,t\in\R$, both maps
    \[
    x\longmapsto S_sT_tx,\qquad x\longmapsto T_tS_sx
    \]
    are continuous. Hence the graph relation extends from $E$ to $\supp\nu$:
    \[
    S_sT_tx=T_tS_sx\qquad x\in\supp\nu,\ s,t\in\R.
    \]
    Notice also that $\supp\nu$ is $T_t$-invariant. 

    Fix $x\in\supp\nu$ and $t\in\R$. By assumption, and since $\h_3$ is abelian, we have
    
    \[
    \begin{aligned}
        \exp\bigl(\beta(S_sx,t)\bigr)S_sx=T_tS_sx=S_sT_tx=\exp\bigl(\beta(x,t)\bigr)S_sx.
    \end{aligned}
    \]
    Consequently,
    \[
    \beta(S_sx,t)-\beta(x,t)\in\Lambda_{S_sx},
    \]
    where $\Lambda_x=\{B\in\h_3:\exp(B)x=x\}$. Lemma~\ref{lem:algStab} gives $\Lambda_{S_sx}=\Lambda_x$. Hence the continuous map
    \[
    s\mapsto\beta(S_sx,t)-\beta(x,t)
    \]
    takes values in the fixed discrete group $\Lambda_x$. Since it vanishes at $s=0$, it vanishes identically. Therefore
    \begin{equation}\label{eq:SInv}
        \beta(S_sx,t)=\beta(x,t),\qquad x\in\supp\nu,\ s,t\in\R.
    \end{equation}

    Choose $Q\in\h_3$ such that $(V,Q)$ is a basis of $\h_3$ and write 
    \[
    \beta(x,t)=a(x,t)V+c(x,t)Q.
    \]
    Let $\pi_Q:\h_3\longrightarrow\R$ denote the $Q$-coordinate:
    \[
    \pi_Q(aV+cQ)=c.
    \]

    Fix $x\in\supp\nu$ and define
    \[
    \mathcal{L}_t\coloneq\beta(x,t)+\R V.
    \]
    Since the $V$-component is absorbed into $\R V$,
    \begin{equation}\label{eq:lineC}
        \mathcal{L}_t=c(x,t)Q+\R V.
    \end{equation}
    In particular
    \[
    \mathcal{L}_t=\mathcal{L}_u\Longleftrightarrow c(x,t)=c(x,u).
    \]

    For $r\in\R$, define
    \[
    F_r(z)\coloneq z+\beta(\exp(z)x,r),\quad z\in\h_3.
    \]
    Since $\h_3$ is abelian,
    \[
    \exp(F_r(z))x=T_r\exp(z)x.
    \]
    Hence the cocycle identity gives that $F_r$ is a homeomorphism with inverse $F_{-r}$. If~$z=\beta(x,t)+sV$, then $\exp(z)x=S_sT_tx$, so \eqref{eq:SInv} and the cocycle identity give
    \[
    F_r(z)=\beta(x,t+r)+sV.
    \]
    Hence
    \begin{equation}\label{eq:LInv}
        F_r(\mathcal{L}_t)=\mathcal{L}_{t+r},\qquad r,t\in\R.
    \end{equation}

    Define 
    \[
    P_x\coloneq\{t\in\R:c(x,t)=0\}=\{t\in\R:\mathcal{L}_t=\mathcal{L}_0\}.
    \]
    Here the second equality follows from \eqref{eq:lineC}. Moreover, for $t,u\in\R$, applying the homeomorphism $F_{-u}$ and using \eqref{eq:LInv}, we obtain
    \[
    \begin{aligned}
        c(x,t)=c(x,u)&\Longleftrightarrow \mathcal{L}_t=\mathcal{L}_u\Longleftrightarrow F_{-u}(\mathcal{L}_t)=F_{-u}(\mathcal{L}_u)\\
        &\Longleftrightarrow\mathcal{L}_{t-u}=\mathcal{L}_0\Longleftrightarrow t-u\in P_x.
    \end{aligned}
    \]
    In particular, $0\in P_x$, and if $t,u\in P_x$, then $t-u\in P_x$. Thus $P_x$ is a subgroup of $\R$. It is closed as $P_x=c(x,\cdot)^{-1}(\{0\})$ and $t\mapsto c(x,t)$ is continuous.

    Every closed subgroup of $\R$ is either $\R$, $\{0\}$ or $b\Z$ for some $b>0$. If $P_x=b\Z$, then
    \[
    \overline{c}_x:\R/b\Z\longrightarrow\R,\qquad \overline{c}_x([t])=c(x,t),
    \]
    is a well-defined continuous injection, which is impossible. Therefore
    \[
    P_x=\R\qquad\text{or}\qquad P_x=\{0\}.
    \]
    In the first case,
    \[
    c(x,t)=0\qquad t\in\R.
    \]
    In the second case, $t\mapsto c(x,t)$ is a continuous injection from $\R$ to $\R$, and is therefore strictly monotone.

    It follows that every $x\in\supp\nu$ belongs to exactly one of the sets
    \[
    \begin{aligned}
        E_0&\coloneq\{x\in\supp\nu: c(x,t)=0\text{ for every }t\in\R\},\\
        E_+&\coloneq\{x\in\supp\nu: c(x,t)>0\text{ for every }t>0\},\\
        E_-&\coloneq\{x\in\supp\nu: c(x,t)<0\text{ for every }t>0\}.
    \end{aligned}
    \]
    Since $c(x,\cdot)$ is either identically zero, strictly increasing, or strictly decreasing, the defining conditions may be tested on rational times. Hence $E_0,E_+$, and $E_-$ are Borel.

    Taking the $Q$-coordinate in the cocycle identity gives
    \begin{equation}\label{eq:cCocycle}
        c(T_ux,t)=c(x,u+t)-c(x,u).
    \end{equation}
    Equation~\eqref{eq:cCocycle} shows that if $c(x,\cdot)$ is identically zero, strictly increasing, or strictly decreasing, then the same is true of $c(T_ux,\cdot)$. Hence  $E_0,E_+$, and $E_-$ are $T_t$-invariant. Since they partition $\supp\nu$, ergodicity implies that exactly one of them is conull.

    We now rule out $E_+$. Suppose, towards a contradiction, that $E_+$ is conull, and define
    \[
    f_+(x)\coloneq\min\Big(1,\max\big(c(x,1),0\big)\Big).
    \]
    Then $0\leq f_+(x)\leq 1$ and $f_+(x)>0$ for $\nu$-a.e. $x$. Since $E_+$ is invariant, for $x\in E_+$ and every $k\geq 0$,
    \[
    0<f_+(T_kx)\leq c(T_kx,1).
    \]
    Therefore, by \eqref{eq:cCocycle}
    \[
    0\leq \frac{1}{n}\sum_{k=0}^{n-1}f_+(T_kx)\leq\frac{1}{n}\sum_{k=0}^{n-1}c(T_kx,1)=\frac{c(x,n)}{n}.
    \]
    Since the $Q$-coordinate of $V$ is zero, the asymptotic drift gives 
    \[
    \frac{c(x,n)}{n}=\pi_Q\left(\frac{\beta(x,n)}{n}\right)\longrightarrow\pi_Q(V)=0, \qquad \text{$\nu$-a.e. $x$}.
    \]
    Hence
    \[
    \frac{1}{n}\sum_{k=0}^{n-1}f_+(T_kx)\longrightarrow0\qquad \text{$\nu$-a.e.}
    \]
    These averages are bounded by $1$. Dominated convergence and the $T_k$-invariance of $\nu$ therefore give
    \[
    0=\lim_{n\to\infty}\int_X\frac{1}{n}\sum_{k=0}^{n-1}f_+(T_kx)d\nu(x)=\lim_{n\to\infty}\frac{1}{n}\sum_{k=0}^{n-1}\int_Xf_+(T_kx)d\nu(x)=\int_Xf_+d\nu.
    \]
    This is impossible as $f_+>0$ almost everywhere. Thus $\nu(E_+)=0$. Applying the same argument to 
    \[
    f_-(x)\coloneq\min\Big(1,\max\big(-c(x,1),0\big)\Big)
    \]
    gives $\nu(E_-)=0$. Hence $E_0$ is conull and 
    \begin{equation}\label{eq:betaDriftPure}
        \beta(x,t)=a(x,t)V\qquad x\in E_0,\ t\in\R.
    \end{equation}

    Let $E_*\subset E_0$ be the conull set on which the asymptotic drift is $V$. Fix $x\in E_*$. For $u\in\R$, equation~\eqref{eq:betaDriftPure} gives
    \[
    T_ux=\exp(a(x,u)V)x=S_{a(x,u)}x.
    \]
    Hence equation~\eqref{eq:SInv} and the cocycle identity imply
    \[
    \beta(x,u+t)=\beta(x,u)+\beta(x,t).
    \]
    The function $t\mapsto \beta(x,t)$ is a continuous homomorphism from $(\R,+)$ to~$\h_3$, so
    \[
    \beta(x,t)=A(x)t
    \]
    for some $A(x)\in\h_3$. The asymptotic drift gives $A(x)=V$. Therefore
    \[
    \beta(x,t)=tV\qquad x\in E_*,\ t\in\R.
    \]

    Thus $T_t$ agrees modulo $\nu$ with the algebraic unipotent flow $x\mapsto\exp(tV)x$. Consequently, $\nu$ is invariant and ergodic for this algebraic flow. Ratner's measure classification theorem \cites{ratner1990measure,ratner1991measure} gives a closed connected subgroup $L<G$ and a closed finite-volume orbit $Lz$ such that
    \[
    \exp(\R V)\leq L,\qquad \nu=m_{Lz}.
    \]
    This completes the proof.
\end{proof}

\begin{proof}[Proof of Theorem~\ref{thm:oneRootSL3rigidity}]
    Pass to the normalized system associated with~$g_V$. Apply Theorem~\ref{thm:oneRtGraPro} and Theorem~\ref{thm:RigidGraGeneDrift}, and transport the conclusion back to the original system.
\end{proof}

\begin{proof}[Proof of Corollary~\ref{cor:RootRigiditySL3}]
    Since $\nu$ is a probability, the pointwise estimate implies the one root mean square estimate with the same exponents. Apply Theorem~\ref{thm:oneRootSL3rigidity}.
\end{proof}

\section{Higher-dimensional subactions}\label{sec:HigheerDimSubactions}
The one dimensional theorem is also a building block for restrictions to a parameter subspace. Once it is known on finitely many ergodic directions forming a basis, the cocycle identity and discreteness of stabilizers recover the full subaction. We first identify the drift on each line and then carry out this finite basis argument.

We retain the notation and hypotheses preceding Theorem~\ref{thm:finiteSubactionRigidity}. Set
\[
\mathcal{B}(x,p)=\iota\tau(x,p),\qquad p\in P.
\]

\begin{Lemma}\label{lem:dirDrift}
    For every $x\in X$, $p\in P$, and $t\in\R$,
    \[
    \mathcal{B}(x,tp)=\int_0^t\iota\big(D_P\tau(\alpha_{sp}x,0)(p)\big)ds.
    \]
    If the line action $t\mapsto\alpha_{tp}$ is $\nu$-ergodic, then
    \[
    \lim_{t\to+\infty}\frac{\mathcal{B}(x,tp)}{t}=\mathcal{V}_{\nu}(p)\qquad \text{for $\nu$-a.e. $x$.}
    \]
\end{Lemma}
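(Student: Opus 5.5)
The integral formula follows from the cocycle identity for $\tau$ in Lemma~\ref{lem:contSameOrbPar} together with Fréchet differentiability at $0$; the drift statement then follows from Lemma~\ref{lem:oneStepIntCri}, or directly from the Birkhoff ergodic theorem.

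\textbf{Step 1: the derivative along the line.} Fix $x\in X$ and $p\in P$, and set $b(t)=\mathcal{B}(x,tp)$. By Lemma~\ref{lem:contSameOrbPar}, for $h\in\R$,
\[
\tau(x,(t+h)p)=\tau(x,tp)+\tau(\alpha_{tp}x,hp).
\]
Since $\tau(y,0)=0$ and $q\mapsto\tau(y,q)$ is Fréchet differentiable at $0$ on $P$, we get
\[
\frac{b(t+h)-b(t)}{h}\longrightarrow\iota\big(D_P\tau(\alpha_{tp}x,0)(p)\big)\qquad\text{as } h\to0.
\]
So $b$ is differentiable everywhere, with derivative $g(t)=\iota\big(D_P\tau(\alpha_{tp}x,0)(p)\big)$. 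This derivative is continuous in $t$, because $y\mapsto D_P\tau(y,0)$ is continuous and $t\mapsto\alpha_{tp}x$ is continuous. A $C^1$ function satisfies the fundamental theorem of calculus, and $b(0)=0$, so
\[
b(t)=\int_0^t g(s)\,ds .
\]
This proves the first identity. The only point needing care is that differentiability at $0$ alone, transported along the orbit by the cocycle identity, already gives differentiability at every $t$. Continuity of the derivative then avoids any appeal to absolute continuity.

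\textbf{Step 2: the asymptotic drift.} Define $\xi(y)=\iota\big(D_P\tau(y,0)(p)\big)$. By hypothesis $\xi\in L^1(\nu,\h_n)$, since evaluation at $p$ is a bounded linear map on $\operatorname{Hom}(P,\h_n)$. Step 1 gives
\[
\mathcal{B}(x,tp)=\int_0^t\xi(\alpha_{sp}x)\,ds .
\]
The map $(s,x)\mapsto\xi(\alpha_{sp}x)$ is jointly continuous, hence jointly measurable. We may therefore apply the Birkhoff ergodic theorem for the $\nu$-ergodic flow $t\mapsto\alpha_{tp}$, which is exactly the remark following Lemma~\ref{lem:oneStepIntCri}. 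It gives
\[
\frac{\mathcal{B}(x,tp)}{t}\longrightarrow\int_X\xi\,d\nu=\mathcal{V}_{\nu}(p)\qquad\text{for $\nu$-a.e. } x.
\]
Here $\nu$ is invariant under the line flow because it is $P$-invariant.

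I do not expect a serious obstacle in this lemma. The one step to check carefully is the derivative computation in Step 1, namely that the increment quotient converges at the shifted base point $\alpha_{tp}x$. This uses only pointwise Fréchet differentiability at $0$ for each base point, so no uniformity in the base point is needed.
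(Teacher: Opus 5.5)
Your proposal is correct and follows the same route as the paper: the cocycle identity carries the derivative at $0$ to the base point $\alpha_{tp}x$, the fundamental theorem of calculus gives the integral formula, and the pointwise ergodic theorem applied to $\iota\big(D_P\tau(\cdot,0)(p)\big)\in L^1(\nu)$ gives the drift. You also note explicitly that this derivative is continuous in $t$, so $t\mapsto\mathcal{B}(x,tp)$ is $C^1$; the paper leaves this detail implicit when it invokes the fundamental theorem of calculus.
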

\begin{proof}
    For $r\neq0$, the cocycle identity gives
    \[
    \frac{\mathcal{B}(x,(t+r)p)-\mathcal{B}(x,tp)}{r}=\frac{\iota\tau(\alpha_{tp}x,rp)}{r}.
    \]
    Letting $r\to0$ gives
    \[
    \frac{d}{dt}\mathcal{B}(x,tp)=\iota\big(D_P\tau(\alpha_{tp}x,0)(p)\big).
    \]
    The fundamental theorem of calculus gives the displayed formula. The last assertion follows from the pointwise ergodic theorem applied to the integrable function $x\mapsto\iota\big(D_P\tau(x,0)(p)\big)$, where the integrability follows from the assumption $\iota\circ D_P\tau(\cdot,0)\in L^1\big(\nu,\operatorname{Hom}(P,\h_n)\big)$.
\end{proof}

\begin{proof}[Proof of Theorem~\ref{thm:finiteSubactionRigidity}]
    Recall that $p_1,\ldots,p_k$ form a basis of $P$ and for each $i$, the line action $t\mapsto\alpha_{tp_i}$ is $\nu$-ergodic. By Lemma~\ref{lem:dirDrift},  $\mathcal{V}_{\nu}(p_i)$ is the drift of the $i$-th line action. The corresponding one-dimensional theorem therefore gives, on a conull set independent of $t$,
    \[
    \alpha_{tp_i}x=\exp\big(t\mathcal{V}_{\nu}(p_i)\big)x\qquad t\in\R.
    \]
    Hence
    \begin{equation}\label{eq:BVlambda}
        \mathcal{B}(x,tp_i)-t\mathcal{V}_{\nu}(p_i)\in\Lambda_x\qquad t\in\R,
    \end{equation}
    where $\Lambda_x=\{B\in\h_n:\exp(B)x=x\}$. By Lemma~\ref{lem:algStab}, $\Lambda_x$ is discrete. Since the LHS of \eqref{eq:BVlambda} is continuous in $t$ and vanishes at $t=0$, it vanishes identically. Thus
    \begin{equation}\label{eq:1Dcase}
        \mathcal{B}(x,tp_i)=t\mathcal{V}_{\nu}(p_i)\qquad t\in\R.
    \end{equation}

    Set $P_{\Q}\coloneq\sum_{i=1}^k\Q p_i$. For each $i$, let $E_i$ be a conull set on which \eqref{eq:1Dcase} holds for every $t$. Since $\nu$ is $P$-invariant,
    \[
    E=\bigcap_{i=1}^k\bigcap_{q\in P_{\Q}}\alpha_q^{-1}E_i
    \]
    is conull. Let $q=\sum_{i=1}^kr_ip_i\in P_{\Q}$ and set $q_0=0$, $q_i=\sum_{\ell=1}^ir_{\ell}p_{\ell}$. For $x\in E$, repeated use of the cocycle identity gives
    \[
    \mathcal{B}(x,q)=\sum_{i=1}^k\mathcal{B}(\alpha_{q_{i-1}}x,r_ip_i)=\sum_{i=1}^kr_i\mathcal{V}_{\nu}(p_i)=\mathcal{V}_{\nu}(q).
    \]
    Both sides are continuous in $q$, so the identity extends to every $p\in P$. Hence the $P$-subaction agrees modulo $\nu$ with the algebraic action of $\exp(\mathcal{V}_{\nu}(P))$. Ratner's theorem applied to any one of the ergodic basis lines shows that $\nu$ is homogeneous.
\end{proof}

We use the following consequence of Pugh and Shub's theorem on ergodic elements \cite{PughShub71}*{Theorem~1}.
\begin{Lemma}\label{lem:almostLineErg} 
   Let $\alpha$ be an ergodic probability-preserving $\R^k$-action on a standard probability space. Then, for almost every $p\in\mathbb{S}^{k-1}$, the one-parameter subaction
   \[
   t\longmapsto\alpha_{tp}
   \]
   is ergodic.
\end{Lemma}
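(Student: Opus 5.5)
The plan is to apply the Pugh--Shub theorem to the individual elements of $\alpha$, and then pass from elements to one-parameter subgroups by a counting argument along lines through the origin. The case $k=1$ is trivial, since $\mathbb{S}^0=\{\pm1\}$ and both directions generate the whole action. So assume $k\geq 2$.

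I will use the form of \cite{PughShub71}*{Theorem~1} that concerns an ergodic measure preserving $\R^k$-action on a standard probability space. It states that the set of $p\in\R^k$ for which the single transformation $\alpha_p$ fails to be ergodic is contained in a countable union $\bigcup_{\ell}H_\ell$ of affine hyperplanes. The hyperplanes come from the countably many eigencharacters $\lambda$ of the Koopman representation. Each one is of the form $H_{\lambda,n}=\{p:\langle\lambda,p\rangle=n\}$ with $n\in\Z$, together with the degenerate possibility that $\lambda=0$ never occurs for a nonconstant eigenfunction, by ergodicity. I will check that the version quoted really gives hyperplanes, rather than some larger null set. If it only gives a Lebesgue null set, the line argument below needs the hyperplane structure, so this is the point to verify against the reference.

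Next, fix $p\in\mathbb{S}^{k-1}$ and suppose the flow $t\mapsto\alpha_{tp}$ is not ergodic. A nonconstant function invariant under the whole flow is invariant under every $\alpha_{tp}$, so no $\alpha_{tp}$ with $t\in\R$ is ergodic. Hence the entire line $\R p$ lies in $\bigcup_\ell H_\ell$. An affine hyperplane either contains the line $\R p$ or meets it in at most one point. A line is not a countable union of points, so some $H_\ell$ contains $\R p$. That hyperplane passes through $0$, so it is a linear hyperplane $\lambda^{\perp}$ with $\lambda\neq 0$ and $\langle\lambda,p\rangle=0$.

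Consequently the set of non-ergodic directions is contained in $\bigcup_\lambda(\lambda^\perp\cap\mathbb{S}^{k-1})$. This is a countable union of great subspheres of dimension $k-2$, so it has zero surface measure on $\mathbb{S}^{k-1}$, which proves the lemma. The only genuine obstacle is the first step, namely making sure the cited theorem supplies the countable hyperplane structure. If needed, I would rederive it directly through the spectral argument. A function invariant under $\alpha_p$ decomposes, via the spectral measure of the $\R^k$-representation, into parts supported on $\{\lambda:\langle\lambda,p\rangle\in\Z\}$. Only atoms of the spectral measure, which are countably many, can contribute for a positive measure set of $p$; the continuous part is handled by Fubini.
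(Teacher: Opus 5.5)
Your argument is correct, granted the countable-hyperplane form of Pugh--Shub. That is the form in which their theorem is usually stated, and it is true: split the spectral measure along the countably many minimal affine subspaces of positive mass. It is not the paper's route, though, and your remark that the line argument ``needs the hyperplane structure'' is mistaken.

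The paper uses only the weaker consequence that $\alpha_p$ is ergodic for Lebesgue-a.e.\ $p\in\R^k$. It combines this with the observation you make in your second step: if the flow along $p$ is not ergodic, then no $\alpha_{tp}$ is ergodic. Hence the cone over the set of bad directions consists of non-ergodic elements and is Lebesgue-null. Polar coordinates then show that the bad directions form a null subset of $\mathbb{S}^{k-1}$, with no counting along lines needed. Your route buys a sharper conclusion (bad directions lie in countably many great subspheres), at the price of the finer input.

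Two details need correcting. First, the hyperplanes do not come only from eigencharacters. For the product $\phi_{p_1}\times\psi_{p_2}$ of two weakly mixing flows there are no nontrivial eigencharacters, yet both coordinate axes consist of non-ergodic elements. They come from non-atomic spectral measure concentrated on lines in the dual. Your counting step never uses where the hyperplanes come from, so nothing breaks.

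Second, your spectral fallback leaves a merely Lebesgue-null exceptional set from the continuous part. That would not feed your counting argument, but it feeds the cone argument directly. In fact, Fubini applied to the whole spectral measure, which has no atom at $0$ by ergodicity, already gives the a.e.\ statement without splitting off atoms.
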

\begin{proof}
By Pugh and Shub's theorem, $\alpha_p$ is ergodic for Lebesgue almost every $p\in\R^k$. Each such $p$ generates an ergodic line action. Ergodicity of the line is unchanged by nonzero rescaling of $p$; polar coordinates therefore give the assertion on the sphere.    
\end{proof}

Recall that $\mathcal{V}_{\nu}:P\to \h_n$ is linear. The one-dimensional rigidity theorems used in Theorem~\ref{thm:finiteSubactionRigidity} require a nonzero asymptotic drift. This is the only reason for excluding directions in $\ker\mathcal{V}_{\nu}$. If $\mathcal{V}_{\nu}\neq0$, then $\ker\mathcal{V}_{\nu}$ is a proper linear subspace of $P$, and hence has Lebesgue measure zero. Although $\mathcal{V}_{\nu}$ need not be injective, one can therefore choose a basis of $P$ entirely outside its kernel.
\begin{Corollary}[Almost every direction criterion]\label{cor:almostEveryDirCri}
    Suppose that $\nu$ is ergodic for the $P$-action and that 
    \[
    \mathcal{V}_{\nu}:P\longrightarrow\h_n
    \]
    is nonzero. Assume that, for Lebesgue almost every $p\in P$ satisfying
    \[
    \mathcal{V}_{\nu}(p)\neq0,
    \]
    the corresponding line restriction satisfies one of the one-dimensional regularity hypotheses listed in Theorem~\ref{thm:finiteSubactionRigidity}. Then the conclusion of that theorem holds.
\end{Corollary}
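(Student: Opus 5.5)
The plan is to reduce Corollary~\ref{cor:almostEveryDirCri} to Theorem~\ref{thm:finiteSubactionRigidity} by producing a basis $p_1,\ldots,p_k$ of $P$ whose elements all satisfy the hypotheses of that theorem. For each $p_i$ we need three things: $T^{p_i}$ is $\nu$-ergodic, $\mathcal{V}_\nu(p_i)\neq0$, and the normalized line flow satisfies one of the listed regularity hypotheses. It therefore suffices to show that the set $G\subset P$ of directions with all three properties has full Lebesgue measure in $P$. A full-measure subset of a $k$-dimensional space cannot be contained in a proper linear subspace, so it contains a basis.

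First, ergodicity. The $P$-action on $(X,\nu)$ is an ergodic probability-preserving $\R^k$-action, via a linear identification $P\cong\R^k$. By Lemma~\ref{lem:almostLineErg}, the line $t\mapsto\alpha_{tp}$ is ergodic for almost every $p$ on the sphere. Ergodicity of a line is invariant under nonzero rescaling, so polar coordinates show the line is ergodic for Lebesgue-a.e. $p\in P$. Alternatively, the proof of that lemma already gives this directly from Pugh--Shub.

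Second, the drift condition. $\mathcal{V}_\nu$ is linear and nonzero, so $\ker\mathcal{V}_\nu$ is a proper subspace and hence Lebesgue-null. This is exactly the remark preceding the corollary.

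Third, regularity. By hypothesis, the regularity condition holds for a.e. $p$ with $\mathcal{V}_\nu(p)\neq0$, so it fails only on a null set. One technical point: the regularity hypotheses are phrased for a line flow that is ergodic with nonzero drift, and normalization uses the drift $\mathcal{V}_\nu(p)$. Lemma~\ref{lem:dirDrift} identifies $\mathcal{V}_\nu(p)$ as the asymptotic drift on ergodic lines, so the standing assumptions hold there.

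Intersecting these three conull sets gives $G$ of full measure in $P$. Now pick the basis inductively. Given independent $p_1,\ldots,p_{i-1}\in G$ with $i\le k$, their span is a proper subspace, hence null, so $G$ minus that span is nonempty and we may choose $p_i$ in it. Theorem~\ref{thm:finiteSubactionRigidity} then yields the conclusion.

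The only mildly delicate step is measurability and the passage from "almost every point of the sphere" to "almost every vector of $P$". Since we only use complements of null sets and never need measurability of $G$ itself (a set containing a conull set suffices for choosing points), I expect no real obstacle. The main care is checking that the hypotheses of the one-dimensional theorems are applied to the line flow $T^{p}$ with its drift $\mathcal{V}_\nu(p)$, which Lemma~\ref{lem:dirDrift} handles.
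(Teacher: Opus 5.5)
Your proposal is correct and follows essentially the same route as the paper's proof. Both obtain a full-measure good set of directions: line ergodicity from Lemma~\ref{lem:almostLineErg} via polar coordinates, nullity of the proper subspace $\ker\mathcal{V}_{\nu}$, and the assumed almost-everywhere regularity. Both then pick a basis inductively outside the successive proper spans, which are null, and apply Theorem~\ref{thm:finiteSubactionRigidity}.
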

\begin{proof}
    By Lemma~\ref{lem:almostLineErg} and polar coordinates, the vectors $p\in P$ for which
    \[
    t\longmapsto\alpha_{tp}
    \]
    is $\nu$-ergodic form a full measure subset of $P$. Moreover, since $\mathcal{V}_{\nu}\neq0$, $\ker\mathcal{V}_{\nu}$ is a proper linear subspace and therefore has Lebesgue measure zero.

    Hence there is a full measure set $\mathcal{G}\subset P$ such that, for every $p\in\mathcal{G}$,
    \[
    t\mapsto\alpha_{tp}\quad\text{is $\nu$-ergodic},\qquad \mathcal{V}_{\nu}(p)\neq0,
    \]
    and the corresponding line restriction satisfies one of the required hypotheses.

    Let $d=\dim P$. Choose $p_1,\ldots,p_d\in\mathcal{G}$ inductively. Once $p_1,\ldots,p_r$ have been chosen with $r<d$, the proper subspace
    \[
    \operatorname{span}\{p_1,\ldots,p_r\}
    \]
    has Lebesgue measure zero, so $\mathcal{G}$ contains a vector $p_{r+1}$ outside this span. Thus $p_1,\ldots,p_d$ form a basis of $P$. Apply Theorem~\ref{thm:finiteSubactionRigidity}.
\end{proof}

The finite basis argument is not specific to the first row action. We record its abstract form in the notation of \S\ref{sec:cptRatnerFra} and \S\ref{sec:RigidityFullGRaFam}.

Let $\alpha$ be a continuous time change of the $N$-action on $X_0$, $\{0\}\neq P<\mathfrak{u}$ a linear subspace, and $\nu$ a $P$-invariant probability measure. Denote the inverse reparametrization cocycle by
\[
 \mathcal{B}:X_0\times \mathfrak{u}\longrightarrow\mathfrak{u},\qquad \alpha_px=\exp\big(\mathcal{B}(x,p)\big)x.
\]
Assume that, for every $x\in X_0$, the map
\[
P\ni p\longmapsto \mathcal{B}(x,p)
\]
is Fr\'echet differentiable at $0$, that
\[
x\longmapsto D_P\mathcal{B}(x,0)
\]
is continuous, and that
\[
D_P\mathcal{B}(\cdot,0)\in L^1\big(\nu,\operatorname{Hom}(P,\mathfrak{u})\big).
\]
Define the linear drift map
\[
\mathcal{V}_{\nu}(p)\coloneq\int_{X_0}D_P\mathcal{B}(x,0)(p)d\nu(x),\qquad p\in P.
\]
For $p\in P$, write
\[
T_t^px\coloneq \alpha_{tp}x.
\]
\begin{Corollary}[Abstract finite basis criterion]\label{cor:absFinBasisCri}
    Suppose that $p_1,\ldots,p_k$ form a basis of $P$ and that, for every $1\leq i\leq k$, $T^{p_i}$ is $\nu$-ergodic, $\mathcal{V}_{\nu}(p_i)\neq0$, and the pair $\big(\mathfrak{u},\mathcal{V}_{\nu}(p_i)\big)$ admits a compact Ratner frame which is either pointwise regular or correlation regular for $(T^{p_i},\nu)$. Then
    \[
    \mathcal{B}(x,p)=\mathcal{V}_{\nu}(p),\qquad \text{for $\nu$-a.e. $x$ and every $p\in P$}.
    \]
    Consequently, the $P$-subaction is algebraic modulo $\nu$ and $\nu$ is a homogeneous probability measure.
\end{Corollary}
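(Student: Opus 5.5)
The plan is to copy the proof of Theorem~\ref{thm:finiteSubactionRigidity}, with the first row frame replaced by the abstract compact frame machinery of \S\ref{sec:cptRatnerFra} and \S\ref{sec:RigidityFullGRaFam}.

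\emph{Step 1: identify the drift on each basis line.} Fix $i$ and consider the line flow $T_t^{p_i}x=\exp(\mathcal{B}(x,tp_i))x$ with cocycle $\beta_i(x,t)\coloneq\mathcal{B}(x,tp_i)$. By the cocycle identity this is a directional restriction of a continuous time change of the $N$-action. I would repeat the argument of Lemma~\ref{lem:dirDrift} verbatim, using Fréchet differentiability at $0$, continuity of $x\mapsto D_P\mathcal{B}(x,0)$ and the cocycle identity, to obtain
\[
\beta_i(x,t)=\int_0^tD_P\mathcal{B}(T^{p_i}_sx,0)(p_i)\,ds.
\]
The pointwise ergodic theorem, applied to the $L^1(\nu)$ integrand and using ergodicity of $T^{p_i}$, then gives asymptotic drift $\mathcal{V}_{\nu}(p_i)\neq0$ for $\nu$-a.e.\ $x$. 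Hence the standing assumptions of \S\ref{sec:cptRatnerFra} hold for $(T^{p_i},\nu)$ with $V=\mathcal{V}_{\nu}(p_i)$.

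\emph{Step 2: algebraicity on each line.} By hypothesis, $(\mathfrak{u},V)$ admits a compact Ratner frame that is pointwise or correlation regular for $(T^{p_i},\nu)$. Theorem~\ref{thm:absCptRigid}, which combines Theorems~\ref{thm:graphProRootCor}/\ref{thm:graphProPtRoot} with Theorem~\ref{thm:fullGraRigid}, then gives $\beta_i(x,t)=t\mathcal{V}_{\nu}(p_i)$ on a conull set $E_i$, for all $t$. Because that theorem already yields equality of the cocycles rather than only of the points, the discreteness of $\Lambda_x$ from Lemma~\ref{lem:algStab} is not needed again here.

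\emph{Step 3: assemble the subaction.} Set $P_{\Q}=\sum\Q p_i$ and
\[
E=\bigcap_i\bigcap_{q\in P_{\Q}}\alpha_q^{-1}E_i .
\]
This is conull by $P$-invariance of $\nu$ and countability. For $q=\sum r_ip_i$ and $x\in E$, write the partial sums $q_{i-1}=\sum_{\ell<i}r_\ell p_\ell\in P_{\Q}$, so that $\alpha_{q_{i-1}}x\in E_i$. The cocycle identity then gives
\[
\mathcal{B}(x,q)=\sum_i\mathcal{B}(\alpha_{q_{i-1}}x,r_ip_i)=\sum_ir_i\mathcal{V}_{\nu}(p_i)=\mathcal{V}_{\nu}(q).
\]
Since $\mathcal{B}(x,\cdot)$ is continuous and $\mathcal{V}_{\nu}$ is linear, the identity extends to all $p\in P$. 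Thus $\alpha_px=\exp(\mathcal{V}_{\nu}(p))x$ modulo $\nu$. Finally, $\nu$ is ergodic for the algebraic unipotent flow $\exp(t\mathcal{V}_{\nu}(p_1))$, so Ratner's measure classification, already invoked at the end of Theorem~\ref{thm:fullGraRigid}, shows that $\nu$ is homogeneous.

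\emph{Main obstacle.} The only delicate point is Step 1. One must make sure that differentiability of $p\mapsto\mathcal{B}(x,p)$ at $0$ alone yields differentiability of $t\mapsto\beta_i(x,t)$ at every $t$. This follows from $\beta_i(x,t+r)-\beta_i(x,t)=\mathcal{B}(T_t^{p_i}x,rp_i)$. Together with continuity of the derivative in $x$, this makes the integrand continuous in $s$, so the fundamental theorem of calculus applies. The remaining steps are bookkeeping with countable intersections.
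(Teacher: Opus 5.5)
Your proposal is correct and follows the paper's proof essentially step for step. The paper likewise transfers Lemma~\ref{lem:dirDrift} verbatim to identify $\mathcal{V}_{\nu}(p_i)$ as the drift of each basis line, applies Theorem~\ref{thm:absCptRigid} to obtain $\mathcal{B}(x,tp_i)=t\mathcal{V}_{\nu}(p_i)$ directly (so, as you note, discreteness of $\Lambda_x$ is not needed again), reuses the rational reconstruction from the proof of Theorem~\ref{thm:finiteSubactionRigidity}, and deduces homogeneity from any one basis direction.
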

\begin{proof}
    The proof of Lemma~\ref{lem:dirDrift} applies verbatim with $\mathcal{B}$ in place of $\iota\tau$. Hence $\mathcal{V}_{\nu}(p_i)$ is the asymptotic drift of the $i$-th line restriction. Theorem~\ref{thm:absCptRigid} gives for $\nu$-a.e. $x$ and every $t\in\R$,
    \[
    \mathcal{B}(x,tp_i)=t\mathcal{V}_{\nu}(p_i).
    \]

    The reconstruction argument in the proof of Theorem~\ref{thm:finiteSubactionRigidity} then gives for $\nu$-a.e. $x$ and every $p\in P$
    \[
    \mathcal{B}(x,p)=\mathcal{V}_{\nu}(p).
    \]
    The homogeneity of $\nu$ follows from Theorem~\ref{thm:absCptRigid} applied to any one of the basis directions.
\end{proof}

\section{Sharpness of the scale conditions}\label{sec:sharpnessScaleCond}

We begin with examples that meet the closed scale conditions but fail to be algebraic in the original coordinates. They are obtained by applying a translation on each orbit by a small smooth displacement; in particular, they remain smoothly conjugate to the algebraic action.

Equip $G$ with a complete right-invariant Riemannian metric, denote its distance by $d_G$, let $d_X$ be the induced quotient metric on $X$, and put $\bar{d}_X=\min\{1,d_X\}$. Let $m_X$ be Haar probability on $X$ and choose a nonconstant smooth compactly supported map $F:X\to\h_n$. For $F$ sufficiently small in $C^1$, set
\[
\Psi_F(x)\coloneq \exp(F(x))x.
\]
Then the homotopy $\Psi_{rF}$, $0\leq r\leq 1$, consists of proper local diffeomorphisms: every map is the identity outside a compact set and its derivative is uniformly close to the identity. Hence $\Psi_F$ is a covering map homotopic to the identity. Its induced map on the fundamental group is an isomorphism, so the covering has one sheet and $\Psi_F$ is a diffeomorphism. Define
\[
\alpha_w^F\coloneq\Psi_F^{-1}\circ\phi_w\circ\Psi_F.
\]
Its reparametrization cocycle is
\begin{equation}\label{eq:cocycleIdRhoF}
    \rho_F(x,p)=p+\iota^{-1}\left(F(\phi_px)-F(x)\right).
\end{equation}
Indeed, this is a smooth cocycle and
\[
\phi_{\rho_F(x,p)}\Psi_F(x)=\Psi_F(\phi_px).
\]
Since $F$ is sufficiently small, every fiber map is a global diffeomorphism. In fact, if 
\[
G_x(p)=\iota^{-1}\left(F(\phi_px)-F(x)\right),
\]
then $\sup_{x,p}\norm{D_pG_x(p)}\leq\kappa<1$, and hence
\[
(1-\kappa)\norm{p-q}\leq\norm{\rho_F(x,p)-\rho_F(x,q)}\leq(1+\kappa)\norm{p-q}.
\]
Thus each fiber map is injective, proper, and a local diffeomorphism; its image is open and closed in $\R^{n-1}$. The parameter dependent inverse function theorem gives a jointly smooth inverse family.

Let $\tau_F$ be the inverse cocycle and set
\[
T_t^F\coloneq\alpha_{tv}^F,\qquad \beta_F(x,t)\coloneq\iota\tau_F(x,tv),\qquad \nu_F\coloneq(\Psi_F^{-1})_*m_X.
\]
\begin{Lemma}\label{lem:OrbShiftId}
    For every $x\in X$ and $t\in\R$,
    \[
    \beta_F(x,t)=t\iota(v)+F(x)-F(T_t^Fx).
    \]
    The measure $\nu_F$ has full support, is $T_t^F$-ergodic, and has asymptotic drift $\iota(v)$.
\end{Lemma}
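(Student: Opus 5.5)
Start from the conjugacy relation $\phi_{\rho_F(x,p)}\Psi_F(x)=\Psi_F(\phi_px)$ and invert it. By definition $T_t^Fx=\alpha^F_{tv}x=\phi_{\tau_F(x,tv)}x$, and $\Psi_F(T_t^Fx)=\phi_{tv}\Psi_F(x)$. Write the left side as $\exp(F(T_t^Fx))\exp(\beta_F(x,t))x$ and the right side as $\exp(t\iota(v))\exp(F(x))x$, and use that $\h_n$ is abelian. This gives
\[
F(T_t^Fx)+\beta_F(x,t)-t\iota(v)-F(x)\in\Lambda_x .
\]
The left side is continuous in $t$ and vanishes at $t=0$, and $\Lambda_x$ is discrete by Lemma~\ref{lem:algStab}. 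Hence it vanishes for all $t$, which is the displayed identity. Alternatively, one can apply $\rho_F(x,\cdot)$ from \eqref{eq:cocycleIdRhoF} at $p=\tau_F(x,tv)$ and read off the identity directly, since $\phi_px=T_t^Fx$.

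For the measure statements, note that $\Psi_F$ conjugates $T_t^F$ to the algebraic flow $\phi_{tv}=\exp(t\iota(v))$, so $\nu_F=(\Psi_F^{-1})_*m_X$ is $T^F_t$-invariant. Ergodicity of $\nu_F$ is equivalent to ergodicity of Haar measure $m_X$ under the unipotent one-parameter flow $\exp(t\iota(v))$. This follows from the Howe--Moore theorem: $G=\SL_n(\R)$ is simple, $\exp(\R\iota(v))$ is unbounded, and so it acts ergodically (indeed mixingly) on $L^2_0(X,m_X)$. Full support holds because $m_X$ has full support and $\Psi_F^{-1}$ is a homeomorphism.

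For the drift, divide the identity by $t$. Since $F$ is compactly supported and continuous, it is bounded, so $(F(x)-F(T_t^Fx))/t\to0$ uniformly in $x$. Therefore $\beta_F(x,t)/t\to\iota(v)$ for every $x$, not just almost every $x$.

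The only delicate point is the ergodicity step, namely invoking Howe--Moore (equivalently, Moore's ergodicity theorem) for a noncompact one-parameter unipotent subgroup on an arbitrary lattice quotient of the simple group $\SL_n(\R)$. The remaining steps are formal consequences of abelianness and boundedness of $F$.
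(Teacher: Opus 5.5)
Your proof is correct and follows the paper's argument. The paper obtains the identity exactly by your \emph{alternative} route, substituting $p=\tau_F(x,tv)$ into \eqref{eq:cocycleIdRhoF}, so the detour through discreteness of $\Lambda_x$ is valid but unnecessary. It then gets invariance and ergodicity (via Moore's theorem) from the conjugacy $\Psi_F$, full support from $\Psi_F$ being a diffeomorphism, and the drift from boundedness of $F$, just as you do.
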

\begin{proof}
    If $p=\tau_F(x,tv)$, then $\rho_F(x,p)=tv$ and $\phi_px=T_t^Fx$. Equation~\eqref{eq:cocycleIdRhoF} gives the stated cocycle identity. Moore's ergodic theorem makes the algebraic flow $\phi_{tv}$ ergodic for $m_X$, and $\Psi_F$ conjugates it to $T_t^F$ with invariant measure $\nu_F$. Full support follows since $\Psi_F$ is a diffeomorphism. Since $F$ is bounded, division of the stated cocycle identity by $t$ gives the drift.
\end{proof}

\begin{Lemma}\label{lem:CptRtEst}
    Assume the drift is normalized to $V=E_{12}$. For every choice of $\theta_2,\ldots,\theta_n\in(0,1]$ there are $C,s_0>0$ such that for every $t\in\R$,
    \begin{equation*}
       \begin{aligned}
           &[\beta_F(\cdot,t)]_{K_2,\theta_2,s_0}\leq C(1+\abs{t})^{2\theta_2},\\
        [\beta_F(\cdot,t)&]_{K_j,\theta_j,s_0}\leq C(1+\abs{t})^{\theta_j}\qquad \text{for }3\leq j\leq n.  
       \end{aligned}
    \end{equation*}
\end{Lemma}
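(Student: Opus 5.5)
The plan is to obtain both estimates from the explicit formula of Lemma~\ref{lem:OrbShiftId}, using only that $F$ is bounded and Lipschitz. Everything then reduces to the polynomial growth of $\Ad_{\exp(tV)}K_j$.

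\emph{Step 1: normalize and reduce to a displacement estimate.} Pass to normalized coordinates through $g_V$ of Lemma~\ref{lem:normalization}. The conjugated system is again of the form $\Psi_{\tilde F}^{-1}\phi\Psi_{\tilde F}$ with $\tilde F=\Ad_{g_V}F\circ\mathcal{C}_V^{-1}$. This map is smooth and compactly supported, so we keep writing $F$, and now $\iota(v)$ becomes $V=E_{12}$. Lemma~\ref{lem:OrbShiftId} gives, with $k=\exp(sK_j)$,
\[
\beta_F(kx,t)-\beta_F(x,t)=\bigl(F(kx)-F(x)\bigr)-\bigl(F(T_t^Fkx)-F(T_t^Fx)\bigr).
\]
Since $F$ is smooth, compactly supported and bounded, there is $C_F$ with $\norm{F(a)-F(b)}\leq C_F\min\{1,d_X(a,b)\}$. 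Because $\min\{1,a\}\leq a^{\theta}$ for $\theta\in(0,1]$, it suffices to show
\[
d_X(T_t^Fkx,T_t^Fx)\leq C\abs{s}(1+\abs{t})^{d_j},\qquad d_2=2,\ d_j=1\ (j\geq3),
\]
uniformly in $x$ for $\abs{s}\leq s_0$. The first bracket is then handled the same way, since $d_X(kx,x)\leq C\abs{s}\leq Cs_0^{1-\theta}\abs{s}^{\theta}$.

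\emph{Step 2: algebraic conjugation.} We have $T_t^F=\Psi_F^{-1}\exp(tV)\Psi_F$. The map $\Psi_F$ is a diffeomorphism equal to the identity off a compact set, with uniformly bounded derivative, so $\Psi_F^{-1}$ is Lipschitz for $d_X$. It therefore suffices to bound $d_X(\exp(tV)\Psi_F(kx),\exp(tV)\Psi_F(x))$. Write $\Psi_F(kx)=g\Psi_F(x)$ with
\[
g=\exp(F(kx))\exp(sK_j)\exp(-F(x)).
\]
Since $H_n$ is abelian and contains $\exp(tV)$, conjugating $g$ by $\exp(tV)$ gives
\[
h_t=\exp\bigl(F(kx)-F(x)\bigr)\exp\bigl(s\Ad_{\exp F(x)}\Ad_{\exp tV}K_j\bigr).
\]
By Lemma~\ref{lem:firstRowCptRatner}, $\Ad_{\exp tV}K_2=K_2+t[V,K_2]-t^2U_2$, which is quadratic in $t$. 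For $j\geq3$ it is $K_j-tU_j$, which is linear in $t$. The map $\Ad_{\exp F(x)}$ ranges over a compact set of operators because $F$ is bounded. For the right-invariant metric we have $d_G(ab,e)\leq d_G(a,e)+d_G(b,e)$ and $d_G(\exp Z,e)\leq\norm{Z}$. Combining these,
\[
d_G(h_t,e)\leq C\abs{s}+C\abs{s}(1+\abs{t})^{d_j}.
\]
Then \eqref{eq:quoMetIneq} bounds the quotient distance by $d_G(h_t,e)$, which is the estimate required in Step~1.

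\emph{Main obstacle.} The delicate point is Step~2. Expanding $g$ by Baker--Campbell--Hausdorff would create components outside $\h_n$ of size $\abs{s}\norm{F}$, and these grow quadratically even for $j\geq3$. That would destroy the degree-one bound. The fix is the factorization above: the $H_n$-factors are pulled through $\exp(tV)$ exactly, so only $\Ad_{\exp tV}K_j$ itself is propagated. A secondary point is the Lipschitz property of $\Psi_F^{-1}$ and of $F$ on the noncompact quotient. Both follow because these maps agree with the identity, respectively vanish, outside a compact set.
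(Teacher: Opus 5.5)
Your proof is correct and is essentially the paper's argument: the orbit-shift formula of Lemma~\ref{lem:OrbShiftId}, Lipschitz bounds for $F$ and $\Psi_F^{-1}$, the factorization $\exp(F(kx))\exp(sK_j)\exp(-F(x))=\exp(F(kx)-F(x))\exp(s\Ad_{\exp F(x)}K_j)$, and the adjoint degrees of $K_j$ along $V$. The one difference is that you conjugate the two factors separately and use $d_G(ab,e)\leq d_G(a,e)+d_G(b,e)$, whereas the paper takes the logarithm $W_{j,s}$, Taylor expands in $s$, and absorbs a crude $s^2(1+\abs{t})^2$ remainder through $\min(1,z+z^2)\leq 2\min(1,z)$.

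Your ``main obstacle'' remark is inaccurate, though. For $j\geq3$ one has $[\h_n,K_j]\subset\h_n$, so a BCH expansion stays in the subalgebra $\h_n\oplus\R K_j$, on which $\ad_V^2=0$. In any case, an $O(s^2)$ remainder of degree two would be harmless, which is how the paper's route keeps the degree-one bound.
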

\begin{proof}
After decreasing $s_0$ if necessary, assume $s_0\leq 1$. For small $s$, define 
\[
\begin{aligned}
    R_{j,s}(x)&\coloneq \exp(F(\exp(sK_j)x))\exp(sK_j)\exp(-F(x)),\\
    W_{j,s}(x)&\coloneq\log R_{j,s}(x).
\end{aligned}
\]
Since $R_{j,0}(x)=\id$ and $W_{j,0}(x)=0$, uniform Taylor expansion at $s=0$ gives
\[
W_{j,s}(x)=sY_j(x)+s^2Z_{j,s}(x),
\]
where $Y_j$ and $Z_{j,s}$ are uniformly bounded. 

Let $X_{K_j}$ denote the vector field on $X$ generated by $K_j$. Set
\[
F_s=F(\exp(sK_j)x),\qquad F_0=F(x).
\]
Since $\h_n$ is abelian, 
\[
R_{j,s}(x)=\exp(F_s-F_0)\exp\left(s\Ad_{\exp(F_0)}K_j\right).
\]
Moreover, uniformly in $x$,
\[
F_s-F_0=s(dF)_x(X_{K_j}(x))+O(s^2).
\]
The Baker-Campbell-Hausdorff formula therefore gives
\[
Y_j(x)=(dF)_x(X_{K_j}(x))+\Ad_{\exp(F(x))}K_j.
\]

The first term belongs to $\h_n$ and hence commutes with $V$. Since $[F(x),V]=0$, the operators $\Ad_{\exp(F(x))}$ and $\ad_V$ commute. Thus for $j\geq 3$, 
\[
\ad_V^2Y_j(x)=\Ad_{\exp(F(x))}\ad_V^2K_j=0\qquad j\geq 3.
\]

Since $V^2=0$, we have $\ad_V^3=0$ on $\sl_n(\R)$. It follows that
\[
\norm{\Ad_{\exp(tV)}W_{2,s}(x)}\ll\abs{s}(1+\abs{t})^2,
\]
and, for $j\geq3$, the linear Taylor coefficient $Y_j$ has adjoint degree at most one, while the quadratic remainder is estimated using the general degree-two bound. Therefore
\[
\norm{\Ad_{\exp(tV)}W_{j,s}(x)}\ll\abs{s}(1+\abs{t})+s^2(1+\abs{t})^2.
\]

The maps $F,\Psi_F$ and $\Psi_F^{-1}$ are globally Lipschitz with respect to the truncated quotient metric. Moreover,
\[
\Psi_F(\exp(sK_j)x)=R_{j,s}(x)\Psi_F(x).
\]
Consequently, 
\[
\begin{aligned}
    \exp(tV)\Psi_F(\exp(sK_j)x)=\exp\left(\Ad_{\exp(tV)}W_{j,s}(x)\right)\exp(tV)\Psi_F(x).
\end{aligned}
\]
The right-invariance of $d_G$ and definition of the quotient metric give
\[
\bar{d}_X(\exp(A)y,y)\ll\min(1,\norm{A}).
\]
Since
\[
T_t^F=\Psi_F^{-1}\exp(tV)\Psi_F,
\]
the preceding estimates and the global Lipschitz bound for $\Psi_F^{-1}$ give
\[
\bar{d}_X(T_t^F\exp(sK_2)x,T_t^Fx)\ll\min\{1,\abs{s}(1+\abs{t})^2\},
\]
\[
\bar{d}_X(T_t^F\exp(sK_j)x,T_t^Fx)\ll\min\{1,\abs{s}(1+\abs{t})+s^2(1+\abs{t})^2\}.
\]
Lemma~\ref{lem:OrbShiftId} gives
\begin{equation*}
    \beta_F(\exp(sK_j)x,t)-\beta_F(x,t)
    =F(\exp(sK_j)x)-F(x)-F(T_t^F\exp(sK_j)x)+F(T_t^Fx).
\end{equation*}
The Lipschitz bound for $F$ and $\min(1,a)\leq a^{\theta}$ for $0<\theta\leq 1$ now give the asserted estimates. For the degree-one roots, use
\[
\min(1,z+z^2)\leq 2\min(1,z)\leq 2z^{\theta},\qquad z=\abs{s}(1+\abs{t}),\qquad 0<\theta\leq1.
\]
This proves the lemma.
\end{proof}

\begin{proof}[Proof of Proposition~\ref{prop:SharpProp}]
    Choose $F$ as above. Lemma~\ref{lem:OrbShiftId} gives full support, ergodicity, and the drift, while Lemma~\ref{lem:CptRtEst} gives the endpoint estimates after normalization.

    Suppose that, for some $W\in\h_n$, the flow $T_t^F$ agreed modulo $\nu_F$ with the algebraic $H_n$-flow $x\mapsto\exp(tW)x$. Intersecting the conull equality sets first over $t\in\Q$, and then using continuity in $t$, gives a conull set on which the equality holds for every $t\in\R$. Hence 
    \[
    \beta_F(x,t)-tW\in\Lambda_x\qquad t\in\R.
    \]
    The LHS is continuous in $t$, takes values in the discrete group $\Lambda_x$, and vanishes at $t=0$; hence it vanishes identically. The asymptotic drift then gives $W=\iota(v)$. The cocycle identity in Lemma~\ref{lem:OrbShiftId} therefore yields
    \[
    F(T_t^Fx)=F(x)\qquad t\in\R.
    \]
    Ergodicity makes $F$ constant $\nu_F$-almost everywhere, and full support and continuity make it constant everywhere, a contradiction.

    In dimension three, the first endpoint estimate implies the corresponding one-root $L^2$ estimate with 
    \[
    \delta=\theta_2,\qquad \gamma=2\theta_2,
    \]
    and hence $2\delta=\gamma$.
\end{proof}

\subsection{Compact examples and optimal exponents}\label{sec:cptExamplesOptCoe}
Let $n=3$ and assume that $\Gamma$ is cocompact. Let $m$ be the Haar probability on $X$. In the parameter coordinates corresponding to $(U_2,U_3)$, write $e_1=(1,0)^{\mathrm{T}}$. Choose a nonconstant real $b\in C^{\infty}(X)$. If $U_2b$ vanished identically, then $b(\exp(tU_2)x)=b(x)$ for all $x,t$; Haar ergodicity and continuity would make $b$ constant. Since $\int_XU_2b\,dm=0$, the function $U_2b$ is itself nonconstant.

Apply the preceding construction with $F(x)=\epsilon b(x)U_2$, where $\epsilon\neq0$ is sufficiently small; in particular, take $1+\epsilon U_2b>0$. Since $\iota^{-1}(cU_2)=ce_1$, \eqref{eq:cocycleIdRhoF} gives
\[
\rho_{\epsilon}(x,(r,s))=\big(r+\epsilon(b(\phi_{(r,s)}x)-b(x)),s\big),\qquad \Psi_{\epsilon}(x)=\exp(\epsilon b(x)U_2)x.
\]
The same $\Psi_{\epsilon}$ conjugates the full actions. For $T_t^{\epsilon}=\alpha_{te_1}^{\epsilon}$, the inverse parameter satisfies
\begin{equation}\label{eq:8.1inverseCocycle}
    \tau_{\epsilon}(x,te_1)=\big(t+\epsilon b(x)-\epsilon b(T_t^{\epsilon}x)\big)e_1.
\end{equation}
Its generator is $(1+\epsilon U_2b)^{-1}U_2$. Since $U_2b$ is nonconstant and the point stabilizers in $G$ are discrete, this is not a constant algebraic vector field.

The invariant probability is
\[
d\nu_{\epsilon}=(1+\epsilon U_2b)dm=(\Psi_{\epsilon}^{-1})_*m.
\]
Indeed, in fundamental Lie-algebra frames, factoring $\Ad_{\exp(\epsilon bU_2)}$ out of $D\Psi_{\epsilon}$ leaves 
\[
Y\longmapsto Y+\epsilon( Yb)U_2.
\]
This map scales $\R U_2$ by $1+\epsilon U_2b(x)$ and induces the identity on the quotient by $\R U_2$. Since the adjoint map has determinant one, the Haar Jacobian of $\Psi_{\epsilon}$ is  $1+\epsilon U_2b>0$. The measure $\nu_{\epsilon}$ has full support, and the conjugacy gives ergodicity. Equation~\eqref{eq:8.1inverseCocycle} gives uniformly bounded inverse deviations and parameter drift $e_1$, hence Lie algebra drift $U_2$. The cocycle formula and the inverse function theorem show that both $\rho_{\epsilon}(x,p)$ and $\tau_{\epsilon}(x,p)$ converge to $p$ in $C^{\infty}$ on compact parameter sets as $\epsilon\to0$.

For every fixed sufficiently small $\epsilon\neq0$ and every $0<\delta\leq 1$, Lemma~\ref{lem:CptRtEst} gives for $\abs{s}\leq s_0$ and $t\in\R$,
\[
\norm{\tau_{\epsilon}(\exp(sK_2)\cdot,te_1)-\tau_{\epsilon}(\cdot,te_1)}_{\infty}\leq C_{\epsilon,\delta}\abs{s}^{\delta}(1+\abs{t})^{2\delta}.
\]
If an $L^2(\nu_{\epsilon})$ estimate with the same $\delta$ and a nonnegative exponent $\gamma<2\delta$ held, Theorem~\ref{thm:oneRootSL3rigidity} would give $T^{\epsilon}_tx=\exp(tU_2)x$ algebraic modulo $\nu_{\epsilon}$. Full support and continuity would give equality everywhere, contradicting the generator above. Thus $2\delta$ is the optimal time exponent, attained in both $L^2(\nu_{\epsilon})$ and the uniform norm. Each fixed example realizes $(3/4, 3/2)$, $(1/\sqrt{2},\sqrt{2})$, and indeed all pairs $(\delta,2\delta)$ with $0<\delta\leq 1$.

\section{Mean square sharpness on the modular  \texorpdfstring{$\SL_3(\R)$}{SL3R} quotient}\label{sec:sl3Sharpness}
The examples in the previous section are smoothly conjugate to algebraic actions. In this section, we provide an example of a rank-two time change of the first row action of $\SL_3(\R)$ on the modular quotient $\SL_3(\R) / \SL_3(\Z)$, which is not smoothly conjugate to the algebraic action by a smooth leaf preserving diffeomorphism and satisfies the conclusion of Theorem~\ref{thm:sharpnessSL3rigidityOneRoot}. In order to construct our example, we exploit the primitive Siegel transform to obtain an explicit rank-two closed cocycle which is not a coboundary.

Let
\[
G=\SL_3(\R),\qquad \Gamma=\SL_3(\Z),\qquad X=G/\Gamma,
\]
and let $m$ be Haar probability measure on $G/\Gamma$. For the construction it is convenient to use the commuting nilpotents
\[
V=E_{21},\qquad Z=E_{23},\qquad \h=\R V\oplus\R Z.
\]
The integral matrix
\begin{equation}
    w_0=\begin{pmatrix}
        0 & 1 & 0\\
        1 & 0 & 0 \\
        0 & 0 & -1
    \end{pmatrix}\in \SL_3(\Z)
\end{equation}
satisfies
\[
\Ad_{w_0}V=E_{12},\qquad \Ad_{w_0}Z=-E_{13},\qquad \Ad_{w_0}(E_{12}-E_{21})=E_{21}-E_{12}.
\]

Let $C_{w_0}(g\Gamma)=w_0g\Gamma$. Then
\[
C_{w_0}\big(\exp(rV+sZ)x\big)=\exp(rE_{12}-sE_{13})C_{w_0}(x).
\]
We first carry out the construction for the action generated by $V$ and $Z$. Once the time-changed action $\alpha^{\epsilon}$
has been constructed, define
\begin{equation}\label{eq:transportNonRigidity}
\widehat{\alpha}_{(r,s)}^{\epsilon}=C_{w_0}\circ\alpha_{(r,-s)}^{\epsilon}\circ C_{w_0}^{-1}.
\end{equation}
This is a time change of the standard first row action. The parameter reflection fixes the selected axis $(t,0)$, while
\[
\Ad_{w_0}(E_{12}-E_{21})=K_2.
\]
We return to the transported system after proving the compact root estimate.

By Lemma~\ref{lem:smmothDisplacement}, every smooth leaf preserving diffeomorphism for the $\exp(\h)$-action has the form
\[
\Psi_B(x)=\exp\big(B_1(x)V+B_2(x)Z\big)x,\qquad B=(B_1,B_2)\in C^{\infty}(X,\R^2).
\]

\subsection{The primitive Siegel cocycle}
Let 
\[
\Z^3_{\operatorname{prim}}\coloneq\{v\in\Z^3:\gcd(\abs{v_1},\abs{v_2},\abs{v_3})=1\}.
\]
For $F\in C_c^{\infty}(\R^3)$, define its primitive Siegel transform by
\begin{equation}\label{eq:primSiegelTrans}
    \Theta_F^{\operatorname{prim}}(g\Gamma)=\sum_{v\in\Z^3_{\operatorname{prim}}}F(gv).
\end{equation}
The sum is pointwise finite. It defines a smooth function on $X$, because locally only finitely many lattice vectors meet the fixed compact support of $F$. It is right $\Gamma$-invariant because $\Gamma$ acts bijectively on $\Z^3_{\operatorname{prim}}$.

For $Y\in\sl_3(\R)$, we use the same symbol for the infinitesimal generator of the left $G$-action on $X$ and write $Y^{\mathrm{lin}}$ for the corresponding linear vector field on $\R^3$:
\begin{equation}\label{eq:LieDeriX}
    (Y\varphi)(g\Gamma)\coloneq\left.\frac{d}{ds}\right|_{s=0}\varphi(\exp(sY)g\Gamma),\qquad (Y^{\mathrm{lin}}F)(y)=\left.\frac{d}{ds}\right|_{s=0}F(\exp(sY)y)=dF_y(Yy).
\end{equation}
If 
\[
D=Y_1\cdots Y_r,\qquad Y_1,\ldots,Y_r\in\sl_3(\R),
\]
is a finite composition of such Lie derivatives, set $D^{\operatorname{lin}}\coloneq Y_1^{\operatorname{lin}}\cdots Y_r^{\operatorname{lin}}$. Local finiteness of \eqref{eq:primSiegelTrans} gives 
\begin{equation}\label{eq:SiegelLieInter}
    D\Theta_F^{\operatorname{prim}}=\Theta_{D^{\operatorname{lin}}F}^{\operatorname{prim}}.
\end{equation}
Moreover, $D^{\operatorname{lin}}F\in C_c^{\infty}(\R^3)$.

\begin{Lemma}\label{lem:priFirstTwoMoments}
    For every $F\in C_c^{\infty}(\R^3)$,
    \[
    \int_X\Theta_F^{\operatorname{prim}}dm=\frac{1}{\zeta(3)}\int_{\R^3}F(y)dy,
    \]
    and $\Theta_F^{\operatorname{prim}}\in L^2(X,m)$. Consequently, for every fixed finite composition $D$ of Lie derivatives,
    \[
    D\Theta_F^{\operatorname{prim}}\in L^2(X,m).
    \]
\end{Lemma}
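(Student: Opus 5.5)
The statement has three parts: a mean formula, square integrability, and the derivative claim. I would reduce the mean formula to the classical Siegel mean value theorem, and the $L^2$ bound to Rogers' second moment formula or a Schmidt-type majorant. The derivative claim then follows formally from \eqref{eq:SiegelLieInter}.

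For the mean, I start from Siegel's formula for the full transform $\Theta_F(g\Gamma)=\sum_{v\in\Z^3\setminus\{0\}}F(gv)$:
\[
\int_X\Theta_F\,dm=\int_{\R^3}F\,dy.
\]
Every nonzero $v\in\Z^3$ is uniquely $kw$ with $k\geq1$ and $w\in\Z^3_{\operatorname{prim}}$, so
\[
\Theta_F=\sum_{k\geq1}\Theta^{\operatorname{prim}}_{F_k},\qquad F_k(y)=F(ky).
\]
I would first take $F\geq0$, so that monotone convergence is available. Then $\int\Theta^{\operatorname{prim}}_{F_k}\,dm=k^{-3}c\int F$, where $c$ is the unknown constant for the primitive transform; the constant exists because $\Theta^{\operatorname{prim}}$ defines an $\SL_3(\R)$-invariant Radon functional, hence a multiple of Lebesgue measure away from $0$. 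Comparing the two sides gives $c\,\zeta(3)=1$. General real $F$ follows by splitting $F=F^+-F^-$, or by dominating $\abs{F}$ by a nonnegative $C_c^\infty$ bump, which also gives absolute integrability.

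For square integrability, I would use the elementary majorant $\abs{\Theta^{\operatorname{prim}}_F}\leq\norm{F}_\infty\,\#\{v\in\Z^3_{\operatorname{prim}}: \norm{gv}\leq R\}$ for $\supp F\subset B_R$. The right-hand side is bounded by $C\,\alpha(g\Gamma)$, where $\alpha$ is Schmidt's function $\max_{i}\lambda_1\cdots\lambda_i$ built from successive minima; in dimension $3$ this is essentially $\lambda_1^{-1}\lambda_2^{-1}$. It is classical that $\alpha\in L^p(X)$ for every $p<3$, so in particular $\alpha\in L^2$. I would cite this from Eskin--Margulis--Mozes, or check it via reduction theory on a Siegel set, where the Haar density is $a_1^{-2}a_2^{-2}$-type and $\alpha^2$ grows like $a_1^{2}$; that integrability check is the one genuine computation. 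Alternatively, Rogers' second moment formula in dimension $n=3\geq3$ gives $\int\abs{\Theta_F}^2<\infty$ directly, and the primitive transform is dominated by the full transform of $\abs{F}$.

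The final claim is immediate from the first two parts. By \eqref{eq:SiegelLieInter}, $D\Theta_F^{\operatorname{prim}}=\Theta^{\operatorname{prim}}_{D^{\operatorname{lin}}F}$, and $D^{\operatorname{lin}}F\in C_c^\infty(\R^3)$, so the $L^2$ statement applies.

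The main obstacle is the $L^2$ bound near the cusp, and I would lean on the Schmidt/EMM majorant rather than reprove Rogers' formula.
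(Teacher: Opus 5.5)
Your argument is correct, but on the first two parts it takes a different route from the paper.

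\textbf{The mean formula.} The paper inverts your decomposition. It writes $\Theta_F^{\operatorname{prim}}=\sum_{k\geq1}\mu(k)\widehat{F_k}$ with the M\"{o}bius function. It gets absolute convergence in $L^1(X,m)$ from Siegel's theorem applied to $\abs{F_k}$, which gives the bound $\sum_k k^{-3}\norm{F}_1$. It then integrates termwise to obtain $\sum_k\mu(k)k^{-3}=1/\zeta(3)$.

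You instead combine the forward identity $\Theta_F=\sum_k\Theta^{\operatorname{prim}}_{F_k}$ with the fact that $F\mapsto\int_X\Theta^{\operatorname{prim}}_F\,dm$ is an $\SL_3(\R)$-invariant Radon measure on $\R^3$. This needs uniqueness of invariant measures on $\R^3\setminus\{0\}$, whereas the paper needs nothing beyond Siegel's theorem. In exchange, your route explains structurally why a constant multiple of Lebesgue measure appears.

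You should add one line excluding an atom $a\delta_0$ at the origin. For $F\geq0$ with $F(0)>0$, such an atom would add $aF(0)$ to every term of $\sum_k\int_X\Theta^{\operatorname{prim}}_{F_k}\,dm$. That sum equals the finite number $\int_{\R^3}F$, so its terms must tend to zero, forcing $a=0$.

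\textbf{Square integrability.} Your fallback, Rogers' formula for the full transform together with $\abs{\Theta^{\operatorname{prim}}_F}\leq\Theta_{\abs{F}}$, is essentially the paper's argument. The paper applies Rogers' primitive formula to $\abs{F(y_1)}\abs{F(y_2)}$ and obtains the explicit bound $\zeta(3)^{-2}\norm{F}_1^2+2\zeta(3)^{-1}\norm{F}_2^2$.

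Your main route, through Schmidt's majorant and the Eskin--Margulis--Mozes bound $\alpha\in L^p$ for $p<3$, is softer. It gives every $p<3$ but loses the explicit constants.

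Be careful with the definition there: $\alpha=\max_i(\lambda_1\cdots\lambda_i)^{-1}$. For unimodular lattices in $\R^3$ this is comparable to $\lambda_1^{-1}\max(1,\lambda_2^{-1})$, not to $\lambda_1^{-1}\lambda_2^{-1}$. The latter undercounts when $\lambda_2$ is large. With the correct $\alpha$ the citation applies as stated.

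The derivative claim is handled exactly as in the paper.
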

\begin{proof}
    Let 
    \[
    \widehat{F}(g\Gamma)\coloneq\sum_{v\in\Z^3\setminus\{0\}}F(gv)
    \]
    be the ordinary Siegel transform and set 
    \[
    F_k(y)=F(ky),\qquad k\geq 1.
    \]
    Let $\mu$ denote the arithmetic M\"{o}bius function. Since every nonzero $w\in\Z^3$ is uniquely a positive integer multiple of a primitive vector,
    \[
    \begin{aligned}
        \sum_{k\geq 1}\mu(k)\widehat{F_k}(g\Gamma)=&\sum_{w\in\Z^3\setminus\{0\}}\left(\sum_{k\mid\gcd(\abs{w_1},\abs{w_2},\abs{w_3})}\mu(k)\right)F(gw)\\
        =&\Theta_F^{\operatorname{prim}}(g\Gamma),
    \end{aligned}
    \]
    because
    \[
    \sum_{k\mid n}\mu(k)=\mathbf 1_{\{n=1\}}.
    \]
    For fixed $g$, only finitely many $w\in\Z^3$ satisfy $gw\in\supp F$, and every such $w$ has only finitely many common divisors. Thus the sums above are pointwise finite.

    Siegel's mean value theorem \cite{SiegelMeanValue1945}*{Eq.~(2), p.~341}, applied to $\abs{F_k}$, gives
    \[
    \begin{aligned}
        \sum_{k\geq 1}\abs{\mu(k)}\norm{\widehat{F_k}}_{L^1(X,m)}&\leq\sum_{k\geq1}\int_{\R^3}\abs{F(ky)}dy =\sum_{k\geq1}k^{-3}\norm{F}_1<\infty.
    \end{aligned}
    \]
    Hence the M\"{o}bius series converges absolutely in $L^1(X,m)$. Termwise integration and another application of Siegel's mean value theorem give
    \[
    \begin{aligned}
       \int_X\Theta_F^{\operatorname{prim}}dm=&\sum_{k\geq 1}\mu(k)\int_X\widehat{F_k}dm=\left(\sum_{k\geq 1}\frac{\mu(k)}{k^3}\right)\int_{\R^3}F(y)dy \\
       =&\frac{1}{\zeta(3)}\int_{\R^3}F(y)dy.
    \end{aligned}
    \]

    For the second moment, Rogers' primitive second moment formula \cite{RogersMeanValue1955}*{Theorem~5} applied to
    \[
    \rho(y_1,y_2)=\abs{F(y_1)}\abs{F(y_2)},
    \]
    gives
    \[
    \begin{aligned}
        \int_X\left(\Theta_{\abs{F}}^{\operatorname{prim}}\right)^2 dm=&\frac{1}{\zeta(3)^2}\norm{F}_1^2+\frac{1}{\zeta(3)}\norm{F}_2^2+\frac{1}{\zeta(3)}\int_{\R^3}\abs{F(y)}\abs{F(-y)} dy\\
        \leq &\frac{1}{\zeta(3)^2}\norm{F}_1^2+\frac{2}{\zeta(3)}\norm{F}_2^2.
    \end{aligned}
    \]
    Since
    \[
    \abs{\Theta_F^{\operatorname{prim}}}\leq\Theta_{\abs{F}}^{\operatorname{prim}},
    \]
    it follows that
    \[
    \Theta_F^{\operatorname{prim}}\in L^2(X,m).
    \]

    Finally, for every fixed finite composition $D$ of Lie derivatives, the function $D^{\operatorname{lin}}F$ belongs to $C_c^{\infty}(\R^3)$. Applying the preceding $L^2$ conclusion to $D^{\operatorname{lin}}F$ and using \eqref{eq:SiegelLieInter} proves the final assertion.
\end{proof}

\subsubsection{The local closed pair}
Choose a number
\[
0<s_*<\frac{1}{4}\log 2.
\]
Choose real functions
\begin{equation}\label{eq:psiPdomain}
    \psi\in C_c^{\infty}((-s_*/2,s_*/2)),\qquad p\in C_c^{\infty}((-d,d)),
\end{equation}
where 
\[
0<d<\frac{1}{2}e^{-s_*},
\]
such that $\psi,p$ are nonnegative, $p$ is even, and
\[
\psi(0)>0,\qquad p(0)>0,\qquad \int_{\R}p(u)du>0.
\]
For $y=(y_1,y_2,y_3)$, define
\begin{equation}\label{eq:f0def}
    F_0(y)=\begin{cases}
        y_1^{-3/2}\psi(\log y_1)p(y_2/y_1)p(y_3/y_1), & y_1>0,\\
        0, & y_1\leq 0
    \end{cases}
\end{equation}
and
\[
F_1(y)=\begin{cases}
    \frac{y_3}{y_1}F_0(y), &y_1>0,\\
    0, &y_1\leq 0.
\end{cases}
\]
Both functions are smooth and compactly supported: on their support, $y_1$ stays in a compact subinterval of $(0,\infty)$. 

On the chart $y_1>0$, write
\[
r=y_1, \qquad x_1=\frac{y_2}{y_1},\qquad x_2=\frac{y_3}{y_1}.
\]
The linear actions of $V$ and $Z$ are
\[
V^{\mathrm{lin}}=\partial_{x_1},\qquad Z^{\mathrm{lin}}=x_2\partial_{x_1}.
\]
Consequently,
\begin{equation}\label{eq:VF1ZF0}
    V^{\mathrm{lin}}F_1=Z^{\mathrm{lin}}F_0.
\end{equation}

Let 
\[
\mu_0\coloneq\int_X\Theta_{F_0}^{\operatorname{prim}}dm=\frac{1}{\zeta(3)}\int_{\R^3}F_0(y)dy>0,
\]
and define
\[
f=\Theta_{F_0}^{\operatorname{prim}}-\mu_0,\qquad g=\Theta_{F_1}^{\operatorname{prim}}.
\]
By the definition of $\mu_0$,
\begin{equation}\label{eq:fHaarMeanZero}
    \int_Xfdm=0.
\end{equation}
Since $p$ is even, $F_1$ is odd in the $y_3$-variable. Hence Lemma~\ref{lem:priFirstTwoMoments} gives
\[
\int_Xgdm=\frac{1}{\zeta(3)}\int_{\R^3}F_1(y)dy=0.
\]
Moreover, $F_0\geq 0$ and therefore
\[
f\geq-\mu_0.
\]
By \eqref{eq:SiegelLieInter} and \eqref{eq:VF1ZF0},
\begin{equation}\label{eq:VgZf}
    Vg=\Theta_{ V^{\mathrm{lin}}F_1}^{\operatorname{prim}}=\Theta_ {Z^{\mathrm{lin}}F_0}^{\operatorname{prim}}=Zf.
\end{equation}

The restrictions on $s_*$ and $d$ ensure that only primitive vectors of the form $(1,n,0)^{\mathrm{T}}$ can contribute in the periodic-orbit calculation below. 

\subsection{The smooth time change}
For $u=(r,s)\in\R^2$, write
\[
\phi_ux\coloneq\exp(rV+sZ)x.
\]
We record the integration of a closed pair in a form also used in Section~\ref{sec:smoothTCcptSL3}.

\begin{Lemma}\label{lem:intScaCocycle}
    Let $\phi$ be a smooth $\R^2$ action with generators $V,Z$ on a smooth manifold $X$, and let $f,g\in C^{\infty}(X,\R)$ satisfy $Vg=Zf$. There is a unique smooth additive cocycle $c:X\times\R^2\to\R$ with infinitesimal generators $f$ and $g$. It is given by
    \[
    c(x,(r,s))=\int_0^1\Big(rf\big(\phi_{t(r,s)}x\big)+sg\big(\phi_{t(r,s)}x\big)\Big)dt,
    \]
    and satisfies
    \[
    \partial_r c(x,(r,s))=f(\phi_{(r,s)}x),\qquad \partial_s c(x,(r,s))=g(\phi_{(r,s)}x).
    \]
\end{Lemma}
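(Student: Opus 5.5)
The plan is to write down the explicit formula, check the cocycle identity and the derivative formulas directly, and obtain uniqueness from the derivative formulas. The one genuinely non-formal step is showing that the formula has the right partial derivatives, and that is where the closedness $Vg=Zf$ is used.

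\emph{Derivative formulas.} Define $c$ by the displayed integral. It is smooth because the integrand is smooth in $(x,r,s,t)$ and $t$ ranges over a compact interval. Set
\[
\omega_u(x)\coloneq rf(x)+sg(x),\qquad u=(r,s).
\]
Then $c(x,u)=\int_0^1\omega_u(\phi_{tu}x)\,dt$, which is the integral of the one-form $(f,g)$, pulled back to the orbit, along the straight segment from $0$ to $u$.

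To compute $\partial_r c$, I will not differentiate under the integral sign. Instead I will compare the straight segment from $0$ to $(r+h,s)$ with the broken path through $(r,s)$. The orbit map $u\mapsto\phi_ux$ is a smooth map $\R^2\to X$. The pullback of the one-form $f\,dr+g\,ds$ along it has exterior derivative $\bigl(V(g)-Z(f)\bigr)(\phi_ux)\,dr\wedge ds=0$. This uses that $V,Z$ commute and generate the action, so that $\partial_r$ of $g(\phi_ux)$ equals $(Vg)(\phi_ux)$, and similarly for $Z$.

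Since the pulled-back form is closed, Stokes' theorem on the triangle with vertices $0$, $(r,s)$, $(r+h,s)$ in $\R^2$ gives that the two path integrals agree. Hence
\[
c(x,(r+h,s))-c(x,(r,s))=\int_0^h f(\phi_{(r+\sigma,s)}x)\,d\sigma,
\]
which yields $\partial_r c=f(\phi_{(r,s)}x)$. The formula for $\partial_s c$ follows in the same way.

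\emph{Cocycle identity.} Fix $x,u$ and consider the function $w\mapsto c(x,u+w)-c(x,u)-c(\phi_ux,w)$. Both $c(x,u+\cdot)$ and $c(\phi_ux,\cdot)$ have gradient $(f,g)(\phi_{u+w}x)$, by the derivative formulas and $\phi_w\phi_u=\phi_{u+w}$. So this function has zero gradient and vanishes at $w=0$, hence it vanishes identically. The infinitesimal generators are $\partial_rc(x,0)=f(x)$ and $\partial_sc(x,0)=g(x)$.

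\emph{Uniqueness.} Let $c'$ be any smooth additive cocycle with generators $f,g$. Differentiating the identity $c'(x,u+w)=c'(x,u)+c'(\phi_ux,w)$ in $w$ at $w=0$ gives $\nabla_uc'(x,u)=(f,g)(\phi_ux)$. Moreover $c'(x,0)=0$, by the cocycle identity at $u=w=0$. Therefore $c'-c$ has zero gradient in $u$ and vanishes at $u=0$, so $c'=c$.
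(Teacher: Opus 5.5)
Your proposal is correct and follows essentially the same route as the paper: closedness of $f(\phi_ux)\,dr+g(\phi_ux)\,ds$ on the parameter plane gives path independence, which you make explicit via Stokes on a triangle. The radial segment then gives the formula, and both the cocycle identity and uniqueness follow from the parameter differential; the only cosmetic difference is that you derive the cocycle identity by comparing gradients rather than by concatenating paths.
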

\begin{proof}
    On the parameter plane, the form
    \[
    f(\phi_{(r,s)}x)dr+g(\phi_{(r,s)}x)ds
    \]
    is closed by $Vg=Zf$. It has a unique primitive vanishing at the origin. Integration along a radial segment gives the displayed formula and joint smoothness. Path independence and concatenation give
    \[
    c(x,p+q)=c(x,p)+c(\phi_px,q).
    \]
    Conversely, a smooth cocycle with the specified generators has this parameter differential and vanishes at the origin, so it coincides with the normalized primitive.
\end{proof}

Write $c(x,r,s)=c(x,(r,s))$ for the cocycle associated with the pair constructed above.

For $0<\epsilon<\mu_0^{-1}$, set $e_1=(1,0)$ and define
\[
\rho_{\epsilon}(x,u)\coloneq u+\epsilon c(x,u)e_1,\qquad x\in X,\quad u\in\R^2.
\]
Equivalently,
\[
\rho_{\epsilon}(x,(r,s))=\big(r+\epsilon c(x,r,s),s\big).
\]
\begin{Proposition}\label{prop:gloParaDiff}
For every $x\in X$, the parameter map $\rho_{\epsilon,x}:u\mapsto\rho_{\epsilon}(x,u)$ is a global smooth diffeomorphism of $\R^2$. Its inverse $\tau_{\epsilon}(x,\cdot)=\rho_{\epsilon,x}^{-1}$ depends smoothly on $(x,w)$, and, for $w\in\R^2$,
\[
\alpha_w^{\epsilon}x=\exp(\tau_{\epsilon,1}(x,w)V+\tau_{\epsilon,2}(x,w)Z)x
\]
defines a smooth $\R^2$-action with the same orbit partition as the algebraic action $\phi_u$.
\end{Proposition}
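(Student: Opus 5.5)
The plan is to reduce the two-dimensional parameter map to a family of one-dimensional maps. Since the second component of $\rho_{\epsilon}(x,(r,s))$ is $s$ itself, $\rho_{\epsilon,x}$ is a global diffeomorphism of $\R^2$ once, for every fixed $s$, the map
\[
h_{x,s}(r)\coloneq r+\epsilon c(x,r,s)
\]
is a diffeomorphism of $\R$. By Lemma~\ref{lem:intScaCocycle},
\[
\partial_rh_{x,s}(r)=1+\epsilon f(\phi_{(r,s)}x).
\]
Because $F_0\geq0$ gives $f\geq-\mu_0$, and $0<\epsilon<\mu_0^{-1}$, this derivative satisfies the uniform lower bound $1+\epsilon f\geq1-\epsilon\mu_0>0$. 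Hence $h_{x,s}$ is strictly increasing with derivative bounded below by a positive constant, so $h_{x,s}(r)\to\pm\infty$ as $r\to\pm\infty$. It is therefore a bijection of $\R$, and since its derivative never vanishes it is a smooth diffeomorphism.

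Next we compute the Jacobian of $\rho_{\epsilon,x}$. It is upper triangular,
\[
D_u\rho_{\epsilon,x}=\begin{pmatrix}1+\epsilon f(\phi_ux) & \epsilon g(\phi_ux)\\ 0 & 1\end{pmatrix},
\]
with determinant $1+\epsilon f(\phi_ux)>0$. Thus $\rho_{\epsilon,x}$ is a smooth bijection with everywhere invertible differential, hence a global diffeomorphism. Joint smoothness of the inverse $\tau_{\epsilon}(x,w)$ in $(x,w)$ follows from the parameter-dependent inverse function theorem applied to $(x,u)\mapsto(x,\rho_{\epsilon}(x,u))$. This map is jointly smooth by Lemma~\ref{lem:intScaCocycle}, bijective by the first step, and has invertible differential (block triangular, with the identity in the $x$-block). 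Its inverse $(x,w)\mapsto(x,\tau_{\epsilon}(x,w))$ is therefore smooth. Explicitly, $\tau_{\epsilon}(x,(w_1,w_2))=(h_{x,w_2}^{-1}(w_1),w_2)$.

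Finally, $\rho_{\epsilon}$ is a cocycle: $c$ is an additive cocycle by Lemma~\ref{lem:intScaCocycle}, and the identity part $u$ is trivially additive. So $\rho_{\epsilon}$ is a smooth time change in the sense of Definition~\ref{def:continuousTimeChange}, with $\phi$ now the $\R^2$-action generated by $V,Z$. The argument of Lemma~\ref{lem:contSameOrbPar} applies verbatim: it uses only the cocycle identity and invertibility of the fiber maps. It gives the inverse cocycle identity for $\tau_{\epsilon}$, shows that $\alpha^{\epsilon}$ is an $\R^2$-action, and shows that its orbits coincide with the $\phi$-orbits, since $\alpha^{\epsilon}_wx=\phi_{\tau_{\epsilon}(x,w)}x$ and $\phi_ux=\alpha^{\epsilon}_{\rho_{\epsilon}(x,u)}x$. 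Smoothness of the action follows from smoothness of $\tau_{\epsilon}$ and of $\phi$.

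The only real point, and the expected obstacle, is the uniform positivity of $1+\epsilon f$, which is needed for properness. If this positivity failed, $h_{x,s}$ could fail to be surjective or injective. The bound $f\geq-\mu_0$, which comes from the nonnegativity of $F_0$, is exactly what makes the restriction $\epsilon<\mu_0^{-1}$ sufficient. No growth control on the unbounded function $g$ is needed.
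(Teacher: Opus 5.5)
Your proposal is correct and follows the paper's proof essentially step for step. The steps are the reduction to the one-variable maps $h_{x,s}$ (the second coordinate is unchanged), the uniform bound $\partial_r h_{x,s}\geq 1-\epsilon\mu_0>0$ coming from $f\geq-\mu_0$, an inverse/implicit function theorem argument for joint smoothness, and the argument of Lemma~\ref{lem:contSameOrbPar} for the action law and the orbit partition. The only cosmetic difference is that you apply the inverse function theorem to the map $(x,u)\mapsto(x,\rho_{\epsilon}(x,u))$, whereas the paper uses the implicit function theorem in the single variable $r$; the two are equivalent here.
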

\begin{proof}
    The cocycle identity for $c$ (cf.~Lemma~\ref{lem:intScaCocycle}) gives
    \[
    \begin{aligned}
        \rho_{\epsilon}(x,u+v)=u+v+\epsilon\big(c(x,u)+c(\phi_ux,v)\big)e_1=\rho_{\epsilon}(x,u)+\rho_{\epsilon}(\phi_ux,v).
    \end{aligned}
    \]

    Fix $x\in X$ and $s\in\R$, and set
    \[
    h_{x,s}(r)\coloneq r+\epsilon c(x,r,s).
    \]
    By Lemma~\ref{lem:intScaCocycle},
    \[
    h'_{x,s}(r)=1+\epsilon f(\phi_{(r,s)}x)\geq 1-\epsilon\mu_0>0.
    \]
    Thus $h_{x,s}$ is strictly increasing. The same uniform lower bound gives
    \[
    h_{x,s}(r)\longrightarrow\pm\infty\qquad \text{as }r\longrightarrow\pm\infty,
    \]
    so $h_{x,s}$ is a smooth diffeomorphism of $\R$.

    Since the second coordinate of $\rho_{\epsilon,x}$ is unchanged, the unique preimage of $(R,S)\in\R^2$ is
    \[
    \left(h_{x,S}^{-1}(R),S\right).
    \]
    Hence $\rho_{\epsilon,x}$ is a global diffeomorphism. Joint smoothness of the inverse follows from the implicit function theorem, since
    \[
    \partial_r\big(r+\epsilon c(x,r,s)-R\big)\geq 1-\epsilon\mu_0>0,
    \]
    and uniqueness makes the local inverse branches agree globally.

    The inverse cocycle identity, as in Lemma~\ref{lem:contSameOrbPar}, gives the action law. Since $\tau_{\epsilon,x}$ is onto $\R^2$ for every $x$, the new action has the same orbits as $\phi_u$.
\end{proof}

\subsection{The restricted flow and its invariant measure}
Set
\[
T_t^{\epsilon}x\coloneq\alpha_{(t,0)}^{\epsilon}x,
\]
and
\[
a_{\epsilon}(x)\coloneq 1+\epsilon f(x)=(1-\epsilon\mu_0)+\epsilon\Theta_{F_0}^{\operatorname{prim}}(x)>0.
\]
For $x\in X$, set
\begin{equation}\label{eq:Rdef}
    R_{x,\epsilon}(q)\coloneq\rho_{\epsilon,1}(x,(q,0))=q+\epsilon c(x,(q,0))=\int_0^qa_{\epsilon}(\exp(rV)x)dr,
\end{equation}
and
\[
q_{\epsilon}(x,t)\coloneq \tau_{\epsilon,1}(x,(t,0)).
\]
Since the second coordinate of $\rho_{\epsilon}$ is unchanged, $
\tau_{\epsilon}(x,(t,0))=(q_{\epsilon}(x,t),0)$. Hence 
\begin{equation}\label{eq:qRidentity}
    q_{\epsilon}(x,t)=R_{x,\epsilon}^{-1}(t)
\end{equation} 
and therefore
\[
T_t^{\epsilon}x=\exp(q_{\epsilon}(x,t)V)x.
\]
\begin{Proposition}\label{prop:InvProDri}
The probability 
\[
d\nu_{\epsilon}=a_\epsilon dm
\]
is invariant under the full action $\alpha^{\epsilon}$, equivalent to Haar, and has full support. The restricted flow $T_t^{\epsilon}$ is ergodic; consequently, the full action $\alpha^{\epsilon}$ is ergodic. Moreover,
\[
\lim_{t\to+\infty}\frac{q_{\epsilon}(x,t)}{t}=1,\qquad \text{for $\nu_{\epsilon}$-a.e. $x$.}
\]  
\end{Proposition}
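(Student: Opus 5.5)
The plan has four parts: establish invariance of $\nu_\epsilon$ under the generators, derive the basic properties of $a_\epsilon$, obtain ergodicity from Haar ergodicity by a time-change argument, and get the drift from the ergodic theorem.

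\emph{Invariance.} The full action $\alpha^\epsilon$ has generators obtained by inverting $D_u\rho_\epsilon(x,0)$. By Lemma~\ref{lem:intScaCocycle}, $\partial_r c=f$ and $\partial_s c=g$ at $u=0$, so
\[
D_u\rho_\epsilon(x,0)=\begin{pmatrix}1+\epsilon f & \epsilon g\\ 0 & 1\end{pmatrix}.
\]
Its inverse gives the generators
\[
W_1=a_\epsilon^{-1}V,\qquad W_2=Z-\epsilon g\,a_\epsilon^{-1}V .
\]
Both $V$ and $Z$ preserve Haar measure $m$ (they are divergence free), so a smooth measure $\phi\,dm$ is invariant under a field $hV+kZ$ exactly when
\[
V(\phi h)+Z(\phi k)=0 .
\]
With $\phi=a_\epsilon$:
\begin{itemize}
\item[] for $W_1$ we get $V(1)=0$;
\item[] for $W_2$ we get $Z(a_\epsilon)-V(\epsilon g)=\epsilon(Zf-Vg)=0$, by \eqref{eq:VgZf}.
\end{itemize}
Hence $a_\epsilon\,dm$ is infinitesimally invariant, and therefore invariant under the flows of $W_1,W_2$ and under the full action. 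Since $a_\epsilon\ge 1-\epsilon\mu_0>0$, the measure is equivalent to Haar and has full support.

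\emph{Normalization and integrability.} The total mass is
\[
\int a_\epsilon\,dm=1+\epsilon\!\int f\,dm=1,
\]
by \eqref{eq:fHaarMeanZero}. Integrability holds because $f\in L^2(m)\subset L^1(m)$ by Lemma~\ref{lem:priFirstTwoMoments}. One technical point needs care: $X$ is noncompact and $f$ is unbounded. I would justify passing from the infinitesimal identity to the global one by testing against $C_c^\infty$ functions and differentiating $\int \varphi(T^\epsilon_t x)\,d\nu_\epsilon$. Alternatively, for $T^\epsilon$ I would use the direct time-change formula: a flow with generator $a^{-1}V$ preserves $a\,dm$ whenever $V$ preserves $m$, which is the classical statement for time changes.

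\emph{Ergodicity.} By \eqref{eq:qRidentity}, $T^\epsilon_t$ is a time change of the algebraic flow $\exp(rV)$ with positive speed $a_\epsilon^{-1}$. The two flows have the same orbits and the same (measurable) invariant sets. Because $\nu_\epsilon$ and $m$ are equivalent, invariant sets of $T^\epsilon$ have $\nu_\epsilon$-measure $0$ or $1$ precisely when they have $m$-measure $0$ or $1$. The flow $\exp(rV)$ is $m$-ergodic by Moore's ergodicity theorem, since $V=E_{21}$ generates a noncompact subgroup of the simple group $\SL_3(\R)$ and $\Gamma$ is a lattice. Hence $T^\epsilon$ is $\nu_\epsilon$-ergodic. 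The full action $\alpha^\epsilon$ has $T^\epsilon$ as a subaction, so it is ergodic as well.

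\emph{Drift.} By \eqref{eq:Rdef}, $R_{x,\epsilon}(q)=\int_0^q a_\epsilon(\exp(rV)x)\,dr$. Birkhoff's theorem for the $m$-ergodic flow $\exp(rV)$ and $a_\epsilon\in L^1(m)$ give
\[
\frac{R_{x,\epsilon}(q)}{q}\longrightarrow\int a_\epsilon\,dm=1\qquad\text{for $m$-a.e. $x$, hence $\nu_\epsilon$-a.e. $x$.}
\]
$R_{x,\epsilon}$ is increasing with inverse $q_\epsilon(x,\cdot)$, and inverting the asymptotic $R_{x,\epsilon}(q)\sim q$ yields $q_\epsilon(x,t)/t\to1$.

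\emph{Main obstacle.} I expect the main difficulty to be the rigorous global invariance on the noncompact quotient given the unbounded density. I would handle it through the explicit time-change structure for $W_1$, and for $W_2$ through the cocycle/Jacobian computation $\det D_u\rho$, in the spirit of Proposition~\ref{prop:compactMeasure}.
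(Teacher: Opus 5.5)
Your proposal is correct and takes essentially the same route as the paper. The paper also reads off the generators $a_\epsilon^{-1}V$ and $Z-\epsilon g a_\epsilon^{-1}V$ from $D_u\rho_\epsilon(x,0)$ and proves invariance by integrating against $C_c^\infty$ test functions; your divergence identity $V(\phi h)+Z(\phi k)=0$ is the pointwise form of that integration by parts using $Vg=Zf$. It then gets ergodicity from Moore's theorem via the common $V$-orbits and $\nu_\epsilon\sim m$, and the drift from the ergodic theorem for $R_{x,\epsilon}(q)/q$. The test-function route already handles your noncompactness concern, because $\varphi\circ\alpha^\epsilon_{w}$ stays compactly supported, so the Jacobian alternative is unnecessary.
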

\begin{proof}
The integral of $a_{\epsilon}$ is one because $f$ has Haar mean zero, cf.~\eqref{eq:fHaarMeanZero}. The generator of $T_t^{\epsilon}$ is $a_{\epsilon}^{-1}V$. For every compactly supported smooth test function $\varphi$,
\[
\int_Xa_{\epsilon}^{-1}V\varphi d\nu_{\epsilon}=\int_X V\varphi dm=0,
\]
where the last equality follows by differentiating at $t=0$ the Haar-invariance identity
\[
\int_X\varphi(\exp(tV)x)dm(x)=\int_X \varphi dm.
\]
Thus $\nu_{\epsilon}$ is $T_t^{\epsilon}$-invariant. It is equivalent to Haar because 
\[
a_{\epsilon}\geq 1-\epsilon\mu_0>0,
\]
and hence has full support. Since the time-changed and algebraic flows have the same oriented $V$-orbits and $\nu_{\epsilon}\sim m$, Moore ergodicity of the $V$-flow gives ergodicity of $T_t^{\epsilon}$.

To prove the invariance under the full action, observe that
\[
D_u\rho_{\epsilon}(x,0)=\begin{pmatrix}
    a_{\epsilon} & \epsilon g(x)\\
    0 & 1
\end{pmatrix}.
\]
Hence the infinitesimal generators of $\alpha^{\epsilon}$ are
\[
\widetilde{V}=a_{\epsilon}^{-1}V,\qquad \widetilde{Z}=Z-\epsilon ga_{\epsilon}^{-1}V.
\]
The $\widetilde{V}$-invariance of $\nu_{\epsilon}$ was proved above. Moreover, for every $\varphi\in C_c^{\infty}(X)$,
\[
\int_X\widetilde{Z}\varphi d\nu_{\epsilon}=\int_X(a_{\epsilon}Z\varphi-\epsilon g V\varphi)dm=-\epsilon\int_X(Zf-Vg)\varphi dm=0
\]
by \eqref{eq:VgZf}. Thus $\nu_{\epsilon}$ is invariant under the full action $\alpha^{\epsilon}$. Since every $\alpha^{\epsilon}$-invariant measurable set is $T_t^{\epsilon}$-invariant, the full action is ergodic.

Finally, since
\[
R_{x,\epsilon}(q)/q=\frac{1}{q}\int_{0}^qa_{\epsilon}(\exp(rV)x)dr,
\]
the ergodic theorem gives
\[
\lim_{q\to\infty}\frac{R_{x,\epsilon}(q)}{q}=\int_X a_{\epsilon}dm=1
\]
for Haar-almost every $x$, hence for $\nu_{\epsilon}$-almost every $x$. Since $R_{x,\epsilon}$ is an increasing diffeomorphism, $q_{\epsilon}(x,t)\to+\infty$ as $t\to+\infty$. Equation~\eqref{eq:qRidentity} therefore gives the stated limit.
\end{proof}
\medskip

\subsection{Periodic data and smooth nonconjugacy}
For $s\in\R$, set
\[
a_s\coloneq \diag(e^s,e^{-s},1),\qquad x_s\coloneq a_s\Gamma,\qquad L_s\coloneq e^{-2s}.
\]
Since
\[
a_s^{-1}\exp(tV)a_s=\exp(te^{2s}V)=\id+te^{2s}E_{21},
\]
we have
\[
\exp(tV)x_s=x_s\Longleftrightarrow a_s^{-1}\exp(tV)a_s\in\Gamma\Longleftrightarrow te^{2s}\in\Z.
\]
Thus the stabilizer of $x_s$ in the $V$-flow is $e^{-2s}\Z=L_s\Z$, and its least positive period is $L_s$.
\begin{Lemma}\label{lem:disDiffNorPeriod}
    For every $s\in[0,s_*]$ and $t\in\R$,
    \[
    \Theta_{F_0}^{\operatorname{prim}}(\exp(tV)x_s)=e^{-3s/2}\psi(s)p(0)\sum_{n\in\Z}p(t+L_sn).
    \]
    Consequently,
    \[
    \frac{1}{L_s}\int_0^{L_s}\Theta_{F_0}^{\operatorname{prim}}(\exp(tV)x_s)dt=e^{s/2}\psi(s)p(0)\int_{\R}p(u)du.
    \]
    Moreover,
    \[
    \frac{R_{x_s,\epsilon}(L_s)}{L_s}=1-\epsilon\mu_0+\epsilon e^{s/2}\psi(s)p(0)\int_{\R}p(u)du.
    \]
    In particular, the normalized periods of 
    the orbits of $x_0$ and $x_{s_*}$ are different.
\end{Lemma}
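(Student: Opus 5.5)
The plan is to compute $\Theta_{F_0}^{\operatorname{prim}}(\exp(tV)x_s)$ directly from the definition, by identifying exactly which primitive vectors $v\in\Z^3_{\operatorname{prim}}$ satisfy $\exp(tV)a_sv\in\supp F_0$.

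For $v=(v_1,v_2,v_3)^{\mathrm T}$ we have
\[
\exp(tV)a_sv=(e^sv_1,\; e^{-s}v_2+te^sv_1,\; v_3)^{\mathrm T},
\]
since $V=E_{21}$ adds $t$ times the first coordinate to the second. Membership in $\supp F_0$ forces the following three conditions:
\begin{itemize}
\item $y_1=e^sv_1>0$ with $\log y_1\in(-s_*/2,s_*/2)$. Since $s\in[0,s_*]$, we get $\log v_1\in(-3s_*/2,s_*/2)$. As $v_1$ is a positive integer and $s_*<\tfrac14\log 2$, this gives $v_1=1$; the restriction then also forces $\abs{s}<s_*/2$ unless $\psi(s)=0$, which is consistent with the factor $\psi(s)$ in the formula.
\item $\abs{y_3/y_1}=\abs{v_3}e^{-s}<d<\tfrac12 e^{-s_*}$, so $v_3=0$.
\item $v_2=n\in\Z$ is free, and every vector $(1,n,0)^{\mathrm T}$ is primitive.
\end{itemize}

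Substituting $y=(e^s,\,e^{-s}n+te^s,\,0)$ into the definition of $F_0$ gives
\[
e^{-3s/2}\psi(s)\,p\bigl(e^{-2s}n+t\bigr)\,p(0)=e^{-3s/2}\psi(s)\,p(0)\,p(t+L_sn).
\]
Summing over $n\in\Z$ yields the first identity.

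For the average, the function $t\mapsto\sum_n p(t+L_sn)$ is $L_s$-periodic, and its integral over one period equals $\int_{\R}p$. Dividing by $L_s=e^{-2s}$ gives
\[
e^{-3s/2}e^{2s}\psi(s)p(0)\int p=e^{s/2}\psi(s)p(0)\int p.
\]

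For $R_{x_s,\epsilon}(L_s)$, use \eqref{eq:Rdef} together with $a_\epsilon=1-\epsilon\mu_0+\epsilon\Theta_{F_0}^{\operatorname{prim}}$ and integrate over one period.

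For the final claim, at $s=0$ the normalized period is $1-\epsilon\mu_0+\epsilon\psi(0)p(0)\int p$, and this exceeds $1-\epsilon\mu_0$ because $\psi(0)>0$. At $s=s_*$ we have $\psi(s_*)=0$, since $\supp\psi\subset(-s_*/2,s_*/2)$, so the normalized period is exactly $1-\epsilon\mu_0$. The two values therefore differ.

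The only delicate step is the support analysis that rules out $v_1\geq2$ and $v_3\neq0$. The inequalities on $s_*$ and $d$ were chosen precisely for this purpose, so I expect it to be routine; the main care is in tracking that the ranges for $s$ and $\log v_1$ stay compatible.
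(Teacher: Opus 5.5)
Your proposal is correct and follows the paper's proof essentially step by step. It uses the same support analysis forcing $v_1=1$ (from $s\geq 0$ and $s_*/2<\log 2$) and $v_3=0$ (from $de^{s_*}<1/2$), then the same periodization over one period, and finally $\psi(0)>0$, $\psi(s_*)=0$. The only detail to make explicit is that positivity of the extra term at $s=0$ also uses $\epsilon>0$, $p(0)>0$ and $\int_{\R}p>0$.
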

\begin{proof}
Fix $s\in[0,s_*]$. For a primitive integer vector $v=(m,n,k)^{\mathrm{T}}\in\Z^3$,
\[
\exp(tV)a_sv=(e^sm,e^{-s}n+te^sm,k)^{\mathrm{T}}.
\]
If
\[
F_0\big(\exp(tV)a_sv\big)\neq 0,
\]
then \eqref{eq:f0def} implies that
\[
m>0,\qquad s+\log m\in(-s_*/2,s_*/2).
\]
Since $s\geq 0$, every $m\geq 2$ would give
\[
s+\log m\geq \log 2>s_*/2.
\]
Hence $m=1$. The second $p$-factor in the definition of $F_0$ is then $p(ke^{-s})$. Its nonvanishing implies
\[
\abs{k}<de^{s}\leq de^{s_*}<\frac{1}{2},
\]
and therefore $k=0$. Thus the only possible contributing primitive vectors are
\[
(1,n,0)^{\mathrm{T}}\qquad n\in\Z.
\]
Substitution into the definition of $F_0$ (cf.~\eqref{eq:f0def}) gives
\[
\Theta_{F_0}^{\operatorname{prim}}\big(\exp(tV)x_s\big)=e^{-3s/2}\psi(s)p(0)\sum_{n\in\Z}p(t+L_sn).
\]

Since $p$ is compactly supported (cf.~\eqref{eq:psiPdomain}),
\[
\int_0^{L_s}\sum_{n\in\Z}p(t+L_sn)dt=\sum_{n\in\Z}\int_{L_sn}^{L_s(n+1)}p(u)du=\int_{\R}p(u)du.
\]
Dividing by $L_s=e^{-2s}$ gives the second identity.

Finally,
\[
R_{x_s,\epsilon}(L_s)=\int_0^{L_s}a_{\epsilon}(\exp(tV)x_s)dt
\]
and 
\[
a_{\epsilon}=1-\epsilon\mu_0+\epsilon\Theta_{F_0}^{\operatorname{prim}}.
\]
This gives the formula for the normalized periods.
Its additional term is positive at $s=0$ and vanishes at $s=s_*$ because $\psi(0)>0$ and  $\psi(s_*)=0$.
\end{proof}

\begin{Proposition}[Failure of smooth algebraic conjugacy]\label{prop:nonSmoothConjugation}
    The flow $T_t^{\epsilon}$ is not smoothly conjugate by a leaf preserving diffeomorphism to any algebraic flow $x\mapsto \exp(tW)x$, $W\in\h$. Consequently, the full action $\alpha^{\epsilon}$ is not smoothly conjugate by such a diffeomorphism to a linear algebraic $\R^2$-action.
\end{Proposition}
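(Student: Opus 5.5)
Suppose, for contradiction, that $\Psi$ is a smooth leaf preserving diffeomorphism with $\Psi\circ T_t^{\epsilon}=\exp(tW)\circ\Psi$ for all $t$, where $W\in\h$. By Lemma~\ref{lem:smmothDisplacement}, $\Psi=\Psi_B$ for some $B\in C^{\infty}(X,\R^2)$, so $\Psi(x)=\exp(B_1(x)V+B_2(x)Z)x$. I will derive a contradiction from the periodic $V$-orbits through $x_s$, using the normalized periods computed in Lemma~\ref{lem:disDiffNorPeriod}.

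\textbf{Step 1: the conjugacy makes $W$ proportional to $V$.} Both flows move along $H$-orbits. Fix $x$. Then $\Psi(T_t^{\epsilon}x)=\exp(tW)\Psi(x)$, so
\[
\exp\big(q_{\epsilon}(x,t)V+B(T_t^{\epsilon}x)-B(x)\big)x=\exp(tW)x,
\]
where we abbreviate $B\leftrightarrow B_1V+B_2Z$. Hence $q_{\epsilon}(x,t)V+B(T_t^{\epsilon}x)-B(x)-tW\in\Lambda_x$, which is discrete by Lemma~\ref{lem:algStab}. This expression is continuous in $t$ and vanishes at $t=0$, so it vanishes identically. Now take a generic $x$ and let $t\to\infty$. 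By Proposition~\ref{prop:InvProDri} we have $q_{\epsilon}/t\to1$, and $B$ is bounded on a positive-measure set that recurs, so dividing by $t$ along recurrence times gives $W=V$. (I would phrase this via Poincaré recurrence to a set where $|B|\le C$.) The same identity then holds everywhere:
\[
B(T_t^{\epsilon}x)-B(x)=(t-q_{\epsilon}(x,t))V.
\]

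\textbf{Step 2: compare periods.} Take $x=x_s$ and $t=R_{x_s,\epsilon}(L_s)$. Then $q_{\epsilon}(x_s,t)=L_s$ and $T_t^{\epsilon}x_s=\exp(L_sV)x_s=x_s$, so the left side is $0$. The identity from Step 1 then forces $t=L_s$, that is, $R_{x_s,\epsilon}(L_s)/L_s=1$ for every $s\in[0,s_*]$. Lemma~\ref{lem:disDiffNorPeriod}, however, gives normalized periods $1-\epsilon\mu_0$ at $s=s_*$ and $1-\epsilon\mu_0+\epsilon\psi(0)p(0)\int p>1-\epsilon\mu_0$ at $s=0$. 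These cannot both equal $1$. This is the contradiction for the flow.

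The main obstacle is Step 1, specifically identifying $W$. The pointwise statement along a single closed orbit is cleaner: on the periodic orbit of $x_s$, the cocycle relation gives $\exp(R(L_s)W)x_s=x_s$, so $R(L_s)W\in\Lambda_{x_s}$. That alone does not determine $W$ without the drift argument, so I would rely on the a.e. drift from Proposition~\ref{prop:InvProDri} combined with boundedness of $B$ on compact sets. Alternatively, comparing the two periodic orbits directly suffices once $W\in\R V$. In that case the periods of $x_0$ and $x_{s_*}$ under $\exp(tW)$ scale by the same factor $\lambda$ relative to $L_s$, while the $T^{\epsilon}$-periods have different ratios, again a contradiction. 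Ruling out a $Z$-component of $W$ uses the fact that the $T^{\epsilon}$-orbit of $x_s$ is closed, hence its image is a closed $\exp(\R W)$-orbit inside the $V$-orbit.

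\textbf{The full action.} If $\Psi\alpha^{\epsilon}_w=\phi_{A^{-1}w}\Psi$, restricting to $w=(t,0)$ yields a leaf preserving conjugacy of $T^{\epsilon}_t$ to the algebraic flow generated by $W=\iota(A^{-1}e_1)\in\h$. This is excluded by the argument above.
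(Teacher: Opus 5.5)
Your proof is correct, but it takes a different route from the paper's.

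\textbf{The paper's route.} The paper differentiates the conjugacy at $t=0$ to obtain the infinitesimal identity $e_1+VB=a_\epsilon\mathbf{w}$. It then integrates this identity once around each of the closed $V$-orbits through $x_0$ and $x_{s_*}$, where $\int_0^{L_s}VB(\exp(tV)x_s)\,dt=0$. This gives $w_2=0$ and $w_1=L_s/R_{x_s,\epsilon}(L_s)$ for both values of $s$. The contradiction is that the two normalized periods differ.

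\textbf{Your route.} You keep the finite-time identity $q_\epsilon(x,t)V+B(T_t^\epsilon x)-B(x)=tW$, which holds for every $x$ and $t$ by discreteness of $\Lambda_x$. You identify $W=V$ from the almost-everywhere drift of Proposition~\ref{prop:InvProDri}. Poincar\'e recurrence to a set $\{\lvert B\rvert\le C\}$ of positive $\nu_\epsilon$-measure handles the unboundedness of $B$ on the noncompact quotient. A single closed orbit then already gives the contradiction, since the normalized period at $x_{s_*}$ is $1-\epsilon\mu_0\neq1$. The orbit through $x_0$ is therefore superfluous in your argument.

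\textbf{Comparison.} The paper's argument is pure periodic-orbit bookkeeping and needs no ergodic-theoretic input. Yours uses the drift and recurrence but only one periodic orbit.

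\textbf{Two side remarks.} First, your comment that the closed-orbit data cannot determine $W$ without the drift is too pessimistic. Evaluating your own vanishing identity at $x=x_s$ and $t=R_{x_s,\epsilon}(L_s)$ gives $R_{x_s,\epsilon}(L_s)W=L_sV$ exactly, not merely modulo $\Lambda_{x_s}$. That is precisely the paper's computation, so Step~1's drift argument could be bypassed entirely. Second, the claim in your alternative that the image of the closed orbit is a closed $\exp(\R W)$-orbit ``inside the $V$-orbit'' is not justified. The map $\Psi_B$ displaces points along $\exp(\h)$-orbits via both $B_1V$ and $B_2Z$. Your main line does not use this claim, so nothing breaks.
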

\begin{proof}
    Suppose that 
    \[
    \Psi_BT_t^{\epsilon}=\exp(tW)\Psi_B,\qquad W=w_1V+w_2Z,
    \]
    for some smooth leaf preserving diffeomorphism $\Psi_B$, written as above. Identifying $\h$ with $\R^2$ through the ordered basis $(V,Z)$, set
    \[
    \mathbf{w}=(w_1,w_2).
    \]
    Since the $\exp(\h)$-stabilizers are discrete, differentiating the conjugacy at $t=0$ in infinitesimal leaf coordinates gives
    \begin{equation}\label{eq:conjPerData}
        e_1+VB=a_\epsilon\mathbf{w},
    \end{equation}
    where 
    \[
    VB=(VB_1,VB_2).
    \]

    Fix $s\in\{0,s_*\}$ and integrate \eqref{eq:conjPerData} along the closed algebraic $V$-orbit
    \[
    t\longmapsto\exp(tV)x_s,\qquad 0\leq t\leq L_s.
    \]
    Since $\exp(L_sV)x_s=x_s$ and $B$ is a globally defined smooth function on $X$,
    \[
    \begin{aligned}
        \int_0^{L_s}VB(\exp(tV)x_s)dt=B(\exp(L_sV)x_s)-B(x_s)=0.
    \end{aligned}
    \]
    Hence
    \[
    L_se_1=\mathbf{w}\int_{0}^{L_s}a_{\epsilon}(\exp(tV)x_s)dt=\mathbf{w} R_{x_s,\epsilon}(L_s).
    \]
    Since $a_{\epsilon}>0$, we have
    \[
    R_{x_s,\epsilon}(L_s)>0.
    \]
    Taking the two coordinates of the preceding identity gives
    \[
    w_2=0,\qquad w_1=\frac{L_s}{R_{x_s,\epsilon}(L_s)}=\left(\frac{R_{x_s,\epsilon}(L_s)}{L_s}\right)^{-1}.
    \]
    Thus $W=w_1V$, and its coefficient is the reciprocal of the normalized period of the orbit of $x_s$.

    Lemma~\ref{lem:disDiffNorPeriod} gives different 
    normalized periods at $s=0$ and $s=s_*$, contradicting the fixed value of $w_1$. A full action conjugacy would restrict to a conjugacy of $T^{\epsilon}$ with an algebraic flow, so it is excluded as well.
\end{proof}

\subsection{The compact root endpoint estimate}
Set
\[
K\coloneq E_{12}-E_{21},\qquad H\coloneq[V,K]=E_{22}-E_{11}.
\]
Then $\exp(\R K)$ is compact and
\begin{equation}\label{eq:AdrVK}
    \Ad_{\exp(rV)}K=K+rH-r^2V.
\end{equation}
Set
\[
c_{\epsilon}\coloneq 1-\epsilon\mu_0>0.
\]
Since $a_{\epsilon}\geq c_{\epsilon}$ and
\[
t=\int_0^{q_{\epsilon}(x,t)}a_{\epsilon}(\exp(rV)x)dr,
\]
we shall repeatedly use the uniform bound
\begin{equation}\label{eq:qEpsilonUpbound}
    \abs{q_{\epsilon}(x,t)}\leq c_{\epsilon}^{-1}\abs{t},\qquad x\in X,\quad t\in\R.
\end{equation}
\begin{Proposition}\label{prop:tanQuaEst}
For every $t\in\R$, the derivative
\[
D_Kq_{\epsilon}(x,t)\coloneq\left.\frac{d}{ds}\right|_{s=0}q_{\epsilon}(\exp(sK)x,t)
\]
exists, belongs to $L^2(\nu_{\epsilon})$, and satisfies
\[
\norm{D_Kq_{\epsilon}(\cdot,t)}_{L^2(\nu_{\epsilon})}\leq C_{\epsilon}(1+\abs{t})^2.
\]
\end{Proposition}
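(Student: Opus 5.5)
The plan is to obtain $D_Kq_{\epsilon}$ by implicit differentiation of the defining identity \eqref{eq:Rdef}–\eqref{eq:qRidentity}. Then rewrite the resulting orbit integral in the time-changed parametrization, so that the bound follows from $T^{\epsilon}$-invariance of $\nu_{\epsilon}$ and the $L^2(m)$ bounds of Lemma~\ref{lem:priFirstTwoMoments}. The only delicate term, the quadratic $V$-term coming from \eqref{eq:AdrVK}, will be handled by integrating by parts along the orbit.

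\emph{Step 1: existence and formula.} The map $(x,q)\mapsto R_{x,\epsilon}(q)$ is jointly smooth, and $\partial_qR_{x,\epsilon}(q)=a_{\epsilon}(\exp(qV)x)\geq c_{\epsilon}>0$. By the implicit function theorem, $s\mapsto q_{\epsilon}(\exp(sK)x,t)$ is smooth, which gives existence of the derivative. Since $\exp(rV)\exp(sK)x=\exp(s\Ad_{\exp(rV)}K)\exp(rV)x$, formula \eqref{eq:AdrVK} gives
\[
\left.\frac{d}{ds}\right|_{s=0}a_{\epsilon}(\exp(rV)\exp(sK)x)=\epsilon\big(Kf+rHf-r^2Vf\big)(\exp(rV)x).
\]
Differentiating $t=R_{\exp(sK)x,\epsilon}(q_{\epsilon}(\exp(sK)x,t))$ at $s=0$ then yields
\[
D_Kq_{\epsilon}(x,t)=-\frac{\epsilon}{a_{\epsilon}(T_t^{\epsilon}x)}\int_0^{q_{\epsilon}(x,t)}\big(Kf+rHf-r^2Vf\big)(\exp(rV)x)\,dr.
\]

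\emph{Step 2: change of variables.} Substitute $r=q_{\epsilon}(x,u)$. We have $dr=du/a_{\epsilon}(T_u^{\epsilon}x)$, and $\exp(rV)x=T_u^{\epsilon}x$. Each integrand $\varphi(\exp(rV)x)\,dr$ thus becomes $(\varphi/a_{\epsilon})(T_u^{\epsilon}x)\,du$ on $[0,t]$. For the $Kf$ and $Hf$ terms, bound the prefactor by $c_{\epsilon}^{-1}$ and $|q_{\epsilon}(x,u)|\leq c_{\epsilon}^{-1}|u|$ by \eqref{eq:qEpsilonUpbound}. Minkowski's integral inequality and $T^{\epsilon}$-invariance of $\nu_{\epsilon}$ (Proposition~\ref{prop:InvProDri}) then bound their $L^2(\nu_{\epsilon})$ norms by $C(1+|t|)^2\,\|Yf/a_{\epsilon}\|_{L^2(\nu_{\epsilon})}$ for $Y\in\{K,H\}$. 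Since $d\nu_{\epsilon}=a_{\epsilon}dm$ and $a_{\epsilon}\geq c_{\epsilon}$, we have
\[
\|Yf/a_{\epsilon}\|_{L^2(\nu_{\epsilon})}^2=\int|Yf|^2a_{\epsilon}^{-1}dm\leq c_{\epsilon}^{-1}\|Y\Theta_{F_0}^{\operatorname{prim}}\|_{L^2(m)}^2,
\]
which is finite by Lemma~\ref{lem:priFirstTwoMoments}.

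\emph{Step 3: the $V$-term (main obstacle).} The same crude estimate applied to the $r^2Vf$ term only gives $|t|^3$, so one power must be recovered. Along the orbit, $\frac{d}{du}f(T_u^{\epsilon}x)=(Vf/a_{\epsilon})(T_u^{\epsilon}x)$ and $\partial_uq_{\epsilon}(x,u)=a_{\epsilon}(T_u^{\epsilon}x)^{-1}$. Integrating by parts,
\[
\int_0^tq_{\epsilon}^2\,\frac{d}{du}f(T_u^{\epsilon}x)\,du=q_{\epsilon}(x,t)^2f(T_t^{\epsilon}x)-2\int_0^tq_{\epsilon}(x,u)\,\frac{f}{a_{\epsilon}}(T_u^{\epsilon}x)\,du.
\]
Here I avoid needing $f\in L^2(\nu_{\epsilon})$, which would require a third moment of the Siegel transform that the paper has not established. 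The key observation is that $f\geq-\mu_0$ and $a_{\epsilon}=1+\epsilon f$ make $f/a_{\epsilon}$ uniformly bounded. The boundary term carries the prefactor $a_{\epsilon}(T_t^{\epsilon}x)^{-1}$, so it equals $q_{\epsilon}^2\,(f/a_{\epsilon})(T_t^{\epsilon}x)$, and both it and the remaining integral are bounded pointwise by $C_{\epsilon}t^2$. Collecting Steps 2 and 3 gives $\|D_Kq_{\epsilon}(\cdot,t)\|_{L^2(\nu_{\epsilon})}\leq C_{\epsilon}(1+|t|)^2$, and in particular $D_Kq_{\epsilon}(\cdot,t)\in L^2(\nu_{\epsilon})$.
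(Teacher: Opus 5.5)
Your proof is correct and follows the paper's argument. The steps match: implicit differentiation of the clock identity; the substitution $r=q_{\epsilon}(x,u)$ with Minkowski's inequality and $T^{\epsilon}$-invariance of $\nu_{\epsilon}$; and integration by parts to bring the $r^2Vf$ term down to quadratic growth. You integrate by parts in $u$ after the substitution, while the paper does it in $r$ before, but this is the same computation.

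The only real difference is how you control the $f$-terms. You use the uniform bound on $f/a_{\epsilon}$, whereas the paper uses $\norm{f/a_{\epsilon}}_{L^2(\nu_{\epsilon})}^2=\int_X\abs{f}^2a_{\epsilon}^{-1}\,dm\leq c_{\epsilon}^{-1}\norm{f}_{L^2(m)}^2$. Since the terminal denominator $a_{\epsilon}(T_t^{\epsilon}x)$ absorbs the extra density, the paper's route also never needs $f\in L^2(\nu_{\epsilon})$ or a third moment, so your concern does not arise there.
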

\begin{proof}
Define
\[
a\coloneq a_{\epsilon},\qquad q_t(x)\coloneq q_{\epsilon}(x,t),
\]
and write $u_r=\exp(rV)$ and $k_s=\exp(sK)$. All integrals below are understood as oriented integrals.

The joint smoothness of the inverse clock gives the existence of the derivative. By \eqref{eq:Rdef}, its defining equation is
\[
t=\int_0^{q_t(x)}a(u_rx)dr=q_t(x)+\epsilon\int_0^{q_t(x)}f(u_rx)dr.
\]
Differentiating the corresponding identity at $k_sx$ and $s=0$ gives
\[
0=a(T_t^{\epsilon}x)D_Kq_{\epsilon}(x,t)+\epsilon\mathcal{J}_K(x,q_t(x)),
\]
and therefore
\begin{equation}\label{eq:DkqEpsilon}
    D_Kq_{\epsilon}(x,t)=-\epsilon\frac{\mathcal{J}_K(x,q_t(x))}{a(T_t^{\epsilon}x)},
\end{equation}
where
\[
\mathcal{J}_K(x,q)\coloneq\frac{d}{ds}\big|_{s=0}\int_0^qf(u_rk_sx)dr.
\]

Since
\[
u_rk_s=\exp(s\Ad_{u_r}K)u_r,
\]
equation~\eqref{eq:AdrVK} gives
\[
\mathcal{J}_K(x,q)=\int_0^q\big((K+rH-r^2V)f\big)(u_rx)dr.
\]
Using $\frac{d}{dr}f(u_rx)=(Vf)(u_rx)$ and integration by parts, we obtain
\begin{equation}\label{eq:Jkxq}
    \mathcal{J}_K(x,q)=\int_0^q(Kf)(u_rx)dr+\int_0^qr(Hf)(u_rx)dr-q^2f(u_qx)+2\int_0^qrf(u_rx)dr.
\end{equation}

Moreover, Lemma~\ref{lem:priFirstTwoMoments} gives $f, Kf, Hf\in L^2(m)$. Hence, since $a\geq c_{\epsilon}$, for $h\in\{f,Kf,Hf\}$, 
\begin{equation}\label{eq:h/aL2estimate}
    \norm{h/a}_{L^2(\nu_{\epsilon})}^2=\int_X\frac{\abs{h}^2}{a}dm\leq c_{\epsilon}^{-1}\norm{h}_{L^2(m)}^2.
\end{equation}
In particular, $(Hf+2f)/a\in L^2(\nu_{\epsilon})$.

Make the change of variables
\[
u=R_{x,\epsilon}(r),\qquad u_rx=T_u^{\epsilon}x,\qquad r=q_{\epsilon}(x,u),\qquad dr=\frac{du}{a(T^{\epsilon}_ux)}.
\]
Equation~\eqref{eq:Jkxq} then becomes
\[
\mathcal{J}_K(x,q_t(x))=\int_0^t\frac{Kf}{a}(T_u^{\epsilon}x)du+\int_0^tq_{\epsilon}(x,u)\frac{Hf+2f}{a}(T_u^{\epsilon}x)du-q_t(x)^2f(T_t^{\epsilon}x).
\]
By Minkowski's inequality, the $T_u^{\epsilon}$-invariance of $\nu_{\epsilon}$, and \eqref{eq:qEpsilonUpbound},
\[
\norm{\int_0^t\frac{Kf}{a}(T_u^{\epsilon}\cdot)du}_{L^2(\nu_{\epsilon})}\leq \abs{t}\norm{Kf/a}_{L^2(\nu_{\epsilon})}
\]
and
\[
\norm{\int_0^tq_{\epsilon}(\cdot,u)\frac{Hf+2f}{a}(T_u^{\epsilon}\cdot)du}_{L^2(\nu_{\epsilon})}\leq\frac{\abs{t}^2}{2c_{\epsilon}}\norm{\frac{Hf+2f}{a}}_{L^2(\nu_{\epsilon})}.
\]
For the boundary term, invariance gives
\[
\norm{q_t^2\frac{f}{a}(T_t^{\epsilon}\cdot)}_{L^2(\nu_{\epsilon})}\leq c_{\epsilon}^{-2}\abs{t}^2\norm{f/a}_{L^2(\nu_{\epsilon})}.
\]
Finally, $a^{-1}\leq c_{\epsilon}^{-1}$ controls the terminal denominator in the two integral terms of \eqref{eq:DkqEpsilon}. Together with \eqref{eq:h/aL2estimate}, these estimates prove the proposition.
\end{proof}

\subsection{The finite compact root increment}
To pass from the derivative estimate to finite increments, we use $d\nu_{\epsilon}=a_{\epsilon}\,dm$ and estimate compact root translates of both $\sqrt{a_{\epsilon}}$ and $\sqrt{a_{\epsilon}}q_{\epsilon}(\cdot,t)$ in $L^2(m)$. Throughout this subsection, set
\[
k_s\coloneq\exp(sK),\qquad a\coloneq a_{\epsilon},\qquad q_t(x)\coloneq q_{\epsilon}(x,t).
\]
\begin{Lemma}\label{lem:halfDensityCptRtEst}
    We have
    \begin{equation*}
        \begin{aligned}
            \norm{K\sqrt{a}}_{L^2(m)}&\leq C_{\epsilon},\\
            \norm{K(\sqrt{a}q_t)}_{L^2(m)}&\leq C_{\epsilon}(1+\abs{t})^2.
        \end{aligned}
    \end{equation*}
\end{Lemma}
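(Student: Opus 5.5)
For the first bound I would differentiate $\sqrt{a}$ directly and use the $L^2$ control of Lie derivatives of primitive Siegel transforms. Since $a=a_{\epsilon}=1+\epsilon f$ is smooth with $a\geq c_{\epsilon}>0$, the chain rule gives
\[
K\sqrt{a}=\frac{\epsilon Kf}{2\sqrt{a}}.
\]
By \eqref{eq:SiegelLieInter}, $Kf=\Theta^{\operatorname{prim}}_{K^{\operatorname{lin}}F_0}$, and Lemma~\ref{lem:priFirstTwoMoments} puts this in $L^2(m)$. Hence
\[
\norm{K\sqrt{a}}_{L^2(m)}\leq \tfrac{\epsilon}{2}c_{\epsilon}^{-1/2}\norm{Kf}_{L^2(m)},
\]
which is the first estimate.

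For the second bound I would use the Leibniz rule:
\[
K(\sqrt{a}\,q_t)=(K\sqrt{a})\,q_t+\sqrt{a}\,D_Kq_{\epsilon}(\cdot,t).
\]
The first term is handled by the uniform bound \eqref{eq:qEpsilonUpbound}, $\abs{q_t}\leq c_{\epsilon}^{-1}\abs{t}$, together with the first estimate. This gives a contribution $\leq C_{\epsilon}\abs{t}$. For the second term I would use $d\nu_{\epsilon}=a\,dm$, which yields the identity
\[
\norm{\sqrt{a}\,D_Kq_{\epsilon}(\cdot,t)}_{L^2(m)}=\norm{D_Kq_{\epsilon}(\cdot,t)}_{L^2(\nu_{\epsilon})}.
\]
Proposition~\ref{prop:tanQuaEst} bounds the right-hand side by $C_{\epsilon}(1+\abs{t})^2$. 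Adding the two contributions proves the lemma.

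The only point needing care is that $K(\sqrt{a}q_t)$ is the genuine pointwise derivative, so that the Leibniz rule applies and its $L^2$ norm is the quantity later used for finite increments. Joint smoothness of $\tau_{\epsilon}$ from Proposition~\ref{prop:gloParaDiff} gives that $q_t$ is smooth on $X$, so the derivative is classical and the Leibniz rule holds pointwise. I would also check that the derivative $D_Kq_{\epsilon}$ in Proposition~\ref{prop:tanQuaEst} matches the Lie-derivative convention \eqref{eq:LieDeriX}. Both are defined through $s\mapsto \exp(sK)x$ at $s=0$, so they agree.

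I expect no real obstacle: the main work is already done in Proposition~\ref{prop:tanQuaEst}. If anything is delicate, it is bookkeeping the constant $c_{\epsilon}^{-1/2}$ that appears when passing between $m$ and $\nu_{\epsilon}$.
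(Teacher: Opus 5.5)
Your proposal is correct and follows the paper's proof essentially step for step. The first bound comes from the chain rule $K\sqrt{a}=\epsilon Kf/(2\sqrt{a})$ together with Lemma~\ref{lem:priFirstTwoMoments}. The second comes from the Leibniz rule, the clock bound \eqref{eq:qEpsilonUpbound}, the identity $d\nu_{\epsilon}=a\,dm$, and Proposition~\ref{prop:tanQuaEst}.
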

\begin{proof}
    Since $a\geq c_{\epsilon}=1-\epsilon\mu_0>0$, 
    \[
    K\sqrt{a}=\frac{\epsilon Kf}{2\sqrt{a}}.
    \]
    Hence, by Lemma~\ref{lem:priFirstTwoMoments},
    \[
        \norm{K\sqrt{a}}_{L^2(m)}^2\leq\frac{\epsilon^2}{4c_{\epsilon}}\norm{Kf}_{L^2(m)}^2<\infty. 
    \]
    This proves the first estimate.

    \medskip

    Since $\int_Xa\,dm=1$, we have $\sqrt{a}\in L^2(m)$. Then \eqref{eq:qEpsilonUpbound} gives
    \[
    \norm{\sqrt{a}q_t}_{L^2(m)}^2\leq c_{\epsilon}^{-2}\abs{t}^2\int_Xa\,dm=c_{\epsilon}^{-2}\abs{t}^2.
    \]
    Thus $\sqrt{a}q_t\in L^2(m)$. By Proposition~\ref{prop:tanQuaEst}, 
    \[
    Kq_t=D_Kq_{\epsilon}(\cdot,t)\in L^2(\nu_{\epsilon}),\qquad \norm{Kq_t}_{L^2(\nu_{\epsilon})}\leq C_{\epsilon}(1+\abs{t})^2.
    \]
    Since $a>0$ and $q_t$ is smooth, the product rule gives
    \[
    K(\sqrt{a}q_t)=\sqrt{a}Kq_t+q_tK\sqrt{a}.
    \]
    Therefore
    \[
    \begin{aligned}
        \norm{K(\sqrt{a}q_t)}_{L^2(m)}\leq&\norm{\sqrt{a}Kq_t}_{L^2(m)}+\norm{q_tK\sqrt{a}}_{L^2(m)}\\
        \leq& \norm{Kq_t}_{L^2(\nu_{\epsilon})}+c_{\epsilon}^{-1}\abs{t}\norm{K\sqrt{a}}_{L^2(m)}\leq C_{\epsilon}(1+\abs{t})^2.
    \end{aligned}
    \]
    This proves the second estimate.
\end{proof}

\begin{Theorem}\label{thm:finCptRtInreClosedBd}
For every $s,t\in\R$,
\[
\norm{q_{\epsilon}(k_s\cdot,t)-q_{\epsilon}(\cdot,t)}_{L^2(\nu_{\epsilon})}\leq C_{\epsilon}\abs{s}(1+\abs{t})^2.
\]    
\end{Theorem}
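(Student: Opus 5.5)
We write $q_{\epsilon}(k_sx,t)-q_{\epsilon}(x,t)$ in terms of the half-density functions of Lemma~\ref{lem:halfDensityCptRtEst}, whose compact root derivatives are already controlled in $L^2(m)$. Since $d\nu_{\epsilon}=a\,dm$,
\[
\norm{q_t\circ k_s-q_t}_{L^2(\nu_{\epsilon})}=\norm{\sqrt{a}\,(q_t\circ k_s)-\sqrt{a}\,q_t}_{L^2(m)}.
\]
Add and subtract $\sqrt{a\circ k_s}\,(q_t\circ k_s)=(\sqrt{a}q_t)\circ k_s$. This gives
\[
\sqrt{a}\,(q_t\circ k_s)-\sqrt{a}\,q_t=\big[(\sqrt{a}q_t)\circ k_s-\sqrt{a}q_t\big]-(q_t\circ k_s)\big[\sqrt{a}\circ k_s-\sqrt{a}\big].
\]
The first bracket is a compact root increment of the smooth function $h=\sqrt{a}q_t$. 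Writing $h(k_sx)-h(x)=\int_0^s(Kh)(k_\sigma x)\,d\sigma$ and applying Minkowski's inequality together with the $k_\sigma$-invariance of Haar measure $m$, we get
\[
\norm{h\circ k_s-h}_{L^2(m)}\leq\abs{s}\,\norm{Kh}_{L^2(m)}\leq C_{\epsilon}\abs{s}(1+\abs{t})^2
\]
by Lemma~\ref{lem:halfDensityCptRtEst}.

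For the second term, use the uniform bound \eqref{eq:qEpsilonUpbound}, which gives $\abs{q_t\circ k_s}\leq c_{\epsilon}^{-1}\abs{t}$ pointwise. The same integral representation applied to $\sqrt{a}$ then yields
\[
\norm{(q_t\circ k_s)(\sqrt{a}\circ k_s-\sqrt{a})}_{L^2(m)}\leq c_{\epsilon}^{-1}\abs{t}\,\abs{s}\,\norm{K\sqrt{a}}_{L^2(m)}\leq C_{\epsilon}\abs{s}\abs{t}.
\]
Adding the two bounds gives the claim for all $s,t\in\R$.

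The main point to justify is the integral representation in $L^2(m)$ for functions that are smooth but not bounded. Here $\sqrt{a}$ and $\sqrt{a}q_t$ are smooth on $X$, since $a\geq c_{\epsilon}>0$ and $q_{\epsilon}$ is jointly smooth by Proposition~\ref{prop:gloParaDiff}. Hence the pointwise fundamental theorem of calculus along $\sigma\mapsto k_\sigma x$ holds for every $x$. Minkowski's integral inequality (Tonelli on $X\times[0,s]$) combined with $m$-invariance under $k_\sigma$ then turns $\int_0^s\norm{(Kh)\circ k_\sigma}_{L^2(m)}\,d\sigma$ into $\abs{s}\norm{Kh}_{L^2(m)}$. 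This requires only measurability and the finiteness supplied by Lemma~\ref{lem:halfDensityCptRtEst}, so no truncation argument is needed.
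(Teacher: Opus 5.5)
Your proposal is correct and follows the paper's proof. It uses the same add-and-subtract decomposition of $\sqrt{a}\,(q_t\circ k_s-q_t)$ and the same $L^2(m)$ increment bound $\norm{F\circ k_s-F}_{L^2(m)}\leq\abs{s}\norm{KF}_{L^2(m)}$, applied to $\sqrt{a}q_t$ and $\sqrt{a}$ via Lemma~\ref{lem:halfDensityCptRtEst}, together with the uniform clock bound $\abs{q_t\circ k_s}\leq c_{\epsilon}^{-1}\abs{t}$.
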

\begin{proof}
For smooth $F$ with $F,KF\in L^2(m)$, Haar invariance and Minkowski's inequality applied to the fundamental theorem of calculus give
\begin{equation}\label{eq:ksDiff}
    \norm{F\circ k_s-F}_{L^2(m)}\leq\abs{s}\norm{KF}_{L^2(m)}.
\end{equation}

We have
\[
\begin{aligned}
    \sqrt{a}(q_{t}\circ k_s-q_t)=\left((\sqrt{a} q_t)\circ k_s-\sqrt{a}q_t\right)+\left(\sqrt{a}-(\sqrt{a})\circ k_s\right)(q_{t}\circ k_s).
\end{aligned}
\]
By \eqref{eq:ksDiff} and Lemma~\ref{lem:halfDensityCptRtEst},
\[
\norm{(\sqrt{a}q_t)\circ k_s-\sqrt{a}q_t}_{L^2(m)}\leq C_{\epsilon}\abs{s}(1+\abs{t})^2.
\]
Moreover, \eqref{eq:qEpsilonUpbound} gives
    \[
    \abs{q_{t}(k_sx)}\leq c_{\epsilon}^{-1}\abs{t},
    \] 
and hence, again by \eqref{eq:ksDiff},
\[
\norm{(\sqrt{a}-(\sqrt{a})\circ k_s)(q_{t}\circ k_s)}_{L^2(m)}\leq c_{\epsilon}^{-1}\abs{t}\norm{\sqrt{a}-(\sqrt{a})\circ k_s}_{L^2(m)}\leq C_{\epsilon}\abs{s}\abs{t}.
\]
Finally, since $d\nu_{\epsilon}=a\,dm$,
\[
\norm{q_{t}\circ k_s-q_t}_{L^2(\nu_{\epsilon})}=\norm{\sqrt{a}(q_{t}\circ k_s-q_t)}_{L^2(m)}\leq C_{\epsilon}\abs{s}(1+\abs{t})^2.
\]
\end{proof}

\begin{proof}[Proof of Theorem~\ref{thm:sharpnessSL3rigidityOneRoot}]
    Lemma~\ref{lem:intScaCocycle} and Proposition~\ref{prop:gloParaDiff} construct the smooth family $\alpha^{\epsilon}$. Since
    \[
    \rho_{\epsilon}(x,u)=u+\epsilon c(x,u)e_1
    \]
    and 
    \[
    \partial_r\rho_{\epsilon,1}(x,r,s)=1+\epsilon f(\phi_{(r,s)}x)\geq 1-\epsilon\mu_0,
    \]
    the parameter-dependent inverse theorem gives $\tau_{\epsilon}\to\id$ smoothly on compact sets. Hence $\alpha^{\epsilon}$ converges to the algebraic $\h$-action as $\epsilon\to0$.
    
    Proposition~\ref{prop:InvProDri} gives the invariant measure and drift of the selected flow. Theorem~\ref{thm:finCptRtInreClosedBd} gives the boundary estimate, while Proposition~\ref{prop:nonSmoothConjugation} gives both nonconjugacy statements. 

    Let $\widehat{\alpha}^{\epsilon}$ be the transported action defined in \eqref{eq:transportNonRigidity}, and set
    \[
    \widehat T_t^{\epsilon}\coloneq\widehat\alpha_{(t,0)}^{\epsilon},\qquad \widehat\nu_{\epsilon}\coloneq (C_{w_0})_*\nu_{\epsilon},\qquad \widehat{q_{\epsilon}}(y,t)\coloneq q_{\epsilon}(C_{w_0}^{-1}y,t).
    \]
    Then
    \[
    \widehat T_t^{\epsilon}y=\exp\big(\widehat q_{\epsilon}(y,t)U_2\big)y
    \]
    and the drift is $U_2$. Since
    \[
    C_{w_0}\circ k_s\circ C_{w_0}^{-1}=\exp(sK_2),
    \]
    a change of variables gives
    \[
    \norm{\widehat q_{\epsilon}(\exp(sK_2)\cdot,t)-\widehat q_{\epsilon}(\cdot,t)}_{L^2(\widehat{\nu_{\epsilon}})}=\norm{q_{\epsilon}(k_s\cdot,t)-q_{\epsilon}(\cdot,t)}_{L^2(\nu_{\epsilon})}\leq C_{\epsilon}\abs{s}(1+\abs{t})^2.
    \]
    The fixed algebraic conjugation $C_{w_0}$ carries leaf preserving diffeomorphisms  for the $\exp(\h)$-action to leaf preserving diffeomorphisms for the first row action, and the reflection of the second parameter is linear. Hence the two nonconjugacy conclusions also pass to the transported action. This proves assertions \ref{item:sharpSL31}--\ref{item:sharpSL33} of Theorem~\ref{thm:sharpnessSL3rigidityOneRoot}.

    The displayed estimate has $(\delta,\gamma)=(1,2)$. If $\widehat{T}_t^{\epsilon}$ satisfied a one-root estimate with $2\delta>\gamma$, then Theorem~\ref{thm:oneRootSL3rigidity} would make it agree with its algebraic drift on a full $\widehat{\nu}_{\epsilon}$-measure set. Full support and continuity would extend the equality to all of $X$, contradicting the transported form of Proposition~\ref{prop:nonSmoothConjugation}.
\end{proof}

\section{Pointwise sharpness for compact Ratner frames}\label{sec:PtSharpCptRatnerFra}
The final example addresses the pointwise abstract theorem. 
We take the product of two smooth time changes of horocycle flows on compact quotients of $\PSL_2(\R)$. In each factor, the invariant distributions of Flaminio and Forni \cite{FlaminioForni03InvariantDistributions} provide a smooth mean zero function which is not a smooth horocycle coboundary, while neither the full action nor its diagonal restriction admits a smooth leaf preserving conjugacy to an algebraic action.

This section proves Theorem~\ref{thm:sharpnessAbsCptRigid}. For $i=1,2$, let $G_i=\PSL_2(\R)$, $\Gamma_i<G_i$ a torsion-free and cocompact lattice, and set $X_i=G_i/\Gamma_i$. Let $m_i$ be  Haar probability measure on $X_i$, and set
\[
X=X_1\times X_2,\qquad m=m_1\otimes m_2.
\]

In $\sl_2(\R)$, define
\[
U\coloneq E_{12},\qquad K\coloneq E_{21}-E_{12},\qquad H\coloneq E_{11}-E_{22}.
\]
For $i=1,2$, let $\mathcal{U}_i$, $\mathcal{K}_i$ and $\mathcal{H}_i$ denote the infinitesimal generators on $X_i$ associated with $U$, $K$ and $H$, respectively. Let
\[
U^{(i)},\qquad K^{(i)},\qquad H^{(i)}
\]
denote the corresponding elements in the $i$-th factor of 
\[
\mathfrak{g}=\sl_2(\R)\oplus\sl_2(\R).
\]
Set 
\[
\mathfrak{u}=\R U^{(1)}\oplus \R U^{(2)},\qquad N=\exp(\mathfrak{u}).
\]

For $i=1,2$, define
\[
h_t^{(i)}x_i\coloneq\exp(tU)x_i
\]
for the horocycle flow on $X_i$. We use the same symbol $U^{(i)}$ for the induced vector field on the product $X$. The algebraic $N$-action is
\[
\phi_{(p_1,p_2)}(x_1,x_2)=\big(\exp(p_1U)x_1,\exp(p_2U)x_2\big).
\]

\subsection{The two nontrivial time changes}
\begin{Lemma}\label{lem:choicesClockFunctions}
    For each $i=1,2$, there is a real function $f_i\in C^{\infty}(X_i)$ such that
    \[
    \int_{X_i}f_i\,dm_i=0
    \]
    and the cohomological equation
    \[
    \mathcal{U}_ig=f_i
    \]
    has no smooth solution $g\in C^{\infty}(X_i)$.
\end{Lemma}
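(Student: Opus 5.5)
The plan is to use the Flaminio--Forni description of horocycle-invariant distributions, and to fall back on a soft functional-analytic argument if needed. By \cite{FlaminioForni03InvariantDistributions}, on a compact quotient $X_i=G_i/\Gamma_i$ the space $\mathcal{I}(X_i)$ of $\mathcal{U}_i$-invariant distributions is infinite dimensional. In particular it contains distributions $D$ not proportional to Haar measure; the discrete-series components alone supply infinitely many such distributions, for every cocompact lattice. For such a $D$ one has $D(\mathcal{U}_ig)=0$ for every $g\in C^\infty(X_i)$, simply because $D$ is $\mathcal{U}_i$-invariant.

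Next I would choose $f_i\in C^\infty(X_i)$ real with $\int f_i\,dm_i=0$ and $D(f_i)\neq0$. Such a function exists because $D$ and $m_i$ are linearly independent continuous functionals on $C^\infty(X_i)$, so some smooth $\varphi$ satisfies $m_i(\varphi)=0$ and $D(\varphi)\neq0$. Reality is arranged as follows. Since $\mathcal{U}_i$ is a real vector field, $\bar D$ (defined by $\bar D(\varphi)=\overline{D(\bar\varphi)}$) is also invariant. Hence the real and imaginary parts of $D$ are real invariant distributions, and at least one of them is not a multiple of $m_i$. We may therefore assume $D$ is real and take $\varphi$ real. With this choice, if $\mathcal{U}_ig=f_i$ had a smooth solution $g$ (complex or real), we would get $D(f_i)=D(\mathcal{U}_ig)=0$, which is a contradiction.

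The main obstacle is justifying the existence of an invariant distribution independent of Haar measure on an arbitrary torsion-free cocompact lattice, rather than on a specific one. I would cite the Flaminio--Forni classification: in each irreducible unitary component of $L^2(X_i)$ (principal, complementary, or discrete series) other than the trivial representation, the space of invariant distributions is nonzero. Nontrivial components always occur, for instance the discrete series with multiplicity equal to the genus, which is at least $2$. If a self-contained argument were preferred, I would instead argue by contradiction. If every mean-zero smooth function were a smooth coboundary, then by the open mapping theorem on Fréchet spaces the map $\mathcal{U}_i$ would have a continuous right inverse on mean-zero functions. Testing this against the decay of correlations (Ratner's rates) for horocycle averages $\frac1T\int_0^T f(h_tx)\,dt$ would force those averages to decay like $1/T$ uniformly. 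That contradicts the known lower bounds from the Flaminio--Forni asymptotics, which give decay no faster than $T^{-(1+\nu)/2}$ for nontrivial components. I expect the citation route to be the intended one.
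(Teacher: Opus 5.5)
Your main argument is correct and is essentially the paper's proof. Flaminio--Forni supplies a $\mathcal{U}_i$-invariant distribution not proportional to $m_i$, and taking its real or imaginary part makes it real. Linear independence from $m_i$ then yields a real mean-zero $f_i$ on which the distribution is nonzero, and invariance rules out a smooth solution of $\mathcal{U}_ig=f_i$.

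The paper adds one detail you skip. It identifies $G_i/\Gamma_i$ with Flaminio--Forni's model $\Gamma_i\backslash\PSL_2(\R)$ by inversion, which turns the left $\exp(tU)$-action into the right $\exp(-tU)$-action and leaves the space of invariant distributions unchanged. Your fallback open-mapping argument is not needed.
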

\begin{proof}
    Inversion identifies $X_i=G_i/\Gamma_i$ with the unit tangent bundle model $\Gamma_i\backslash\PSL_2(\R)$. Under this identification, left translation by $\exp(tU)$ corresponds to right translation by $\exp(-tU)$. Since replacing $U$ by $-U$ does not change the space of invariant distributions, Flaminio and Forni's classification \cite{FlaminioForni03InvariantDistributions}*{Theorem~1.1} gives, after pulling back by inversion, a complex $\mathcal{U}_i$-invariant distribution which is not a complex multiple of $m_i$. Taking a real or imaginary part if necessary, choose a real $\mathcal{U}_i$-invariant distribution $\mathcal{D}_i$ which is not a real multiple of $m_i$. 

    The two continuous linear functionals
    \[
    f\mapsto \mathcal{D}_i(f),\qquad f\mapsto \int_{X_i}f\,dm_i
    \]
    are therefore linearly independent on $C^{\infty}(X_i,\R)$. Hence there is a real $f_i\in C^{\infty}(X_i)$ such that
    \[
    \int_{X_i}f_i\,dm_i=0,\qquad \mathcal{D}_i(f_i)\neq0.
    \]
    
    The invariance of $\mathcal{D}_i$ means equivalently that
    \[
    \mathcal{D}_i(\mathcal{U}_i\varphi)=0\qquad \text{for every }\varphi\in C^{\infty}(X_i,\R).
    \]
    If $\mathcal{U}_ig=f_i$ for some smooth $g$, then
    \[
    \mathcal{D}_i(f_i)=\mathcal{D}_i(\mathcal{U}_ig)=0,
    \]
    contradicting the choice of $f_i$.
\end{proof}

Fix the functions $f_1,f_2$. Choose $\epsilon_0>0$ such that, for $i=1,2$,
\[
a_{i,\epsilon}\coloneq1+\epsilon f_i>0
\]
on $X_i$ whenever $\abs{\epsilon}<\epsilon_0$. Since $f_i$ has zero mean,
\begin{equation}\label{eq:zeroAmean}
    \int_{X_i}a_{i,\epsilon}\,dm_i=1.
\end{equation}
For $x_i\in X_i$ and $r\in\R$, define 
\[
R_{i,x_i}^{\epsilon}(r)\coloneq\int_0^ra_{i,\epsilon}(h_s^{(i)}x_i)\,ds.
\]
Then
\[
\partial_rR_{i,x_i}^{\epsilon}(r)=a_{i,\epsilon}(h_r^{(i)}x_i).
\]
Since $X_i$ is compact, there are constants
\[
0<m_{i,\epsilon}\leq M_{i,\epsilon}<\infty
\]
such that
\begin{equation}\label{eq:boundPartialDeriR}
    m_{i,\epsilon}\leq\partial_r R_{i,x_i}^{\epsilon}(r)\leq M_{i,\epsilon}
\end{equation}
uniformly in $x_i$ and $r$. In particular, $R_{i,x_i}^{\epsilon}(r)\to\pm\infty$ as $r\to\pm\infty$. Thus every $R_{i,x_i}^{\epsilon}$ is a smooth increasing diffeomorphism of $\R$.
\subsection{The product time change}
Define
\begin{equation}\label{eq:productCocycle}
    \rho_{\epsilon}((x_1,x_2),(p_1,p_2))=(R_{1,x_1}^{\epsilon}(p_1),R_{2,x_2}^{\epsilon}(p_2)).
\end{equation}
We regard functions on $X_i$ as functions on $X$ through the $i$-th coordinate projection.

\begin{Lemma}\label{lem:productRepara}
    The map $\rho_{\epsilon}$ is a smooth reparametrization cocycle for $\phi$. Every parameter map is a smooth diffeomorphism of $\R^2$, and the inverse family is jointly smooth. The action induced by $\rho_{\epsilon}$ is the product of two time changed horocycle flows whose product generators are
    \[
    \widetilde{U}_{\epsilon}^{(i)}=a_{i,\epsilon}^{-1}U^{(i)},\qquad i=1,2.
    \]
    Moreover,
    \[
    \widetilde{U}_{\epsilon}^{(i)}\longrightarrow U^{(i)} \quad \text{in } C^{\infty},\qquad i=1,2,
    \]
    as $\epsilon\to0$.
\end{Lemma}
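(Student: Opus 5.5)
The plan is to reduce everything to the one-dimensional clocks $R_{i,x_i}^{\epsilon}$ and to the fact that $\phi$ is a product of two commuting flows. First I will check the cocycle identity for $\rho_\epsilon$ coordinatewise. For each $i$ the additivity
\[
R_{i,x_i}^{\epsilon}(p+q)=R_{i,x_i}^{\epsilon}(p)+R_{i,h_p^{(i)}x_i}^{\epsilon}(q)
\]
follows by splitting $\int_0^{p+q}a_{i,\epsilon}(h_s^{(i)}x_i)\,ds$ at $s=p$ and substituting $s\mapsto s+p$, using $h_{s+p}^{(i)}=h_s^{(i)}h_p^{(i)}$. Since $\phi_{(p_1,p_2)}$ acts on the $i$-th factor by $h_{p_i}^{(i)}$, this is exactly $\rho_\epsilon(x,p+q)=\rho_\epsilon(x,p)+\rho_\epsilon(\phi_px,q)$. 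Joint smoothness of $(x,p)\mapsto\rho_\epsilon(x,p)$ follows from differentiating under the integral, because $a_{i,\epsilon}$ is smooth and the flow is smooth.

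Next I will treat invertibility. By \eqref{eq:boundPartialDeriR} each $R_{i,x_i}^{\epsilon}$ is a smooth increasing diffeomorphism of $\R$, so the fiber map $p\mapsto\rho_\epsilon(x,p)$ is a product of two such maps. It is therefore a smooth diffeomorphism of $\R^2$, with diagonal Jacobian bounded below by $\min_i m_{i,\epsilon}>0$. For joint smoothness of the inverse $\tau_\epsilon(x,w)=\big((R^{\epsilon}_{1,x_1})^{-1}(w_1),(R^{\epsilon}_{2,x_2})^{-1}(w_2)\big)$, I will apply the implicit function theorem to $(x_i,r,w)\mapsto R^{\epsilon}_{i,x_i}(r)-w$, whose $r$-derivative is at least $m_{i,\epsilon}$. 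Uniqueness of the global inverse makes the local smooth branches agree. This verifies that $\rho_\epsilon$ is a smooth time change in the sense of Definition~\ref{def:continuousTimeChange}.

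For the induced action, I will use $\alpha_w x=\phi_{\tau_\epsilon(x,w)}x$. This acts on the $i$-th factor by $h^{(i)}_{(R^{\epsilon}_{i,x_i})^{-1}(w_i)}x_i$, which depends only on $x_i$ and $w_i$, so the action is a product of two one-parameter flows. Differentiating at $w_i=0$ gives $\partial_{w_i}\tau_{\epsilon,i}(x,0)=1/a_{i,\epsilon}(x_i)$, so the generators are $a_{i,\epsilon}^{-1}U^{(i)}$, the standard time changed horocycle flows. Finally, $a_{i,\epsilon}^{-1}=(1+\epsilon f_i)^{-1}$ converges to $1$ in $C^\infty(X_i)$ as $\epsilon\to0$, since $f_i$ is smooth on a compact manifold and all derivatives of $(1+\epsilon f_i)^{-1}$ are $O(\epsilon)$ uniformly. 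Hence $\widetilde U^{(i)}_\epsilon\to U^{(i)}$ in $C^\infty$.

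None of these steps is a real obstacle. The only point needing care is global, rather than merely local, joint smoothness of the inverse family. I handle it as in Proposition~\ref{prop:gloParaDiff}: the implicit function theorem gives local smooth inverses, and the uniform lower bound on $\partial_r R$ makes the global inverse unique, so the local inverses patch together.
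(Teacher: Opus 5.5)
Your proposal is correct and follows the paper's proof essentially step by step. You use the same ingredients in the same order: coordinatewise additivity of the clocks $R^{\epsilon}_{i,x_i}$; the derivative bounds \eqref{eq:boundPartialDeriR} together with the implicit function theorem for the jointly smooth inverse; the identity $\partial_r R^{\epsilon}_{i,x_i}(0)=a_{i,\epsilon}(x_i)$ for the generators; and $(1+\epsilon f_i)^{-1}\to 1$ in $C^{\infty}$. You simply spell out details the paper leaves implicit, namely the splitting of the integral and the patching of local inverse branches.
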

\begin{proof}
    For each factor and all $p,q\in\R$,
    \[
    R_{i,x_i}^{\epsilon}(p+q)=R_{i,x_i}^{\epsilon}(p)+R_{i,h_p^{(i)}x_i}^{\epsilon}(q).
    \]
    Hence \eqref{eq:productCocycle} satisfies the cocycle identity. Every parameter map is the product of two smooth diffeomorphisms of $\R$. Its inverse is
    \[
    \tau_{\epsilon}\big((x_1,x_2),(w_1,w_2)\big)=\left((R_{1,x_1}^{\epsilon})^{-1}(w_1),(R_{2,x_2}^{\epsilon})^{-1}(w_2)\right),
    \]
    which is jointly smooth by \eqref{eq:boundPartialDeriR} and the implicit-function theorem.

    Since 
    \[
    \partial_rR_{i,x_i}^{\epsilon}(0)=a_{i,\epsilon}(x_i),
    \]
    the $i$-th factor of the induced action is generated by $a_{i,\epsilon}^{-1}U^{(i)}$. Finally,
    \[
    a_{i,\epsilon}^{-1}=(1+\epsilon f_i)^{-1}\longrightarrow1\quad \text{in }C^{\infty},
    \]
    which proves the last assertion.
\end{proof}

Denote the induced action by $\alpha^{\epsilon}$, so that
\[
\alpha^{\epsilon}_{\rho_{\epsilon}(x,p)}x=\phi_px.
\]
Set
\[
T_t^{\epsilon}\coloneq\alpha^{\epsilon}_{(t,t)}.
\]

For $i=1,2$, let $q_{i,\epsilon}(x_i,\cdot)$ be the inverse 
of $R_{i,x_i}^{\epsilon}$, so that
\begin{equation}\label{eq:inverseClockq}
    R_{i,x_i}^{\epsilon}(q_{i,\epsilon}(x_i,t))=t.
\end{equation}
Then
\[
T_t^{\epsilon}(x_1,x_2)=\left(h_{q_{1,\epsilon}(x_1,t)}^{(1)}x_1,h_{q_{2,\epsilon}(x_2,t)}^{(2)}x_2\right).
\]
The physical cocycle of the diagonal restriction is therefore
\begin{equation}\label{eq:productCocycleInverse}
    \beta_{\epsilon}(x,t)=q_{1,\epsilon}(x_1,t)U^{(1)}+q_{2,\epsilon}(x_2,t)U^{(2)}.
\end{equation}

\subsection{Invariant measure, mixing, and drift}
For each factor set
\[
d\nu_{i,\epsilon}\coloneq a_{i,\epsilon}\,dm_i,
\]
and let
\[
\nu_{\epsilon}\coloneq \nu_{1,\epsilon}\otimes\nu_{2,\epsilon}.
\]
\begin{Proposition}\label{prop:measureDrift}
    The measure $\nu_{\epsilon}$ is a full-support probability invariant under the full $\R^2$-action $\alpha^{\epsilon}$. The diagonal flow $T^{\epsilon}$ is mixing with respect to $\nu_{\epsilon}$. Moreover,
    \[
    \lim_{t\to+\infty}\frac{\beta_{\epsilon}(x,t)}{t}=U^{(1)}+U^{(2)},\qquad \text{$\nu_{\epsilon}$-a.e. $x$.}
    \]
\end{Proposition}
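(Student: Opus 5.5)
The argument reduces to single factors: each assertion splits along the product structure of $X=X_1\times X_2$, $\alpha^{\epsilon}$ and $\nu_{\epsilon}$.

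\emph{Invariance.} In factor $i$ the flow generated by $\widetilde U_{\epsilon}^{(i)}=a_{i,\epsilon}^{-1}U^{(i)}$ preserves $\nu_{i,\epsilon}=a_{i,\epsilon}\,dm_i$. This is the computation from Proposition~\ref{prop:InvProDri}: for $\varphi\in C^{\infty}(X_i)$,
\[
\int a_{i,\epsilon}^{-1}\mathcal U_i\varphi\, d\nu_{i,\epsilon}=\int\mathcal U_i\varphi\, dm_i=0 .
\]
By \eqref{eq:zeroAmean}, $\nu_{i,\epsilon}$ is a probability. Positivity of $a_{i,\epsilon}$ gives full support. The two generators act on different coordinates, so the product $\nu_{\epsilon}$ is invariant under both, hence under the full action $\alpha^{\epsilon}$.

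\emph{Drift.} By \eqref{eq:productCocycleInverse} we need $q_{i,\epsilon}(x_i,t)/t\to1$ for $\nu_{i,\epsilon}$-a.e.\ $x_i$. The horocycle flow is ergodic for $m_i$ (Moore), so the ergodic theorem and \eqref{eq:zeroAmean} give $R^{\epsilon}_{i,x_i}(r)/r\to\int a_{i,\epsilon}\,dm_i=1$ for $m_i$-a.e., hence $\nu_{i,\epsilon}$-a.e., $x_i$. Since $R^{\epsilon}_{i,x_i}$ is an increasing diffeomorphism, inverting via \eqref{eq:inverseClockq} gives $q_{i,\epsilon}(x_i,t)/t\to1$. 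Taking the intersection of the two conull sets in the product yields the drift $U^{(1)}+U^{(2)}$.

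\emph{Mixing.} This is the main step. We have $T_t^{\epsilon}=\widetilde h^{(1)}_t\times\widetilde h^{(2)}_t$, where $\widetilde h^{(i)}$ is the time-changed horocycle flow on $(X_i,\nu_{i,\epsilon})$. By Forni--Ulcigrai \cite{forni2012time}, smooth time changes of horocycle flows on compact quotients are mixing. Their hypotheses (a smooth positive time-change function on a compact quotient) hold for $a_{i,\epsilon}^{-1}$ once $\abs{\epsilon}<\epsilon_0$. Alternatively, Marcus's classical mixing result for time changes with enough regularity would also apply.

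The product of two mixing flows is mixing. It suffices to check this on product functions $\varphi_1\otimes\varphi_2$ and $\psi_1\otimes\psi_2$: the correlation factors as a product of factor correlations, each of which converges. Linear combinations of such functions are dense in $L^2(\nu_{\epsilon})$, and a uniform-bound approximation argument extends the convergence to all of $L^2(\nu_{\epsilon})$.

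The only point needing care is matching Forni--Ulcigrai's conventions, which use the right-action model $\Gamma\backslash\PSL_2(\R)$. The inversion trick from Lemma~\ref{lem:choicesClockFunctions} transfers the result: time changes correspond, and reversing the sign of $U$ preserves mixing since mixing is symmetric in time.
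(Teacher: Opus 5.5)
Your proposal is correct and follows essentially the same route as the paper: infinitesimal invariance of $a_{i,\epsilon}\,dm_i$ in each factor, Forni--Ulcigrai for mixing of each time-changed horocycle flow, and mixing of the product. The only variation is in the drift, where you apply the ergodic theorem to the algebraic horocycle flow and invert $R^{\epsilon}_{i,x_i}$ (as in Proposition~\ref{prop:InvProDri}), whereas the paper applies it to the time-changed flow through $q_{i,\epsilon}(x_i,t)=\int_0^t a_{i,\epsilon}^{-1}\bigl(h^{(i)}_{q_{i,\epsilon}(x_i,s)}x_i\bigr)\,ds$; both work, and your version does not need ergodicity of the time-changed flow.
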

\begin{proof}
    Equation~\eqref{eq:zeroAmean} shows that each $\nu_{i,\epsilon}$ is a probability. Since the $i$-th factor flow is generated by $a_{i,\epsilon}^{-1}\mathcal{U}_i$,
    \[
    \int_{X_i}a_{i,\epsilon}^{-1}\mathcal{U}_ig\,d\nu_{i,\epsilon}=\int_{X_i}\mathcal{U}_ig\,dm_i=0
    \]
    for every smooth $g$ on $X_i$. Thus $\nu_{i,\epsilon}$ is invariant under the $i$-th time-changed flow. Since $a_{i,\epsilon}>0$, it is equivalent to $m_i$ and has full support. Hence $\nu_{\epsilon}$ is a full-support probability invariant under the product action.

    By the quantitative mixing theorem for smooth time changes of compact horocycle flows \cite{forni2012time}, each flow is mixing. Therefore their product $T^{\epsilon}$ is mixing.

    Differentiating \eqref{eq:inverseClockq} and using $q_{i,\epsilon}(x_i,0)=0$ gives
    \[
    q_{i,\epsilon}(x_i,t)=\int_0^t a_{i,\epsilon}^{-1}\left(h_{q_{i,\epsilon}(x_i,s)}^{(i)}x_i\right)\,ds.
    \]
    Since the $i$-th factor flow is ergodic, the ergodic theorem gives
    \[
    \lim_{t\to+\infty}\frac{q_{i,\epsilon}(x_i,t)}{t}=\int_{X_i}a_{i,\epsilon}^{-1}d\nu_{i,\epsilon}=\int_{X_i}1dm_i=1
    \]
    for $\nu_{i,\epsilon}$-a.e. $x_i$. Equation \eqref{eq:productCocycleInverse} then completes the proof.
\end{proof}

\subsection{The compact Ratner frame}
The $\sl_2$-relations are
\begin{equation}\label{eq:sl2product}
    [U,K]=H,\qquad [U,H]=-2U.
\end{equation}
Consequently,
\begin{equation}\label{eq:adUcomputations}
    \frac{1}{2}\ad_U^2K=-U,\qquad \ad_U^3K=0.
\end{equation}
\begin{Lemma}\label{lem:productCptFra}
    For $V=U^{(1)}+U^{(2)}$, the family
    \[
    (K^{(1)},2,-U^{(1)}),\qquad (K^{(2)},2,-U^{(2)})
    \]
    is a compact Ratner frame for $(\mathfrak{u},V)$.
\end{Lemma}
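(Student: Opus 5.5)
We check the three conditions of Definition~\ref{def:CptRatnerFra} directly, working in $\mathfrak{g}=\sl_2(\R)\oplus\sl_2(\R)$. The two summands commute, so for $V=U^{(1)}+U^{(2)}$ and any $Z=z_1U^{(1)}+z_2U^{(2)}\in\mathfrak{u}$ we have
\[
\ad_Z K^{(i)}=z_i[U,K]^{(i)}.
\]
Here the bracket $[U,K]^{(i)}$ is taken inside the $i$-th factor. All brackets therefore reduce to the single-factor relations \eqref{eq:sl2product} and \eqref{eq:adUcomputations}.

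\emph{Condition (i), relative compactness.} $K=E_{21}-E_{12}$ is skew-symmetric, so $\exp(sK)\in\SO(2)$ and $\exp(\R K)$ is compact in $\PSL_2(\R)$. The one-parameter group $\exp(\R K^{(i)})$ is the image of $\exp(\R K)$ under the inclusion of the $i$-th factor, and is therefore compact.

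\emph{Condition (ii), nilpotency of degree $d_i=2$.} For $Z_1,Z_2,Z_3\in\mathfrak{u}$, only the $i$-th components act on $K^{(i)}$, giving
\[
\ad_{Z_1}\ad_{Z_2}\ad_{Z_3}K^{(i)}=z_{1,i}z_{2,i}z_{3,i}\bigl(\ad_U^3K\bigr)^{(i)}=0
\]
by \eqref{eq:adUcomputations}. The same reasoning gives $\ad_V^2K^{(i)}=(\ad_U^2K)^{(i)}$, because the $U^{(j)}$ with $j\neq i$ act trivially on $K^{(i)}$.

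\emph{Condition (iii), top coefficients and basis.} By \eqref{eq:adUcomputations},
\[
A_i=\tfrac{1}{2}\ad_V^2K^{(i)}=\bigl(\tfrac12\ad_U^2K\bigr)^{(i)}=-U^{(i)}.
\]
Since $U^{(1)},U^{(2)}$ span $\mathfrak{u}$, the pair $(-U^{(1)},-U^{(2)})$ is a basis of $\mathfrak{u}$.

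The only point needing care is the sign and normalization in \eqref{eq:adUcomputations}. From $[U,K]=H$ and $[U,H]=-2U$ we get $\ad_U^2K=[U,H]=-2U$, so $\tfrac12\ad_U^2K=-U$. Then $\ad_U^3K=[U,-2U]=0$. This matches the degree-two computation \eqref{eq:KadRes} used for $\SL_3$, so no genuine obstacle is expected.
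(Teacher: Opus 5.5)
Your proposal is correct and follows essentially the same route as the paper: you check the three frame conditions factor by factor, using that the two $\sl_2(\R)$ summands commute together with the single-factor relations $\tfrac12\ad_U^2K=-U$ and $\ad_U^3K=0$. Your version simply writes out the bracket computations and the compactness of $\exp(\R K^{(i)})$ that the paper states in one line each.
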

\begin{proof}
    For each $i=1,2$, the element $K^{(i)}$ generates the compact rotation subgroup in the $i$-th factor. Since the two factors commute, \eqref{eq:adUcomputations} gives
    \[
    \frac{1}{2}\ad_V^{2}K^{(i)}=-U^{(i)}.
    \]
    Moreover,
    \[
    \ad_{Z_1}\ad_{Z_2}\ad_{Z_3}K^{(i)}=0
    \]
    for all $Z_1,Z_2,Z_3\in\mathfrak{u}$. Finally, $(-U^{(1)},-U^{(2)})$ is a basis of $\mathfrak{u}$.
\end{proof}

\subsection{The pointwise estimate}
For $i=1,2$, define
\[
A_i(x_i,q)\coloneq \int_0^qf_i(h_r^{(i)}x_i)dr,
\]
so that
\[
R_{i,x_i}^{\epsilon}(q)=q+\epsilon A_i(x_i,q).
\]
The relations in \eqref{eq:sl2product} give
\begin{equation}\label{eq:adKHUDeco}
    \Ad_{\exp(rU)}K=K+rH-r^2U.
\end{equation}
\begin{Lemma}\label{lem:quadraticCptRtDis}
    For each $i=1,2$, there is $C_i>0$ such that
    \[
    \abs{A_i(\exp(sK)x_i,q)-A_i(x_i,q)}\leq C_i\abs{s}(1+\abs{q})^2
    \]
    for every $s,q\in\R$, and $x_i\in X_i$.
\end{Lemma}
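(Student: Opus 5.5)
Since $X_i$ is compact and $f_i$ is smooth, $f_i$, $\mathcal{K}_if_i$, $\mathcal{H}_if_i$ and $\mathcal{U}_if_i$ are all uniformly bounded. The plan is to bound the derivative in $s$ uniformly and then integrate; the growth $(1+\abs{q})^2$ then comes only from the quadratic conjugation polynomial \eqref{eq:adKHUDeco}. This is the same mechanism as in Proposition~\ref{prop:tanQuaEst}, but it is now pointwise because $f_i$ is bounded.

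Fix $x_i$ and $q$, and put $k_s=\exp(sK)$ and $u_r=\exp(rU)$. Using $u_rk_{s}=\exp\bigl(s'\Ad_{u_r}K\bigr)u_r k_{s-s'}$, we differentiate $s\mapsto A_i(k_sx_i,q)$ at an arbitrary $s=\sigma$, writing $y=k_\sigma x_i$. By \eqref{eq:adKHUDeco},
\[
\frac{d}{ds}\Big|_{s=\sigma}A_i(k_sx_i,q)=\int_0^q\big((\mathcal{K}_i+r\mathcal{H}_i-r^2\mathcal{U}_i)f_i\big)(u_ry)\,dr.
\]
The first two terms are bounded by $\norm{\mathcal{K}_if_i}_\infty\abs{q}+\tfrac12\norm{\mathcal{H}_if_i}_\infty q^2$. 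For the third term we can either bound it directly by $\tfrac13\norm{\mathcal{U}_if_i}_\infty\abs{q}^3$, or integrate by parts as in \eqref{eq:Jkxq}, which gives $-q^2f_i(u_qy)+2\int_0^q rf_i(u_ry)\,dr$ and hence a bound of $2\norm{f_i}_\infty q^2$. We use the integration by parts, because the direct bound would give cubic growth. With it, the derivative is bounded by $C_i(1+\abs{q})^2$, uniformly in $\sigma$ and $x_i$.

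Finally, the mean value theorem (equivalently, integrating the derivative over $[0,s]$) gives $\abs{A_i(k_sx_i,q)-A_i(x_i,q)}\leq C_i\abs{s}(1+\abs{q})^2$ for all $s$. The only point that needs care is the integration by parts that turns the $r^2\mathcal{U}_if_i$ term into a boundary term plus a lower-order integral; without it the estimate would be off by one power of $q$. Differentiability of $A_i$ in $s$ under the integral sign is routine because $f_i$ is smooth on the compact manifold $X_i$.
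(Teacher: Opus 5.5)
Your proposal is correct and follows essentially the same route as the paper's proof: you differentiate in the rotation parameter using $\Ad_{\exp(rU)}K=K+rH-r^2U$, bound the $\mathcal{K}_i$ and $\mathcal{H}_i$ terms directly, integrate the $r^2\mathcal{U}_if_i$ term by parts to get a $2\norm{f_i}_{\infty}q^2$ bound, and then integrate over $[0,s]$. Your conjugation identity $u_rk_s=\exp\bigl(s'\Ad_{u_r}K\bigr)u_rk_{s-s'}$ is an equivalent form of the one the paper uses to differentiate at an arbitrary base point $\exp(\sigma K)x_i$.
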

\begin{proof}
    Set $x_{i,\sigma}\coloneq\exp(\sigma K)x_i$. Since
    \[
    \exp(rU)\exp\big((\sigma+\eta) K\big)=\exp(\eta\Ad_{\exp(rU)}K)\exp(rU)\exp(\sigma K),
    \]
    equation~\eqref{eq:adKHUDeco} gives
    \[
    \frac{d}{d\sigma}f_i(h_r^{(i)}x_{i,\sigma})=\bigl((\mathcal{K}_i+r\mathcal{H}_i-r^2\mathcal{U}_i)f_i\bigr)(h_r^{(i)}x_{i,\sigma}).
    \]
    Differentiating under the oriented integral, we obtain
    \begin{equation}\label{eq:dSigmaAi}
        \frac{d}{d\sigma}A_i(x_{i,\sigma},q)=\int_0^q\bigl((\mathcal{K}_i+r\mathcal{H}_i-r^2\mathcal{U}_i)f_i\bigr)(h_r^{(i)}x_{i,\sigma})dr.
    \end{equation}
     
    The first two terms on the RHS of \eqref{eq:dSigmaAi} are bounded respectively by
    \[
    \norm{\mathcal{K}_if_i}_{\infty}\abs{q}\qquad\text{and}\qquad \frac{1}{2}\norm{\mathcal{H}_if_i}_{\infty}q^2.
    \]
    For the last term, since
    \[
    \frac{d}{dr}f_i(h_r^{(i)}x_{i,\sigma})=(\mathcal{U}_if_i)(h_r^{(i)}x_{i,\sigma}),
    \]
    integration by parts gives
    \[
    -\int_0^qr^2(\mathcal{U}_if_i)(h_r^{(i)}x_{i,\sigma})dr=-q^2f_i(h_q^{(i)}x_{i,\sigma})+2\int_0^qrf_i(h_r^{(i)}x_{i,\sigma})dr.
    \]
    Hence its absolute value is at most
    \[
    2\norm{f_i}_{\infty}q^2.
    \]
    It follows from \eqref{eq:dSigmaAi} that
    \[
    \abs{\frac{d}{d\sigma}A_i(x_{i,\sigma},q)}\leq C_i(1+\abs{q})^2
    \]
    uniformly in $x_i$, $\sigma$ and $q$. Therefore,
    \[
    A_i(\exp(sK)x_i,q)-A_i(x_i,q)=\int_0^s\frac{d}{d\sigma}A_i(x_{i,\sigma},q)d\sigma,
    \]
    which proves the lemma.
\end{proof}

\begin{Proposition}\label{prop:endptEstPhyCoc}
For each $i=1,2$, there is $C_{i,\epsilon}>0$ such that
\[
\abs{q_{i,\epsilon}(\exp(sK)x_i,t)-q_{i,\epsilon}(x_i,t)}\leq C_{i,\epsilon}\abs{s}(1+\abs{t})^2
\]
for every $s,t\in\R$, and $x_i\in X_i$. In particular, the compact root estimate in part~\ref{item:sharpnessAbstract2} of Theorem~\ref{thm:sharpnessAbsCptRigid} holds.
\end{Proposition}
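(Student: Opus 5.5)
The plan is to transfer the clock estimate of Lemma~\ref{lem:quadraticCptRtDis} to the inverse clock $q_{i,\epsilon}$ through the defining identity \eqref{eq:inverseClockq}, using the uniform bounds \eqref{eq:boundPartialDeriR} on $\partial_rR^{\epsilon}_{i,x_i}$. Fix $i$, $s$, $t$, $x_i$, and write $y_i=\exp(sK)x_i$, $q=q_{i,\epsilon}(x_i,t)$ and $q'=q_{i,\epsilon}(y_i,t)$. Both satisfy $R^{\epsilon}_{i,x_i}(q)=t=R^{\epsilon}_{i,y_i}(q')$. Subtracting gives
\[
R^{\epsilon}_{i,y_i}(q')-R^{\epsilon}_{i,y_i}(q)=R^{\epsilon}_{i,x_i}(q)-R^{\epsilon}_{i,y_i}(q)=\epsilon\bigl(A_i(x_i,q)-A_i(y_i,q)\bigr).
\]

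The mean value theorem and the lower bound $\partial_rR^{\epsilon}_{i,y_i}\geq m_{i,\epsilon}$ then give
\[
\abs{q'-q}\leq m_{i,\epsilon}^{-1}\abs{\epsilon}\,\abs{A_i(y_i,q)-A_i(x_i,q)}\leq m_{i,\epsilon}^{-1}\abs{\epsilon}C_i\abs{s}(1+\abs{q})^2,
\]
where the second step is Lemma~\ref{lem:quadraticCptRtDis}. Next, the lower bound also yields $\abs{q}=\abs{q_{i,\epsilon}(x_i,t)}\leq m_{i,\epsilon}^{-1}\abs{t}$, because $\abs{R^{\epsilon}_{i,x_i}(q)}\geq m_{i,\epsilon}\abs{q}$. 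Hence $(1+\abs{q})^2\leq \max(1,m_{i,\epsilon}^{-1})^2(1+\abs{t})^2$, and this proves the estimate with an explicit constant $C_{i,\epsilon}$, uniform in $x_i$.

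For part~\ref{item:sharpnessAbstract2}, use \eqref{eq:productCocycleInverse}. Since $\exp(sK^{(1)})$ moves only the first coordinate, we have $\beta_\epsilon(\exp(sK^{(1)})x,t)-\beta_\epsilon(x,t)=\bigl(q_{1,\epsilon}(\exp(sK)x_1,t)-q_{1,\epsilon}(x_1,t)\bigr)U^{(1)}$, and similarly for $i=2$. So the estimate just proved gives $[\beta_\epsilon(\cdot,t)]_{K^{(i)},1,s_0}\leq C_{i,\epsilon}\norm{U^{(i)}}(1+\abs{t})^2$ for every $s_0$ and all $t\geq0$. For negative $t$ the same argument applies, since Lemma~\ref{lem:quadraticCptRtDis} holds for all $q\in\R$.

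The argument contains no real obstacle. The only point needing care is that the mean value step must be applied to a single fiber map, $R^{\epsilon}_{i,y_i}$, which is why the difference of the two clocks is evaluated at the common argument $q$ before invoking Lemma~\ref{lem:quadraticCptRtDis}.
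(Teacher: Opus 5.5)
Your proposal is correct and follows essentially the same route as the paper: compare the two inverse clocks through the single fiber map $R^{\epsilon}_{i,\exp(sK)x_i}$, rewrite the difference at the common argument $q$ as $\epsilon\bigl(A_i(x_i,q)-A_i(\exp(sK)x_i,q)\bigr)$, apply Lemma~\ref{lem:quadraticCptRtDis}, and bound $\abs{q}\leq m_{i,\epsilon}^{-1}\abs{t}$. The passage to the $K^{(i)}$-seminorm, using that $\exp(sK^{(i)})$ changes only the $i$-th inverse clock in \eqref{eq:productCocycleInverse}, is also the paper's argument.
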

\begin{proof}
Set
\[
x'_i\coloneq \exp(sK)x_i,\qquad q=q_{i,\epsilon}(x_i,t),\qquad q'=q_{i,\epsilon}(x'_i,t).
\]
By the definition of the inverse clocks,
\begin{equation}\label{eq:diffInverseClocks}
    R_{i,x_i}^{\epsilon}(q)=t=R_{i,x'_i}^{\epsilon}(q').
\end{equation}
The derivative bound \eqref{eq:boundPartialDeriR} gives
\[
m_{i,\epsilon}\abs{q-q'}\leq\abs{R_{i,x'_i}^{\epsilon}(q')-R_{i,x'_i}^{\epsilon}(q)}.
\]
Using \eqref{eq:diffInverseClocks} and $R_{i,x_i}^{\epsilon}(r)=r+\epsilon A_i(x_i,r)$, the RHS equals
\[
\abs{R_{i,x_i}^{\epsilon}(q)-R_{i,x'_i}^{\epsilon}(q)}=\abs{\epsilon}\abs{A_i(x_i,q)-A_i(x'_i,q)}\leq\abs{\epsilon}C_i\abs{s}(1+\abs{q})^2
\]
by Lemma~\ref{lem:quadraticCptRtDis}.

Moreover, since $R_{i,x_i}^{\epsilon}(0)=0$, 
\[
m_{i,\epsilon}\abs{q}\leq\abs{R_{i,x_i}^{\epsilon}(q)-R_{i,x_i}^{\epsilon}(0)}=\abs{t}.
\]
Consequently,
\[
\abs{q'-q}\leq\frac{\abs{\epsilon}C_i}{m_{i,\epsilon}}\abs{s}\left(1+\frac{\abs{t}}{m_{i,\epsilon}}\right)^2\leq C_{i,\epsilon}\abs{s}(1+\abs{t})^2.
\]

Finally, $K^{(i)}$ acts only on the $i$-th factor, while the other inverse clock in \eqref{eq:productCocycleInverse} is unchanged. Thus the last estimate implies the $K^{(i)}$-seminorm bound in \ref{item:sharpnessAbstract2} of Theorem~\ref{thm:sharpnessAbsCptRigid}.
\end{proof}

\subsection{Nonconjugacy to algebraic actions}
The algebraic $N$-action on $X$ is free. Indeed, a nontrivial stabilizer would give a nontrivial unipotent element in one of the cocompact lattices $\Gamma_i$, which is impossible by the contraction argument used in Lemma~\ref{lem:algStab}.

\begin{Proposition}[Diagonal flow nonconjugacy]\label{prop:nonSmoothAlgDiag}
    For every sufficiently small nonzero $\epsilon$, the flow $T^{\epsilon}$ is not smoothly conjugate by a leaf preserving diffeomorphism to any algebraic flow $x\mapsto\exp(tW)x$, $W\in\mathfrak{u}$. Consequently, the action $\alpha^{\epsilon}$ is not smoothly conjugate by such a diffeomorphism to a linear algebraic $N$-action.
\end{Proposition}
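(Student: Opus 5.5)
Suppose $\Psi T_t^{\epsilon}=\exp(tW)\Psi$ with $\Psi$ a smooth leaf preserving diffeomorphism and $W=w_1U^{(1)}+w_2U^{(2)}\in\mathfrak{u}$. We mimic the proof of Proposition~\ref{prop:nonSmoothConjugation}, replacing periodic orbits by the cohomological obstruction of Lemma~\ref{lem:choicesClockFunctions}.

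\emph{Step 1: displacement form.} Since the algebraic $N$-action on $X$ is free, the analogue of Lemma~\ref{lem:smmothDisplacement} writes
\[
\Psi(x)=\exp\bigl(B_1(x)U^{(1)}+B_2(x)U^{(2)}\bigr)x,\qquad B=(B_1,B_2)\in C^{\infty}(X,\R^2).
\]
The generator of $T^{\epsilon}$ is $a_{1,\epsilon}^{-1}U^{(1)}+a_{2,\epsilon}^{-1}U^{(2)}$. Differentiating the conjugacy at $t=0$ and using freeness to read off leaf coordinates gives, for $i=1,2$,
\[
a_{i,\epsilon}^{-1}+\bigl(a_{1,\epsilon}^{-1}U^{(1)}+a_{2,\epsilon}^{-1}U^{(2)}\bigr)B_i=w_i .
\]

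\emph{Step 2: reduce to one factor.} Multiply the $i=1$ equation by $a_{1,\epsilon}a_{2,\epsilon}$ and integrate over $x_2$ against $m_2$. The term $a_{1,\epsilon}U^{(2)}B_1$ integrates to zero by $U^{(2)}$-invariance of $m_2$. Since $\int a_{2,\epsilon}\,dm_2=1$ and $a_{2,\epsilon}$ does not depend on $x_1$, we obtain
\[
1+\mathcal{U}_1\bar B_1=w_1a_{1,\epsilon}=w_1+\epsilon w_1f_1,\qquad \bar B_1(x_1)=\int_{X_2}a_{2,\epsilon}(x_2)B_1(x_1,x_2)\,dm_2(x_2),
\]
with $\bar B_1\in C^{\infty}(X_1)$, smooth by differentiation under the integral on the compact $X_2$. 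Integrating against $m_1$ and using $\int f_1\,dm_1=0$ gives $w_1=1$. Hence $\epsilon f_1=\mathcal{U}_1\bar B_1$, so $f_1=\mathcal{U}_1(\bar B_1/\epsilon)$ for $\epsilon\neq0$. This contradicts Lemma~\ref{lem:choicesClockFunctions}, and using only $f_1$ already suffices.

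\emph{Step 3: the full action.} A leaf preserving conjugacy of $\alpha^{\epsilon}$ to $w\mapsto\phi_{A^{-1}w}$ restricts along $w=(t,t)$ to a conjugacy of $T^{\epsilon}$ with the algebraic flow $\exp(tW)$, where $W=\iota(A^{-1}(1,1))$. This is excluded by Step~2.

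\emph{Main obstacle.} The delicate point is Step~1: establishing the displacement representation with globally smooth $B$, and justifying the differentiated identity rigorously via freeness and smoothness of $\Psi$. After that, the averaging in Step~2 is routine.
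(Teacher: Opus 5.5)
Your proof is correct and follows the paper's argument essentially step for step: the displacement form of Lemma~\ref{lem:smmothDisplacement} (which applies directly with $G_0=\PSL_2(\R)\times\PSL_2(\R)$ and $\Gamma_0=\Gamma_1\times\Gamma_2$, so Step~1 is not really an obstacle), differentiation of the conjugacy at $t=0$ via freeness, and multiplication by $a_{1,\epsilon}a_{2,\epsilon}$ followed by averaging over $X_2$ to get $\mathcal{U}_1\bar B_1=\epsilon f_1$, which contradicts Lemma~\ref{lem:choicesClockFunctions}. The only cosmetic difference is the order of operations: the paper first fixes $(w_1,w_2)=(1,1)$ by integrating against $\nu_{\epsilon}$, whereas you average over $X_2$ first and then integrate over $X_1$, which amounts to the same computation because $a_{1,\epsilon}a_{2,\epsilon}\,dm=d\nu_{\epsilon}$.
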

\begin{proof}
   By Lemma~\ref{lem:smmothDisplacement}, every smooth leaf preserving conjugacy has the form $\Psi_F$ with $F\in C^{\infty}(X,\mathfrak{u})$. Identifying $\mathfrak{u}$ with $\R^2$ through the ordered basis $(U^{(1)},U^{(2)})$, suppose that 
   \[
   \Psi_FT_t^{\epsilon}=\exp(tW)\Psi_F,\qquad W=c_1U^{(1)}+c_2U^{(2)},
   \]
   for some smooth $F=(F_1,F_2):X\to\R^2$. Freeness and \eqref{eq:productCocycleInverse} give
   \[
   F(T_t^{\epsilon}x)+(q_{1,\epsilon}(x_1,t),q_{2,\epsilon}(x_2,t))=F(x)+t(c_1,c_2).
   \]
   Differentiating at $t=0$ gives
   \begin{equation}\label{eq:diffFResFlow}
       X_{\epsilon}F+(a_{1,\epsilon}^{-1},a_{2,\epsilon}^{-1})=(c_1,c_2),
   \end{equation}
   where $X_{\epsilon}=a_{1,\epsilon}^{-1}U^{(1)}+a_{2,\epsilon}^{-1}U^{(2)}$ is the generator of $T^{\epsilon}$.
   
   Integrate with respect to the $T^{\epsilon}$-invariant measure $\nu_{\epsilon}$. Invariance gives $\int_XX_{\epsilon}F_jd\nu_{\epsilon}=0$, while
   \[
   \int_Xa^{-1}_{1,\epsilon}d\nu_{\epsilon}=\int_{X_2}a_{2,\epsilon}dm_2=1,\qquad \int_Xa_{2,\epsilon}^{-1}d\nu_{\epsilon}=\int_{X_1}a_{1,\epsilon}dm_1=1.
   \]
Hence $(c_1,c_2)=(1,1)$.

The first coordinate of \eqref{eq:diffFResFlow} becomes
\[
a_{1,\epsilon}^{-1}U^{(1)}F_1+a_{2,\epsilon}^{-1}U^{(2)}F_1=1-a_{1,\epsilon}^{-1}=\frac{\epsilon f_1}{a_{1,\epsilon}}.
\]
Multiplying by $a_{1,\epsilon}a_{2,\epsilon}$ and integrating over $X_2$ gives
\[
\mathcal{U}_1\overline{F}_1=\epsilon f_1,\qquad \overline{F}_1(x_1)=\int_{X_2}a_{2,\epsilon}(x_2)F_1(x_1,x_2)dm_2(x_2).
\]
Here the term containing $U^{(2)}F_1$ integrates to zero because $m_2$ is $h_t^{(2)}$-invariant, while $a_{1,\epsilon}$ is independent of $x_2$. Since $\overline{F}_1$ is smooth and $\epsilon\neq0$, the identity $\mathcal{U}_1(\epsilon^{-1}\overline{F}_1)=f_1$ contradicts Lemma~\ref{lem:choicesClockFunctions}. A full action conjugacy would restrict to the diagonal line and contradict this conclusion.
\end{proof}

\begin{proof}[Proof of Theorem~\ref{thm:sharpnessAbsCptRigid}]
    Lemma~\ref{lem:choicesClockFunctions} supplies the functions $f_i$,
    and Lemma~\ref{lem:productRepara} constructs the arbitrarily small smooth time changes. Proposition~\ref{prop:measureDrift} gives the invariant measure, mixing, full support, and the drift. Lemma~\ref{lem:productCptFra} and Proposition~\ref{prop:endptEstPhyCoc} give the compact frame and the closed-boundary estimates. Proposition~\ref{prop:nonSmoothAlgDiag} gives both nonconjugacy conclusions.

    For both frame directions,
    \[
    d_i=2,\qquad \delta_i=1,\qquad \gamma_i=2,
    \]
    and hence
    \[
    d_i\delta_i=\gamma_i=2\qquad i=1,2.
    \]
    Thus the strict inequalities $d_i\delta_i>\gamma_i$ in pointwise compact-frame rigidity cannot in general be replaced by $d_i\delta_i\geq \gamma_i$.
\end{proof}

\section{Smooth time changes on compact \texorpdfstring{$\SL_3(\R)$}{SL3R} quotients}\label{sec:smoothTCcptSL3}

In this section, we prove Theorem~\ref{thm:smoothConjugacyCri} and Theorem~\ref{thm:NonConjTCGivenQuo}. Throughout this section and Section~\ref{sec:cptRtAsyOpExp}, let $G=\SL_3(\R)$ and $X=G/\Gamma$, where $\Gamma<G$ is a cocompact lattice. Let $m$ be Haar probability on $X$. Write
\[
U=U_{2}=E_{12},\qquad Z=U_3=E_{13},\qquad K=K_2=E_{21}-E_{12}.
\]
As in the compact setting of the introduction, we write parameter vectors as columns $p=(p_1,p_2)^{\mathrm{T}}$ and set $\iota(p)=p_1U+p_2Z$. Thus $\phi_px=\exp(\iota(p))x$, and $e_1=(1,0)^{\mathrm{T}}$, $e_2=(0,1)^{\mathrm{T}}$ form the standard basis of the parameter space. Vector norms on $\R^2$ are Euclidean, and matrix norms are the associated operator norms. Let $\rho$ be the reparametrization cocycle of a smooth time change of $\phi$, let $\tau$ be its parameter inverse, and $\alpha$ the induced action. 

\subsection{Parameter derivatives and the invariant probability}

Recall that 
\[
M(x)=D_p\rho(x,0)\qquad \text{and}\qquad A=\int_XM\,dm.
\]
The matrix $M(x)$ converts infinitesimal algebraic parameters to parameters of $\alpha$. For a Lie algebra matrix $Y$, we write
\[
(Yf)(x)=\left.\frac{d}{ds}\right|_{s=0}f(\exp(sY)x),
\]
with derivatives taken componentwise for vector and matrix valued functions.

Differentiating $\rho(x,p+he_j)-\rho(x,p)=\rho(\phi_px,he_j)$ in $h$ at zero gives
\begin{equation}\label{eq:deriCocycle}
    D_p\rho(x,p)=M(\phi_px).
\end{equation}
Equality of mixed parameter derivatives then gives
\begin{equation}\label{eq:mixedDeriEq}
    U(Me_2)=Z(Me_1).
\end{equation}
Thus each row $(f_1,f_2)$ of $M$ is closed along the leaves: $Uf_2=Zf_1$. Since $\rho(x,0)=0$, integration along a straight parameter segment and Haar invariance give
\begin{equation}\label{eq:inteMrho}
    \rho(x,p)=\int_0^1M(\phi_{rp}x)p\,dr,\qquad \int_X\rho(x,p)dm(x)=Ap.
\end{equation}

Recall that the algebraic flows generated by $U$ and $Z$ are ergodic with respect to the Haar measure by Moore's ergodic theorem.  

\begin{Lemma}\label{lem:averagingClosedRows}
    Suppose $f=(f_1,f_2)$ and $g=(g_1,g_2)$ are smooth real rows with $Uf_2=Zf_1$ and $Ug_2=Zg_1$. Then
    \begin{equation*}
        \int_X(f_1g_2-f_2g_1)dm=\left(\int_Xf_1dm\right)\left(\int_Xg_2dm\right)-\left(\int_Xf_2dm\right)\left(\int_Xg_1dm\right).
    \end{equation*}
    Consequently, if $R:X\to\operatorname{Mat}_{2\times 2}(\R)$ is smooth and its rows are closed, then
    \begin{equation*}
        \int_X R \adj M \,dm=\left(\int_XRdm\right)\adj A,\qquad \int_X\det M dm=\det A.
    \end{equation*}
    Here $\adj Q$ is the adjugate matrix of $Q$.
\end{Lemma}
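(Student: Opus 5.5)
The plan is to reduce the first identity to a Hodge-type decomposition of closed rows. Each closed row $f$ should split as its Haar mean plus an exact part:
\[
f=\Big(\int_X f_1dm,\int_X f_2dm\Big)+(Ub_f,Zb_f)+\text{(non-exact closed remainder)}.
\]
The remainder is the obstruction, so we do not rely on the splitting itself. We use instead the integrated cocycle of Lemma~\ref{lem:intScaCocycle}. For the closed row $f$, let $c_f(x,p)$ be the unique additive cocycle with generators $f_1,f_2$, and define $c_g$ in the same way. Then $\partial_{p_1}c_f(x,p)=f_1(\phi_px)$ and $\partial_{p_2}c_f(x,p)=f_2(\phi_px)$, and likewise for $g$. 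The mixed expression $f_1g_2-f_2g_1$ evaluated at $\phi_px$ is the Jacobian of the map $p\mapsto(c_f(x,p),c_g(x,p))$. Integrating this Jacobian over a large square $Q_L=[0,L]^2$, we get
\[
\frac{1}{L^2}\int_{Q_L}(f_1g_2-f_2g_1)(\phi_px)\,dp=\frac{1}{L^2}\oint_{\partial Q_L}c_f\,dc_g
\]
by Stokes' theorem. We then integrate over $x$ with respect to $m$. By $\phi$-invariance of $m$, the left side equals $\int_X(f_1g_2-f_2g_1)\,dm$ for every $L$.

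Next we evaluate the boundary term. Along each side of $\partial Q_L$, write $c_f(x,p)=\bar f\cdot p+e_f(x,p)$, where $\bar f$ is the mean row. The deviation $e_f$ has $L^2(m)$ norm $o(L)$ uniformly on $Q_L$. This follows from the mean ergodic theorem for the ergodic $U$- and $Z$-flows (Moore), applied along axis-parallel segments and combined with the cocycle identity. The linear parts of $c_f$ and $c_g$ produce exactly $\frac{1}{L^2}\oint\bar f\cdot p\,d(\bar g\cdot p)=\det\begin{pmatrix}\bar f_1&\bar f_2\\ \bar g_1&\bar g_2\end{pmatrix}$, which is the claimed right-hand side. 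The cross terms have the form $\frac{1}{L^2}\oint(\bar f\cdot p)\,de_g$ and $\frac{1}{L^2}\oint e_f\,d(\bar g\cdot p)$. For the first, we integrate by parts along each side and obtain terms bounded by $L^{-2}\cdot L\cdot o(L)$. For the second, the integrand is $e_f$ times a bounded derivative, which again gives $o(1)$ after integration in $x$ and Cauchy--Schwarz. The remaining term $\frac{1}{L^2}\int_X\oint e_f\,de_g\,dm$ is the main obstacle, because $de_g=(g-\bar g)(\phi_p x)\,dp$ is only bounded and not small. This naive bound gives $L^{-2}\cdot L\cdot o(L)=o(1)$ only if $e_f=o(L)$ holds in $L^2$ uniformly along the boundary. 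So the key step is a uniform $L^2$ sublinearity of $e_f$ on $\partial Q_L$. We would establish it from the mean ergodic theorem by splitting each boundary point $p$ into its two axis-parallel legs. Each leg has $L^2$ norm $\leq\sup_{r\le L}\|\int_0^r(f_j-\bar f_j)\circ\text{flow}\|_{2}=o(L)$, and this bound is uniform in the starting point by invariance. Letting $L\to\infty$ then proves the first identity.

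Finally, the matrix identities follow by linearity. The $(i,j)$ entry of $R\adj M$ equals $R_{i1}(\adj M)_{1j}+R_{i2}(\adj M)_{2j}$. Since $\adj M=\begin{pmatrix}M_{22}&-M_{12}\\-M_{21}&M_{11}\end{pmatrix}$, each entry has the form $\pm(f_1g_2-f_2g_1)$, with $f$ a row of $R$ and $g$ a row of $M$; both rows are closed by hypothesis and \eqref{eq:mixedDeriEq}. Applying the first identity entrywise gives $\int R\adj M\,dm=(\int R\,dm)\adj A$. Taking $R=M$ and using $M\adj M=\det M\cdot I$ yields $\int_X\det M\,dm=\det A$.
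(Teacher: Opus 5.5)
Your proof is correct, but it takes a genuinely different route from the paper's.

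\textbf{The paper's route.} The paper never integrates the rows into cocycles. It sets $I(t)=\int_X\big((f_1\circ u_t)g_2-(f_2\circ u_t)g_1\big)\,dm$ with $u_t=\exp(tU)$, and shows $I'(t)=0$: it differentiates, uses $Uf_2=Zf_1$, moves $U$ and $Z$ across by Haar invariance, and then uses $Ug_2=Zg_1$. It then averages $I(t)=I(0)$ over $t\in[0,T]$, and the mean ergodic theorem for the $U$-flow turns $I(0)$ into $\bar f_1\bar g_2-\bar f_2\bar g_1$.

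\textbf{Your route.} You read $f_1g_2-f_2g_1$ along an orbit as the Jacobian of $p\mapsto(c_f(x,p),c_g(x,p))$. Green's theorem converts its average over $Q_L$ into a boundary integral, and you show that only the linear drift parts survive. The analytic input is the uniform sublinearity
\[
\sup_{p\in\partial Q_L}\|c_f(\cdot,p)-\bar f\cdot p\|_{L^2(m)}=o(L).
\]
You derive this correctly from the cocycle identity (splitting $p$ into two axis legs), $\phi$-invariance of $m$, and the mean ergodic theorem. You should still state explicitly why
\[
\sup_{0\le r\le L}\bigl\|\int_0^r(f_1-\bar f_1)(u_{r'}\,\cdot)\,dr'\bigr\|_{L^2(m)}=o(L).
\]
It follows by splitting at a large fixed $R_0$: for $r\ge R_0$ use the ergodic theorem, and for $r\le R_0$ use the trivial bound $R_0\|f_1-\bar f_1\|_{2}$. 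With this in hand, all three error terms are bounded by $L^{-2}\cdot O(L)\cdot o(L)$, as you claim.

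\textbf{Comparison.} The paper's argument is shorter: a one-line derivative computation, and ergodicity of a single flow. Yours needs ergodicity of both the $U$- and $Z$-flows (both supplied by Moore) and more bookkeeping. In return, it shows what the identity means: the Haar average of the leafwise wedge of two closed forms equals the wedge of their asymptotic drifts. The Green's-theorem argument also adapts naturally to $d$ closed rows over an $\R^d$-action. Your deduction of the matrix identities from the scalar one coincides with the paper's.
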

\begin{proof}
    Write $u_t=\exp(tU)$ and define
    \[
    I(t)=\int_X\big(f_1(u_tx)g_2(x)-f_2(u_tx)g_1(x)\big)dm(x).
    \]
    Integration by parts and commutativity of $U$ and $Z$ imply that 
    \[
    \begin{aligned}
        I'(t)=&-\int_X(f_1\circ u_t)Ug_2dm-\int_X((Zf_1)\circ u_t)g_1dm\\
        =&-\int_X(f_1\circ u_t)Zg_1dm+\int_X(f_1\circ u_t)Zg_1dm=0,
    \end{aligned}
    \]
    where we also used the fact that the rows are closed. 
    
    The mean ergodic theorem and ergodicity of the $U$-flow give
    \[
    \frac{1}{T}\int_0^Tf_i\circ u_tdt\longrightarrow\int_Xf_i\,dm
    \text{ in $L^2(m)$.}
    \]
    Since $g_j\in L^2(m)$, Fubini's theorem and Cauchy-Schwarz give
    \[
    \begin{aligned}
        \frac{1}{T}\int_0^T\int_X(f_i\circ u_t)g_j\,dm\,dt&=\int_X\left(\frac{1}{T}\int_0^Tf_i\circ u_t\,dt\right)g_j\,dm\longrightarrow\left(\int_Xf_i\,dm\right)\left(\int_Xg_jdm\right).
    \end{aligned}
    \]
    Averaging $I(t)=I(0)$ and applying this limit to the pairs $(i,j)=(1,2)$ and $(2,1)$ proves the first identity.

    Each entry of $R\adj M$ is, up to sign, the determinant formed from one row of $R$ and one row of $M$. Applying the scalar identity to these rows proves the matrix identity. Applying it to the two rows of $M$ gives $\int_X\det M\,dm=\det A$.
\end{proof}

\begin{Proposition}\label{prop:compactMeasure}
    The matrix $A$ is invertible, and $d\mu=\frac{\det M}{\det A}dm$ defines a smooth full support probability invariant under $\alpha$. Moreover,
    \[
    \int_XM^{-1}d\mu=A^{-1}.
    \]
\end{Proposition}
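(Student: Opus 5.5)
We use Lemma~\ref{lem:averagingClosedRows}. By \eqref{eq:mixedDeriEq} the rows of $M$ are closed, so that lemma gives $\int_X\det M\,dm=\det A$. To see that $A$ is invertible, use positivity of $\det M$. Each fiber map $\rho_x$ is a diffeomorphism of $\R^2$, so $D_p\rho(x,p)=M(\phi_px)$ is invertible everywhere by \eqref{eq:deriCocycle}. In particular $\det M(x)\neq0$ for every $x$. Since $X$ is connected, being a quotient of the connected group $\SL_3(\R)$, the continuous function $\det M$ has constant sign. Its integral $\det A$ is therefore nonzero, which gives invertibility. The ratio $\det M/\det A$ is then strictly positive, smooth, and has $m$-integral one. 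Hence $\mu$ is a smooth full support probability.

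\textbf{Invariance.} The generators of $\alpha$ are obtained by differentiating $\alpha_w x=\phi_{\tau(x,w)}x$ at $w=0$, which gives $D_w\tau(x,0)=M(x)^{-1}$. Thus the generator in direction $e_j$ is $\widetilde Y_j=\sum_k (M^{-1})_{kj}Y_k$, where $Y_1=U$ and $Y_2=Z$. Writing $M^{-1}=\adj M/\det M$, invariance of $\mu$ amounts to
\[
\int_X \sum_k (\adj M)_{kj}\,Y_k\varphi \,dm=0
\]
for every smooth $\varphi$. Integrating by parts with respect to the Haar-invariant fields, this reduces to the pointwise divergence identity $\sum_k Y_k(\adj M)_{kj}=0$. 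For $j=1$ this reads $U(M_{22})-Z(M_{21})=0$, and for $j=2$ it reads $-U(M_{12})+Z(M_{11})=0$. Both are exactly the row-closedness relations \eqref{eq:mixedDeriEq}, after tracking which entries of $\adj M$ appear. We will therefore carry out this sign bookkeeping carefully, mirroring the computation of $\widetilde Z$ in Proposition~\ref{prop:InvProDri}.

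\textbf{The identity $\int_X M^{-1}d\mu=A^{-1}$.} We have $\int_X M^{-1}d\mu=(\det A)^{-1}\int_X\adj M\,dm$. Apply Lemma~\ref{lem:averagingClosedRows} with $R$ the constant identity matrix; its rows are trivially closed. The lemma gives $\int_X\adj M\,dm=\adj A$, and hence $\int_X M^{-1}d\mu=\adj A/\det A=A^{-1}$.

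\textbf{Expected obstacle.} The sign argument is the only genuinely topological point, and connectedness of $X$ settles it. The main care needed is in the invariance computation. Two things must be checked there: first, that the generator formula uses $M^{-1}$ rather than its transpose, given the column convention for parameter vectors; second, that the resulting divergence identity matches the closedness relation \eqref{eq:mixedDeriEq} with the correct signs.
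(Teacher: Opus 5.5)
Your proposal is correct and follows the paper's proof essentially step by step: connectedness of $X$ gives $\det M$ a constant sign, Lemma~\ref{lem:averagingClosedRows} then gives $\det A\neq0$, and the row-closedness relations give the divergence identity $U(\adj M)_{1j}+Z(\adj M)_{2j}=0$, hence infinitesimal invariance (which you still need to integrate to flow invariance, as the paper does, by applying it to $\varphi\circ\alpha_{re_j}$). The only cosmetic difference is that you obtain $\int_X\adj M\,dm=\adj A$ by invoking the lemma with $R=\id$, whereas the paper uses linearity of the $2\times 2$ adjugate directly; this is the same observation.
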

\begin{proof}
    The connectedness of $X$ and invertibility of $M(x)$ imply that $\det M$ has constant nonzero sign. Lemma~\ref{lem:averagingClosedRows} therefore gives
    \[
    \det A=\int_X\det M\,dm\neq0.
    \]
    Hence $h=\det M/\det A$ is smooth and positive, with integral one. Compactness makes $h$ bounded above and below away from zero, so $\mu$ is a full support probability.

    Fix $\eta\in\R^2$. Differentiating $\rho(x,\tau(x,r\eta))=r\eta$ at $r=0$, using $\tau(x,0)=0$, gives
    \[
    \left.\frac{d}{dr}\right|_{r=0}\tau(x,r\eta)=M^{-1}(x)\eta.
    \]
    Since $\alpha_{r\eta}x=\phi_{\tau(x,r\eta)}x$, its generator $X_{\eta}$ satisfies
    \[
    (X_{\eta}f)(x)=(M^{-1}(x)\eta)_1(Uf)(x)+(M^{-1}(x)\eta)_2(Zf)(x),\qquad f\in C^{\infty}(X).
    \]
    
    Write $M=(m_{ij})$. Equation~\eqref{eq:mixedDeriEq} gives $Um_{12}=Zm_{11}$ and $Um_{22}=Zm_{21}$, and hence
    \[
    U\big((\adj M)_{1j}\big)+Z\big((\adj M)_{2j}\big)=0,\qquad j=1,2.
    \]
    Define the smooth scalar functions 
    \[
    b_j(x)=h(x)(M^{-1}(x)\eta)_{j},\qquad j=1,2.
    \]
    Since $h(x)M^{-1}(x)\eta=(\det A)^{-1}\adj M(x) \eta$, the preceding identities imply  
    $Ub_1+Zb_2=0$. For every smooth test function $f$,  integration by parts therefore gives
    \[
    \int_X X_{\eta}fd\mu=\int_X(b_1Uf+b_2Zf)dm=-\int_Xf(Ub_1+Zb_2)dm=0.
    \]
    Applying this identity to $f\circ \alpha_{r\eta}$ proves invariance under the flow, and hence under the full action.

    \medskip
    
    Finally, the adjugate is linear for $2\times 2$ matrices, so
    \[
    \int_XM^{-1}d\mu=\frac{1}{\det A}\int_X\adj Mdm=\frac{\adj A}{\det A}=A^{-1}.
    \]
\end{proof}

Since the density $h=d\mu/dm$ is bounded above and bounded away from zero, the $L^2(m)$ and $L^2(\mu)$ norms are comparable. We will also use 
\begin{equation}\label{eq:rotationNorm}
    \norm{f\circ k_s}_{L^2(\mu)}\leq B_{\mu}\norm{f}_{L^2(\mu)},\qquad k_s=\exp(sK),\quad s\in\R,
\end{equation}
for scalar or vector valued $f$, with $B_{\mu}=(\max_Xh/\min_Xh)^{1/2}$. In fact, Haar invariance gives
\[
\norm{f\circ k_s}^2_{L^2(\mu)}=\int_X\abs{f(y)}^2h(k_{-s}y)dm(y)\leq B_{\mu}^2\norm{f}^2_{L^2(\mu)}.
\]

\subsection{Smooth conjugacy and bounded deviations}

Define the centered columns
\begin{equation}\label{eq:defGj}
    g_j=Me_j-Ae_j,\qquad j=1,2.
\end{equation}
They satisfy $Ug_2=Zg_1$ and have zero Haar mean. Consider the unitary representation
\[
\pi(g)f(x)=f(g^{-1}x),\qquad g\in G,\quad f\in \mathcal{H}=L_0^2(X,m;\C).
\]
It has no nonzero $G$-fixed vector: such a vector would be $U$-invariant, hence constant by ergodicity, and therefore zero,  since it has zero mean.

Let $\mathcal{H}^{\infty}$ denote the smooth vectors of $\pi$: a vector $f\in\mathcal{H}$ belongs to $\mathcal{H}^{\infty}$ if $g\mapsto\pi(g)f$ is smooth in the $L^2$ norm. For the Haar representation on compact $X$, these are exactly the $L^2$ classes having smooth mean zero representatives. This follows from the Lie algebra Sobolev characterization \cite{Wang2015Cohomological}*{\S 3.4} and Sobolev embedding, since a Lie algebra basis gives a frame of $TX$. We use these smooth representatives below.

With our derivative convention, $d\pi(U)=-U$. A $U$-invariant distribution is a continuous linear functional $\mathcal{D}$ on $\mathcal{H}^{\infty}$, equipped with its usual smooth vector topology, such that
\[
\mathcal{D}(U\varphi)=0\qquad \text{for every }\varphi\in\mathcal{H}^{\infty}.
\]
Wang's theorem \cite{Wang2015Cohomological}*{Theorem~2.1}, applied coordinatewise to $d\pi(U)b=-g_1$, shows that every $b\in L^2_0(X,m,\R^2)$ satisfying $Ub=g_1$ in the infinitesimal generator sense has a smooth representative. It also supplies such a solution whenever
\[
\mathcal{D}\big((g_1)_j\big)=0,\qquad j=1,2,
\]
for every $U$ invariant distribution $\mathcal{D}$; taking real parts gives a real valued solution.

The conditions of Theorem~\ref{thm:smoothConjugacyCri} are also equivalent to this vanishing condition. We prove this characterization together with the theorem.

\begin{proof}[Proof of Theorem~\ref{thm:smoothConjugacyCri}]
    \emph{\ref{item:thm:smoothConjugacyCri1}$\Longleftrightarrow$\ref{item:thm:smoothConjugacyCri2}.}
    The identities
    \[
    D_p\rho(x,p)=M(\phi_px),\qquad D_{\eta}\tau(x,\eta)=M^{-1}(\alpha_{\eta}x)
    \]
    give uniform Lipschitz constants $\norm{M}_{\infty}$ and $\norm{M^{-1}}_{\infty}$ for the parameter maps and their inverses. Consequently,
    \[
    \norm{\rho(x,te_1)-tAe_1}\leq\norm{M}_{\infty}\norm{te_1-\tau(x,tAe_1)},\quad \norm{te_1-\tau(x,tAe_1)}\leq\norm{M^{-1}}_{\infty}\norm{\rho(x,te_1)-tAe_1}.
    \]
    The equivalence follows from the comparison of the $L^2(m)$ and $L^2(\mu)$ norms.
    
    \medskip\noindent\emph{\ref{item:thm:smoothConjugacyCri1}$\Longleftrightarrow$\ref{item:thm:smoothConjugacyCri3}.}
    Assume \ref{item:thm:smoothConjugacyCri1}. On $L^2_0(X,m,\R^2)$, set
    \[
    \mathscr{U}_tf=f\circ\phi_{te_1},\qquad C_t=\rho(\cdot,te_1)-tAe_1.
    \]
    The operators $\mathscr{U}_t$ form a strongly continuous unitary group, and \eqref{eq:inteMrho} gives $\int_XC_t\,dm=0$. The cocycle identity and \eqref{eq:deriCocycle} give
    \[
    C_{s+t}=C_s+\mathscr{U}_sC_t,\qquad C_t=\int_0^t\mathscr{U}_ug_1\,du,
    \]
    where $g_1=Me_1-Ae_1$ is defined in \eqref{eq:defGj}.

    Let $K_0=\sup_{t}\norm{C_t}_{L^2(m)}$. The averages
    \[
    b_R=-\frac{1}{R}\int_0^RC_t\,dt
    \]
    have $L^2$ norm at most $K_0$. Choose $R_j\to\infty$ such that $b_{R_j}\rightharpoonup b$ in $L^2_0(X,m,\R^2)$. For each fixed $s\in\R$,
    \[
    \mathscr{U}_sb_R-b_R=C_s-\frac{1}{R}\left(\int_R^{R+s}C_t\,dt-\int_{0}^sC_t\,dt\right).
    \]
    The error has $L^2(m)$ norm at most $2\abs{s}K_0/R$, with oriented integrals when $s<0$. Passing to the same weak limit gives
    \[
    \mathscr{U}_sb-b=C_s\qquad \text{in }L^2(m),\quad s\in\R.
    \]
    Strong continuity then gives
    \[
    \frac{\mathscr{U}_sb-b}{s}=\frac{1}{s}\int_0^s\mathscr{U}_ug_1\, du\longrightarrow g_1\qquad \text{in }L^2(m),\quad s\to0.
    \]
    Thus $b$ belongs to the generator domain and $Ub=g_1$. Wang's regularity theorem \cite{Wang2015Cohomological}*{Theorem~2.1~(1)} gives a smooth representative of $b$, which we continue to denote by $b$. Thus $b\in C^{\infty}(X,\R^2)$ and $Ub=g_1$ holds everywhere.

    Since $Ub=g_1$ and $Ug_2=Zg_1$, commutativity gives
    \[
    U(Zb-g_2)=Zg_1-Ug_2=0.
    \]
    Thus the smooth vector valued function $Zb-g_2$ is $U$-invariant, so Haar ergodicity makes it constant. Haar invariance gives $\int_XZb\,dm=0$, while $\int_Xg_2\,dm=0$ by centering. Hence $Zb=g_2$. For $p=p_1e_1+p_2e_2$, the straight segment formula \eqref{eq:inteMrho} and \eqref{eq:defGj} give 
    \[
    \rho(x,p)-Ap=\int_0^1(M(\phi_{rp}x)-A)p\,dr=\int_0^1\big(p_1Ub+p_2Zb\big)(\phi_{rp}x)dr=b(\phi_px)-b(x).
    \]
    This proves \ref{item:thm:smoothConjugacyCri3}.

    \medskip

    Conversely, under \ref{item:thm:smoothConjugacyCri3}, Haar invariance gives
    \[
    \norm{C_t}_{L^2(m)}=\norm{b\circ \phi_{te_1}-b}_{L^2(m)}\leq2\norm{b}_{L^2(m)},
    \]
    which proves \ref{item:thm:smoothConjugacyCri1}.

    \medskip

    The invariant distribution characterization follows from the same argument. Vanishing of the distributions supplies an $L^2_0$ solution of $Ub=g_1$ by Wang's existence theorem \cite{Wang2015Cohomological}*{Theorem~2.1~(2)}. Regularity and the preceding compatibility argument then give \ref{item:thm:smoothConjugacyCri3}. Conversely, subtracting the Haar mean from a smooth transfer gives
    \[
    \mathcal{D}\big((g_1)_j\big)=\mathcal{D}(Ub_j)=0,\qquad j=1,2,
    \]
    for every $U$-invariant distribution $\mathcal{D}$.
    
    \medskip\noindent\emph{\ref{item:thm:smoothConjugacyCri3}$\Longleftrightarrow$\ref{item:thm:smoothConjugacyCri4}.}
    Assume \ref{item:thm:smoothConjugacyCri3}, and set
    \[
    F=A^{-1}b,\qquad \Psi(x)=\phi_{F(x)}x.
    \]
    The cohomology identity gives
    \[
    \Psi(\phi_px)=\phi_{p+F(\phi_px)}x=\phi_{A^{-1}\rho(x,p)}\Psi(x).
    \]
    Taking $p=\tau(x,\eta)$ gives $\Psi\alpha_{\eta}=\phi_{A^{-1}\eta}\Psi$.

    Define $\Xi(x)=\alpha_{-b(x)}x$. The intertwining identity gives
    \[
    \Psi(\Xi(x))=\phi_{-F(x)}\Psi(x)=x.
    \]
    For the other composition, set $y=\Psi(x)$. Item~\ref{item:thm:smoothConjugacyCri3} at $p=F(x)$ gives $\rho(x,F(x))=b(y)$, so $y=\alpha_{b(y)}x$ and $\Xi(y)=x$. Thus $\Psi$ is a smooth diffeomorphism with inverse $\Xi$, proving \ref{item:thm:smoothConjugacyCri4} and \eqref{eq:compactConjugacyMaps}.

    \medskip

    Conversely, suppose that \ref{item:thm:smoothConjugacyCri4} holds, so
    \[
    \Psi\alpha_{\eta}=\phi_{B^{-1}\eta}\Psi,\qquad B\in \GL_2(\R),
    \]
    for a smooth leaf preserving diffeomorphism $\Psi$. By Lemma~\ref{lem:smmothDisplacement}, write $\Psi(x)=\phi_{F(x)}x$ with $F$ smooth. Substitution of $\eta=\rho(x,p)$ shows that
    \[
    p+F(\phi_px)-F(x)-B^{-1}\rho(x,p)
    \]
    belongs to the discrete parameter stabilizer of $x$. For fixed $x$, this expression is continuous in $p$ and vanishes at zero, so it vanishes identically. Haar integration gives $p=B^{-1}Ap$, hence $B=A$. The same identity proves \ref{item:thm:smoothConjugacyCri3} with $b=AF$ and identifies the parameter matrix of every full conjugacy.

    \medskip\noindent\emph{\ref{item:thm:smoothConjugacyCri4}$\Longleftrightarrow$\ref{item:thm:smoothConjugacyCri5}.}
    A full conjugacy restricts to every parameter line and hence \ref{item:thm:smoothConjugacyCri4} implies \ref{item:thm:smoothConjugacyCri5}.

    \medskip

    Conversely, assume \ref{item:thm:smoothConjugacyCri5}, let $\Psi$ be a conjugacy as in \ref{item:thm:smoothConjugacyCri5}, with
    \[
    \Psi\alpha_{tv}\Psi^{-1}=\phi_{tq},\qquad v\neq0.
    \]
    Set 
    \[
    \widehat{\alpha}_{\eta}=\Psi\alpha_{\eta}\Psi^{-1},
    \]
    and let $Y_1,Y_2$ be its generators. They are linearly independent everywhere, since the generators of $\alpha$ are independent and $\Psi$ is a diffeomorphism. Thus $v_1Y_1+v_2Y_2$ never vanishes, and $q\neq0$. By Lemma~\ref{lem:smmothDisplacement}, $D\Psi$ preserves the leaf tangent planes. We may therefore write
    \[
    Y_j(x)=c_{1j}(x)U_x+c_{2j}(x)Z_x,\qquad j=1,2,
    \]
    with smooth coefficients.

    Commutativity of $\widehat{\alpha}$ gives
    \[
    \phi_{tq}\widehat{\alpha}_{re_j}=\widehat{\alpha}_{re_j}\phi_{tq}.
    \]
    Differentiating in $r$ at zero gives
    \[
    D\phi_{tq}(x)Y_j(x)=Y_j(\phi_{tq}x).
    \]
    Since $\phi_{tq}$ preserves $U$ and $Z$, comparison of coefficients gives $c_{ij}\circ\phi_{tq}=c_{ij}$. Moore's ergodic theorem and continuity make all $c_{ij}$ constant. Their matrix $C=(c_{ij})$ is invertible, so the generators determine the full action as
    \[
    \widehat{\alpha}_{\eta}=\phi_{C\eta},\qquad \eta\in\R^2.
    \]
    The same $\Psi$ is therefore a full conjugacy, and the preceding part gives $C=A^{-1}$.

    \medskip\noindent
    \emph{Uniqueness and the invariant measure.} Two smooth transfers differ by a $U$-invariant function, hence by a constant vector. For any full conjugacy $\Psi$, the probability $\Psi_*\mu$ has a smooth density relative to Haar measure and is invariant under the full algebraic action. Its density is therefore $U$-invariant; Haar ergodicity and normalization give $\Psi_*\mu=m$. This also applies to every conjugacy in \ref{item:thm:smoothConjugacyCri5}, since the same map conjugates the full action.
\end{proof}

\subsection{The cohomological obstruction}
\begin{proof}[Proof of Theorem~\ref{thm:NonConjTCGivenQuo}]
\emph{\ref{item:thm:NonCOnjTCGivenQuo1}$\Longrightarrow$\ref{item:thm:NonCOnjTCGivenQuo3}} 
    Let $(f,g)$ be a smooth pair as in \ref{item:thm:NonCOnjTCGivenQuo1}. Lemma~\ref{lem:intScaCocycle}, applied to the generators $U, Z$, gives a smooth scalar cocycle $\mathcal{C}$ with 
    \[
    \begin{aligned}
        \partial_r\mathcal{C}(x,(r,s))=f(\phi_{(r,s)}x),\qquad
        \partial_s\mathcal{C}(x,(r,s))=g(\phi_{(r,s)}x).
    \end{aligned}
    \]
    For sufficiently small $\epsilon\neq0$, set
    \[
    \rho_{\epsilon}(x,(r,s))=(r+\epsilon\mathcal{C}(x,(r,s)),s),\qquad a_{\epsilon}=1+\epsilon f\geq \frac{1}{2}.
    \]
    For fixed $x$ and $s$, the first component has $r$-derivative $a_{\epsilon}(\phi_{(r,s)}x)\geq\frac{1}{2}$ and tends to $\pm\infty$ as $r\to\pm\infty$. It is therefore a smooth diffeomorphism of $\R$. Since the second component is unchanged, each full parameter map is a diffeomorphism of $\R^2$. The implicit function theorem gives a jointly smooth inverse, including its dependence on $\epsilon$. Let $\alpha^{\epsilon}$ denote the induced action. Its infinitesimal and mean matrices are
    \[
    M_{\epsilon}=\begin{pmatrix}
        1+\epsilon f & \epsilon g\\
        0 & 1 
    \end{pmatrix},\qquad A_{\epsilon}=\id.
    \]
    A full conjugacy would give, by Theorem~\ref{thm:smoothConjugacyCri}, a smooth transfer whose first coordinate $b_1$ satisfies $Ub_1=\epsilon f$ and $Zb_1=\epsilon g$. Then $b_1/\epsilon$ would be a common primitive for $(f,g)$, a contradiction. The same theorem rules out a leaf preserving smooth algebraic conjugacy of any nonzero restriction.

    The first restriction $T_t^{\epsilon}=\alpha^{\epsilon}_{te_1}$ is generated by $a_{\epsilon}^{-1}U$ and preserves the smooth full support probability $a_{\epsilon}m$ by Proposition~\ref{prop:compactMeasure}. For a bounded $T^{\epsilon}$ invariant function $h$ and smooth $\varphi$, differentiate $\int_Xh(\varphi\circ T_t^{\epsilon})a_{\epsilon}dm=\int_Xh\varphi a_{\epsilon}\,dm$ at zero to obtain
    \[
    \int_XhU\varphi\,dm=0.
    \]
    Applying this identity to $\varphi\circ\exp(tU)$ shows that $\int_Xh(\varphi\circ\exp(tU))\,dm$ is independent of $t$. Haar invariance and density of smooth functions give
    \[
    h\circ \exp(tU)=h\qquad \text{$m$-a.e., for each $t\in\R$.}
    \]
    Moore's ergodic theorem makes $h$ constant $m$-a.e., and hence $a_{\epsilon}m$-a.e. Finally, as $\epsilon\to0$,  $\rho_{\epsilon}$, its inverse, and the induced action converge smoothly to their algebraic counterparts on compact parameter sets. This proves \ref{item:thm:NonCOnjTCGivenQuo3}.

\medskip\noindent
\emph{\ref{item:thm:NonCOnjTCGivenQuo3}$\Longrightarrow$\ref{item:thm:NonCOnjTCGivenQuo2}} A full conjugacy would restrict to a conjugacy of the first parameter line, contrary to \ref{item:thm:NonCOnjTCGivenQuo3}.

\medskip\noindent
\emph{\ref{item:thm:NonCOnjTCGivenQuo2}$\Longrightarrow$\ref{item:thm:NonCOnjTCGivenQuo1}} Take a time change as in \ref{item:thm:NonCOnjTCGivenQuo2}, and suppose \ref{item:thm:NonCOnjTCGivenQuo1} fails. Every smooth closed pair with zero Haar means then has a common smooth primitive. The two rows of $M-A$ are such pairs. Their primitives form a smooth vector $b$ satisfying $Ub=g_1$ and $Zb=g_2$. As in the preceding proof, integration along a straight parameter segment gives
\[
\rho(x,p)-Ap=b(\phi_px)-b(x).
\]
Theorem~\ref{thm:smoothConjugacyCri} gives a full conjugacy, contradicting \ref{item:thm:NonCOnjTCGivenQuo2}.
\end{proof}

\section{Compact root asymptotics and optimal exponents}\label{sec:cptRtAsyOpExp}

We prove Theorems~\ref{thm:CptRtAsy} and \ref{thm:OptCptRtExp}, retaining the compact setting of Section~\ref{sec:smoothTCcptSL3}. Fix $v\neq0$ and set
\[
T_t=\alpha_{tv},\qquad \xi=M^{-1}v,\qquad k_s=\exp(sK),\qquad D_s(x,t)=\tau(k_sx,tv)-\tau(x,tv).
\]
Differentiating $\rho(x,\tau(x,tv))=tv$ gives
\begin{equation}\label{eq:derivativeSetup}
    \partial_t\tau(x,tv)=\xi(T_tx),\qquad \tau(x,tv)=\int_0^t\xi(T_ux)du,\qquad \norm{\tau(\cdot,tv)}_{\infty}\leq L\abs{t},\qquad L=\norm{\xi}_{\infty}.
\end{equation}
Consequently, $\norm{D_s(\cdot,t)}_{\infty}\leq 2L\abs{t}$ for all $s,t$. Integrals with negative upper endpoints are oriented.

\subsection{The quadratic compact root estimate}
For $p=(p_1,p_2)^{\mathrm{T}}$ and $B=\iota(p)$, the first row structure gives $B^2=0$ and $BKB=p_1B$. Hence
\begin{equation}\label{eq:adexpBexpand}
    \Ad_{\exp(B)}K=K+[B,K]-p_1B.
\end{equation}
The quadratic term is a scalar multiple of $B$. Closedness converts its contribution into a derivative along the inverse parameter path, allowing integration by parts and giving quadratic rather than cubic growth.

\begin{Lemma}\label{lem:derivativeAlongOrbit}
    Let $\tau_1$ and $\xi_1$ be the first coordinates of $\tau$ and $\xi$, respectively. For every $x\in X$ and $t\in\R$,
    \begin{equation*}
        \begin{aligned}
            M(T_tx)K\tau(x,tv)=&\tau_1(x,tv)M(T_tx)\tau(x,tv)-\int_0^t(KM)(T_ux)\xi(T_ux)du\\
            &-\int_0^t\Big(\big([\iota(\tau(x,uv)),K]M\big)(T_ux)\xi(T_ux)+\xi_1(T_ux)M(T_ux)\tau(x,uv)+\tau_1(x,uv)v\Big)du.
        \end{aligned}
    \end{equation*}
    In the commutator term, the vector $\tau(x,uv)$ is held fixed when the Lie derivative acts on $M$.
\end{Lemma}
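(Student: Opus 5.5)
We differentiate the defining identity $\rho(x,\tau(x,tv))=tv$ in the compact direction and then convert the inhomogeneous term to an integral along the orbit. Put $k_s=\exp(sK)$ and $B_t=\iota(\tau(x,tv))$. Since $T_tx=\exp(B_t)x$,
\[
\exp(B_t)k_sx=\exp\big(s\Ad_{\exp(B_t)}K\big)T_tx ,
\]
so $\tau(k_sx,tv)$ is the parameter $p$ at which $\rho(k_sx,p)=tv$. Differentiating $\rho(k_sx,\tau(k_sx,tv))=tv$ at $s=0$ and using \eqref{eq:deriCocycle} gives
\[
M(T_tx)\,K\tau(x,tv)=-(K\rho)(x,p)\big|_{p=\tau(x,tv)},
\]
where $K$ acts only on the base point of $\rho$. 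We then compute $(K\rho)(x,p)$ from \eqref{eq:inteMrho}. Rather than a straight segment, we integrate along the path $u\mapsto\tau(x,uv)$, $0\le u\le t$. By \eqref{eq:derivativeSetup} its velocity is $\xi(T_ux)$, and by \eqref{eq:deriCocycle} and the cocycle identity,
\[
\rho(y,\tau(x,tv))=\int_0^t M(\phi_{\tau(x,uv)}y)\,\xi(T_ux)\,du
\]
for $y$ near $x$, with the path held fixed. Path independence holds because the rows of $M$ are closed, by \eqref{eq:mixedDeriEq}.

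Next we differentiate this in $y=k_sx$ at $s=0$. Using \eqref{eq:adexpBexpand} with $B=\iota(\tau(x,uv))$, the Lie derivative of $M\circ\phi_{\tau(x,uv)}$ along $K$ at $x$ equals
\[
\big(KM+[B,K]M-\tau_1(x,uv)\,BM\big)(T_ux).
\]
Here $[B,K]$ acts with $B$ frozen, which explains the remark in the statement. The first two terms give the two integrals $\int_0^t (KM)\xi$ and $\int_0^t([\iota(\tau),K]M)\xi$ in the formula.

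The remaining term, $-\int_0^t\tau_1(x,uv)\,(BM)(T_ux)\,\xi(T_ux)\,du$, is the quadratic one and must be integrated by parts. Write $(BM)(T_ux)\xi(T_ux)$ as the directional derivative of $M$ along $B=\iota(\tau(x,uv))$. Using closedness of the rows of $M$, $\sum_j \tau_j(\partial_j M)\xi=\sum_j\xi_j(\partial_jM)\tau$, where $\partial_1=U$ and $\partial_2=Z$. Hence this term equals $\big(\tfrac{d}{du}M(T_ux)\big)\tau(x,uv)$, because $\tfrac{d}{du}M(T_ux)=\sum_j\xi_j(T_ux)(\partial_jM)(T_ux)$. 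Integrating by parts then gives
\[
\tau_1(x,tv)M(T_tx)\tau(x,tv)-\int_0^t\Big(\xi_1(T_ux)M(T_ux)\tau(x,uv)+\tau_1(x,uv)M(T_ux)\xi(T_ux)\Big)du .
\]
Since $M\xi=v$, the last integrand becomes $\tau_1(x,uv)v$. Collecting terms with the overall minus sign, and checking that the sign of the $\tau_1$ term flips as in the stated formula, yields the lemma.

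The main obstacle is bookkeeping. One must justify differentiating the base point through the fixed-path representation; the parameter path depends on $x$, but not on $y$ once it is fixed, and closedness guarantees path independence. One must also get the signs right in \eqref{eq:adexpBexpand} and in the closedness symmetry $(\partial_BM)\xi=(\partial_\xi M)\tau$, which requires the row identity $Um_{i2}=Zm_{i1}$ to be applied with both vectors general.
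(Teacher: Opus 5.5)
Your proposal is correct and follows the paper's proof essentially step for step. You differentiate the inverse identity in the base point, represent $\rho(y,\tau(x,tv))$ along the frozen path $u\mapsto\tau(x,uv)$, expand with \eqref{eq:adexpBexpand}, use the closedness symmetry to rewrite the quadratic term as $\bigl(\frac{d}{du}M(T_ux)\bigr)\tau(x,uv)$, and integrate by parts using $M\xi=v$ (one minor point: the fixed-path representation already follows from the chain rule and \eqref{eq:deriCocycle}, so no appeal to path independence is needed there).
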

\begin{proof}
Fix $x,t$. Differentiating the inverse equation $\rho(k_sx,\tau(k_sx,tv))=tv$ in the base point gives
\[
M(T_tx)K\tau(x,tv)=-\left.\frac{d}{ds}\right|_{s=0}\rho(k_sx,\tau(x,tv)).
\]
To evaluate the RHS, keep the path $u\mapsto\tau(x,uv)$ and its velocity $\xi(T_ux)$ fixed. For a separate base point $y$, the parameter derivative formula gives
\[
\rho(y,\tau(x,tv))=\int_0^tM(\phi_{\tau(x,uv)}y)\xi(T_ux)du.
\]
Differentiate in $y=k_sx$ at $s=0$. Group conjugation and \eqref{eq:adexpBexpand} give
\[
\begin{aligned}
    M(T_tx)K\tau(x,tv)=&-\int_0^t\big((K+[\iota(\tau(x,uv)),K])M\big)(T_ux)\xi(T_ux)du\\
    &+\int_0^t\tau_1(x,uv)\big(\iota(\tau(x,uv))M\big)(T_ux)\xi(T_ux)du.
\end{aligned}
\]
Closedness \eqref{eq:mixedDeriEq} gives $(\iota(a)M)c=(\iota(c)M)a$ for constant $a,c\in\R^2$. Since $T$ has generator $\xi_1U+\xi_2Z$, 
\[
\frac{d}{du}M(T_ux)=\xi_1(T_ux)(UM)(T_ux)+\xi_2(T_ux)(ZM)(T_ux).
\]
Applying the closedness identity at $T_ux$, with $a=\tau(x,uv)$ and $c=\xi(T_ux)$, therefore gives
\[
\big(\iota(\tau(x,uv))M\big)(T_ux)\xi(T_ux)=\left(\frac{d}{du}M(T_ux)\right)\tau(x,uv).
\]
The derivative on the right acts only on $M$. Using $\partial_u\tau_1(x,uv)=\xi_1(T_ux)$ and  $M(T_ux)\partial_u\tau(x,uv)=v$, the product rule gives
\[
\frac{d}{du}\big(\tau_1(x,uv)M(T_ux)\tau(x,uv)\big)=\tau_1(x,uv)\left(\frac{d}{du}M(T_ux)\right)\tau(x,uv)+\xi_1(T_ux)M(T_ux)\tau(x,uv)+\tau_1(x,uv)v.
\]
Integrating from $0$ to $t$ and using $\tau(x,0)=0$ proves the lemma.
\end{proof}

\begin{proof}[Proof of Theorem~\ref{thm:CptRtAsy}~\ref{item:CptRtAsyitem1}]
Compactness bounds $M$ and $M^{-1}$, and the fixed direction derivatives $KM$, $[U,K]M$ and $[Z,K]M$. Since
\[
[\iota(p),K]M=p_1[U,K]M+p_2[Z,K]M,
\]
we have
\[
\norm{([\iota(p),K]M)(y)}\leq C\norm{p},\qquad p\in\R^2,\quad y\in X.
\]
Together with \eqref{eq:derivativeSetup}, this bounds each summand in the last integrand of Lemma~\ref{lem:derivativeAlongOrbit} by $C\abs{u}$, uniformly in $x$. Its integral is therefore $O(t^2)$ for both signs of $t$. The endpoint term is also $O(t^2)$, and the first integral is $O(\abs{t})$. Multiplying by the uniformly bounded inverse $M^{-1}(T_tx)$ gives
\[
\norm{K\tau(\cdot,tv)}_{\infty}\leq C(\abs{t}+t^2).
\]
Finally,
\[
D_s(x,t)=\int_0^s(K\tau(\cdot,tv))(k_rx)dr
\]
proves the increment bound for all real $s,t$.
\end{proof}

\subsection{The leading quadratic term}
For the rest of this section, suppose that $T$ is $\mu$-ergodic. Use Lemma~\ref{lem:normalization} to transport the action, lattice, and measure to coordinates in which 
\[
A^{-1}v=e_1.
\]
By Proposition~\ref{prop:compactMeasure},
\[
\int_X\xi\,d\mu=A^{-1}v=e_1.
\]
The ergodic theorem applied to \eqref{eq:derivativeSetup}, followed by dominated convergence using the uniform clock bound, gives
\[
\frac{\tau(x,tv)}{t}\longrightarrow e_1\quad \text{$\mu$-a.e. $x$,}\qquad \norm{\frac{\tau(\cdot,tv)}{t}-e_1}_{L^2(\mu)}\longrightarrow0\quad t\to+\infty.
\]
The next identity determines the average of the integral terms in Lemma~\ref{lem:derivativeAlongOrbit}.
\begin{Lemma}\label{lem:identityDerivativeAlongOrbit}
    For $H=[U,K]=E_{11}-E_{22}$, 
    \[
    \int_X\big((HM)\xi+\xi_1Me_1+v\big)d\mu=2v.
    \]
\end{Lemma}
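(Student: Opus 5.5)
The plan is to obtain the identity by differentiating the averaging identity of Lemma~\ref{lem:averagingClosedRows} along a one-parameter family of rescaled cocycles produced by the diagonal element $H=E_{11}-E_{22}$. The starting observation is that $H$ normalizes $\h_3$ diagonally: $[H,U]=2U$ and $[H,Z]=Z$. Consequently, with
\[
a_r=\exp(rH)\qquad\text{and}\qquad D_r=\diag(e^{2r},e^{r}),
\]
we have $a_r\phi_p a_r^{-1}=\phi_{D_rp}$, that is, $a_r\phi_p=\phi_{D_rp}a_r$.

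Fix $r$ and set $\rho_r(x,p)\coloneq\rho(a_rx,D_rp)$. Using $a_r\phi_px=\phi_{D_rp}a_rx$ and the cocycle identity for $\rho$, one checks directly that $\rho_r$ is again a smooth cocycle over $\phi$. Its parameter derivative at $0$ is
\[
R_r(x)\coloneq M(a_rx)D_r .
\]
By \eqref{eq:mixedDeriEq}, applied to the cocycle $\rho_r$ in place of $\rho$, the rows of $R_r$ are closed. Lemma~\ref{lem:averagingClosedRows} therefore gives
\[
\int_X M(a_r\cdot)D_r\,\adj M\,dm=\Big(\int_X M(a_r\cdot)\,dm\Big)D_r\adj A=AD_r\adj A,
\]
where the last step uses Haar invariance under $a_r$.

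Next, differentiate this identity at $r=0$. With the paper's convention $(Hf)(x)=\frac{d}{ds}\big|_{s=0}f(\exp(sH)x)$, the derivative of $M(a_rx)$ at $r=0$ is $(HM)(x)$, and $\frac{d}{dr}\big|_{r=0}D_r=\diag(2,1)$. Differentiation under the integral is harmless because $X$ is compact and everything is smooth. This yields
\[
\int_X\big(HM+M\diag(2,1)\big)\adj M\,dm=A\diag(2,1)\adj A .
\]

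Now multiply on the right by $v/\det A$. By Proposition~\ref{prop:compactMeasure}, $\adj M\,dm/\det A=M^{-1}\,d\mu$, and $M^{-1}v=\xi$. The left side becomes
\[
\int_X(HM)\xi\,d\mu+\int_X M\diag(2,1)\xi\,d\mu .
\]
For the second integrand, $M\diag(2,1)\xi=M\xi+\xi_1Me_1=v+\xi_1Me_1$. The right side becomes $A\diag(2,1)A^{-1}v$. In the normalized coordinates $A^{-1}v=e_1$, this equals $A\diag(2,1)e_1=2Ae_1=2v$. Together these give exactly $\int_X\big((HM)\xi+\xi_1Me_1+v\big)\,d\mu=2v$.

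The only step needing genuine care is the verification that $\rho_r$ is a cocycle with closed-row derivative $M(a_r\cdot)D_r$. This comes down to the commutation $a_r\phi_p=\phi_{D_rp}a_r$, which I will check entrywise on $U$ and $Z$. I also need to track the sign convention for the left action so that the derivative of $M\circ a_r$ is $+HM$ and not $-HM$. Everything else is bookkeeping.
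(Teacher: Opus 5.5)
Your proof is correct and follows the paper's argument. You use the same rescaled cocycle $\rho(\exp(rH)x,\diag(e^{2r},e^r)p)$, Lemma~\ref{lem:averagingClosedRows}, and the identity $M\diag(2,1)\xi=v+\xi_1Me_1$ together with $A^{-1}v=e_1$. The only difference is the order: you apply the averaging lemma for each $r$ and then differentiate at $r=0$, whereas the paper differentiates first to get $N=HM+M\diag(2,1)$, checks that its rows are closed, and applies the lemma once. Your order avoids that closedness check, at the cost of differentiating under the integral, which compactness makes harmless.
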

\begin{proof}
    The relation 
    \[
    \exp(rH)\phi_p=\phi_{\diag(e^{2r},e^r)p}\exp(rH)
    \]
    shows that
    \[
    (x,p)\longmapsto\rho(\exp(rH)x,\diag(e^{2r},e^r)p)
    \]
    is a cocycle over $\phi$. Its parameter derivative at zero is $M(\exp(rH)x)\diag(e^{2r},e^r)$, whose rows are therefore closed. 
    
    Define the smooth matrix valued function 
    \[
    N(x)\coloneq\left.\frac{d}{dr}\right|_{r=0}\big(M(\exp(rH)x)\diag(e^{2r},e^r)\big)= HM(x)+M(x)\diag(2,1).
    \]
    Differentiating the closedness identities in $r$ shows that $N$ has closed rows. Haar invariance gives $\int_XHM\,dm=0$, and hence
    \[
    \int_XN\,dm=A\diag(2,1).
    \]
    Since $\xi=M^{-1}v$ and $d\mu=\frac{\det M}{\det A}dm$, Lemma~\ref{lem:averagingClosedRows} gives
    \[
    \int_XN\xi\, d\mu=\frac{1}{\det A}\left(\int_XN\adj M\,dm\right)v=\left(\int_XN\,dm\right)A^{-1}v=A\diag(2,1)A^{-1}v=2v.
    \]
    Finally, 
    \[
    N\xi=(HM)\xi+M\diag(2,1)\xi=(HM)\xi+v+\xi_1Me_1,
    \]
    which proves the lemma.
\end{proof}
\medskip

For $t>0$, define
\begin{equation}\label{eq:Etx}
    E_t(x)\coloneq\frac{K\tau(x,tv)}{t^2}-w(T_tx),
\end{equation}
where
\begin{equation}\label{eq:defwSection12}
   w=e_1-\xi=e_1-M^{-1}v. 
\end{equation}
\begin{proof}[Proof of \eqref{eq:CptRtAsyitem2eq1} and \eqref{eq:CptRtAsyitem2eq2} in Theorem~\ref{thm:CptRtAsy}]
For $f\in L^2(\mu,\R^2)$, set 
\[
B_t\coloneq t^{-1}\int_0^tf\circ T_u\,du\qquad \text{for $t>0$},\qquad \overline{f}=\int_Xf\,d\mu.
\]
The mean ergodic theorem gives $B_t\to\int_Xf\,d\mu$ in $L^2(\mu)$, and integration by parts gives
\[
\frac{1}{t^2}\int_0^tuf(T_u\cdot)\,du=B_t-\frac{1}{t^2}\int_0^tuB_u\,du.
\]
Consequently,
\begin{equation}\label{eq:weightedLimit}
    \norm{\frac{1}{t^2}\int_0^tuf\circ T_u\,du-\frac{1}{2}\overline{f}}_{L^2(\mu)}\leq\norm{B_t-\overline{f}}_{L^2(\mu)}+\int_0^1r\norm{B_{tr}-\overline{f}}_{L^2(\mu)}\,dr\longrightarrow0\quad \text{ as $t\to+\infty$.}
\end{equation}
The last integral tends to zero by dominated convergence, since $\norm{B_u-\overline{f}}_{L^2(\mu)}\leq\norm{f-\overline{f}}_{L^2(\mu)}$ for $u>0$.

\medskip

We estimate the three contributions in Lemma~\ref{lem:derivativeAlongOrbit} after division by $t^2$. For the endpoint term, the uniform clock bound gives
\[
\norm{\frac{\tau_1(\cdot,tv)M(T_t\cdot)\tau(\cdot,tv)}{t^2}-M(T_t\cdot)e_1}_{L^2(\mu)}\leq\norm{M}_{\infty}(L+1)\norm{\frac{\tau(\cdot,tv)}{t}-e_1}_{L^2(\mu)}\longrightarrow0.
\]
The first integral satisfies
\[
\norm{\frac{1}{t^2}\int_0^t(KM)(T_u\cdot)\xi(T_u\cdot)\,du}_{L^2(\mu)}\leq\frac{\norm{KM}_{\infty}L}{t}\longrightarrow0.
\]

In the remaining integral, replacing $\tau(x,uv)$ by $ue_1$ changes its integrand to
\[
u\big((HM)\xi+\xi_1Me_1+v\big)(T_ux),
\]
since $[\iota(ue_1),K]=uH$. With $a=\tau(x,uv)-ue_1$, the original integrand minus this replacement is exactly
\[
\big([\iota(a),K]M\big)(T_ux)\xi(T_ux)+\xi_1(T_ux)M(T_ux)a+a_1v.
\]
Its norm is at most $C\norm{a}$, uniformly in $x,u$: in fact, $[\iota(a),K]M=a_1HM+a_2[Z,K]M$, and all coefficients are bounded on $X$. Minkowski's inequality therefore bounds the normalized replacement error in $L^2(\mu)$ by
\[
\frac{C}{t^2}\int_0^t\norm{\tau(\cdot,uv)-ue_1}_{L^2(\mu)}\,du=\frac{C}{t^2}\int_0^tu\norm{\frac{\tau(\cdot,uv)}{u}-e_1}_{L^2(\mu)}\,du\longrightarrow0.
\]
Here we use the normalized clock convergence and its uniform bound. The weighted limit \eqref{eq:weightedLimit} and Lemma~\ref{lem:identityDerivativeAlongOrbit} give
\[
\frac{1}{t^2}\int_0^tu\big((HM)\xi+\xi_1Me_1+v\big)(T_ux)\,du\longrightarrow v\qquad \text{in $L^2(\mu)$.}
\]
Thus the remaining integral, divided by $t^2$, converges to $v$. Combining the three contributions in the clock identity gives
\[
\norm{\frac{M(T_t\cdot)K\tau(\cdot,tv)}{t^2}-M(T_t\cdot)e_1+v}_{L^2(\mu)}\longrightarrow0.
\]
Multiplication by the uniformly bounded matrix $M^{-1}(T_t\cdot)$ gives
\[
\norm{E_t}_{L^2(\mu)}=\norm{\frac{K\tau(\cdot,tv)}{t^2}-w\circ T_t}_{L^2(\mu)}\longrightarrow0,
\]
proving \eqref{eq:CptRtAsyitem2eq1}. Finally, $T$-invariance of $\mu$ gives $\norm{w\circ T_t}_{L^2(\mu)}=\norm{w}_{L^2(\mu)}$. The reverse triangle inequality therefore gives
\[
\abs{\frac{\norm{K\tau(\cdot,tv)}_{L^2(\mu)}}{t^2}-\norm{w}_{L^2(\mu)}}=\abs{\norm{\frac{K\tau(\cdot,tv)}{t^2}}_{L^2(\mu)}-\norm{w\circ T_t}_{L^2(\mu)}}\leq \norm{E_t}_{L^2(\mu)}\longrightarrow0,
\]
which proves \eqref{eq:CptRtAsyitem2eq2}.
\end{proof}

\subsection{The two scales of the compact root increment}
Lemma~\ref{lem:exactRelId} gives the exact endpoint identity
\begin{equation}\label{eq:endPtIdKs}
    T_tk_sx=\exp(\iota(D_s(x,t)))\exp(s\Ad_{\exp(\iota(\tau(x,tv)))}K)T_tx.
\end{equation}
Let $d_X$ be the quotient Riemannian distance. By \eqref{eq:derivativeSetup}, \eqref{eq:adexpBexpand}, and Theorem~\ref{thm:CptRtAsy}~\ref{item:CptRtAsyitem1}, the Lie algebra elements in the two exponential factors have norm at most $C\abs{s}t^2$ for $t\geq 1$. For $Y\in\mathfrak{g}$, compactness of $X$ bounds the speed of $r\mapsto\exp(rY)z$, $0\leq r\leq 1$, by $C_X\norm{Y}$, uniformly in $z$. Hence
\[
d_X(\exp(Y)z,z)\leq C_X\norm{Y}.
\]
Applying this estimate successively to the two factors in the endpoint identity and using the triangle inequality gives 
\begin{equation}\label{eq:disQuaControl}
    \sup_{x\in X}d_X(T_tk_sx,T_tx)\leq C\abs{s}t^2,\qquad t\geq 1,\quad \abs{s}t^2\leq1.
\end{equation}
\begin{proof}[Proof of \eqref{eq:CptRtAsyitem2eq3} of Theorem~\ref{thm:CptRtAsy}]
    For $t\geq 1$ and $0<\abs{s}t^2\leq 1$, the fundamental theorem of calculus and the substitution $\sigma=rs$ give
    \[
    D_s(x,t)=\int_0^s(K\tau)(k_{\sigma}x,tv)\,d\sigma=s\int_0^1(K\tau)(k_{rs}x,tv)\,dr.
    \]
    Hence \eqref{eq:Etx} gives
    \[
    \frac{D_s(x,t)}{st^2}=\int_0^1(E_t(k_{rs}x)+w(T_tk_{rs}x))dr.
    \]
    Minkowski's inequality and the composition estimate \eqref{eq:rotationNorm} control the first term; Lipschitz continuity of $w$ and \eqref{eq:disQuaControl} control the second. Thus
    \[
    \norm{\frac{D_{s}(\cdot,t)}{st^2}-w\circ T_t}_{L^2(\mu)}\leq B_{\mu}\norm{E_t}_{L^2(\mu)}+C\abs{s}t^2. 
    \]
    Taking $s=s(t)\neq0$ with $\abs{s(t)}t^2\to0$ proves the relative expansion for either sign of $s(t)$. Since $\mu$ is $T$-invariant, $\norm{w\circ T_t}_{L^2(\mu)}=\norm{w}_{L^2(\mu)}$. The reverse triangle inequality therefore gives
    \[
    \frac{\norm{D_{s(t)}(\cdot,t)}_{L^2(\mu)}}{\abs{s(t)}t^2}\longrightarrow\norm{w}_{L^2(\mu)}\qquad t\to+\infty.
    \]
\end{proof}
\medskip

At the critical scale, differentiating the inverse parameter gives
\begin{equation}\label{eq:sigmaTauInt}
    \sigma e_1+\tau(y,-\sigma v)=\int_0^{\sigma}w(T_{-u}y)du.
\end{equation}
Besides Theorem~\ref{thm:CptRtAsy}~\ref{item:CptRtAsyitem3}, we also prove the trajectory comparison:
\begin{equation}\label{eq:trajectoryComparisonDx}
    \norm{\sup_{\abs{\sigma}\leq S}\bar{d}_X(T_tk_{\sigma/t^2}\cdot,T_{t-\sigma}\cdot)}_{L^2(\mu)}\longrightarrow0,\qquad \bar{d}_X=\min(1,d_X),
\end{equation}
for every $S>0$ as $t\to+\infty$.
\begin{proof}[Proof of Theorem~\ref{thm:CptRtAsy}~\ref{item:CptRtAsyitem3} and \eqref{eq:trajectoryComparisonDx}] 
Fix $S>0$ and let 
\[
d_t(\sigma,x)\coloneq D_{\sigma/t^2}(x,t).
\]
The quadratic bound (cf.~Theorem~\ref{thm:CptRtAsy}~\ref{item:CptRtAsyitem1}) gives $\norm{d_t(\sigma,x)}\leq C_S$ for $t\geq 1$ and $\abs{\sigma}\leq S$. Integrating \eqref{eq:Etx} in the rescaled parameter gives
\[
d_t(\sigma,x)=\int_0^{\sigma}\big(w(T_tk_{u/t^2}x)+E_t(k_{u/t^2}x)\big)du.
\]
The integral of the second term, uniformly in $\sigma$, is bounded by $\int_{-S}^S\norm{E_t(k_{u/t^2}x)}du$. By \eqref{eq:rotationNorm}, its $L^2$ norm is at most $2SB_{\mu}\norm{E_t}_{L^2(\mu)}$.

\medskip

For $s=\sigma/t^2$, equation~\eqref{eq:adexpBexpand} shows that the second exponent in \eqref{eq:endPtIdKs} is
\[
\sigma\left(\frac{K}{t^2}+\frac{[\iota(\tau(x,tv)),K]}{t^2}-\frac{\tau_1(x,tv)}{t}\iota\left(\frac{\tau(x,tv)}{t}\right)\right).
\]
The norm of its difference from $-\sigma U$ satisfies
\[
\norm{\frac{\sigma}{t^2}\Ad_{\exp(\iota(\tau(x,tv)))}K+\sigma U}\leq C_S\left(t^{-1}+\norm{\frac{\tau(x,tv)}{t}-e_1}\right),\qquad \abs{\sigma}\leq S.
\]
All Lie algebra elements in the exponential factors remain in bounded sets. The map
\[
(P,Q,y)\longmapsto\exp(P)\exp(Q)y
\]
therefore has uniformly bounded derivative in $Q$ on the relevant sets, since $X$ is compact. Keeping the first exponent fixed and comparing the second with $-\sigma U$ gives
\[
\sup_{\abs{\sigma}\leq S}d_X(T_tk_{\sigma/t^2}x,\phi_{d_t(\sigma,x)-\sigma e_1}T_tx)\leq C_S\left(t^{-1}+\norm{\frac{\tau(x,tv)}{t}-e_1}\right).
\]
Combining this with the preceding integral identity and the Lipschitz bound for $w$, we obtain
\begin{equation}\label{eq:dtwCompare}
   \sup_{\abs{\sigma}\leq S}\norm{d_t(\sigma,x)-\int_0^{\sigma}w(\phi_{d_t(u,x)-ue_1}T_tx)du}\leq r_t(x), 
\end{equation}
where
\[
r_t(x)=\int_{-S}^S\norm{E_t(k_{u/t^2}x)}du+C_S\left(t^{-1}+\norm{\frac{\tau(x,tv)}{t}-e_1}\right)
\]
satisfies $\norm{r_t}_{L^2(\mu)}\to0$.

\medskip

For fixed $t,x$, set $y=T_tx$, and define 
\[
z(\sigma)=\sigma e_1+\tau(y,-\sigma v).
\]
Since
\[
    z'(\sigma)=w(T_{-\sigma}y),\qquad \phi_{z(\sigma)-\sigma e_1}y=\phi_{\tau(y,-\sigma v)}y=T_{-\sigma}y,
\]
this function solves
\begin{equation}\label{eq:z'formula}
z'(\sigma)=w(\phi_{z(\sigma)-\sigma e_1}y),\qquad z(0)=0.
\end{equation}
The RHS is globally Lipschitz in the parameter variable, with a constant $C_0$ independent of $y,\sigma$. Indeed, for $p,a\in\R^2$,
\[
\left.\frac{d}{dh}\right|_{h=0}w(\phi_{p+ha-\sigma e_1}y)=(a_1Uw+a_2Zw)(\phi_{p-\sigma e_1}y),
\]
whose norm is at most $C_0\norm{a}$ by compactness.

Set $e(\sigma)=\norm{d_t(\sigma,x)-z(\sigma)}$. Subtracting the exact and approximate integral equations, cf.~\eqref{eq:dtwCompare} and~\eqref{eq:z'formula}, gives
\[
e(\sigma)\leq r_t(x)+C_0\int_0^{\sigma}e(u)\,du,\qquad 0\leq \sigma\leq S.
\]
For the Gronwall estimate, set
\[
F(\sigma)=r_t(x)+C_0\int_0^{\sigma}e(u)\,du.
\]
Then $e\leq F$, $F(0)=r_t(x)$, and $F'=C_0e\leq C_0F$. Hence
\[
\frac{d}{d\sigma}\big(e^{-C_0\sigma}F(\sigma)\big)\leq 0,\qquad e(\sigma)\leq F(\sigma)\leq e^{C_0\sigma}r_t(x).
\]
Applying the same argument to $\sigma\mapsto e(-\sigma)$ on $[0,S]$ gives
\[
\sup_{\abs{\sigma}\leq S}\norm{d_t(\sigma,x)-\sigma e_1-\tau(T_tx,-\sigma v)}\leq e^{C_0S}r_t(x).
\]
Taking $L^2(\mu)$ norms and using $\norm{r_t}_{L^2(\mu)}\to0$ proves Theorem~\ref{thm:CptRtAsy}~\ref{item:CptRtAsyitem3}.

\medskip

Finally, integrating the bounded generating fields of $\phi$ along a straight parameter segment gives
\[
d_X(\phi_py,\phi_qy)\leq C_X\norm{p-q},\qquad p,q\in\R^2,\quad y\in X.
\]
Since
\[
\phi_{\tau(T_tx,-\sigma v)}T_tx=T_{t-\sigma}x,
\]
the endpoint comparison and the preceding error bound give
\[
\sup_{\abs{\sigma}\leq S}d_X(T_tk_{\sigma/t^2}x,T_{t-\sigma}x)\leq C_S\left(t^{-1}+\norm{\frac{\tau(x,tv)}{t}-e_1}+r_t(x)\right).
\]
The RHS tends to zero in $L^2(\mu)$, proving \eqref{eq:trajectoryComparisonDx}.
\end{proof}
\medskip

Let $X_v=\xi_1U+\xi_2Z$ be the generator of $T$, acting componentwise on vector valued functions. Taylor's theorem applied to \eqref{eq:sigmaTauInt} gives, for every integer $N\geq 1$ and every $\sigma\in\R$,
\[
\norm{\sigma e_1+\tau(\cdot,-\sigma v)-\sum_{j=0}^{N-1}\frac{(-1)^j\sigma^{j+1}}{(j+1)!}X_v^jw}_{\infty}\leq\frac{\abs{\sigma}^{N+1}}{(N+1)!}\norm{X_v^Nw}_{\infty}.
\]
Thus the finite Taylor expansion at $\sigma=0$ begins with $\sigma w-\frac{1}{2}\sigma^2X_vw$.

The relative expansion proved above applies when the perturbation parameter varies with time: for every real function $s(t)\neq0$ with $t^2\abs{s(t)}\to0$,
\[
\norm{\frac{D_{s(t)}(\cdot,t)}{s(t)t^2}-w\circ T_t}_{L^2(\mu)}\longrightarrow0\qquad t\to+\infty.
\]
For fixed $s\neq0$, by contrast, the clock bound gives
\[
\norm{\frac{D_s(\cdot,t)}{st^2}}_{\infty}\leq\frac{2L}{\abs{s}t}\longrightarrow0\qquad t\to+\infty.
\]

\subsection{Exact algebraicity, conjugacy, and optimal exponents}
The quadratic coefficient vanishes precisely for linear time changes:
\begin{equation}\label{eq:wEquivMRhoTau}
    w=0\quad\Longleftrightarrow\quad M=A\Longleftrightarrow\quad\rho(x,p)=Ap\quad\Longleftrightarrow\quad\tau(x,tv)=te_1,
\end{equation}
where the cocycle identities hold for all $x\in X$, $p\in\R^2$, and $t\in\R$. In fact, $w=0$ in \eqref{eq:defwSection12} gives $Me_1=v$ and $U(Me_2)=Z(Me_1)=0$. Haar ergodicity and continuity make $M$ constant, hence equal to $A$. The radial formula and $A^{-1}v=e_1$ give the next two implications; differentiating the last identity at $t=0$ gives $w=0$.

Under the strict compact root hypothesis, Theorem~\ref{thm:oneRootSL3rigidity}, continuity, and full support of $\mu$ give $T_tx=\phi_{te_1}x$ for every $x\in X$ and $t\in\R$. Comparing generators gives
\[
\xi_1U+\xi_2Z=U.
\]
Since $U$ and $Z$ are linearly independent, $w=0$, and the full time change is linear.

By Theorem~\ref{thm:smoothConjugacyCri}, smooth leaf preserving conjugacy of the full action is equivalent to
\begin{equation}\label{eq:smoothConjugacyCri12.4-1}
    \sup_{\sigma\in\R}\norm{\sigma e_1+\tau(\cdot,-\sigma v)}_{L^2(\mu)}<\infty.
\end{equation}
In this case, let $b$ be a smooth transfer as in that theorem and set $F=A^{-1}b$. Substituting $p=\tau(x,tv)$ in the transfer identity and using $v=Ae_1$ gives
\begin{equation}\label{eq:smoothConjugacyCri12.4-2}
    \tau(x,tv)-te_1=F(x)-F(T_tx),\qquad x\in X,\quad t\in\R.
\end{equation}

\begin{proof}[Proof of Theorem~\ref{thm:OptCptRtExp}]
\emph{Lower bounds.} Since $M$ is nonconstant, \eqref{eq:wEquivMRhoTau} gives $w\neq0$; full support of $\mu$ gives $\norm{w}_{L^2(\mu)}>0$. Recall that 
\[
\sigma e_1+\tau(y,-\sigma v)=\int_0^{\sigma}w(T_{-u}y)\,du,
\]
hence we have
\[
\frac{\sigma e_1+\tau(\cdot,-\sigma v)}{\sigma}\to w\qquad \text{uniformly as $\sigma\to0$.}
\]
Choose a fixed small $\sigma_0\neq0$ such that $\norm{\sigma_0e_1+\tau(\cdot,-\sigma_0v)}_{L^2(\mu)}>0$. Theorem~\ref{thm:CptRtAsy}~\ref{item:CptRtAsyitem3} and invariance of $\mu$ give
\[
\norm{D_{\sigma_0/t^2}(\cdot,t)}_{L^2(\mu)}\longrightarrow\norm{\sigma_0e_1+\tau(\cdot,-\sigma_0v)}_{L^2(\mu)}>0.
\]
For all sufficiently large $t$, we have $0<\abs{\sigma_0}/t^2\leq s_0$. Hence
\[
\mathcal{N}_{\delta}(t)\geq \frac{\norm{D_{\sigma_0/t^2}(\cdot,t)}_{L^2(\mu)}}{(\abs{\sigma_0}/t^2)^{\delta}}\geq c_{\delta}t^{2\delta}
\]
for some $c_{\delta}>0$. Any bound $\mathcal{N}_{\delta}(t)\leq C(1+t)^{\gamma}$ therefore requires $\gamma\geq2\delta$, so $\gamma_*(\delta)\geq2\delta$.

\medskip\noindent
\emph{General upper bounds.} For $0<\delta\leq 1$, interpolate the estimates $\norm{D_s(\cdot,t)}_{\infty}\leq C\abs{s}(1+t)^2$ and $\norm{D_s(\cdot,t)}_{\infty}\leq C(1+t)$:
\[
\norm{D_s(\cdot,t)}_{\infty}\leq C_{\delta}\abs{s}^{\delta}(1+t)^{1+\delta}.
\]
This proves $\gamma_*(\delta)\leq 1+\delta$ and gives attainment at $(\delta,\gamma)=(1,2)$. Now let $\delta>1$ and suppose, for a contradiction, that there are finite $C,\gamma\geq0$ such that
\[
\norm{D_s(\cdot,t)}_{L^2(\mu)}\leq C\abs{s}^{\delta}(1+t)^{\gamma},\qquad 0<\abs{s}\leq s_0,\quad t\geq0.
\]
For each fixed $t$, smoothness and compactness give
\[
\frac{D_s(\cdot,t)}{s}\longrightarrow K\tau(\cdot,tv)\qquad \text{uniformly as $s\to0$},
\]
whereas the assumed estimate gives
\[
\norm{\frac{D_s(\cdot,t)}{s}}_{L^2(\mu)}\leq C\abs{s}^{\delta-1}(1+t)^{\gamma}\longrightarrow0.
\]
Hence $\norm{K\tau(\cdot,tv)}_{L^2(\mu)}=0$ for every $t\geq0$, contradicting \eqref{eq:CptRtAsyitem2eq2}, since $\norm{w}_{L^2(\mu)}>0$. Thus $\gamma_*(\delta)=+\infty$ for $\delta>1$.

\medskip\noindent
\emph{The conjugate case.} Equation~\eqref{eq:smoothConjugacyCri12.4-2} gives $\norm{D_s(\cdot,t)}_{\infty}\leq 4\norm{F}_{\infty}$. Combining this with the quadratic estimate gives
\[
\norm{D_s(\cdot,t)}_{\infty}\leq C\min(1,\abs{s}(1+t)^2)\leq C\abs{s}^{\delta}(1+t)^{2\delta},\qquad 0<\delta\leq 1.
\]
The $L^2$ lower bound already proved establishes optimality in both norms, and the displayed estimate gives attainment.
\end{proof}

\appendix
\section{Smooth displacement along unipotent leaves}\label{sec:smoothDisplacement}
\begin{Lemma}\label{lem:smmothDisplacement}
    Let $G_0$ be a connected semisimple real linear group, $\Gamma_0<G_0$ a discrete subgroup, and $N<G_0$ a closed connected unipotent subgroup with Lie algebra $\mathfrak{n}$. Set $X_0=G_0/\Gamma_0$. Every smooth map $\Psi:X_0\to X_0$ satisfying $\Psi(x)\in Nx$ for every $x\in X_0$ has the form
    \[
    \Psi(x)=\exp(F(x))x
    \]
    for a unique smooth map $F:X_0\to\mathfrak{n}$.
\end{Lemma}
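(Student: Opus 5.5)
The plan is to reduce the statement to a lifting problem for the orbit relation, settle uniqueness through density of points with trivial stabilizer, and obtain existence by analytic continuation with trivial monodromy. Throughout, $\Lambda_x=\{B\in\mathfrak{n}:\exp(B)x=x\}$, exactly as in Lemma~\ref{lem:algStab}. That lemma shows $\Lambda_x$ is discrete and constant along $N$-orbits, and its proof uses only that $\Gamma_0$ is discrete, so it applies here.

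\emph{Step 1 (local smooth branches).} Consider the solution set
\[
S=\{(B,x)\in\mathfrak{n}\times X_0:\exp(B)x=\Psi(x)\},
\]
which is nonempty over every $x$ by hypothesis. Fix $(B_0,x_0)\in S$ and write $x_0=g_0\Gamma_0$. Choose a small neighbourhood $W$ of $g_0$ on which $\pi:G_0\to X_0$ is a diffeomorphism onto its image. Choose also a neighbourhood $W'$ of $\exp(B_0)g_0$ with the same property.
\begin{itemize}
\item The smooth map $(B,g)\mapsto\exp(B)g$ from $\mathfrak{n}\times G_0$ to $G_0$ is injective.
\item It is a diffeomorphism onto the closed submanifold of pairs lying in a common $N$-coset, since $\exp:\mathfrak{n}\to N$ is a diffeomorphism for unipotent $N$.
\item For $g\in W$ near $g_0$, the point $\Psi(\pi g)$ has a unique lift $\tilde\Psi(g)\in W'$, and this lift depends smoothly on $g$.
\item By hypothesis $\tilde\Psi(g)\in N g\Gamma_0$. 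Since $\tilde\Psi(g)$ is close to $\exp(B_0)g_0$, the relevant $\Gamma_0$-translate is the identity for $g$ near $g_0$, so $\tilde\Psi(g)=\exp(B(g))g$.
\end{itemize}
Here $B(g)=\log\bigl(\tilde\Psi(g)g^{-1}\bigr)$ is smooth. The step I expect to need most care is justifying that the $\Gamma_0$-translate stays the identity. I plan to use a fixed small ball in $G_0$ and the discreteness of $\Gamma_0$: the set of $\gamma$ with $\exp(B)g\gamma$ in a compact neighbourhood of $\exp(B_0)g_0$ is finite, and only $\gamma=e$ survives by continuity. This gives through every point of $S$ a unique local smooth section of $S\to X_0$. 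In other words, $S$ is étale over $X_0$.

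\emph{Step 2 (density of free points and uniqueness).} A point $x=g\Gamma_0$ is non-free exactly when $g^{-1}Ng\cap\Gamma_0\neq\{e\}$. For each of the countably many nontrivial unipotent $\gamma\in\Gamma_0$, the set $\{g: g\gamma g^{-1}\in N\}$ is a proper real-analytic subvariety of $G_0$; it is proper because $N$ is a proper subgroup and the conjugacy class of $\gamma$ is not contained in $N$. Hence the free set $X^{\rm free}$ is comeagre and dense. Now suppose $F_1,F_2$ are two continuous solutions. Their difference $F_1(x)-F_2(x)$ lies in $\Lambda_x$, is continuous, and vanishes on $X^{\rm free}$, so it vanishes everywhere. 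This is the uniqueness claim.

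\emph{Step 3 (global existence by continuation).} Fix, in each connected component, a free basepoint $x_*$ and the unique $B_*\in\mathfrak{n}$ with $\exp(B_*)x_*=\Psi(x_*)$. Continue this branch along paths using Step 1. The continuation is path-independent: a loop at $x_*$ returns to a solution at $x_*$, which is unique because $x_*$ is free, so the monodromy is trivial. Since $X_0$ is a manifold, hence locally path connected, this defines $F$ on the whole component. $F$ is smooth by Step 1 and unique by Step 2.

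The main obstacle is path lifting in the étale map $S\to X_0$. Along a path $\gamma$, the continued branch might escape to infinity in $\mathfrak{n}$ before reaching an endpoint, for instance when $N$-orbits are dense. To exclude this, I plan to argue at the supremum time $t^*$ of the continuation interval. Pick any solution $B^*$ at $\gamma(t^*)$ and its local branch $\tilde B$ from Step 1. For $t<t^*$ near $t^*$, the difference between the continued branch and $\tilde B$ at $\gamma(t)$ is a continuous $\Lambda_{\gamma(t)}$-valued function. It vanishes at free times, and free times are dense along generic paths. One can first perturb the path so that it meets $X^{\rm free}$ densely, which the analytic description in Step 2 should allow. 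The difference is then identically zero, so the branch continues past $t^*$. If this density-along-paths argument fails, the fallback is to lift everything to $G_0$ and continue the smooth branch $g\mapsto\exp(B(g))g$ on the simply connected cover. There the orbit map is proper, so escape to infinity cannot occur, and one would descend using $\Gamma_0$-equivariance together with Step 2.
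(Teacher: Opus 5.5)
Your Step~2 is a sound uniqueness argument. For non-abelian $N$, replace ``$F_1(x)-F_2(x)\in\Lambda_x$'' by the remark that at a free point $\exp F_1(x)=\exp F_2(x)$, hence $F_1(x)=F_2(x)$. The existence part, however, has a genuine gap, and it is already in Step~1.

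\textbf{Step~1 fails at non-free points.} The claim that every point of $S$ carries a local continuous section is false when the stabilizer is nontrivial. Take $G_0=\SL_2(\R)$, $\Gamma_0=\SL_2(\Z)$, $N=\exp(\R E_{12})$, $\Psi=\id$, $x_0=\Gamma_0$, $B_0=E_{12}$. Your lift is $\tilde\Psi(g)=g\exp(E_{12})$, and $g\exp(E_{12})g^{-1}\notin N$ for $g=\exp(\epsilon E_{21})$ with $\epsilon\neq0$. For irrational $\epsilon$ the point $\exp(\epsilon E_{21})x_0$ is free, so the only solution there is $B=0$. Hence no continuous branch passes through $(E_{12},x_0)$. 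At a non-free point two decompositions $\tilde\Psi(g_0)=ng_0\gamma$ coexist, and continuity cannot force $\gamma=e$.

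\textbf{Nothing bounds $B$.} At a free point your finiteness argument works as long as $B$ stays bounded, but the proposal never bounds it. Decompositions $\tilde\Psi(g)=n_g g\gamma_g$ in which $n_g$ and $\gamma_g$ leave every compact set as $g\to g_0$ are not excluded. These correspond to long $N$-excursions of nearby points that return close to $\Psi(x_0)$. This is the escape to infinity you flag in Step~3, and it already blocks the local claim.

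\textbf{Local existence is the whole content.} Once a continuous branch exists near every point, your density argument patches local branches directly: they agree at free points, hence on overlaps. So no path lifting is needed, and the lemma reduces to local existence, which the proposal does not establish. Your fallbacks do not supply it. The continuation argument presupposes a local branch at the limit point of the path. Passing to $G_0$ does not help either, because the relation there is $\tilde\Psi(g)\in Ng\Gamma_0$ rather than $\tilde\Psi(g)\in Ng$, and a continuous lift with $\tilde\Psi(g)\in Ng$ is equivalent to the conclusion.

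\textbf{The missing idea: use $D\Psi$ to control the displacement.} Apart from the regularity of a branch once it is found, your existence argument uses only continuity of $\Psi$. The paper proceeds as follows.
\begin{itemize}
\item The adjoint action induces $\pi:N\to\GL(\mathfrak g_0/\mathfrak n)$, whose differential $L$ is injective by Jacobson--Morozov.
\item $\pi(N)$ is closed, and $M\mapsto L^{-1}(\log M)$ is smooth on it, since the logarithm is polynomial on unipotent matrices.
\item Take local lifts $a,b$ of $x$ and $\Psi(x)$. A Baire argument with the closed sets $E_\gamma=\{x:b(x)\gamma a(x)^{-1}\in N\}$ yields a smooth $N$-valued displacement $v$ on a dense open set $O$.
\item Written in the fundamental frames, $D\Psi_x$ preserves $\mathfrak n$. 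The induced map $\overline{\mathcal J}(x)$ on $\mathfrak g_0/\mathfrak n$ equals $\pi(v(x))$ on $O$.
\item Continuity and closedness of $\pi(N)$ give $\overline{\mathcal J}(x)\in\pi(N)$ for all $x$. Then $F=L^{-1}(\log\overline{\mathcal J})$ is a global smooth map equal to $\log v$ on $O$, hence $\Psi(x)=\exp(F(x))x$ everywhere.
\end{itemize}
This disposes of both of your difficulties. A displacement escaping to infinity near a point would force the transverse derivative to blow up. At non-free points, $D\Psi_x$ selects the correct branch.
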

\begin{proof}
The case $N=\{e\}$ is immediate. Set $E=\mathfrak{g}_0/\mathfrak{n}$, where $\mathfrak{g}_0$ is the Lie algebra of $G_0$. The adjoint action induces 
\[
\pi:N\longrightarrow\operatorname{GL}(E),\qquad \pi(n)(Y+\mathfrak{n})=\Ad_nY+\mathfrak{n},
\]
with differential
\[
L(B)(Y+\mathfrak{n})=[B,Y]+\mathfrak{n}.
\]
The map $L$ is injective. Indeed, if $L(B)=0$ with $B\neq0$, then $[B,\mathfrak{g}_0]\subset\mathfrak{n}$. By the real Jacobson--Morozov theorem, there are $C,H\in\mathfrak{g}_0$ with $[B,C]=H$ and $[H,B]=2B$. Thus $H\in\mathfrak{n}$, so $\ad_H$ is nilpotent, whereas $\ad_HB=2B$ with $B\neq0$, a contradiction.

Every $L(B)$ is nilpotent. For $d=\dim E$, the finite logarithm
\[
\log M=\sum_{j=1}^{d-1}\frac{(-1)^{j+1}}{j}(M-\id)^j
\]
is polynomial on the closed set $\{M:(M-\id)^d=0\}$. Since $\exp:\mathfrak{n}\to N$ is a diffeomorphism and $\pi(\exp B)=\exp L(B)$, we have
\[
\pi(N)=\{M\in\operatorname{End}(E):(M-\id)^d=0,\ \log M\in L(\mathfrak{n})\}.
\]
Consequently, $\pi(N)$ is closed in $\operatorname{End}(E)$, and $M\mapsto L^{-1}(\log M)$ is smooth on $\pi(N)$. Here $L^{-1}$ is linear on $L(\mathfrak{n})$ and may be extended linearly to $\operatorname{End}(E)$.

On a sufficiently small open set $U\subset X_0$, choose smooth lifts $a,b:U\to G_0$ of $x$ and $\Psi(x)$. The closed sets
\[
E_{\gamma}=\{x\in U:b(x)\gamma a(x)^{-1}\in N\},\qquad \gamma\in \Gamma_0,
\]
cover $U$. Since $\Gamma_0$ is countable, Baire's theorem shows that the union of their interiors is dense in $U$. On each such interior, $v(x)=b(x)\gamma a(x)^{-1}$ is smooth and $\Psi(x)=v(x)x$. Thus $\Psi$ locally admits a smooth $N$-valued displacement on a dense open set $O\subset X_0$.

The fundamental vector fields give smooth isomorphisms:
\[
\vartheta_x:\mathfrak{g}_0\longrightarrow T_xX_0,\qquad \vartheta_x(Y)=\left.\frac{d}{dt}\right|_{t=0}\exp(tY)x.
\]
Define
\[
\mathcal{J}(x)=\vartheta_{\Psi(x)}^{-1}\circ D\Psi_x\circ\vartheta_x.
\]
Where $\Psi(x)=v(x)x$ with $v$ smooth and $N$-valued, differentiation gives
\[
\mathcal{J}(x)Y-\Ad_{v(x)}Y\in\mathfrak{n},\qquad Y\in\mathfrak{g}_0.
\]
Hence $\mathcal{J}(x)\mathfrak{n}\subset\mathfrak{n}$ on $O$, and therefore everywhere by continuity. Its induced quotient map $\overline{\mathcal{J}}(x)\in\operatorname{End}(E)$ is smooth and equals $\pi(v(x))$ on $O$. Closedness of $\pi(N)$ gives $\overline{\mathcal{J}}(x)\in\pi(N)$ everywhere. Define
\[
F(x)=L^{-1}\big(\log\overline{\mathcal{J}}(x)\big).
\]
Then $F$ is smooth and $\Psi(x)=\exp(F(x))x$ on $O$, hence on $X_0$ by continuity. Any other smooth displacement $F'$ satisfies $\overline{\mathcal{J}}(x)=\pi(\exp(F'(x)))$, so the same formula gives $F'=F$.
\end{proof}

\bibliographystyle{plain} 
\bibliography{refs} 

\begin{bibdiv}
\begin{biblist}

\bib{afr22}{article}{
      author={Artigiani, M.},
      author={Flaminio, L.},
      author={Ravotti, D.},
       title={On rigidity properties of time-changes of unipotent flows},
        date={2024},
     journal={Ann. Sc. Norm. Super. Pisa Cl. Sci. (5), published online},
}

\bib{FlaminioForni03InvariantDistributions}{article}{
      author={Flaminio, Livio},
      author={Forni, Giovanni},
       title={Invariant distributions and time averages for horocycle flows},
        date={2003},
        ISSN={0012-7094,1547-7398},
     journal={Duke Math. J.},
      volume={119},
      number={3},
       pages={465\ndash 526},
  url={https://doi-org.libproxy1.nus.edu.sg/10.1215/S0012-7094-03-11932-8},
      review={\MR{2003124}},
}

\bib{forni2012time}{article}{
      author={Forni, Giovanni},
      author={Ulcigrai, Corinna},
       title={Time-changes of horocycle flows},
        date={2012},
        ISSN={1930-5311,1930-532X},
     journal={J. Mod. Dyn.},
      volume={6},
      number={2},
       pages={251\ndash 273},
         url={https://doi.org/10.3934/jmd.2012.6.251},
      review={\MR{2968956}},
}

\bib{Jacobson1951Sl2}{article}{
      author={Jacobson, Nathan},
       title={Completely reducible {L}ie algebras of linear transformations},
        date={1951},
        ISSN={0002-9939,1088-6826},
     journal={Proc. Amer. Math. Soc.},
      volume={2},
       pages={105\ndash 113},
         url={https://doi-org.libproxy1.nus.edu.sg/10.2307/2032629},
      review={\MR{49882}},
}

\bib{kakutani1943131}{article}{
      author={Kakutani, Shizuo},
       title={Induced measure preserving transformations},
        date={1943},
        ISSN={0369-9846},
     journal={Proc. Imp. Acad. Tokyo},
      volume={19},
       pages={635\ndash 641},
         url={http://projecteuclid.org/euclid.pja/1195573248},
      review={\MR{14222}},
}

\bib{katok1977monotone}{article}{
      author={Katok, A.~B.},
       title={Monotone equivalence in ergodic theory},
        date={1977},
        ISSN={0373-2436},
     journal={Izv. Akad. Nauk SSSR Ser. Mat.},
      volume={41},
      number={1},
       pages={104\ndash 157, 231},
      review={\MR{0442195}},
}

\bib{kanigowski2021kakutani}{article}{
      author={Kanigowski, Adam},
      author={Vinhage, Kurt},
      author={Wei, Daren},
       title={Kakutani equivalence of unipotent flows},
        date={2021},
        ISSN={0012-7094},
     journal={Duke Math. J.},
      volume={170},
      number={7},
       pages={1517\ndash 1583},
         url={https://doi.org/10.1215/00127094-2020-0074},
      review={\MR{4255058}},
}

\bib{LindenstraussWei2025}{article}{
      author={Lindenstrauss, Elon},
      author={Wei, Daren},
       title={Time change rigidity for unipotent flows},
        date={2025},
     journal={arXiv preprint arXiv: 2502.08081},
}

\bib{ornstein1982equivalence}{book}{
      author={Ornstein, Donald~S.},
      author={Rudolph, Daniel~J.},
      author={Weiss, Benjamin},
       title={Equivalence of measure preserving transformations},
      series={Mem. Amer. Math. Soc.},
        date={1982},
      volume={37},
      number={262},
         url={https://doi.org/10.1090/memo/0262},
      review={\MR{653094}},
}

\bib{PughShub71}{article}{
      author={Pugh, Charles},
      author={Shub, Michael},
       title={Ergodic elements of ergodic actions},
        date={1971},
        ISSN={0010-437X,1570-5846},
     journal={Compositio Math.},
      volume={23},
       pages={115\ndash 122},
      review={\MR{283174}},
}

\bib{ratner1978horocycle}{article}{
      author={Ratner, Marina},
       title={Horocycle flows are loosely {B}ernoulli},
        date={1978},
        ISSN={0021-2172},
     journal={Israel J. Math.},
      volume={31},
      number={2},
       pages={122\ndash 132},
         url={https://doi.org/10.1007/BF02760543},
      review={\MR{516248}},
}

\bib{ratner1979cartesian}{article}{
      author={Ratner, Marina},
       title={The {C}artesian square of the horocycle flow is not loosely
  {B}ernoulli},
        date={1979},
        ISSN={0021-2172},
     journal={Israel J. Math.},
      volume={34},
      number={1-2},
       pages={72\ndash 96 (1980)},
         url={https://doi.org/10.1007/BF02761825},
      review={\MR{571396}},
}

\bib{ratner1981some}{article}{
      author={Ratner, Marina},
       title={Some invariants of {K}akutani equivalence},
        date={1981},
        ISSN={0021-2172},
     journal={Israel J. Math.},
      volume={38},
      number={3},
       pages={231\ndash 240},
         url={https://doi.org/10.1007/BF02760808},
      review={\MR{605381}},
}

\bib{ratner1986rigidity}{article}{
      author={Ratner, Marina},
       title={Rigidity of time changes for horocycle flows},
        date={1986},
        ISSN={0001-5962},
     journal={Acta Math.},
      volume={156},
      number={1-2},
       pages={1\ndash 32},
         url={https://doi.org/10.1007/BF02399199},
      review={\MR{822329}},
}

\bib{ratner1990measure}{article}{
      author={Ratner, Marina},
       title={On measure rigidity of unipotent subgroups of semisimple groups},
        date={1990},
        ISSN={0001-5962},
     journal={Acta Math.},
      volume={165},
      number={3-4},
       pages={229\ndash 309},
         url={https://doi.org/10.1007/BF02391906},
      review={\MR{1075042}},
}

\bib{ratner1991measure}{article}{
      author={Ratner, Marina},
       title={On {R}aghunathan's measure conjecture},
        date={1991},
        ISSN={0003-486X},
     journal={Ann. of Math. (2)},
      volume={134},
      number={3},
       pages={545\ndash 607},
         url={https://doi.org/10.2307/2944357},
      review={\MR{1135878}},
}

\bib{ratner1991topological}{article}{
      author={Ratner, Marina},
       title={Raghunathan's topological conjecture and distributions of
  unipotent flows},
        date={1991},
        ISSN={0012-7094},
     journal={Duke Math. J.},
      volume={63},
      number={1},
       pages={235\ndash 280},
         url={https://doi.org/10.1215/S0012-7094-91-06311-8},
      review={\MR{1106945}},
}

\bib{RavottiParabolic19}{article}{
      author={Ravotti, Davide},
       title={Parabolic perturbations of unipotent flows on compact quotients
  of {${\rm SL}(3,\mathbb{R})$}},
        date={2019},
        ISSN={0010-3616,1432-0916},
     journal={Comm. Math. Phys.},
      volume={371},
      number={1},
       pages={331\ndash 351},
         url={https://doi-org.libproxy1.nus.edu.sg/10.1007/s00220-019-03348-0},
      review={\MR{4015347}},
}

\bib{RogersMeanValue1955}{article}{
      author={Rogers, C.~A.},
       title={Mean values over the space of lattices},
        date={1955},
        ISSN={0001-5962,1871-2509},
     journal={Acta Math.},
      volume={94},
       pages={249\ndash 287},
         url={https://doi-org.libproxy1.nus.edu.sg/10.1007/BF02392493},
      review={\MR{75243}},
}

\bib{RudolphSmoothOrbitEquivalence79}{article}{
      author={Rudolph, Daniel~J.},
       title={Smooth orbit equivalence of ergodic {${\bf R}\sp{d}$}\ actions,
  {$d\geq 2$}},
        date={1979},
        ISSN={0002-9947,1088-6850},
     journal={Trans. Amer. Math. Soc.},
      volume={253},
       pages={291\ndash 302},
         url={https://doi-org.libproxy1.nus.edu.sg/10.2307/1998198},
      review={\MR{536948}},
}

\bib{SiegelMeanValue1945}{article}{
      author={Siegel, Carl~Ludwig},
       title={A mean value theorem in geometry of numbers},
        date={1945},
        ISSN={0003-486X},
     journal={Ann. of Math. (2)},
      volume={46},
       pages={340\ndash 347},
         url={https://doi-org.libproxy1.nus.edu.sg/10.2307/1969027},
      review={\MR{12093}},
}

\bib{tang2020new}{article}{
      author={Tang, Siyuan},
       title={New time-changes of unipotent flows on quotients of {L}orentz
  groups},
        date={2022},
        ISSN={1930-5311},
     journal={J. Mod. Dyn.},
      volume={18},
       pages={13\ndash 67},
         url={https://doi.org/10.3934/jmd.2022002},
      review={\MR{4385813}},
}

\bib{Tang23}{article}{
      author={Tang, Siyuan},
       title={Rigidity of joinings for time changes of unipotent flows on
  quotients of {L}orentz groups},
        date={2023},
        ISSN={0143-3857,1469-4417},
     journal={Ergodic Theory Dynam. Systems},
      volume={43},
      number={11},
       pages={3796\ndash 3860},
         url={https://doi-org.libproxy1.nus.edu.sg/10.1017/etds.2022.83},
      review={\MR{4651586}},
}

\bib{Wang2015Cohomological}{article}{
      author={Wang, Zhenqi~Jenny},
       title={Cohomological equation and cocycle rigidity of parabolic actions
  in some higher-rank {L}ie groups},
        date={2015},
        ISSN={1016-443X,1420-8970},
     journal={Geom. Funct. Anal.},
      volume={25},
      number={6},
       pages={1956\ndash 2020},
         url={https://doi-org.libproxy1.nus.edu.sg/10.1007/s00039-015-0351-6},
      review={\MR{3432162}},
}

\end{biblist}
\end{bibdiv}

\end{document}